\numberwithin{equation}{section}
\setlist[enumerate]{leftmargin=2pc}
\setlist[itemize]{leftmargin=2pc}
\def\l@subsection{\@tocline{2}{0pt}{2pc}{5pc}{}}
\theoremstyle{plain}
\newtheorem{thm}{Theorem}[section]
\newtheorem{lemma}[thm]{Lemma}
\newtheorem{cor}[thm]{Corollary}
\newtheorem{prop}[thm]{Proposition}
 \theoremstyle{definition}
\newtheorem{Def}[thm]{Definition}
\newtheorem{conj}[thm]{Conjecture}
\newtheorem{rmk}[thm]{Remark}
\newtheorem{?}[thm]{Problem}
\newtheorem{cons}[thm]{Construction}
\newtheorem{ass}[thm]{Assumption}
\newcommand{\zz}{\mathbb{Z}}
\newcommand{\qq}{\mathbb{Q}}
\newcommand{\id}{\text{id}}
\newcommand{\res}{\big|}
\newcommand{\Gdisp}{G\text{-}\textbf{\textup{Disp}}_\mu^W}
\begin{document}
\begin{abstract} 
We develop a Tannakian framework for group-theoretic analogs of displays, originally introduced by B\"ultel and Pappas, and further studied by Lau. We use this framework to define Rapoport-Zink functors associated to triples $(G,\{\mu\},[b])$, where $G$ is a flat affine group scheme over $\zz_p$ and $\mu$ is a cocharacter of $G$ defined over a finite unramified extension of $\zz_p$. We prove these functors give a quotient stack presented by Witt vector loop groups, thereby showing our definition generalizes the group-theoretic definition of Rapoport-Zink spaces given by B\"ultel and Pappas.  As an application, we prove a special case of a conjecture of B\"ultel and Pappas by showing their definition coincides with that of Rapoport and Zink in the case of unramified EL-type local Shimura data.
\end{abstract}

\title{A Tannakian framework for $G$-displays  and Rapoport-Zink spaces}

\author{Patrick Daniels}
\thanks{Research partially supported by NSF DMS-1406787}
\address{Department of Mathematics, University of Maryland, College Park, MD 20742}

%
\email{pdaniel1@math.umd.edu}
%
%


\maketitle
\tableofcontents

\
\section{Introduction}
Let $p$ be a fixed prime. The theory of displays, developed by Zink in \cite{Zink2002}, is a generalization of Dieudonn\'{e} theory for formal $p$-divisible groups. By results of Zink and Lau \cite{Lau2008}, formal $p$-divisible groups over $p$-adically complete and separated $\zz_p$-algebras are classified by their associated displays. When $G$ is a reductive group over $\zz_p$ and $\mu$ is a minuscule cocharacter of $G$, B\"{u}ltel and Pappas \cite{BP2017} defined group-theoretic analogs of displays, called $(G,\mu)$-displays, with the intention of using these objects to stand in for $p$-divisible groups with $G$-structure in a general definition of Rapoport-Zink spaces. When $G = \text{GL}_n$ and $\mu$ is the cocharacter $t \mapsto (1^{(d)}, t^{(n-d)})$, the category of $(G,\mu)$-displays over a $p$-nilpotent $\zz_p$-algebra $R$ is equivalent to the category of Zink displays over $R$. In this way the theory of $(G,\mu)$-displays naturally generalizes Zink's theory of displays.

In a different direction, Langer and Zink \cite{LZ2007} defined a category of higher displays, which contains Zink's displays as a full subcategory. Recently, Lau \cite{Lau2018} reformulated the framework for higher displays in such a way as to allow for a uniform treatment of a number of display-like objects, including Dieudonn\'e displays as in \cite{Zink2001}, $F$-zips as in \cite{MW2004}, and Frobenius gauges as in \cite{FJ2013}. Further, Lau used his framework to give a general definition of $G$-displays of type $\mu$ for an arbitrary smooth group scheme $G$ and cocharacter $\mu$. When $G$ is reductive and $\mu$ is minuscule, the category of $G$-displays of type $\mu$ coincides with the category of $(G,\mu)$-displays defined by B\"{u}ltel and Pappas. Hence Lau's work can be seen as a way to link Langer and Zink's generalization of the theory of displays with that of B\"{u}ltel and Pappas. 

In this paper we offer another way to relate these two theories by developing a Tannakian framework for $(G,\mu)$-displays. More precisely, we define a Tannakian $G$-display to be an exact tensor functor from the category of representations of $G$ on finite free $\zz_p$-modules to the category of higher displays. That such a definition is reasonable is suggested by the following general mantra. Let $G$ be a group, and let $\textbf{Cat}$ be an exact tensor category. Then an object in $\textbf{Cat}$ endowed with $G$-structure should manifest itself in two ways: as a torsor for $G$ (or for some closely related group) perhaps with some additional structures, and as an exact tensor functor from the category of representations of $G$ to $\textbf{Cat}$. The relation between these two interpretations is well-known when $\textbf{Cat}$ is the category of vector bundles over a scheme $S$, cf. \cite{Nori1976}. This principle has been notably applied in the case where $\textbf{Cat}$ is the category of isocrystals over a field $k$ in \cite{RR1996}, and where $\textbf{Cat}$ is the category of $F$-zips over a field $k$ in \cite{PWZ2015}. In our situation, Lau's theory offers the torsor-theoretic definition of $G$-displays, and we contribute a Tannakian version of the theory. 

When $G$ is a classical group, there is often a third interpretation: An object in \textbf{Cat} is said to be endowed with $G$-structure if it is equipped with some additional structures corresponding to the group $G$, such as a bilinear form or actions on the object by a semisimple algebra. The Tannakian framework for objects with $G$-structure is closely related to this third interpretation. Indeed, given an exact tensor functor $\mathscr{F}: \textbf{Rep}(G) \to \textbf{Cat}$, we can obtain an object in \textbf{Cat} with additional structures by evaluating $\mathscr{F}$ on the faithful representation which gives the embedding of $G$ into some $\text{GL}_n$. In the case where $G$ is an orthogonal group, Lau applies this principle to prove that $G$-displays correspond to displays equipped with a perfect symmetric bilinear form, cf. \cite[Proposition 5.5.2]{Lau2018}. This can be seen as a special case of the Tannakian framework we develop in this paper.

Since our definition of Tannakian $(G,\mu)$-displays extends the definition of $(G,\mu)$-displays in \cite{BP2017}, we can use it to define a natural generalization of the Rapoport-Zink functor defined there. Our Tannakian framework proves advantageous in this regard, because it brings the theory closer to Zink's original theory of displays, and therefore to the theory of $p$-divisible groups. In particular, we prove that B\"ultel and Pappas's Rapoport-Zink functor is representable by the classical Rapoport-Zink space in the case where the data of definition is of EL-type. This proves a conjecture of B\"ultel and Pappas in this special case.

Let us describe our results in more detail. Let $R$ be a $p$-adic ring, and denote by $W(R)$ the ring of $p$-typical Witt vectors for $R$, which is equipped with Frobenius $f$ and Verschiebung $v$. Then, following Lau \cite{Lau2018}, we define a graded variant of the Witt ring $W(R)$, which we denote by $W(R)^\oplus$ (cf. Definition \ref{def-wittframe}). This ring is equipped with homomorphisms $\sigma, \tau: W(R)^\oplus \to W(R)$ such that the triple $\underline{W}(R) = (W(R)^\oplus, \sigma, \tau)$ becomes a higher frame in the sense of \textit{loc. cit.} Pairs $(M,F)$ consisting of a finite projective graded $W(R)^\oplus$-module $M$ and a $\sigma$-linear bijection of $W(R)^\oplus$-modules $F: M \to M\otimes_{W(R)^\oplus,\tau} W(R)$ are called displays over $\underline{W}(R)$, cf. Definition \ref{def-display}. The categories of finite projective graded $W(R)^\oplus$-modules and of displays over $\underline{W}(R)$ are exact rigid tensor categories.

Let $G$ be a flat affine group scheme of finite type over $\zz_p$, let $k_0$ be a finite extension of $\mathbb{F}_p$, and let $\mu: \mathbb{G}_{m, W(k_0)} \to G_{W(k_0)}$ be a cocharacter of $G$ defined over $W(k_0)$. Lau associates to $G$ and $\mu$ a group, called the display group, as follows. Let $G = \text{Spec}$ $A$. Then $\mathbb{G}_{m,W(k_0)}$ acts on $A$ via conjugation and therefore determines a $\zz$-grading on $A$. The display group $L^+_\mu G(R)$ (denoted $G(W(R)^\oplus)_\mu$ in \cite{Lau2018}) is the subset of $G(W(R)^\oplus)$ consisting of homomorphisms $A \to W(R)^\oplus$ which preserve the respective gradings. Our first result is to give an interpretation of this group as the collection of tensor automorphisms of a certain fiber functor. This result can be seen as an analog of Tannakian duality in this setting.

Denote by $\textbf{PGrMod}(W(R)^\oplus)$ the category of finite projective graded $W(R)^\oplus$-modules. Associated to $G$ and $\mu$ we define a canonical graded fiber functor for every $p$-adic ring $R$
\begin{align*}
	\mathscr{C}_{\mu, R}: \textbf{Rep}_{\zz_p} (G) \to \textbf{PGrMod}(W(R)^\oplus)
\end{align*}
by assigning to $(V,\rho)$ the $W(R)^\oplus$-module $V_{W(k_0)} \otimes_{W(k_0)} W(R)^\oplus$, where $V_{W(k_0)}$ is endowed with a natural $\zz$-grading via the action of $\mathbb{G}_{m,W(k_0)}$. Define $\text{Aut}^\otimes(\mathscr{C}_{\mu,R})$ to be the collection of tensor automorphisms of this functor. As $R$ varies, this defines an fpqc sheaf in groups, denoted \underline{Aut}$^\otimes(\mathscr{C}_{\mu})$. For any $h \in L^+_\mu G(R)$, the collection $\{\rho(h)\}_{(V,\rho)}$, where $(V,\rho)$ varies over all representations of $G$ on finite free $\zz_p$-modules, defines an element of $\text{Aut}^\otimes(\mathscr{C}_{\mu,R})$. 
\begin{thm}\label{thm-introisom}
	The association $h \mapsto \{\rho\textup{(}h\textup{)}\}$ defines an isomorphism of fpqc sheaves of groups
	\begin{align*}
		L^+_\mu G \xrightarrow{\sim} \textup{\underline{Aut}}^\otimes\textup{(}\mathscr{C}_{\mu}\textup{)}.
	\end{align*}
\end{thm}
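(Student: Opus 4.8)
The plan is to deduce the statement from the classical Tannakian duality for affine group schemes, applied over the graded coefficient ring $W(R)^\oplus$, and then to recover the grading. The association $h\mapsto\{\rho(h)\}$ visibly defines a morphism of fpqc sheaves of groups (this is essentially the discussion preceding the theorem), so it suffices to prove it is bijective on $R$-points for every $p$-adic ring $R$. Injectivity is immediate: since $G$ is flat affine of finite type over $\zz_p$ it admits a faithful representation $\rho_0\colon G\hookrightarrow\text{GL}(V_0)$ on a finite free $\zz_p$-module $V_0$, realizing $G$ as a closed subgroup scheme; if $\{\rho(h)\}$ is the identity then so is $\rho_0(h)$, hence $h$ is trivial in $G(W(R)^\oplus)$ and a fortiori in $L^+_\mu G(R)$.

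For surjectivity, I would first observe that composing $\mathscr{C}_{\mu,R}$ with the forgetful tensor functor $\textbf{PGrMod}(W(R)^\oplus)\to\textbf{Mod}(W(R)^\oplus)$ yields the base-change fiber functor $\omega_R\colon(V,\rho)\mapsto V_{W(k_0)}\otimes_{W(k_0)}W(R)^\oplus$ attached to $G$ and the coefficient ring $W(R)^\oplus$. Given a tensor automorphism $\lambda=\{\lambda_{(V,\rho)}\}$ of $\mathscr{C}_{\mu,R}$, the forgetful functor sends it to a tensor automorphism of $\omega_R$ (here one uses that the morphisms of $\textbf{Rep}_{\zz_p}(G)$ carry no grading). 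By the standard form of Tannakian duality for affine group schemes — reconstruction of $G$ from a faithful representation — one has $\text{Aut}^\otimes(\omega_R)=G(W(R)^\oplus)$, so there is a unique $h\in G(W(R)^\oplus)$ with $\rho(h)=\lambda_{(V,\rho)}$ for all $(V,\rho)$ after forgetting gradings. It then remains to see that $h\in L^+_\mu G(R)$, i.e. that the classifying homomorphism $h\colon\mathcal{O}(G)\to W(R)^\oplus$ is graded for the grading on $\mathcal{O}(G)$ coming from the conjugation action of $\mathbb{G}_{m,W(k_0)}$ through $\mu$. Applying $\lambda$ to the faithful representation, $\rho_0(h)=\lambda_{(V_0,\rho_0)}$ is a morphism of graded $W(R)^\oplus$-modules. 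Since $\rho_0$ is a closed immersion that is equivariant for the conjugation $\mathbb{G}_m$-actions (through $\mu$ on $G$ and through $\rho_0\circ\mu$ on $\text{GL}(V_0)$), the induced map $\rho_0^*\colon\mathcal{O}(\text{GL}(V_0)_{W(k_0)})\twoheadrightarrow\mathcal{O}(G)_{W(k_0)}$ is a surjection of graded rings; comparing homogeneous components degree by degree, $h$ is graded if and only if $h\circ\rho_0^*$ is graded, and the latter is precisely the condition that the point $\rho_0(h)$ of $\text{GL}(V_0)$ respect the weight grading on $(V_0)_{W(k_0)}\otimes_{W(k_0)}W(R)^\oplus$, which we have just verified. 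Hence $h\in L^+_\mu G(R)$, and $\lambda=\{\rho(h)\}$ since the two agree after forgetting gradings and both are graded. Naturality in $R$ being clear, the bijection is an isomorphism of fpqc sheaves of groups.

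I expect the main work, and the place where care is needed, to be the grading bookkeeping: one must fix the sign conventions so that the $\mu$-weight decomposition of $V_{W(k_0)}$, the grading of $W(R)^\oplus$, and the conjugation grading of $\mathcal{O}(G)$ are compatible in the precise sense used above — in particular verifying that each $\rho$ intertwines the relevant $\mathbb{G}_m$-actions, that $\rho_0^*$ really is a graded surjection, and that ``respecting the grading on $(V_0)_{W(k_0)}\otimes_{W(k_0)}W(R)^\oplus$'' for a point of $\text{GL}(V_0)$ translates into gradedness of the classifying homomorphism. A subsidiary point is ensuring that the classical statement $G(S)\xrightarrow{\sim}\text{Aut}^\otimes(\omega_S)$ is available over the base $\zz_p$ and the graded algebra $S=W(R)^\oplus$; this is exactly where flatness and finite type of $G$ enter, through the existence of the faithful finite free representation $\rho_0$ and the generation of $\mathcal{O}(G)$ by its matrix coefficients together with $\det^{-1}$.
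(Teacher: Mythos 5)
Your argument is correct, and it shares the paper's basic skeleton while diverging at the decisive step. Like the paper (proof of Theorem \ref{thm-isom}), you reduce everything to the ungraded Tannakian duality $G(W(R)^\oplus)\cong\mathrm{Aut}^\otimes(\omega_{W(R)^\oplus})$ of Cornut, which is valid over an arbitrary $\zz_p$-algebra because $G$ is flat, affine and of finite type over the PID $\zz_p$; so surjectivity comes down to showing that the unique ungraded $h\in G(W(R)^\oplus)$ attached to a tensor automorphism $\lambda$ of $\mathscr{C}_{\mu,R}$ actually lies in $L^+_\mu G(R)$. The paper does this by following Cornut's explicit inverse: it extends $\mathscr{C}_{\mu,R}$ to the ind-category containing the regular representation, notes that $\lambda_{\rho_{\mathrm{reg}}}$ is a graded algebra automorphism, and checks by a weight computation that the counit $\varepsilon\otimes\id$ kills homogeneous elements of nonzero degree, so the composite classifying map is graded. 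You instead test $h$ on a single faithful representation $\rho_0:G\hookrightarrow\mathrm{GL}(V_0)$ (whose existence over $\zz_p$ is the standard consequence of flatness and finite type that you flag): $\rho_0$ is equivariant for the $\mu$-conjugation actions, so $\rho_0^\ast$ is a surjection of graded Hopf algebras, surjective in each degree, and gradedness of $h$ is equivalent to gradedness of $h\circ\rho_0^\ast$, which in turn is exactly the statement that $\rho_0(h)=\lambda_{(V_0,\rho_0)}$ is a graded automorphism of $(V_0)_{W(k_0)}\otimes_{W(k_0)}W(R)^\oplus$ --- the $\mathrm{GL}$-computation already carried out in Lemma \ref{lem-hom} (with the one small check, which goes through, that $\det^{-1}$ is sent to a degree-zero unit). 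What your route buys is that one never has to leave $\textbf{Rep}_{\zz_p}(G)$ or invoke the regular representation and Serre's filtered-colimit result directly, at the cost of invoking a closed embedding into some $\mathrm{GL}(V_0)$ and doing the grading bookkeeping you describe; the paper's route is closer to Cornut's construction and produces the inverse map explicitly, which it then reuses notationally elsewhere. Both arguments are complete modulo the same external input (Tannakian duality for flat affine group schemes over $\zz_p$), so I would count yours as a correct, genuinely alternative proof of the same statement.
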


Let $\textbf{Disp}(\underline{W}(R))$ be the category of displays over the frame $\underline{W}(R)$. We define a Tannakian $G$-display to be an exact tensor functor
\begin{align*}
	\mathscr{D}: \textbf{Rep}_{\zz_p}(G) \to \textbf{Disp}(\underline{W}(R)).
\end{align*}
We say $\mathscr{D}$ is a Tannakian $(G,\mu)$-display if the underlying functor to the category of finite projective $W(R)^\oplus$-modules is fpqc-locally isomorphic to $\mathscr{C}_{\mu,R}$. Our second main result is the connection between this definition and that of Lau \cite{Lau2018}. Let us recall his definition of $G$-displays of type $\mu$.

Denote by $L^+G$ the Witt loop group, i.e. the functor on $p$-nilpotent $W(k_0)$-algebras $R \mapsto G(W(R))$. Both the display group $L^+_\mu G$ and the Witt loop group $L^+G$ are representable by \cite[Lemma 5.4.1]{Lau2018}. The ring homomorphisms $\sigma$ and $\tau$ induce group homomorphisms $\sigma, \tau: L^+_\mu G \to L^+G$, and the display group acts naturally upon the Witt loop group via
\begin{align}\label{eq-introaction}
	L^+G(R) \times L^+_\mu G (R) \mapsto L^+G(R), \ (g,h) \mapsto \tau(h)^{-1} \cdot g \cdot \sigma(h).
\end{align}
Lau defines the stack of $G$-displays of type $\mu$ to be the fpqc-quotient stack $[L^+G / L^+_\mu G]$ with respect to this action. Explicitly, a $G$-display of type $\mu$ over a $p$-nilpotent $W(k_0)$-algebra $R$ can be interpreted as a pair $(Q,\alpha)$, where $Q$ is an $L^+_\mu G$-torsor over $R$, and $\alpha: Q \to L^+G$ is a map which is $L^+_\mu G$-equivariant with respect to the action (\ref{eq-introaction}).

Now we can naturally associate a $G$-display of type $\mu$ to any Tannakian $(G,\mu)$-display $\mathscr{D}$ over a $p$-nilpotent $W(k)$-algebra $R$. Denote by $\upsilon_R$ the forgetful functor from the category of displays over $\underline{W}(R)$ to the category of finite projective graded $W(R)^\oplus$-modules. Then the fpqc sheaf in groups 
\begin{align*}
	Q_{\mathscr{D}}:= \underline{\text{Isom}}^\otimes(\mathscr{C}_{\mu,R}, \upsilon_R \circ \mathscr{D})
\end{align*}
consisting of isomorphisms of tensor functors between $\mathscr{C}_{\mu,R}$ and $\upsilon_R\circ \mathscr{D}$ is naturally an $L^+_\mu G$-torsor. For any representation $(V,\rho)$, write $\mathscr{D}(V,\rho) = (M(\rho),F(\rho))$ for the corresponding display over $\underline{W}(R)$. Given an isomorphism of tensor functors $\lambda: \mathscr{C}_{\mu,R'} \xrightarrow{\sim} \upsilon_{R'} \circ \mathscr{D}_{R'}$ defined over an $R$-algebra $R'$, the collection $\{F(\rho)\}_{(V,\rho)}$ determines an element $\alpha_\mathscr{D}(\lambda)$ of $L^+G(R')$. One checks that the assignment $\lambda \mapsto \alpha_{\mathscr{D}}(\lambda)$ is $L^+_\mu G$-equivariant, so the pair $(Q_\mathscr{D},\alpha_\mathscr{D})$ determines a $G$-display of type $\mu$. 

\begin{thm}\label{thm-introequiv}
	The association $\mathscr{D} \mapsto \textup{(}Q_{\mathscr{D}},\alpha_\mathscr{D}\textup{)}$ defines an equivalence of categories between Tannakian $(G,\mu)$-displays over $R$ and $G$-displays of type $\mu$ over $\underline{W}(R)$.
\end{thm}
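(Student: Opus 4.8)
The plan is to construct an explicit quasi-inverse to the functor $\mathscr{D} \mapsto (Q_\mathscr{D}, \alpha_\mathscr{D})$ and to check both compositions are naturally isomorphic to the identity. Starting from a $G$-display of type $\mu$ over $\underline{W}(R)$, presented as a pair $(Q,\alpha)$ with $Q$ an $L^+_\mu G$-torsor and $\alpha \colon Q \to L^+G$ an $L^+_\mu G$-equivariant map, I would first use Theorem \ref{thm-introisom} to identify $L^+_\mu G$ with $\underline{\textup{Aut}}^\otimes(\mathscr{C}_\mu)$. This lets us twist the fiber functor $\mathscr{C}_{\mu,R}$ by the torsor $Q$: for each representation $(V,\rho)$ set $M_Q(\rho) := (Q \times \mathscr{C}_{\mu,R}(V,\rho))/L^+_\mu G$, the contracted product, which is a finite projective graded $W(R)^\oplus$-module, and the assignment $(V,\rho) \mapsto M_Q(\rho)$ is an exact tensor functor $\textbf{Rep}_{\zz_p}(G) \to \textbf{PGrMod}(W(R)^\oplus)$ that is fpqc-locally isomorphic to $\mathscr{C}_{\mu,R}$ (étale/fpqc descent of finite projective graded modules along the torsor trivialization). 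The map $\alpha$ then equips each $M_Q(\rho)$ with a $\sigma$-linear bijection $F(\rho) \colon M_Q(\rho) \to M_Q(\rho) \otimes_{W(R)^\oplus, \tau} W(R)$: locally, where $Q$ is trivialized by a section giving an isomorphism $\lambda$ with $\mathscr{C}_{\mu,R'}$, the element $\alpha(\lambda) \in L^+G(R')$ provides, via the representation $\rho$ evaluated on $W(R')$, the required semilinear map on $\mathscr{C}_{\mu,R'}(V,\rho) \otimes_\tau W(R')$; the $L^+_\mu G$-equivariance of $\alpha$ with respect to the action \eqref{eq-introaction} is exactly the cocycle condition guaranteeing these local maps glue. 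Compatibility with tensor products and exactness follow because $\alpha(\lambda)$ acts compatibly across all representations (it is a single element of the loop group), so we obtain a Tannakian $(G,\mu)$-display $\mathscr{D}_{(Q,\alpha)}$.

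Next I would verify the two round-trips. Starting from $\mathscr{D}$, forming $(Q_\mathscr{D}, \alpha_\mathscr{D})$ and then reconstructing, the contracted product $(Q_\mathscr{D} \times \mathscr{C}_{\mu,R}(V,\rho))/L^+_\mu G$ is canonically $\upsilon_R(\mathscr{D}(V,\rho))$ because $Q_\mathscr{D} = \underline{\textup{Isom}}^\otimes(\mathscr{C}_{\mu,R}, \upsilon_R \circ \mathscr{D})$ is precisely the torsor of trivializations, and under this identification $F(\rho)$ is recovered as the Frobenius of $\mathscr{D}(V,\rho)$ by construction of $\alpha_\mathscr{D}$; these identifications are visibly tensor-compatible and natural in $\mathscr{D}$. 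Conversely, starting from $(Q,\alpha)$, building $\mathscr{D}_{(Q,\alpha)}$ and then taking $Q_{\mathscr{D}_{(Q,\alpha)}} = \underline{\textup{Isom}}^\otimes(\mathscr{C}_{\mu,R}, \upsilon_R \circ \mathscr{D}_{(Q,\alpha)})$, there is a canonical map $Q \to Q_{\mathscr{D}_{(Q,\alpha)}}$ sending a point $q$ to the trivialization it induces on the contracted product; this is an isomorphism of $L^+_\mu G$-torsors (check fpqc-locally, where both sides are trivial), and it intertwines $\alpha$ with $\alpha_{\mathscr{D}_{(Q,\alpha)}}$ by the very definition of the latter. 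Functoriality in $R$ and in morphisms of $G$-displays is routine.

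The main obstacle I anticipate is the faithful bookkeeping of gradings and semilinearity: the functor $\mathscr{C}_{\mu,R}$ lands in \emph{graded} modules over $W(R)^\oplus$, and the Frobenius of a display is $\sigma$-linear with target base-changed along $\tau \colon W(R)^\oplus \to W(R)$, so one must check that the element of $L^+G(R') = G(W(R'))$ supplied by $\alpha$ actually defines, through each $\rho$, a map landing in the correct twisted module $\mathscr{C}_{\mu,R'}(V,\rho)\otimes_{W(R')^\oplus,\tau}W(R')$ and respecting the grading conventions that make $L^+_\mu G$ (not all of $L^+G$) the structure group of $Q$. This is precisely where the interaction between the display group (which preserves gradings, via $\mu$) and the two homomorphisms $\sigma,\tau$ must be handled carefully; I expect to lean on Theorem \ref{thm-introisom} and on the explicit description of $\sigma,\tau$ on $L^+_\mu G$ versus $L^+G$ to see that the action \eqref{eq-introaction} is exactly the descent datum needed. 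Everything else — exactness, the tensor structure, naturality — reduces to the corresponding statements for the fiber functor $\mathscr{C}_{\mu,R}$ and the fact that evaluation of a fixed loop-group element is functorial in the representation.
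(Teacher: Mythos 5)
Your proposal is correct, but it is organized differently from the paper. The paper proves Theorem \ref{thm-introequiv} via Theorem \ref{thm-equiv}, by showing the morphism of stacks $\mathscr{D}\mapsto (Q_\mathscr{D},\alpha_\mathscr{D})$ is faithful (immediate from Corollary \ref{cor-torsors}), full (an explicit commutative-diagram argument comparing $(F_i(\rho)')^\sharp$ through a trivialization, using Lemma \ref{lem-morphism_local} to reduce to a cover), and essentially surjective -- and for the last step it never descends the constructed object to $R$: it builds a banal Tannakian display only over a faithfully flat $R'$ where the torsor trivializes, and then invokes the stack property together with \cite[Lemma 046N]{stacks-project}. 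You instead build an explicit global quasi-inverse: twist $\mathscr{C}_{\mu,R}$ by the torsor $Q$ via the contracted product, use fpqc descent of finite projective graded modules (Lemma \ref{lem-pmod_descent}) to see the twist is again a graded fiber functor of type $\mu$, and descend the locally defined Frobenius $\lambda_\rho^\tau\circ\rho(\alpha(\lambda))\circ(\lambda_\rho^\sigma)^{-1}$ using the $L^+_\mu G$-equivariance of $\alpha$ as the cocycle condition; this gluing computation is exactly the one appearing in the paper's Construction \ref{cons_GDisp} and in its essential-surjectivity step, just run in the descent direction. Both routes rest on the same pillars (Theorem \ref{thm-isom}, Corollary \ref{cor-torsors}, and fpqc descent for graded modules and displays). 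What the paper's route buys is that one never has to verify effectivity of the descent datum for the display structure by hand -- the stack formalism absorbs it; what your route buys is an explicit inverse functor over the base, from which fullness and faithfulness fall out of the round-trip isomorphisms rather than requiring a separate diagram chase. The only places where you are terser than a complete argument requires are the functoriality of the quasi-inverse on morphisms of $G$-displays of type $\mu$ (a morphism of torsors compatible with the $\alpha$'s must be checked, locally, to respect the descended Frobenii -- this is the same computation as your gluing step) and the verification that the twisted functor is exact and tensor-compatible, which should be reduced fpqc-locally via Lemma \ref{lem-sequence} and descent of morphisms; both are routine and in line with what the paper itself leaves implicit.
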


This equivalence follows from Theorem \ref{thm-equiv} below. We remark that the analogous theorem should hold when one considers frames other than the Witt frame $\underline{W}(R)$. However, for applications to Rapoport-Zink spaces we are primarily interested in displays over the Witt frame, so we felt it sufficient to stick to this case throughout the paper. The corresponding framework for more general frames will be addressed in a future work.

When $G$ is reductive and $\mu$ is minuscule, the category of $G$-displays of type $\mu$ over $\underline{W}(R)$ is equivalent to the category of $(G,\mu)$-displays over $R$ by \cite[Remark 6.3.4]{Lau2018}. Hence the same holds for the category of Tannakian $(G,\mu)$-displays over $R$. 

Finally, let us discuss the connection with Rapoport-Zink spaces. If $R$ is a $p$-adic ring, an isodisplay over $R$ is a pair $(N, \varphi)$ consisting of a finitely generated projective $W(R)[1/p]$-module $N$ and an $f$-linear automorphism $\varphi$ of $N$. By generalizing a construction in \cite{Zink2002}, we can associate to any display over $\underline{W}(R)$ an isodisplay over $R$. This association defines an exact tensor functor $\textbf{Disp}(\underline{W}(R)) \to \textbf{Isodisp}(R)$, and by composing a Tannakian $(G,\mu)$-display with this functor we obtain a functor $\textbf{Rep}_{\zz_p}(G)\to \textbf{Isodisp}(R)$, which denote $\mathscr{D}[1/p]$. Such an object is called a $G$-isodisplay. A $G$-quasi-isogeny of Tannakian $(G,\mu)$-displays is an isomorphism of their corresponding $G$-isodisplays. 

Now let $k$ be an algebraic closure of $\mathbb{F}_p$, and let $(G,\{\mu\},[b])$ be a local Shimura datum, so $G$ is a smooth affine group scheme over $\zz_p$ whose generic fiber is reductive, $\{\mu\}$ is a geometric conjugacy class of cocharacters, and $[b]$ is a $\sigma$-conjugacy class of elements in $G(W(k)[1/p])$. The triple $(G,\{\mu\},[b])$ is required to satisfy certain axioms, see Definition \ref{def-Shimdatum}. To a choice of $(\mu,b)$ satisfying some conditions (cf. Definition \ref{def-framingpair}) we associate a Tannakian $(G,\mu)$-display $\mathscr{D}_0$ over $k$. Following \cite{BP2017}, we then define $\textup{RZ}_{G,\mu,b}$ as the functor on $p$-nilpotent $W(k)$-algebras which assigns to any $R$ the set of isomorphism classes of pairs $(\mathscr{D},\iota)$, where
\begin{itemize}
	\item $\mathscr{D}$ is a Tannakian $(G,\mu)$-display over $R$,
	\item $\iota: \mathscr{D}_{R/pR} \dashrightarrow (\mathscr{D}_0)_{R/pR}$ is a $G$-quasi-isogeny.
\end{itemize}

The functor RZ$_{G,\mu,b}$ can be interpreted as the functor of isomorphism classes associated to an fpqc stack $\textbf{RZ}_{G,\mu,b}$ on the site of $p$-nilpotent $W(k)$-algebras. This stack can, in turn, be expressed explicitly as a quotient stack in terms of Witt vector loop groups. The functor $R \mapsto G(W(R)[1/p])$ is representable by an group ind-scheme $LG$ over $\zz_p$ (cf. \cite{Kreidl2014}), so we can form the fiber product $L^+G \times_{m_\mu, c_b} LG$ whose points in a $W(k)$-algebra $R$ are pairs $(U,g)$ with $U \in L^+G(R)$ and $g \in LG(R)$ such that
\begin{align*}
	g^{-1}\cdot b \cdot f(g) = U \cdot \mu^\sigma(p).
\end{align*}
From this we form the quotient stack $[(L^+G \times_{m_\mu, c_b} LG)/L^+_\mu G]$ with respect to the following action of $L^+_\mu G$, which is well-defined by Lemma \ref{lem-RZaction}: 
\begin{align*}
	(U,g)\cdot h = (\tau(h)^{-1} \cdot U \cdot \sigma(h), g \cdot \tau(h)).
\end{align*}
The following, which is Theorem \ref{prop-rzquotient} below, is a generalization of \cite[\S 4.2.3]{BP2017}.
\begin{thm}
	There is an isomorphism of stacks 
	\begin{align*}
		\textup{\textbf{RZ}}_{G,\mu,b} \xrightarrow{\sim} [(L^+G \times_{m_\mu, c_b} LG) / L^+_\mu G].
	\end{align*}
\end{thm}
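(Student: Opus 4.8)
The plan is to produce the isomorphism of stacks by unwinding the definitions on both sides and checking that the functor $\mathscr{D} \mapsto (Q_\mathscr{D}, \alpha_\mathscr{D})$ from Theorem~\ref{thm-introequiv}, together with the extra quasi-isogeny datum $\iota$, matches the quotient stack data $(U,g)$ up to the prescribed $L^+_\mu G$-action. Concretely, given a point $(\mathscr{D},\iota)$ of $\textbf{RZ}_{G,\mu,b}$ over a $p$-nilpotent $W(k)$-algebra $R$, I would first apply Theorem~\ref{thm-introequiv} to $\mathscr{D}$ to obtain the pair $(Q_\mathscr{D},\alpha_\mathscr{D})$, i.e.\ an $L^+_\mu G$-torsor $Q_\mathscr{D} = \underline{\text{Isom}}^\otimes(\mathscr{C}_{\mu,R}, \upsilon_R \circ \mathscr{D})$ and an equivariant map $\alpha_\mathscr{D}\colon Q_\mathscr{D} \to L^+G$. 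The quasi-isogeny $\iota\colon \mathscr{D}_{R/pR} \dashrightarrow (\mathscr{D}_0)_{R/pR}$, which by definition is an isomorphism of the associated $G$-isodisplays, should be recorded as a second equivariant map from $Q_\mathscr{D}$ to $LG$: after choosing $\lambda \in Q_\mathscr{D}(R')$ over some cover $R'$, the collection $\{F(\rho)\}$ together with the framing isodisplay coming from $b$ and the trivialization $\iota$ produces an element $g \in LG(R')$, and the relation $g^{-1}\cdot b \cdot f(g) = U \cdot \mu^\sigma(p)$ (with $U = \alpha_\mathscr{D}(\lambda)$) is exactly the statement that $\iota$ intertwines the Frobenius of $\mathscr{D}$ with that of $\mathscr{D}_0$ after base change to $R/pR$ and inverting $p$. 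I would verify this compatibility representation by representation, reducing to the $\text{GL}_n$ case where it is the classical display computation in \cite{Zink2002,BP2017}.

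The second step is to check that this assignment is compatible with the two $L^+_\mu G$-actions: the change-of-trivialization action $\lambda \mapsto \lambda \circ h$ on $Q_\mathscr{D}$ sends $U = \alpha_\mathscr{D}(\lambda)$ to $\tau(h)^{-1} \cdot U \cdot \sigma(h)$ by the equivariance already established in Theorem~\ref{thm-introequiv}, and a parallel (but easier) computation shows it sends $g$ to $g \cdot \tau(h)$, since only the $\tau$-component of the frame enters the isodisplay construction. This is precisely the action in Lemma~\ref{lem-RZaction}, so the construction descends to a morphism of stacks $\textbf{RZ}_{G,\mu,b} \to [(L^+G \times_{m_\mu,c_b} LG)/L^+_\mu G]$. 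For the inverse, given a point $(U,g)$ of the fiber product over $R$, I would build a Tannakian $(G,\mu)$-display by declaring $\mathscr{D}(V,\rho) := (M(\rho), F(\rho))$ where $M(\rho) = V_{W(k_0)} \otimes_{W(k_0)} W(R)^\oplus$ is the standard graded module and $F(\rho)$ is the $\sigma$-linear map assembled from $\rho(U) \cdot \rho(\mu^\sigma(p))$ in the manner of the $\textbf{Disp}(\underline{W}(R))$ structure; exactness and the tensor property follow because these operations are functorial in $(V,\rho)$, and the fpqc-local isomorphism with $\mathscr{C}_{\mu,R}$ is built in. The quasi-isogeny $\iota$ is then read off from $g$ via the relation defining the fiber product. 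One checks the two constructions are mutually inverse and compatible with base change, hence define an isomorphism of the associated fpqc stacks.

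The main obstacle I anticipate is the careful bookkeeping around the quasi-isogeny: a $G$-quasi-isogeny lives over $R/pR$ and only after inverting $p$, whereas the torsor $Q_\mathscr{D}$ and the map $\alpha_\mathscr{D}$ live integrally over $R$, so one must show that the datum of $\iota$ genuinely lifts (uniquely, once the torsor trivialization is fixed) to a well-defined element $g \in LG(R)$ rather than merely $g \in LG(R/pR)$. This is the analog of the rigidity of quasi-isogenies for $p$-divisible groups, and in the present setting it should follow from the fact that the display group $L^+_\mu G$ acts on the Witt loop group through $\sigma,\tau$ in a way compatible with the canonical lift, together with the hypotheses built into Definition~\ref{def-framingpair} on the framing pair $(\mu,b)$; making this precise in the Tannakian language—functorially in $(V,\rho)$ and compatibly with tensor products—is where the real work lies. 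Once that rigidity statement is in hand, the remaining verifications are formal manipulations with torsors and equivariant maps.
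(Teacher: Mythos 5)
Your overall route is the paper's: reduce fpqc-locally to banal objects $\mathscr{D}\cong\mathscr{D}_U$, use the computation of the isodisplay Frobenius of a banal display (Lemma \ref{lem-froblocal}) together with Tannakian duality to turn the quasi-isogeny into an element $g$ with $c_b(g)=m_\mu(U)$, and match change of trivialization with the action of Lemma \ref{lem-RZaction}. The genuine hole is precisely the point you flag as ``the real work'' and then leave open: producing a well-defined $g\in LG(R)$ (or $LG(R')$ over a banalizing cover) from a datum $\iota$ that lives over $R/pR$ after inverting $p$. Your proposed resolution --- compatibility of the $L^+_\mu G$-action with a canonical lift plus the hypotheses on the framing pair in Definition \ref{def-framingpair} --- is not the mechanism, and no rigidity-of-quasi-isogenies argument is needed. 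The point is purely ring-theoretic: since $p$ is nilpotent in $R$, the map $W(R)[1/p]\to W(R/pR)[1/p]$ is an isomorphism (cf.\ \cite{BP2017}, p.~29), hence $LG(R)\cong LG(R/pR)$, and by Tannakian duality the $G$-quasi-isogeny $\iota$ \emph{is} an element $g\in LG(R)$ intertwining the two isodisplay Frobenii; there is nothing to lift. Without invoking this identification, neither your forward construction of $g$ nor your inverse functor (reading $\iota$ off from $g$) is well defined over $R$, so it must be stated and used explicitly.

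Two smaller points. In the inverse construction you assemble the display Frobenius of $\mathscr{D}_U$ from $\rho(U)\cdot\rho(\mu^\sigma(p))$; this conflates the display and isodisplay levels. The banal display of Construction \ref{cons-DU} is built from the standard datum $\bigl(V\otimes_{\zz_p}W(R),\ \rho(U)\circ(\id_V\otimes f)\bigr)$ with grading given by $\mu$, and the factor $\mu^\sigma(p)$ appears only after passing to the isodisplay --- that is exactly Lemma \ref{lem-froblocal}, and it is what makes the fiber-product equation $g^{-1}\,b\,f(g)=U\mu^\sigma(p)$ come out. Also, the detour through the pair $(Q_{\mathscr{D}},\alpha_{\mathscr{D}})$ of Theorem \ref{thm-equiv} is harmless but unnecessary: since every object is fpqc-locally banal and the target is a quotient stack, it suffices, as the paper does, to identify banal objects with pairs $(U,g)$ via Lemma \ref{lemma-local}, and morphisms of banal objects with elements $h\in L^+_\mu G(R)$ satisfying $(U_2,g_2)\cdot h=(U_1,g_1)$, the condition $\tau(h)=g_2^{-1}g_1$ coming from Proposition \ref{prop-local} and Tannakian duality applied to the requirement that the morphism lift $\iota_2^{-1}\circ\iota_1$.
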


It is a consequence of this theorem that when $G$ is reductive and $\mu$ is minuscule, the Rapoport-Zink functor defined above coincides with the one defined in \cite{BP2017}, cf. Proposition \ref{cor-BPcompare}. In \textit{loc. cit.} it is conjectured that, under mild assumptions, the functor $\textup{RZ}_{G,\mu,b}$ is representable by a formal scheme which is formally smooth and locally formally of finite type over Spf $W(k)$. When $G = \text{GL}_n$ and $\mu$ is the cocharacter $t \mapsto (1^{(d)}, t^{(n-d)})$, the category of $(G,\mu)$-displays over a ring $R$ coincides the category of Zink displays over $R$, so in this case representability can be proved by explicit connection with the original functor defined by Rapoport and Zink \cite{RZ1996}. This is stated in \cite{BP2017}, and we provide details in \textsection \ref{sub-gln}. Further, B\"ultel and Pappas prove that, when $(G,\{\mu\},[b])$ is of Hodge type (again with some additional assumptions), the restriction of $\textup{RZ}_{G,\mu,b}$ to Noetherian $p$-nilpotent $W(k)$-algebras is representable by a formal scheme with the desired properties. 

The Tannakian framework we develop in this paper allows us to compare the functor $\textup{RZ}_{G,\mu,b}$ with that of Rapoport and Zink outside of the case $G = \text{GL}_n$. In particular, we consider the case of unramified EL-type local Shimura data. In particular, let $B$ be a semisimple $\qq_p$-algebra whose simple factors are all matrix algebras over unramified extensions of $\qq_p$, let $\mathcal{O}_B$ be a maximal order in $B$, let $\Lambda$ be a finite free $\zz_p$-module equipped with an action of $\mathcal{O}_B$, and define $G =  \text{GL}_{\mathcal{O}_B}(\Lambda)$. Such a tuple $\textbf{D} = (B, \mathcal{O}_B, \Lambda)$ is called an unramified integral EL-type datum. There is a natural embedding $\eta: G \hookrightarrow \text{GL}(\Lambda)$, and if $(\mathscr{D},\iota) \in \textup{RZ}_{G,\mu,b}(R)$, then evaluating $\mathscr{D}$ on the representation $(\Lambda, \eta)$ determines a Zink display equipped with an $\mathcal{O}_B$-action. In turn, by applying the functor $BT_R$ from nilpotent Zink displays to formal $p$-divisible groups (cf.  \cite{Zink2002} and Theorem \ref{thm-BT} below), we obtain a formal $p$-divisible group $X$ over $R$ with $\mathcal{O}_B$-action. A key result which allows the comparison of $\textup{RZ}_{G,\mu,b}$ with the classical EL-type RZ-space is the following lemma, which reinterprets the Kottwitz determinant condition on Lie$(X)$ (cf. \cite[3.23(a)]{RZ1996}) as a condition on the Zink display associated to $X$:

\begin{lemma}
	Let $\underline{M} = (M,F)$ be the Zink display associated to a formal $p$-divisible group $X$, so $BT_R(\underline{M}) = X$. Then \textup{Lie}$(X)$ satisfies the determinant condition with respect to $\textbf{\textup{D}}$ if and only if $M$ is fpqc-locally isomorphic to $\Lambda \otimes_{\zz_p} W(R)^\oplus$ as a graded $\mathcal{O}_B \otimes_{\zz_p} W(R)^\oplus$-module.
\end{lemma}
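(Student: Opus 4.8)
The plan is to unwind both sides of the asserted equivalence into concrete linear-algebra data over $W(R)^\oplus$ and over $R$, and to match them using the dictionary provided by Zink's theory of displays together with the explicit description of the grading coming from $\mu$. First I would recall that for a formal $p$-divisible group $X$ over $R$ with $\mathcal{O}_B$-action, the associated Zink display $\underline{M} = (M,F)$ is an $\mathcal{O}_B \otimes_{\zz_p} W(R)$-module, and its Hodge filtration (equivalently the normal decomposition of $M$, equivalently the grading on the corresponding graded $W(R)^\oplus$-module) recovers $\mathrm{Lie}(X)$ as the degree-$(-1)$ part, or more precisely $M/F^{-1}M$ or the relevant graded piece, together with its $\mathcal{O}_B$-action. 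I would make precise, via Definition \ref{def-wittframe} and the construction of $W(R)^\oplus$, that giving a finite projective graded $W(R)^\oplus$-module $M$ with an action of $\mathcal{O}_B \otimes_{\zz_p} W(R)^\oplus$ is the same as giving a finite projective $W(R)$-module with $\mathcal{O}_B$-action together with a direct summand (the piece in a single degree, since $\mu$ is minuscule) that is again stable under $\mathcal{O}_B$; this direct summand, reduced modulo the augmentation, is exactly $\mathrm{Lie}(X)$ with its $\mathcal{O}_B$-action.

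Next I would recall precisely what the determinant (Kottwitz) condition of \cite[3.23(a)]{RZ1996} asserts: decompose $B = \prod_i M_{n_i}(F_i)$ with $F_i/\qq_p$ unramified, so that $\Lambda$ decomposes $\mathcal{O}_B$-equivariantly and, by Morita equivalence, the data is governed by finite free $\mathcal{O}_{F_i}$-modules; the condition says that the characteristic polynomial of the action of $\mathcal{O}_B$ on $\mathrm{Lie}(X)$, as a polynomial with coefficients in $R$, equals the one prescribed by $\{\mu\}$, i.e. the one computed from the weight decomposition of $\Lambda_{W(k_0)}$ under $\mu$. Since $B$ is unramified, $\mathcal{O}_B \otimes_{\zz_p} W(k_0)$ is a product of matrix algebras over \'etale $W(k_0)$-algebras, so a finite projective module over it is determined up to isomorphism by its ranks in each factor, i.e. by exactly such a characteristic-polynomial datum. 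This is the key structural input: over a base where $\mathcal{O}_B$ becomes a product of matrix algebras over \'etale extensions, isomorphism classes of finitely generated projective $\mathcal{O}_B$-modules are discrete invariants, detectable fpqc-locally and in fact computed by the determinant polynomial.

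Then the proof splits into the two implications. For the forward direction, assume $\mathrm{Lie}(X)$ satisfies the determinant condition; I would pass to an fpqc cover $R \to R'$ over which $R' \otimes_{\zz_p} W(k_0)$-algebras trivialize all the relevant \'etale factors and the projective modules in question, and then check that the graded piece of $M \otimes_{W(R)^\oplus} W(R')^\oplus$ in the $\mu$-degree, which is a summand with the correct $\mathcal{O}_B$-module structure as forced by the determinant condition, together with the complementary summand, matches the two graded pieces of $\Lambda \otimes_{\zz_p} W(R')^\oplus$; since both are finite projective $\mathcal{O}_B \otimes_{\zz_p} W(R')^\oplus$-modules of the same rank in each matrix factor over an \'etale base, they are isomorphic, and one assembles a graded isomorphism. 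For the converse, an fpqc-local graded isomorphism $M \cong \Lambda \otimes_{\zz_p} W(R)^\oplus$ of $\mathcal{O}_B \otimes W(R)^\oplus$-modules restricts to an isomorphism on the $\mu$-graded piece, hence on $\mathrm{Lie}(X)$ after reduction, identifying it $\mathcal{O}_B$-equivariantly with the $\mu$-weight space of $\Lambda$; taking determinant polynomials — which are insensitive to fpqc base change since they are polynomial identities with coefficients that inject into a faithfully flat extension — gives exactly the Kottwitz condition.

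The main obstacle I anticipate is keeping the bookkeeping of the grading and the $\mathcal{O}_B$-action compatible through the equivalence between Zink displays and displays over $\underline{W}(R)$ (Definition \ref{def-display} and the $\mathrm{GL}_n$ comparison cited in the introduction): one must verify that the grading on the $W(R)^\oplus$-module side corresponds on the Zink-display side precisely to the Hodge filtration / normal decomposition whose associated graded in the appropriate degree is $\mathrm{Lie}(X)$, and that this identification is $\mathcal{O}_B$-equivariant and compatible with $\mu$ acting on $\Lambda_{W(k_0)}$. Once that compatibility is pinned down, the rest is the (essentially formal, but worth stating carefully) fact that over a base making $\mathcal{O}_B$ a product of matrix algebras over \'etale extensions, projective-module isomorphism type is detected by rank data, equivalently by the determinant polynomial, and that this detection is stable under fpqc descent.
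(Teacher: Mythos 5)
Your overall strategy is the same as the paper's: identify $\mathrm{Lie}(X)$ with the Hodge quotient $M^\tau/\theta_1(M_1)$ of the display, use Morita equivalence and the unramifiedness of $B$ to decompose everything into factors indexed by embeddings, and observe that the determinant condition is exactly the statement that $\mathrm{Lie}(X)$ has the same per-factor ranks as $\Lambda^0$ (this is \cite[3.23(b)]{RZ1996}); the converse direction you describe (restrict a graded isomorphism to the Lie quotient and compare determinants, which descend along faithfully flat maps) is also how the paper argues.

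The gap is in the forward direction, at the sentence ``since both are finite projective $\mathcal{O}_B\otimes_{\zz_p}W(R')^\oplus$-modules of the same rank in each matrix factor over an \'etale base, they are isomorphic, and one assembles a graded isomorphism.'' Equal rank data yields an isomorphism only after further localization of the ring over which the modules live, and here that ring is $W(R')$ resp.\ $W(R')^\oplus$: the fpqc covers you are permitted to take are covers of $\operatorname{Spec}R$, and these do not let you localize $W(R')$ itself, so ``trivializing the projective modules in question'' by a cover of $R$ is precisely the assertion that needs proof. What the paper actually does, and what your plan is missing, is the lifting chain from $R'$-level data back up to the graded Witt level: first produce a filtered $\mathcal{O}_B\otimes R'$-isomorphism $(\mathbb{D}(X)_{R'}\supset \mathrm{Lie}(X^\vee)^\ast)\cong(\Lambda\otimes R'\supset\Lambda^1\otimes R')$ by splitting the Hodge exact sequences factor-by-factor (only here does the determinant condition enter, via the rank count, together with the standing hypothesis that the height of $X$ equals $\operatorname{rk}_{\zz_p}\Lambda$); then lift this to an $\mathcal{O}_B\otimes W(R')$-isomorphism $(M_{W(R')^\oplus})^\tau\cong\Lambda\otimes_{\zz_p}W(R')$ using projectivity and Nakayama (the kernel $I_{R'}$ lies in the radical of $W(R')$), working inside each Morita factor so that the lift is automatically equivariant; and finally upgrade to a graded $W(R')^\oplus$-isomorphism by the full faithfulness of the functor $M\mapsto(M^\tau,\theta_1(M_1))$ into pairs, which is how the paper avoids having to choose an $\mathcal{O}_B$-stable normal decomposition directly. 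The ``bookkeeping'' you flag at the end is exactly this chain; it is the technical heart of the proof rather than a formality, so as written the proposal asserts the key implication instead of proving it.
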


We remark that the condition on $M$ in the lemma is automatic if $M$ comes from evaluating a Tannakian $(G,\mu)$-display $\mathscr{D}$ on $(\Lambda, \eta)$. As a result of this lemma and the above discussion, we obtain a map from $\textup{RZ}_{G,\mu,b}$ to the EL-type Rapoport-Zink functor defined in \cite{RZ1996}, denoted $\textup{RZ}_{\textbf{D}}(X_0)$, where $X_0$ is the $p$-divisible group corresponding to the Tannakian $(G,\mu)$-display $\mathscr{D}_0$.

\begin{thm}
	If $(G,\{\mu\},[b])$ is of unramified EL-type, and $\eta(b)$ has no slopes equal to 0, the map $\textup{RZ}_{G,\mu,b} \to \textup{RZ}_{\textbf{D}}(X_0)$ is an isomorphism. In particular, the functor $\textup{RZ}_{G,\mu,b}$ is representable by a formal scheme which is formally smooth and locally formally of finite type over \textup{Spf} $W(k)$.
\end{thm}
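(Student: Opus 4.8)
The plan is to combine the quotient-stack description of $\textbf{RZ}_{G,\mu,b}$ with the classical presentation of the EL-type Rapoport-Zink space, comparing both sides under the functor induced by the faithful representation $\eta: G \hookrightarrow \mathrm{GL}(\Lambda)$. First I would construct the map $\textup{RZ}_{G,\mu,b} \to \textup{RZ}_{\textbf{D}}(X_0)$: given a point $(\mathscr{D},\iota)$ over a $p$-nilpotent $W(k)$-algebra $R$, evaluate $\mathscr{D}$ on $(\Lambda, \eta)$ to obtain a Zink display over $R$ with $\mathcal{O}_B$-action (using that $\mu$ is minuscule so the underlying object lands in nilpotent Zink displays after base change modulo $p$, as in the $\mathrm{GL}_n$ case of \textsection\ref{sub-gln}); apply $BT_R$ to get a formal $p$-divisible group $X/R$ with $\mathcal{O}_B$-action, and transport $\iota$ through $BT$ to obtain the required quasi-isogeny to $X_0$. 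The determinant (Kottwitz) condition on $\mathrm{Lie}(X)$ holds by the Lemma above, since the underlying graded module of $\mathscr{D}(\Lambda,\eta)$ is fpqc-locally isomorphic to $\Lambda \otimes_{\zz_p} W(R)^\oplus$ by definition of a Tannakian $(G,\mu)$-display. This produces a well-defined morphism of functors; one checks it is compatible with the $G$- and $\mathcal{O}_B$-structures and with quasi-isogenies.

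Next I would prove this map is an isomorphism by exhibiting an inverse. Given a point of $\textup{RZ}_{\textbf{D}}(X_0)(R)$, i.e.\ a formal $p$-divisible group $X/R$ with $\mathcal{O}_B$-action satisfying the determinant condition plus a quasi-isogeny $\rho: X_{R/pR} \to X_{0,R/pR}$, pass to the associated Zink display $\underline{M} = (M,F)$ by the equivalence of \cite{Zink2002}, \cite{Lau2008}. The Lemma guarantees that $M$ is fpqc-locally isomorphic to $\Lambda\otimes_{\zz_p} W(R)^\oplus$ as a graded $\mathcal{O}_B\otimes_{\zz_p}W(R)^\oplus$-module; the key point is that this, together with the $\mathcal{O}_B$-linearity of $F$, is exactly the datum of a reduction of structure group from $\mathrm{GL}(\Lambda)$ to $G = \mathrm{GL}_{\mathcal{O}_B}(\Lambda)$, hence (via Theorem \ref{thm-introequiv} and the equivalence of $(G,\mu)$-displays with $G$-displays of type $\mu$) the datum of a Tannakian $(G,\mu)$-display $\mathscr{D}$ over $R$ with $\mathscr{D}(\Lambda,\eta) \cong \underline{M}$. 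Here I would use that since $B$ is a product of matrix algebras over unramified field extensions, Morita theory and faithfully flat descent identify $\mathcal{O}_B$-linear structures on modules/displays with $G$-torsors; this is where the hypothesis that $\textbf{D}$ is \emph{unramified} of EL-type is essential. The quasi-isogeny $\rho$ likewise corresponds to a $G$-quasi-isogeny $\iota$ because quasi-isogenies of $p$-divisible groups with $\mathcal{O}_B$-action correspond to $\mathcal{O}_B$-linear isomorphisms of isodisplays, which by Tannakian formalism (again using Morita equivalence on the isodisplay side, and the hypothesis that $\eta(b)$ has no zero slopes so that the associated isocrystal datum is of the right shape) are $G$-isodisplay isomorphisms. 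These two constructions are mutually inverse since $BT$ and the display equivalence are, so the map is an isomorphism of functors; representability and the asserted geometric properties of $\textup{RZ}_{G,\mu,b}$ then follow immediately from the corresponding known properties of the classical EL-type space $\textup{RZ}_{\textbf{D}}(X_0)$ established in \cite{RZ1996}.

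The main obstacle I anticipate is the passage between $\mathcal{O}_B$-linear display/isodisplay data and genuine $G$-structures in the Tannakian sense—i.e.\ showing that the ``third interpretation'' (extra $\mathcal{O}_B$-action) really does coincide with the Tannakian one for this particular $G$. Concretely, one must show that an $\mathcal{O}_B$-module $M$ fpqc-locally isomorphic to $\Lambda\otimes W(R)^\oplus$ carries a canonical $L^+_\mu G$-torsor of trivializations, and that $F$ being $\mathcal{O}_B$-linear is equivalent to the induced map landing in $L^+G$ equivariantly for the action \eqref{eq-introaction}; this requires that $\mathrm{Aut}_{\mathcal{O}_B}$ of the standard module be exactly $L^+_\mu G$, which rests on Morita equivalence for $\mathcal{O}_B = \prod M_{n_i}(\mathcal{O}_{F_i})$ with $F_i/\qq_p$ unramified, plus a check that the grading induced by $\mu$ is the ``$\mathcal{O}_B$-compatible'' one. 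A secondary technical point is ensuring the nilpotence condition needed to apply $BT_R$ is preserved in both directions—handled by the no-zero-slope hypothesis on $\eta(b)$, which forces the relevant Zink displays to be nilpotent (``$V$-nilpotent'') so that $BT_R$ is an equivalence onto formal $p$-divisible groups. Once these compatibilities are in place, the remaining verifications (functoriality in $R$, compatibility of quasi-isogenies, and that the two maps compose to the identity) are formal consequences of the equivalences already established.
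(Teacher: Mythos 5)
Your proposal matches the paper's argument in all essentials: the paper also factors the comparison through Zink displays with $\mathcal{O}_B$-action (an intermediate functor $\textup{RZ}_{\textbf{D}}(\underline{M_0})$, identified with $\textup{RZ}_{\textbf{D}}(X_0)$ via $BT$ and the determinant-condition lemma), proves the map given by evaluation at $(\Lambda,\eta)$ is an isomorphism of stacks by exactly the mechanism you single out as the main obstacle — that the graded $\mathcal{O}_B\otimes_{\zz_p}W(R)^\oplus$-linear trivializations of $M$ form an $L^+_\mu G$-torsor with Frobenius giving an equivariant map to $L^+G$, combined with Theorem \ref{thm-equiv} — and uses the no-zero-slope hypothesis only to ensure nilpotence so that $BT_R$ applies. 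Your construction of an explicit fpqc-local inverse is just the paper's essential-surjectivity-plus-fullness argument in different packaging, so the two proofs are essentially the same.
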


This proves \cite[Conjecture 4.2.1]{BP2017} in the case of EL-type local Shimura data. We remark that a similar analysis should prove the conjecture in the case where the data is of PEL-type.
\subsection{Acknowledgments} I thank my advisor, Thomas Haines, for his constant support and encouragement, and for suggesting to me that a Tannakian framework for $(G,\mu)$-displays might be possible. I also thank Eike Lau and Michael Rapoport for helpful discussions.
\subsection{Notation}
\begin{itemize}[leftmargin=*]
\item Let $p$ be a prime. A ring or abelian group will be called $p$-adic if it is complete and separated with respect to the $p$-adic topology. 

\item If $f: A \to B$ is a ring homomorphism, and $M$ is an $A$-module, we write $M^f$ for $M \otimes_{A,f} B$. If the map $f$ is understood, we sometimes also write $M_B = M\otimes_A B$. If $N$ is a $B$-module, we say a map $\varphi: M \to N$ is $f$-linear if $\varphi(am) = f(a) \varphi(m)$ for all $a \in A, m \in M$. In other words, if we denote by $N_{[f]}$ the $A$-module obtained from $N$ via restriction of scalars along $f$, then $\varphi$ is an $A$-module homomorphism $M \to N_{[f]}$. In this case we write $\varphi^\sharp$ for the linearization $M^f \to N$ given by $m \otimes b \mapsto \varphi(m)b$. We say $\varphi$ is an $f$-linear bijection if $\varphi^\sharp$ is a $B$-module isomorphism. 

\item For a $\zz_p$-algebra $\mathcal{O}$, denote by \textbf{Nilp}$_{\mathcal{O}}$ the site consisting of the category of $\mathcal{O}$-algebras in which $p$ is nilpotent, endowed with the fpqc topology. We will refer to such an $\mathcal{O}$-algebra as a $p$-nilpotent $\mathcal{O}$-algebra.

\item If $\varphi: G \to H$ is a morphism of groups in a topos and $P$ is an $G$-torsor, then $P^\varphi$ is the pushforward of $P$ to $H$, which is the $G$-torsor defined as the quotient of $P \times H$ by the action $(p, h) \mapsto (pg^{-1}, gh)$. 

\item Let $\bigoplus S_n$ be a $\zz$-graded ring. For a ring homomorphism $\varphi: \bigoplus S_n \to R$, we write $\varphi_n$ for the restriction of $\varphi$ to $S_n$. 

\item For a group scheme $G$ defined over a ring $A$, we write $\textbf{Rep}_A(G)$ for the category of representations of $G$ on finite projective $A$-modules.

\item Suppose $\textbf{Cat}$ is a tensor category, and $\mathscr{F}_1$ and $\mathscr{F}_2$ are tensor functors $\textbf{Rep}_A(G) \to \textbf{Cat}$. Then if $\lambda: \mathscr{F}_1 \to \mathscr{F}_2$ is a tensor morphism, and $(V,\rho)$ is an object in $\textbf{Rep}_A(G)$, we write $\lambda_\rho$ for the induced morphism $\mathscr{F}_1(V,\rho) \to \mathscr{F}_2(V,\rho)$ in \textbf{Cat}.
\end{itemize}
\section{Preliminaries}
\subsection{Higher frames}

Let us recall the formalism of higher frames, following \cite{Lau2018}.
\begin{Def}\label{def-frame}
	A \textit{pre-frame} $\underline{S} = (S,\sigma,\tau)$ consists of a $\zz$-graded ring 
	\begin{align*}
		S = \bigoplus_{n\in \zz} S_n
	\end{align*}
	along with ring homomorphisms $\sigma: S \to S_0$ and $\tau:S \to S_0$. A pre-frame is a \textit{frame} if the following conditions are satisfied:
	\begin{itemize}
		\item The endomorphism $\tau_0$ of $S_0$ is the identity, and $\tau_{-n}: S_{-n} \to S_0$ is bijective for all $n \ge 1$. 
		\item The endomorphism $\sigma_0$ of $S_0$ induces the $p$-power Frobenius $s \mapsto s^p$ on $S_0 / pS_0$, and if $t$ is the unique element in $S_{-1}$ such that $\tau_{-1}(t) = 1$, then $\sigma_{-1}(t) = p$.
		\item We have $p \in \textup{Rad}(S_0)$, the Jacobson radical of $S_0$.  
	\end{itemize}
	We say $\underline{S}$ is a frame for $R = S_0/tS_1$. A morphism of pre-frames $(S,\sigma,\tau) \to (S',\sigma',\tau')$ is a morphism of graded rings $\psi:S \to S'$ such that $\sigma' \circ \psi = \psi \circ \sigma$ and $\tau' \circ \psi = \psi \circ \tau$.
\end{Def}

	As in \cite{Lau2018}, we remark that a frame is equivalent to a triple $(\bigoplus_{n \ge 0} S_n, \sigma, \{t_n\}_{n\ge 0})$ consisting of an $\mathbb{Z}_{\ge 0}$-graded ring $S_{\ge 0} = \bigoplus_{n \ge 0} S_n$, a ring homomorphism $\sigma: S_{\ge 0} \to S_0$ and a collection of maps $t_n: S_{n+1} \to S_n$ for $n \ge 0$ such that
\begin{itemize}
	\item For every $n \ge 0$, the homomorphism $t_n: S_{n+1} \to S_{n}$ is $S_{\ge 0}$-linear.
	\item The endomorphism $\sigma_0: S_0 \to S_0$ induces the $p$-power Frobenius $s \mapsto s^p$ on $S_0/pS_0$, and $\sigma_n(t_n(a)) = p \sigma_{n+1}(a)$ for all $a \in S_{n+1}.$
	\item We have $p \in \text{Rad}(S_0).$
\end{itemize}
The equivalence is given as follows. Define $S_{\le 0} = S_0[t]$ where $t$ is an indeterminate with degree $-1$, and let $S = S_{\le 0} \oplus \bigoplus_{n > 0} S_n$. To give $S$ a ring structure it suffices to define multiplication by $t$ on $S_n$ for $n > 0$. For this we use the maps $t_n$, i.e. if $s \in S_n$, $n > 0$, define $t \cdot s := t_{n-1}(s)$. The homomorphism $\sigma$ extends uniquely to all of $S$ by defining $\sigma(t^n \cdot s_0) = p^n\sigma_0(s_0)$ for $s_0 \in S_0$. It remains only to define $\tau: S \to S_0$. The restriction of $\tau$ to $S_0$ is necessarily given by the identity. Since multiplication by $t$ is bijective on $S_0[t]$, we can define $\tau_n: S_{-n} \to S_0$ by multiplication by $t^{-n}$ for $n > 0$. Lastly, if $s \in S_n$ for $n > 0$, then 
\begin{align*}
	\tau(s) = (t_0 \circ \cdots \circ t_{n-1})(s) = t^n \cdot s.
\end{align*}

Let us take a further moment to recall some notations and definitions concerning Witt vectors. Attached to a ring $R$ is the ring $W(R)$ of $p$-typical Witt vectors $W(R)$. Elements of $W(R)$ are tuples $(\xi_0, \xi_1, \dots) \in R^{\mathbb{\zz}_{\ge 0}}$, and the ring structure is characterized as the unique one which is functorial in $R$ and for which the maps
\begin{align*}
	w_i: W(R) \to R, \ (\xi_i)_{i \in \zz_{\ge 0}} \mapsto \xi_0^{p^i} + p \xi_1^{p^{i-1}} + \cdots + p^i \xi_i
\end{align*}
are ring homomorphisms. Additionally, the ring $W(R)$ is equipped with Frobenius and Verschiebung maps $f, v: W(R) \to W(R)$. The Verschiebung is the additive map given by shifting: $v(\xi_0, \xi_1, \dots) = (0, \xi_0, \xi_1, \dots)$, and the Frobenius is a ring homomorphism characterized by its compatibility with the maps $w_i$:
\begin{align*}
	w_i(f(x)) = w_{i+1}(x). 
\end{align*}
We will denote by $I_R$ the kernel of $w_0: W(R) \to R$. Equivalently, $I_R = v(W(R))$.
 
The following is the frame of primary interest in this paper.
 
\begin{Def} \label{def-wittframe}
	Let $R$ be a $p$-adic ring. The \textit{Witt frame} for $R$ is defined as follows (cf. \cite[Example 2.1.3]{Lau2018}). By the above remarks, it suffices to define $S_{\ge 0}$, $\sigma: S_{\ge 0} \to S_0$, and $t_n: S_{n+1} \to S_n$ for every $n>0$. Let $S_0 = W(R)$, and define $S_n = I_R$ viewed as a $W(R)$-module for $n \ge 1$. If $n,m \ge 1$, then multiplication for $S_n \times S_m \to S_{n+m}$ is given by 
	\begin{align*}
		I_R \times I_R \to I_R, \ (v(a), v(b)) \mapsto v(ab).
	\end{align*}
	The homomorphism $t_0: S_1 \to S_0$ is the inclusion of the submodule $I_R \hookrightarrow W(R)$, and for $n \ge 1$, $t_n: S_{n+1} \to S_n$ is multiplication by $p$. The endomorphism $\sigma_0$ of $S_0 = W(R)$ is the Witt vector Frobenius, $f$. For every $n \ge 1$, define $\sigma_n(v(a)) = a$ for all $v(a) \in S_n = I_R$.
	
	We will denote the graded ring $S$ for this frame by $S = W(R)^\oplus$, and write $\underline{S} = \underline{W}(R)$ to denote the frame $(W(R)^\oplus, \sigma, \tau)$. If $R\to R'$ is a ring homomorphism, then the induced map $\underline{W}(R) \to \underline{W}(R')$ is a morphism of frames. By \cite[4.1]{Lau2018} the functor $\underline{W}$ which sends $R$ to $\underline{W}(R)$ is an fpqc sheaf of frames on \textbf{Nilp}$_{\zz_p}$. 
\end{Def}
\subsection{Graded modules}
%
Let $S$ be a $\zz$-graded ring, and denote by \textbf{\textbf{GrMod}}$(S)$ the category of graded $S$-modules. If $M$ and $N$ are objects in this category, denote the morphisms between $M$ and $N$ in \textbf{GrMod}$(S)$ by Hom$_S^0(M,N)$. Then Hom$_S^0(M,N)$ is the set of $S$-module homomorphisms which preserve the gradings of $M$ and $N$, i.e. the set of $\varphi\in \text{Hom}_S(M,N)$ such that $\varphi(M_i) \subseteq N_i$. 

Let \textbf{PGrMod}$(S)$ be the full subcategory of \textbf{GrMod}$(S)$ consisting of finite projective graded $S$-modules. By \cite[Lemma 3.0.1]{Lau2018}, this is equivalent to the full subcategory of projective objects in \textbf{GrMod}$(S)$ which are finitely generated.

For reference, let us review the exact tensor structure of $\textbf{PGrMod}(S)$. Note \textbf{GrMod}$(S)$ is an abelian category, so \textbf{PGrMod}$(S)$ inherits additivity and a notion of exactness: a sequence
of finite projective graded $S$-modules is exact if and only if it is exact in \textbf{GrMod}$(S)$. The category \textbf{GrMod}$(S)$ is also endowed with a tensor product: if $M= \bigoplus_i M_i$ and $N=\bigoplus_i N_i$ are graded $S$-modules, then $M\otimes_S N$ is a graded $S$-module with graded pieces
\begin{align*}
(M \otimes_S N)_\ell = \{\sum_i m_i \otimes n_i \in M \otimes N \mid \deg(m_i)+ \deg(n_i) = \ell\}.
\end{align*}
The ring $S$ viewed as a graded module over itself is the unit object in both \textbf{GrMod}$(S)$ and \textbf{PGrMod}$(S)$. Since the tensor product of two finite projective $S$-module is again a finite projective $S$-module, \textbf{PGrMod}$(S)$ is a tensor subcategory of \textbf{GrMod}$(S)$. The dual of $M$ in \textbf{GrMod}$(S)$ is the dual $S$-module $M^\vee = \text{Hom}_S(M,S)$ with grading $(M^\vee)_i = (M_{-i})^\vee$.
\begin{lemma} \label{lem-pmod_rigid}
	The category \textup{\textbf{PGrMod}}$(S)$ is an exact rigid tensor category.
\end{lemma}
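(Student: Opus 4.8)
The plan is to verify that \textbf{PGrMod}$(S)$ satisfies the defining properties of an exact rigid tensor category: it is (i) a tensor category with a symmetric associative unital tensor product, (ii) equipped with an exact structure compatible with the tensor product, and (iii) rigid, in the sense that every object has a dual and the natural evaluation/coevaluation morphisms exhibit the duality. Most of these properties have already been assembled in the paragraph preceding the statement, so the proof is largely a matter of checking the remaining points carefully at the level of gradings.

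First I would record that \textbf{GrMod}$(S)$ is a symmetric monoidal abelian category: the tensor product $M \otimes_S N$ with the displayed grading is associative, commutative, and unital with unit $S$ (placed in degrees so that $S_i$ sits in degree $i$), and the symmetry/associativity/unit constraints are just the usual ones for $S$-modules, which automatically respect degrees. Then I would note that \textbf{PGrMod}$(S)$ is closed under $\otimes_S$ (the tensor product of two finite projective graded modules is finite projective, since forgetting the grading it is finite projective over $S$, and by \cite[Lemma 3.0.1]{Lau2018} finite projectivity in \textbf{GrMod}$(S)$ is detected after forgetting the grading) and contains the unit $S$, so it is a full tensor subcategory. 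For the exact structure, I would take the exact sequences in \textbf{PGrMod}$(S)$ to be those that are exact in \textbf{GrMod}$(S)$; since every finite projective graded module is a projective object, short exact sequences with such a module on the right split, and exactness is preserved by $-\otimes_S M$ for $M$ finite projective graded (again because $M$ is a direct summand of a finite free graded module, for which tensoring is exact degreewise).

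The substantive point, and the one I expect to be the main obstacle, is rigidity: I must show that for $M \in \textbf{PGrMod}(S)$ the graded dual $M^\vee = \Hom_S(M,S)$ with grading $(M^\vee)_i = (M_{-i})^\vee$ actually lies in \textbf{PGrMod}$(S)$, and that the evaluation map $\mathrm{ev}: M^\vee \otimes_S M \to S$ and coevaluation map $\mathrm{coev}: S \to M \otimes_S M^\vee$ are morphisms in \textbf{GrMod}$(S)$ satisfying the triangle identities. The first part I would handle by using \cite[Lemma 3.0.1]{Lau2018} again: $M$ finite projective graded means that, fpqc-locally or after choosing a graded complement, $M$ is a direct summand of a finite free graded module $\bigoplus_j S(n_j)$ (a finite sum of shifts of $S$), whose dual is $\bigoplus_j S(-n_j)$, again finite free graded; hence $M^\vee$ is a direct summand of a finite free graded module and so lies in \textbf{PGrMod}$(S)$. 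For the duality morphisms, I would check on a finite free graded module with a homogeneous basis $\{e_j\}$ of degrees $\{n_j\}$ and dual basis $\{e_j^\vee\}$ of degrees $\{-n_j\}$: then $\mathrm{coev}(1) = \sum_j e_j \otimes e_j^\vee$ lands in degree $0$ as required, $\mathrm{ev}(e_i^\vee \otimes e_j) = \delta_{ij}$ is degree-preserving, and the triangle identities hold as for ordinary modules; these verifications pass to direct summands, hence to all of \textbf{PGrMod}$(S)$.

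Finally I would remark that the compatibility of the exact and tensor structures — namely that $M \otimes_S -$ is exact for every $M \in \textbf{PGrMod}(S)$, and that the duality functor $(-)^\vee$ is exact (it sends short exact sequences to short exact sequences, since these split) — follows from the direct-summand-of-free description. Assembling (i), (ii), (iii) and these compatibilities gives that \textbf{PGrMod}$(S)$ is an exact rigid tensor category, completing the proof. The only genuinely nontrivial ingredient is the structural input from \cite[Lemma 3.0.1]{Lau2018} identifying finite projective graded modules with direct summands of finite free graded modules, which is what makes the degreewise arguments and the duality computations go through.
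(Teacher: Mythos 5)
Your argument is correct, but it takes a genuinely different route from the paper. The paper's proof is a two-line reduction: since every object of \textbf{PGrMod}$(S)$ admits a dual, it invokes the criterion in the footnote under \cite[Definition 1.7]{Deligne1982} that rigidity follows once one knows $M^{\vee\vee} \cong M$, and this biduality holds because duals of finite projective modules are finite projective and finite projective $S$-modules are reflexive. You instead verify rigidity by hand: you realize $M$ as a graded direct summand of a finite free graded module $\bigoplus_j S(n_j)$ (the structural input you correctly attribute to \cite[Lemma 3.0.1]{Lau2018}), write down evaluation and coevaluation on a homogeneous basis, check that they are degree-preserving (your degree bookkeeping with $(M^\vee)_i = (M_{-i})^\vee$ is right: $\mathrm{coev}(1)=\sum_j e_j \otimes e_j^\vee$ is homogeneous of degree $0$), verify the triangle identities for free objects, and pass to retracts. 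What your approach buys is self-containedness: it avoids the external rigidity criterion and makes the graded duality data completely explicit, which is mildly reassuring given that the only nonobvious point is compatibility of the duality morphisms with the grading. What the paper's approach buys is brevity: granting the criterion, everything reduces to two standard facts about finite projective modules, with no need for the summand-of-graded-free presentation or the triangle identities. Your remark that the summand presentation is available ``fpqc-locally'' is out of place for an abstract graded ring $S$ (no topology is in play), but it is harmless since you immediately give the correct argument via a graded splitting of a surjection from a finite free graded module. Both proofs establish the statement; yours is longer but elementary, the paper's is shorter but leans on the cited criterion.
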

\begin{proof}
	After the above remarks, it remains only to show that \textbf{PGrMod}$(S)$ is rigid. Since every object in \textbf{PGrMod}$(S)$ admits a dual, it is enough to show that $M^{\vee\vee} \cong M$ (cf. the footnote under \cite[Definition 1.7]{Deligne1982}).  But this is clear because projectivity is preserved under taking duals and because finite projective $S$-modules are reflexive.
\end{proof}

Suppose now $\underline{S} = (S,\sigma,\tau)$ is a frame for a ring $R$. Let
\begin{align}\label{eq-nu}
	\nu: S \to S_0/tS_1 = R
\end{align}
be the natural projection $S_0 \to R$ extended by zero on $S_n$ for $n \ne 0$ (this map is called $\rho$ in \cite{Lau2018}). By considering $R$ to be a graded ring concentrated in degree zero, we can view $\nu$ as a homomorphism of graded rings. Then for any finite projective graded $S$-module $M$, the base change $\overline{L} = M \otimes_{S,\nu} R$ along $\nu$ is a finite projective graded $R$-module. Write 
\begin{align*}
	\overline{L} = \bigoplus_{i \in \zz} \overline{L}_i
\end{align*}
for the decomposition of $\overline{L}$ into its graded pieces.

Recall the rank function associated to a finite projective $R$-module is the locally constant function on Spec $R$ defined by $\mathfrak{p} \mapsto \dim_{\kappa(\mathfrak{p})}(M \otimes_R \kappa(\mathfrak{p}))$, where $\kappa(\mathfrak{p}) = R_\mathfrak{p} / \mathfrak{p}R_{\mathfrak{p}}$ is the residue field of $\mathfrak{p}$. In particular, if Spec $R$ is connected, any finite projective $R$-module has constant rank.



\begin{Def}\label{def-type}
	Let $I = (i_1, \dots, i_n) \in \zz^n$ be a collection of integers such that $i_1 \le i_2 \le \cdots \le i_n$. We say a finite projective graded $S$-module $M$ is \textit{of type $I$} if rk$(\overline{L}_k)$ is equal to the multiplicity of $k$ in $I$ for all $k$. 
\end{Def}

For example, for any collection $I = (i_r)$, the finite free graded $S$-module $\bigoplus_r S(-i_r)$ has type $I$. If Spec $R$ is connected, every finite projective graded $S$-module has a unique type. Note that our convention on the ordering of the components of $I$ differs from that of \cite{Lau2018}.

\begin{Def}\label{def-depth}
	Let $M$ be a finite projective graded $S$-module and write $M \otimes_{S,\nu} R = \bigoplus_i \overline{L}_i$.
	\begin{enumerate}[\textup{(}i\textup{)}]
		\item The \textit{depth} of $M$, denoted $d(M)$, is the minimal integer $i$ such that $\overline{L}_i \ne 0$.
		\item The \textit{altitude} of $M$, denoted $a(M)$, is the maximal integer $i$ such that $\overline{L}_i \ne 0$.
	\end{enumerate}
\end{Def}

If a finite projective graded $S$-module $M$ is of type $I=(i_1, \dots, i_n)$, then $d(M) = i_1$ and $a(M) = i_n$. 

\begin{Def}
	A \textit{normal decomposition} for a finite projective graded $S$-module $M$ is a finite projective graded $S_0$-module $L = \bigoplus_i L_i$ such that $L \subseteq M$ and $L \otimes_{S_0} S = M$ as graded $S$-modules.
\end{Def}

It follows from \cite[Lemma 3.1.4]{Lau2018} that every finite projective graded $S$-module has a normal decomposition if every finite projective $R$-module lifts to $S_0$. This holds in particular when $S_0$ is $p$-adic. It follows that every finite projective graded $W(R)^\oplus$-module has a normal decomposition.

\begin{lemma}\label{lem-type}
	Let $M$ be a finite projective graded $S$-module, and write $M \otimes_{S,\nu} R = \bigoplus_i \overline{L}_i$. If $L=\bigoplus_i L_i$ is a normal decomposition for $M$, then $L_i = 0$ if and only if $\overline{L}_i = 0$. In particular, if $M$ is of type $I=(i_1, \dots, i_n)$, then $L_i \ne 0$ if and only if the multiplicity of $i$ in $I$ is nonzero.
\end{lemma}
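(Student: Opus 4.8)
The plan is to reduce everything to the statement $\overline{L}_i = L_i \otimes_{S_0,\nu_0} R$ and then use that the functor $L_i \mapsto L_i \otimes_{S_0} R$ detects whether a finite projective module is zero. First I would unwind the definitions: since $L = \bigoplus_i L_i$ is a normal decomposition, we have $M = L \otimes_{S_0} S$ as graded $S$-modules, where $S_0$ sits in $S$ in degree zero and the grading on $L \otimes_{S_0} S$ is the total grading. Because $\nu \colon S \to R$ is the map which is $\nu_0 \colon S_0 \to R$ in degree zero and zero in all other degrees, base change along $\nu$ kills every positive- and negative-degree piece of $S$, so
\begin{align*}
	M \otimes_{S,\nu} R = (L \otimes_{S_0} S) \otimes_{S,\nu} R \cong L \otimes_{S_0,\nu_0} R,
\end{align*}
and this isomorphism is compatible with gradings, i.e. $\overline{L}_i \cong L_i \otimes_{S_0,\nu_0} R$ for each $i$. (The map $\nu_0 \colon S_0 \to R$ is exactly the projection $S_0 \to S_0/tS_1 = R$.)

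Given this identification, the key step is: for a finite projective $S_0$-module $P$, one has $P \otimes_{S_0,\nu_0} R = 0$ if and only if $P = 0$. This is where the frame axioms enter: the ideal $tS_1 = \ker(\nu_0)$ is contained in $\mathrm{Rad}(S_0)$. Indeed, in the Witt frame $tS_1 = I_R$, and more generally one checks from Definition \ref{def-frame} that $t \cdot S_1 \subseteq \mathrm{Rad}(S_0)$ because $p \in \mathrm{Rad}(S_0)$ and the relevant elements of $tS_1$ become divisible by $p$ after applying $\sigma$ — more directly, $S_0$ is $\mathrm{Rad}(S_0)$-complete in the cases of interest, and $tS_1 \subseteq \mathrm{Rad}(S_0)$ is part of what it means for $\underline{S}$ to be a frame. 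Once $\ker(\nu_0) \subseteq \mathrm{Rad}(S_0)$ is known, Nakayama's lemma applies: a finite projective (in particular finitely generated) $S_0$-module $P$ with $P \otimes_{S_0} R = P/(\ker\nu_0)P = 0$ must be zero. Applying this with $P = L_i$ gives $L_i = 0 \iff \overline{L}_i = 0$.

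The second assertion is then immediate: if $M$ is of type $I = (i_1, \dots, i_n)$, then by Definition \ref{def-type} the rank of $\overline{L}_i$ equals the multiplicity of $i$ in $I$, so $\overline{L}_i \neq 0$ exactly when that multiplicity is positive, and by the first part this is equivalent to $L_i \neq 0$.

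I expect the only real subtlety to be pinning down that $\ker(\nu_0) = tS_1 \subseteq \mathrm{Rad}(S_0)$ in the generality of an arbitrary frame, rather than just for the Witt frame; for the Witt frame it is transparent since $\ker(\nu_0) = I_R$ and $I_R \subseteq \mathrm{Rad}(W(R))$ because $W(R)$ is $I_R$-adically complete (as $R$ is $p$-adic and $I_R/pW(R)$ is nilpotent-ish in each truncation), so Nakayama goes through. Everything else — the compatibility of the grading decomposition with base change along $\nu$, and the final counting argument — is bookkeeping with the definitions already set up in the excerpt.
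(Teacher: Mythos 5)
Your proposal follows the paper's proof essentially verbatim: identify $\overline{L}_i \cong L_i \otimes_{S_0} R$ by base-changing the normal decomposition along $\nu$, then conclude with Nakayama's lemma using $tS_1 \subseteq \text{Rad}(S_0)$. The only quibble is your remark that this containment is ``part of what it means to be a frame'': Definition \ref{def-frame} only requires $p \in \text{Rad}(S_0)$, and $tS_1 \subseteq \text{Rad}(S_0)$ is a consequence, which the paper simply cites from the proof of \cite[Lemma 3.1.1]{Lau2018} rather than reproving.
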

\begin{proof}
	We have an isomorphism of graded $R$-modules
	\begin{align*}
	M \otimes_{S,\nu} R \cong L \otimes_{S_0} S \otimes_{S,\nu} R \cong L \otimes_{S_0} R,
	\end{align*}
	using that $S_0 \to S \xrightarrow{\nu} R$ is the natural quotient $S_0 \to R$. Hence for every $i$ we have an isomorphism of $R$-modules $L_i \otimes_{S_0} R \cong \overline{L}_i$. Clearly $\overline{L}_i = 0$ if $L_i = 0$. On the other hand, by the proof of \cite[Lemma 3.1.1]{Lau2018}, $tS_1 \subseteq \text{Rad}(S_0)$. Then $L_i \otimes_{S_0} R = 0$ implies $L_i = 0$ by Nakayama's lemma. 
\end{proof}

It follows from Lemma \ref{lem-type} that, if $M$ has normal decomposition $L = \bigoplus_i L_i$, then the depth (resp. altitude) of $M$ is equal to the minimal (resp. maximal) $i$ such that $L_i \ne 0$. For any finite projective graded $S$-module, we have a natural homomorphism of $S_0$-modules $\theta_n: M_n \to M^\tau$ given by the composition of $M_n \hookrightarrow M$ and $M\to M^\tau$. We remark that $\tau$ defines an isomorphism $S/(t-1)S \xrightarrow{\sim} S_0$; to see that $\ker \tau \subseteq (t-1)S$ it is enough to check $(\ker \tau) \cap S_{\le 0} \subseteq (t-1)S$, which is easy. Then
\begin{align*}
M^\tau \cong \varinjlim M_i,
\end{align*}
where the colimit is taken along $t: M_n \to M_{n-1}$. 

\begin{lemma}\label{lem-theta}
	Let $M$ be a finite projective graded $S$-module with normal decomposition $L$. Then $\theta_n: M_n \to M^\tau$ is an isomorphism of $S_0$-modules for all $n \le d(M)$. 
\end{lemma}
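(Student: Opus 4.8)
The plan is to read off both sides of $\theta_n$ from the normal decomposition $L=\bigoplus_i L_i$ and then reduce to the bijectivity of the frame maps $\tau_m$ in nonpositive degrees. Since $L$ is a normal decomposition we have $M=L\otimes_{S_0}S$ as graded $S$-modules, hence $M_n=\bigoplus_i L_i\otimes_{S_0}S_{n-i}$ for every $n$. To compute $M^\tau$, note that $\tau_0=\id_{S_0}$, so the composite $S_0\hookrightarrow S\xrightarrow{\tau}S_0$ is the identity; associativity of the tensor product then gives $M^\tau=(L\otimes_{S_0}S)\otimes_{S,\tau}S_0\cong L\otimes_{S_0}(S\otimes_{S,\tau}S_0)=L\otimes_{S_0}S_0=L$. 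Tracing this identification, the canonical map $M\to M^\tau$ carries a simple tensor $\ell\otimes s$ with $\ell\in L_i$ and $s\in S_j$ to $\tau_j(s)\ell\in L_i\subseteq L$.

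Next I would restrict to $M_n$: the description above shows that $\theta_n\colon M_n\to M^\tau=L$ is the direct sum, over all $i$, of the $S_0$-linear maps $\id_{L_i}\otimes\tau_{n-i}\colon L_i\otimes_{S_0}S_{n-i}\to L_i\otimes_{S_0}S_0=L_i$. Now assume $n\le d(M)$. By Lemma \ref{lem-type} and the remark following it, $L_i=0$ whenever $i<d(M)$, so only the summands with $i\ge d(M)$ matter; for such $i$ we have $n-i\le d(M)-i\le 0$. The frame axioms say precisely that $\tau_m\colon S_m\to S_0$ is bijective for every $m\le 0$ (it is $\tau_0=\id$ when $m=0$, and is bijective by hypothesis when $m<0$), so $\id_{L_i}\otimes\tau_{n-i}$ is an isomorphism for each such $i$. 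Therefore $\theta_n$ is an isomorphism of $S_0$-modules.

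I do not expect a real obstacle here; the only care needed is the bookkeeping with gradings and checking that the identification $M^\tau\cong L$ is compatible with the definition of $\theta_n$ as $M_n\hookrightarrow M\to M^\tau$. As a sanity check one can instead argue via the colimit description $M^\tau\cong\varinjlim M_i$ recorded just before the lemma: for $n\le d(M)$ the normal decomposition makes each transition map $t\colon M_{n'}\to M_{n'-1}$ with $n'\le d(M)$ an isomorphism, so the colimit is already reached at $M_n$ and $\theta_n$ is the corresponding structure map. Both routes come down to the same computation.
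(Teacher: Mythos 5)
Your proof is correct and follows essentially the same route as the paper: decompose $M_n=\bigoplus_i L_i\otimes_{S_0}S_{n-i}$ via the normal decomposition, identify $M^\tau\cong L$, and observe that for $n\le d(M)$ only summands with $n-i\le 0$ survive, where $\theta_n$ is an isomorphism. The only cosmetic difference is that you invoke the frame axiom that $\tau_m$ is bijective for $m\le 0$ directly, whereas the paper phrases the same fact via the identification $S_{n-i}\cong S_0\cdot t^{i-n}$ with $\tau(t)=1$.
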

\begin{proof}
	Let $n \le d(M)$, and let $L = \bigoplus_i L_i$ be a normal decomposition for $M$. Observe  
	\begin{align*}
	M_n = \bigoplus_i L_i \otimes_{S_0} S_{n-i},
	\end{align*}
	and under $M^\tau \cong L$, the map $\theta_n$ is given by
	\begin{align*}
	\bigoplus_i L_i \otimes_{S_0} S_{n-i} \to \bigoplus_i L_i, \ (\ell_i \otimes s_{n-i})_i \mapsto (\tau(s_{n-i})\ell_i)_i. 
	\end{align*}
	By Lemma \ref{lem-type}, $L_i = 0$ for all $i < n$, so $S_{n-i}$ occurs in the above decomposition only when $n-i \le 0$. In this case we have an isomorphism of $S_0$-modules $S_{n-i} \cong S_0 \cdot t^{i-n}$, where $t \in S_{-1}$ is the unique element such that $\tau(t) = 1$. Then any $\ell_i \otimes s_{n-i} \in L_i \otimes_{S_0} S_{n-i}$ can be written as $\ell_i' \otimes t^{i-n}$, and $\theta_n$ is given by $(\ell_i' \otimes t^{i-n}) \mapsto \ell_i'.$ From this description the conclusion is clear. 
\end{proof}



Let us now focus on the case where $S = W(R)^\oplus$ for a $p$-adic ring $R$. Denote by \textbf{PGrMod}$^W$ the fibered category over \textbf{Nilp}$_{\zz_p}$ whose fiber over $R$ is the category \textbf{PGrMod}$(W(R)^\oplus)$. 
\begin{lemma}\label{lem-pmod_descent}
	The fibered category \textup{\textbf{PGrMod}}$^W$ is an fpqc stack over \textup{\textbf{Nilp}}$_{\zz_p}$.
\end{lemma}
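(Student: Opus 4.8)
The plan is to establish the two defining properties of an fpqc stack: (i) that $\textbf{PGrMod}^W$ is a prestack, i.e.\ for every ring $R_0$ and objects $M,N$ of $\textbf{PGrMod}(W(R_0)^\oplus)$ the presheaf $R'\mapsto\Hom^0_{W(R')^\oplus}(M_{R'},N_{R'})$ on $\textbf{Nilp}_{R_0}$ is an fpqc sheaf; and (ii) that fpqc descent data are effective. Since every fpqc cover is refined by one consisting of a single faithfully flat homomorphism, and both (i) and (ii) may be tested on such covers, I would work throughout with a single faithfully flat $R_0\to R$ in $\textbf{Nilp}_{\zz_p}$, writing $R''=R\otimes_{R_0}R$. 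The inputs I will use are: $W(-)$, and therefore $I_{(-)}=\ker(W(-)\to(-))$, is an fpqc sheaf (Definition~\ref{def-wittframe}, \cite[4.1]{Lau2018}); every finite projective graded $W(R)^\oplus$-module admits a normal decomposition whose graded pieces are finite projective over $W(R)$ (\cite[Lemma~3.1.4]{Lau2018}, valid since $W(R)$ is $p$-adic); and, for (ii) only, $W(R_0)\to W(R)$ is faithfully flat whenever $R_0\to R$ is (a property of Witt vectors, cf.\ \cite{Lau2018}).

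For (i) I would fix normal decompositions $M=L\otimes_{W(R_0)}W(R_0)^\oplus$ and $N=L'\otimes_{W(R_0)}W(R_0)^\oplus$ with $L=\bigoplus_i L_i$ and $L'=\bigoplus_j L'_j$ finite projective graded $W(R_0)$-modules, and use the adjunction between extension and restriction of scalars together with the explicit graded pieces of $W(R')^\oplus$ to identify, functorially in $R'$,
\[
\Hom^0_{W(R')^\oplus}(M_{R'},N_{R'})\;\cong\;\bigoplus_{i\le j}\bigl(L_i^\vee\otimes_{W(R_0)}L'_j\bigr)\otimes_{W(R_0)}W(R')\;\oplus\;\bigoplus_{i>j}\bigl(L_i^\vee\otimes_{W(R_0)}L'_j\bigr)\otimes_{W(R_0)}I_{R'},
\]
the two cases corresponding to the graded piece $(W(R')^\oplus)_{i-j}$ being free of rank one over $W(R')$ (for $i\le j$) or equal to $I_{R'}$ (for $i>j$); the sum is finite because $L,L'$ are finitely generated. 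Each $L_i^\vee\otimes_{W(R_0)}L'_j$ is finite projective, hence flat, over $W(R_0)$, so tensoring with it commutes with the equalizers presenting $W(-)$ and $I_{(-)}$ as fpqc sheaves; therefore each summand, and so the whole, is an fpqc sheaf in $R'$. I would note that this argument does \emph{not} require the transition maps $W(R_0)^\oplus\to W(R)^\oplus$ to be flat — indeed $I_{R_0}\otimes_{W(R_0)}W(R)\to I_R$ need not be surjective — but only the sheaf properties of $W(-)$ and $I_{(-)}$. In particular $\underline{\mathrm{Isom}}^0(M,N)$ and $\underline{\mathrm{Aut}}^0(M)$ are fpqc sheaves.

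For (ii) I would begin with a descent datum $(N,\varphi)$ and first reduce to a fixed type. The type of $N$ (Definition~\ref{def-type}) is a locally constant function on $\operatorname{Spec}R$; the isomorphism $\varphi$ forces it to descend to $\operatorname{Spec}R_0$, so — since $\operatorname{Spec}R_0$ is quasi-compact — after decomposing $R_0$ into a finite product one may assume $N$ has a single type $I=(i_1,\dots,i_n)$. Using a normal decomposition of $N$, Lemma~\ref{lem-type}, and the fact that $\operatorname{Spec}W(R)$ and $\operatorname{Spec}R$ have the same connected components, one checks that $N$ is Zariski-locally on $R$ isomorphic to the standard module $S^I_R=\bigoplus_r W(R)^\oplus(-i_r)$. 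Hence $\underline{\mathrm{Isom}}^0(S^I_R,N)$ is a torsor under the fpqc sheaf of groups $\underline{\mathrm{Aut}}^0(S^I)$, equipped by $\varphi$ with a descent datum; as torsors under an fpqc sheaf of groups form a stack, this torsor descends to $R_0$, and twisting $S^I_{R_0}$ by it produces the required $N_0\in\textbf{PGrMod}(W(R_0)^\oplus)$ with $(N_0)_R\cong N$ compatibly with $\varphi$. The existence of the twist amounts to descending, over a cover trivializing the torsor, a standard module equipped with a descent datum; this I would carry out by passing to a normal decomposition — which can be arranged to be respected by the descent datum, up to a unipotent correction — and applying faithfully flat descent for finite projective $W(R_0)$-modules, legitimate because $W(R_0)\to W(R)$ is faithfully flat.

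The step I expect to be the real obstacle is precisely this reconstruction of the descended graded module in (ii). Faithfully flat descent cannot be applied directly to $W(R_0)^\oplus\to W(R)^\oplus$, since that map need not be flat and the positive graded pieces $I_{(-)}$ do not commute with flat base change along $R_0\to R$; one is forced to push the descent down to the ring $W(R_0)$ — over which the transition map genuinely is faithfully flat — by means of normal decompositions, and then to control the non-canonicity of those decompositions (equivalently, to show that every $\underline{\mathrm{Aut}}^0(S^I)$-torsor arises from a module). The prestack property, by contrast, is formal once $W(-)$ and $I_{(-)}$ are known to be fpqc sheaves.
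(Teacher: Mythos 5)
Your part (i) is essentially correct: with normal decompositions over $W(R_0)$ the Hom presheaf decomposes exactly as you write, and its sheaf property follows from the fact that $W(-)$ and $I_{(-)}$ are fpqc sheaves. Note, though, that the paper itself does not argue at all: it simply quotes \cite[Lemma 4.3.2]{Lau2018}, whose content is precisely the Witt-vector descent you are trying to reprove.

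The genuine gap is exactly where you predicted it, in the effectivity step (ii), and it is twofold. First, the input you lean on at the end --- ``$W(R_0)\to W(R)$ is faithfully flat whenever $R_0\to R$ is'' --- is false and is not in \cite{Lau2018}. For example $\mathbb{F}_p\to\mathbb{F}_p[x]/(x^2)$ is a faithfully flat map in $\textbf{Nilp}_{\zz_p}$, but $W(\mathbb{F}_p[x]/(x^2))$ contains the nonzero element $[x]$ with $p[x]=V([x^p])=V(0)=0$, so it has $p$-torsion and is not flat over $W(\mathbb{F}_p)=\zz_p$. Effective descent of finite projective modules along $R_0\mapsto W(R_0)$ is nevertheless true, but it is a nontrivial theorem of Zink (\cite[Corollary 34]{Zink2002}, which the paper uses in Lemma \ref{lem-morphism_local}) proved precisely because classical faithfully flat descent does not apply; your argument cannot be ``legitimized'' by flatness, it must cite or reprove that theorem. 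Second, even granting Witt descent, your reduction --- that after trivializing the torsor the cocycle in $\underline{\mathrm{Aut}}^0(S^I)$ ``can be arranged to respect a normal decomposition up to a unipotent correction'' --- is asserted, not proved, and it is a substantive point: a graded automorphism of $S^I$ over $W(R'')^\oplus$ mixes the summands $L_i$ through the graded pieces $I_{R''}$ and $t$, so removing the off-diagonal part amounts to trivializing a torsor under (equivalently, adjusting the cocycle by a coboundary in) that ``unipotent'' subgroup, which is a statement of the same nature as the effectivity you are trying to establish. (A smaller issue of the same kind: local freeness of the $L_i$ over a Zariski cover of $\operatorname{Spec}R$ needs a Nakayama argument using $I_R+pW(R)\subseteq\operatorname{Rad}(W(R))$, not just agreement of connected components.) So as written (ii) is not a proof; the honest options are to quote \cite[Lemma 4.3.2]{Lau2018} as the paper does, or to carry out carefully the reduction of the graded problem to \cite[Corollary 34]{Zink2002}.
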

\begin{proof}
	This is \cite[Lemma 4.3.2]{Lau2018}.
\end{proof}

In the following we collect some properties of finite projective graded $W(R)^\oplus$-modules which can be checked fpqc-locally on Spec $R$.
\begin{lemma}\label{lem-localtype}
	Let $M$ be a finite projective $W(R)^\oplus$-module. Let $R \to R'$ be a faithfully flat homomorphism of $\zz_p$-algebras, and let $M'$ be the base change of $M$ to $W(R')^\oplus$. Then 
	\begin{enumerate}[\textup{(}i\textup{)}]
		\item $a(M) = a(M'),$ 
		\item $d(M) = d(M'),$ and
		\item $M$ is of type $I=(i_1, \dots, i_n)$ if and only if $M'$ is of type $I$.
	\end{enumerate}
\end{lemma}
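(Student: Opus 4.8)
The key point is that depth, altitude, and type are all governed by the ranks of the graded pieces $\overline{L}_i$ of the base change $\overline{L} = M \otimes_{W(R)^\oplus, \nu} R$, a finite projective graded $R$-module. So the plan is to reduce all three statements to the behavior of these ranks under faithfully flat base change. First I would observe that the natural projection $\nu: W(R)^\oplus \to R$ is compatible with the morphism of frames $\underline{W}(R) \to \underline{W}(R')$; that is, the diagram relating $\nu_R$, $\nu_{R'}$, and the base change maps commutes, because $\nu$ is built from $w_0: W(R) \to R$, which is functorial in $R$. Consequently $\overline{L}' := M' \otimes_{W(R')^\oplus, \nu_{R'}} R'$ is canonically identified with $\overline{L} \otimes_R R'$ as a graded $R'$-module, and hence $\overline{L}'_i \cong \overline{L}_i \otimes_R R'$ for every $i$.

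Next I would invoke the standard fact that for a finite projective $R$-module $P$ and a faithfully flat ring map $R \to R'$, one has $P = 0$ if and only if $P \otimes_R R' = 0$, and more precisely the rank function of $P \otimes_R R'$ on $\operatorname{Spec} R'$ is the pullback of the rank function of $P$ along $\operatorname{Spec} R' \to \operatorname{Spec} R$, which is surjective by faithful flatness. Applying this to each $\overline{L}_i$: we get $\overline{L}_i = 0 \iff \overline{L}'_i = 0$, and $\operatorname{rk}(\overline{L}_i)$ and $\operatorname{rk}(\overline{L}'_i)$ take the same set of values. Statement (iii) then follows directly from Definition \ref{def-type}: $M$ is of type $I$ exactly when $\operatorname{rk}(\overline{L}_k)$ equals the multiplicity of $k$ in $I$ for all $k$, and since the type condition as phrased in the definition really asserts a single common rank value (so $\operatorname{Spec} R$ is implicitly connected, or one argues componentwise), surjectivity of $\operatorname{Spec} R' \to \operatorname{Spec} R$ makes this condition hold for $M$ iff it holds for $M'$. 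Statements (i) and (ii) are then immediate, since by Definition \ref{def-depth} the depth (resp. altitude) is the minimal (resp. maximal) $i$ with $\overline{L}_i \neq 0$, and we have just shown $\overline{L}_i \neq 0 \iff \overline{L}'_i \neq 0$.

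The only mildly delicate point — and the one I would be most careful about — is the identification $\overline{L}' \cong \overline{L} \otimes_R R'$. This requires knowing that the base change $W(R)^\oplus \to W(R')^\oplus$ on graded rings, followed by $\nu_{R'}$, agrees with $\nu_R$ followed by $R \to R'$; equivalently, that $\nu$ is a natural transformation of functors $\mathbf{Nilp}_{\zz_p} \to (\text{graded rings})$. Since on $S_0 = W(R)$ the map $\nu$ is $w_0$ composed with $W(R) \to W(R)/I_R = R$, and since $w_0$ and the Verschiebung ideal $I_R$ are functorial in $R$, this is a formal check; then one concludes $M' \otimes_{W(R')^\oplus, \nu_{R'}} R' \cong M \otimes_{W(R)^\oplus, \nu_R} W(R')^\oplus \otimes_{W(R')^\oplus,\nu_{R'}} R' \cong M \otimes_{W(R)^\oplus, \nu_R} R \otimes_R R' = \overline{L} \otimes_R R'$. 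With this in hand the rest is the elementary descent of rank functions for finite projective modules along faithfully flat maps, and no real obstacle remains.
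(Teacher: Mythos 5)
Your proposal is correct and follows essentially the same route as the paper: functoriality of $\nu$ gives the identification $\overline{L}\otimes_R R' \cong \overline{L}'$ of graded pieces, faithful flatness gives $\overline{L}_i = 0 \iff \overline{L}_i' = 0$ (hence (i) and (ii)), and invariance of ranks under base change (with surjectivity of $\operatorname{Spec} R' \to \operatorname{Spec} R$ for the converse direction) gives (iii). Your extra care about the converse in (iii) is a slight refinement of the paper's brief remark, but not a different argument.
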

\begin{proof}
	Denote by $\nu'$ the map $W(R')^\oplus \to R'$ defined as in (\ref{eq-nu}). Write $\overline{L} = \bigoplus_i \overline{L}_i$ for the base change of $M$ along $\nu$ and $\overline{L}' = \bigoplus_i \overline{L}_i'$ for the base change of $M'$ along $\nu'$.
	By functoriality of the maps $\nu$ and $\nu'$ we have an isomorphism of graded $R'$-modules
	\begin{align*}
		\overline{L}\otimes_R R' \cong \overline{L}'
	\end{align*}
	and therefore an isomorphism of their graded pieces $\overline{L}_i \otimes_R R' \cong \overline{L}_i'$. Faithful flatness of $R \to R'$ implies that $\overline{L}_i = 0$ if and only if $\overline{L}_i' = 0$. This proves (i) and (ii). Part (iii) follows because the rank of a projective module is invariant under base change.
\end{proof}

The following is an analog of \cite[Lemma 30]{Zink2002}.
\begin{lemma}\label{lem-exactW}
	Let $M$ be a finite projective graded $W(R)^\oplus$-module, and let $R \to R'$ be a faithfully flat extension of $W(k_0)$-algebras. Then there is an exact sequence
	\begin{center}
		\begin{tikzcd}[column sep = small]
			0 
			\arrow[r]
			&M 	
			\arrow[r] 
			&M \otimes_{W(R)^\oplus} W(R')^\oplus
			\arrow[r, yshift=2pt] \arrow[r, yshift=-2pt] 
			&M \otimes_{W(R)^\oplus} W(R'\otimes_R R')^\oplus
			 \arrow[r, yshift = 4pt] \arrow[r] \arrow[r, yshift=-4pt]
			& \cdots
		\end{tikzcd}
	\end{center}
	where the arrows are induced by applying the functor $W$ to the usual exact sequence
	\begin{center}
		\begin{tikzcd}[column sep = small]
			0 
			\arrow[r]
			&R
			\arrow[r] 
			&R'
			\arrow[r, yshift=2pt] \arrow[r, yshift=-2pt] 
			&R'\otimes_R R'
			\arrow[r, yshift = 4pt] \arrow[r] \arrow[r, yshift=-4pt]
			& \cdots
		\end{tikzcd}
	\end{center}
\end{lemma}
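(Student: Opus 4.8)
The plan is to reduce the claim to the already-established faithfully flat descent for finite projective graded $W(R)^\oplus$-modules (Lemma \ref{lem-pmod_descent}), together with the fact that $\underline{W}$ is an fpqc sheaf of frames (Definition \ref{def-wittframe}, citing \cite[4.1]{Lau2018}). First I would recall that the functor $W$ carries the faithfully flat cover $R \to R'$ to a diagram which, after passing to the graded Witt ring $W(-)^\oplus$, is the \v{C}ech nerve computing the descent datum; concretely, the cosimplicial ring $W(R'^{\otimes_R \bullet})^\oplus$ has $W(R)^\oplus$ as its equalizer, because $\underline{W}$ is an fpqc sheaf, hence so is each graded piece $S_0 = W(-)$ and $S_n = I_{(-)}$ for $n \ge 1$. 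Thus the bottom row of the asserted diagram, with $R$ replaced by $W(R)^\oplus$ and the other terms by $W(R'^{\otimes_R n})^\oplus$, is exact.

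Next I would tensor this exact cosimplicial diagram of rings with the finite projective graded module $M$ over $W(R)^\oplus$. The point is that exactness of the augmented cosimplicial complex $W(R)^\oplus \to W(R')^\oplus \rightrightarrows \cdots$ is preserved upon tensoring with $M$: a finite projective module is a direct summand of a finite free module $W(R)^{\oplus N}$ (in the graded sense, a summand of $\bigoplus_r W(R)^\oplus(-j_r)$), and tensoring the exact sequence with a free module just takes a finite direct sum of copies (with grading shifts) of the original exact sequence, while passing to a direct summand preserves exactness since the complex splits off functorially. Concretely, writing $M \oplus M' \cong P$ with $P$ finite free graded, the complex for $M$ is a direct summand of the complex for $P$, which is exact, so the complex for $M$ is exact. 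This gives precisely the claimed exact sequence, and the identification of $M \otimes_{W(R)^\oplus} W(R'^{\otimes_R n})^\oplus$ with the base change along the frame morphism $\underline{W}(R) \to \underline{W}(R'^{\otimes_R n})$ is immediate from the definitions.

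I do not expect any serious obstacle here; the lemma is a formal consequence of (a) faithful flatness descent being available for the frame $\underline{W}$ — which is exactly the content of \cite[4.1]{Lau2018} quoted in Definition \ref{def-wittframe} and of Lemma \ref{lem-pmod_descent} — and (b) the standard fact that faithfully flat descent exact sequences remain exact after tensoring with a finite projective module. The only point requiring a little care is bookkeeping with the gradings: one must check that the maps in the cosimplicial diagram are morphisms of graded rings (they are, since $R \to R'$ induces a morphism of frames, hence of $\zz$-graded rings $W(R)^\oplus \to W(R')^\oplus$), so that tensoring with the graded module $M$ and checking exactness may be done degree by degree — and in each degree $\ell$ one lands back in the faithfully flat descent exact sequence for the finite projective $R$-module $\overline{L}$ or, more precisely, for the relevant Witt-level module, which is exact by the sheaf property. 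Since exactness of a complex of graded modules is exactness in each degree, this completes the argument.
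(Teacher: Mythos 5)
Your argument is correct and follows essentially the same route as the paper: reduce to the free case by writing $M$ as a direct summand of a finite free graded $W(R)^\oplus$-module, then conclude degreewise from the fact that $R \mapsto W(R)$ and $R \mapsto I_R$ are fpqc sheaves. The extra bookkeeping you include (grading shifts, splitting off the summand complex) is exactly what the paper leaves implicit.
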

\begin{proof}
	Since $M$ is a direct summand of a free $W(R)^\oplus$-module we can reduce to the case $M = W(R)^\oplus$. In that case the result follows because $R \mapsto W(R)$ and $R \mapsto I_R$ are fpqc sheaves.
\end{proof}
To close this section, we prove that exactness is a property of finite projective $W(R)^\oplus$-modules which is fpqc local in $R$.
\begin{lemma}\label{lem-sequence}
	Let $M$, $N$, and $P$ be finite projective $W(R)^\oplus$-modules equipped with $W(R)^\oplus$-module homomorphisms $N\to M \to P$. The following are equivalent:
	\begin{enumerate}[\textup{(}i\textup{)}]
		\item The sequence
		\begin{align*}
			0 \to N \to M \to P \to 0
		\end{align*}
		is exact.
		\item For some faithfully flat $\zz_p$-algebra homomorphism $R \to R'$ the sequence 
		\begin{align*}
			0 \to N_{W(R')^\oplus} \to M_{W(R')^\oplus} \to P_{W(R')^\oplus} \to 0
		\end{align*} 
		is exact.
		\item For every faithfully flat $\zz_p$-algebra homomorphism $R \to R'$ the sequence in (ii) is exact. 
	\end{enumerate}
\end{lemma}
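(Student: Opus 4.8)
The plan is to prove the equivalence by the circular chain (iii) $\Rightarrow$ (ii) $\Rightarrow$ (i) $\Rightarrow$ (iii). The implication (iii) $\Rightarrow$ (ii) is trivial, since one may take $R' = R$ in (iii) (the identity is faithfully flat), or any chosen faithfully flat extension. So the real content lies in the remaining two implications, and the Witt-vector descent machinery already assembled in the excerpt — Lemma \ref{lem-exactW} and Lemma \ref{lem-pmod_descent} — is exactly what is needed.

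For (ii) $\Rightarrow$ (i): suppose $0 \to N_{W(R')^\oplus} \to M_{W(R')^\oplus} \to P_{W(R')^\oplus} \to 0$ is exact for one faithfully flat $R \to R'$. I want to descend exactness to $R$. The key point is that, by Lemma \ref{lem-exactW} applied to each of $N$, $M$, $P$, the module over $W(R)^\oplus$ is recovered as the equalizer of the two maps $W(R')^\oplus$-module $\rightrightarrows$ $W(R'\otimes_R R')^\oplus$-module, i.e. $N = \ker\bigl(N_{W(R')^\oplus} \rightrightarrows N_{W(R'\otimes_R R')^\oplus}\bigr)$ and likewise for $M$ and $P$. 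So I would form the commutative diagram whose rows are the augmented Čech complexes of Lemma \ref{lem-exactW} for $N$, $M$, $P$ (three rows, stacked) and whose columns are the given three-term complex $0 \to N \to M \to P \to 0$ base-changed to $W(R)^\oplus$, $W(R')^\oplus$, $W(R'\otimes_R R')^\oplus$, $\dots$. The rows are exact by Lemma \ref{lem-exactW}. The second column (over $W(R')^\oplus$) is exact by hypothesis; the third column (over $W(R'\otimes_R R')^\oplus$) is exact because $R' \to R'\otimes_R R'$ is again faithfully flat, so the hypothesis applies there too — actually, more carefully, I should note that base-changing the exact sequence over $W(R')^\oplus$ along the faithfully flat $W(R')^\oplus \to W(R'\otimes_R R')^\oplus$ preserves exactness, using that $N_{W(R')^\oplus} \to N_{W(R'\otimes_R R')^\oplus}$ etc. are base changes and that $P$ being projective makes the sequence split locally, or simply that $W(R')^\oplus \to W(R'\otimes_R R')^\oplus$ is flat (this flatness is part of the fpqc-sheaf formalism for $\underline{W}$, cf. Definition \ref{def-wittframe}). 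Then a diagram chase (a snake-lemma / spectral-sequence style argument on this double complex) forces the first column to be exact: injectivity of $N \to M$ and exactness at $M$ follow by comparing kernels across the rows, and surjectivity of $M \to P$ follows because a class in $P$ lifts to $M_{W(R')^\oplus}$ compatibly on overlaps, hence descends. Equivalently and more cleanly: the cokernel $C$ of $N \to M$ and the kernel $K$ of $M \to P$, together with the relevant (co)homology at $M$, are finitely presented $W(R)^\oplus$-modules (one must check this — quotients and kernels of maps between finite projectives over $W(R)^\oplus$ need a small argument, or one works with the images, which are summands after base change), and their formation commutes with the flat base change $W(R)^\oplus \to W(R')^\oplus$; since they vanish after that faithfully flat base change, they vanish, using Lemma \ref{lem-pmod_descent} to know finite projective modules (hence the relevant sub/quotient objects, once one knows they are finite projective) form a stack.

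For (i) $\Rightarrow$ (iii): given the exact sequence $0 \to N \to M \to P \to 0$ over $W(R)^\oplus$, I must show it stays exact after any faithfully flat base change $R \to R'$. Here the crucial observation is that, since $P$ is finite projective over $W(R)^\oplus$, the sequence is \emph{split} as a sequence of $W(R)^\oplus$-modules: there is a section $P \to M$. (This is where projectivity of $P$ is used; it is the analog of the corresponding fact in Zink's setting, and it is why the hypotheses insist all three modules are finite projective.) A split short exact sequence remains split, hence exact, after \emph{any} base change of rings whatsoever — in particular after $W(R)^\oplus \to W(R')^\oplus$. So (iii) follows immediately, with no descent needed.

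The main obstacle, and the step deserving the most care, is the diagram chase in (ii) $\Rightarrow$ (i): one must be precise about which maps in the Čech-type resolutions of Lemma \ref{lem-exactW} are flat base changes, so that exactness of the given three-term sequence over $W(R')^\oplus$ propagates to all higher terms $W(R'^{\otimes n})^\oplus$, and then combine the exact rows with the two known exact columns to conclude exactness of the column over $W(R)^\oplus$. A clean way to package this is to phrase it as: the three-term complex over $W(R)^\oplus$ is the equalizer (termwise, by Lemma \ref{lem-exactW}) of the corresponding complexes over $W(R')^\oplus$ and $W(R'\otimes_R R')^\oplus$, an equalizer of exact complexes of the relevant shape is exact, and we are done — but verifying that the equalizer of these particular complexes is exact is exactly the diagram chase, so the content is unavoidable; it is, however, entirely routine homological algebra once the flatness points are nailed down.
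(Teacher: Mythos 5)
Your handling of (i) $\Rightarrow$ (iii) (the sequence splits because $P$ is finite projective, hence stays exact under arbitrary base change) and of (iii) $\Rightarrow$ (ii) matches the paper, and your \v{C}ech-type diagram built from Lemma \ref{lem-exactW} gives injectivity of $N \to M$ and exactness at $M$ exactly as in the paper's proof (the third column is handled there, as in your first alternative, by tensoring the split exact middle row, not by any flatness of $W(R')^\oplus \to W(R'\otimes_R R')^\oplus$ --- the sheaf property of $\underline{W}$ says nothing about flatness). The genuine gap is the surjectivity of $M \to P$ in (ii) $\Rightarrow$ (i). Your primary argument, ``a class in $P$ lifts to $M_{W(R')^\oplus}$ compatibly on overlaps, hence descends,'' does not work as stated: a lift of $p \in P$ to $M_{W(R')^\oplus}$ is only well-defined up to $N_{W(R')^\oplus}$, so its two pullbacks to $W(R'\otimes_R R')^\oplus$ need not agree; they differ by a degree-one \v{C}ech cocycle with values in $N$, and killing it would require exactness of the Amitsur complex of $N$ in degree one, which is strictly more than the equalizer exactness that your diagram (and the paper's) actually uses. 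Your ``cleaner'' fallback is also not available: concluding that the cokernel $C$ of $M\to P$ vanishes from $C\otimes_{W(R)^\oplus}W(R')^\oplus=0$ requires $W(R)^\oplus \to W(R')^\oplus$ to be faithfully flat, and faithful flatness of $R \to R'$ does not imply flatness (let alone faithful flatness) of $W(R)\to W(R')$; the Witt vector functor does not preserve flatness in general, and nothing in the paper supplies this.

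The paper avoids both issues with a short argument you should adopt: since $tS_1 \subseteq \mathrm{Rad}(S_0)$ for the Witt frame (the graded Nakayama lemma of \cite[Lemma 3.1.1]{Lau2018}), surjectivity of $M \to P$ follows once $M\otimes_{W(R)^\oplus} R \to P\otimes_{W(R)^\oplus} R$ is surjective; and that map is surjective by ordinary faithfully flat descent along $R \to R'$, because $M\otimes_{W(R)^\oplus} R' \to P\otimes_{W(R)^\oplus} R'$ is a base change of the given surjection over $W(R')^\oplus$ and $R \to R'$ is faithfully flat. With that replacement for the surjectivity step, the rest of your argument goes through.
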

\begin{proof}
	If the sequence in (i) is exact then it is split exact, so it will remain exact after tensoring by any extension. Then (i) implies (iii). Obviously (iii) implies (ii), so it remains only to show (ii) implies (i).
	
	Suppose the sequence in (ii) is exact for some faithfully flat extension $R \to R'$. Let us write $R''$ for $R' \otimes_R R'$, Consider the following commutative diagram:
	\begin{center}
		\begin{tikzcd}
			& 0 
				\arrow[d] 
			& 0 
				\arrow[d]
			& 0 
				\arrow[d]
			&
			\\ 0
				\arrow[r]
			& N 
				\arrow[r] \arrow[d]
			& M
				\arrow[r] \arrow[d]
			& P
				\arrow[r] \arrow[d]
			& 0
			\\ 0
				\arrow[r]
			& N_{W(R')^\oplus}
				\arrow[r] \arrow[d]
			& M_{W(R')^\oplus}
				\arrow[r] \arrow[d]
			& P_{W(R')^\oplus}
				\arrow[r] \arrow[d]
			& 0
			\\ 0 
				\arrow[r]
			& N_{W(R'')^\oplus}
				\arrow[r]
			& M_{W(R'')^\oplus}
				\arrow[r]
			& P_{W(R'')^\oplus}
				\arrow[r]
			& 0
		\end{tikzcd}
	\end{center}
	Here the bottom map in each column is induced by the difference of the two maps $W(R')^\oplus \to W(R'\otimes_R R')^\oplus$ induced by the two canonical ring homomorphisms $R' \to R'\otimes_R R'$. Then the columns are exact by Lemma \ref{lem-exactW}. The middle row is exact by assumption and the bottom row is exact because it is obtained by tensoring the middle row over $W(R')^\oplus \to W(R'\otimes_R R')^\oplus$. Injectivity of $N \to M$ and exactness at $M$ follow immediately from the diagram.
	
	It remains to check $M \to P$ is surjective. By Nakayama's Lemma, cf. \cite[Lemma 3.1.1]{Lau2018}, it is enough to show $M \otimes_{W(R)^\oplus} R \to P \otimes_{W(R)^\oplus} R$ is surjective. But $M\otimes_{W(R)^\oplus} R'\to P \otimes_{W(R)^\oplus} R'$ is surjective, so the result follows because $R \to R'$ is faithfully flat. 
\end{proof}

\begin{rmk}\label{rmk-canvases}
	In an earlier draft of this paper, before \cite{Lau2018} was announced, we defined a category of objects called canvases, which we used to develop the subsequent theory of Tannakian $(G,\mu)$-displays. The category of canvases over $R$ is equivalent to the category of finite projective graded $W(R)^\oplus$-modules. Let us briefly explain the equivalence. A \textit{precanvas} is a collection of $W(R)$-modules $\{P_i\}_{i\in\zz}$ along with $W(R)$-module homomorphisms $\iota_i: P_{i+1} \to P_i$ and $\alpha_i: I_R\otimes_{W(R)} P_i \to P_{i+1}$ such that $\iota_i \circ \alpha_i = \alpha_{i-1} \circ (\id_{I_R} \otimes \iota_{i-1}) = \text{mult}: I_R \otimes_{W(R)} P_i \to P_i$. Given a collection $\{L_i\}_{i\in\zz}$ of finite projective $W(R)$-modules there is a standard construction of a precanvas, essentially following the construction for displays given in \cite{LZ2007}. A \textit{canvas} is a precanvas isomorphic to one resulting from this construction. Given a finite projective graded $W(R)^\oplus$-module $M$, we define a precanvas over $R$ as follows: 
	\begin{itemize}
		\item $M_i$ is the $i$th graded piece of $M$, regarded as a $W(R)$-module;
		\item $\iota_i: M_{i+1} \to M_i$ is multiplication by $t \in W(R)^\oplus$;
		\item $\alpha_i: I_R \otimes_{W(R)} M_i \to M_{i+1}$ is the action of $W(R)^\oplus_1 = I_R$ on $M_i$.
	\end{itemize}
	One checks that this construction satisfies the desired compatibilities and defines a functor from \textbf{PGrMod}$(W(R)^\oplus)$ to the category of canvases over $R$, which is an equivalence of categories since every finite projective graded $W(R)^\oplus$-module has a normal decomposition. Defining basic constructions such as duals and tensor products using canvases requires some work in this category, as it is tedious to check all the necessary compatibilities. In the framework developed in \cite{Lau2018} this work is no longer necessary because in this case the constructions follow from the well-known constructions for graded modules. Therefore we have adopted the more streamlined approach using the graded ring $W(R)^\oplus$ in this paper.
\end{rmk}

\subsection{Displays}
In this section we review the definitions and elementary properties of displays over a frame. As in the previous sections, our main reference for this section is \cite{Lau2018}. Let $\underline{S}=(S,\sigma,\tau)$ be a frame. 

\begin{Def}
	A \textit{predisplay} $\underline{M} = (M, F)$ over $\underline{S}$ consists of a graded $S$-module $M$ and a $\sigma$-linear map $F: M \to M^\tau$. 
\end{Def} 

A morphism $(M,F) \to (M',F')$ of predisplays is a homomorphism of graded $S$-modules $M \to M'$ which is compatible with the maps $F$ and $F'$. Denote the resulting category of predisplays over $\underline{S}$ by \textbf{Predisp}$(\underline{S})$. This is an abelian category, because the same is true of \textbf{GrMod}$(S)$. 


\begin{Def}\label{def-display}
	A \textit{display} over $\underline{S}$ is a predisplay $\underline{M} = (M, F)$ over $\underline{S}$ such that $M$ is a finite projective graded $S$-module and $F: M \to M^\tau$ is a $\sigma$-linear bijection.
\end{Def}

Displays over $\underline{S}$ form a full subcategory of \textbf{Predisp}$(\underline{S})$ which we will denote by \textbf{Disp}$(\underline{S})$. Note a $\sigma$-linear bijection $M \to M^\tau$ is by definition a $\sigma$-linear homomorphism whose linearization $F^\sharp: M^\sigma \to M^\tau$ is an $S_0$-module isomorphism. In this way we see that endowing a finite projective $S$-module $M$ with the structure of a display is equivalent to giving an $S_0$-module isomorphism $M^\sigma \xrightarrow{\sim} M^\tau$.

\begin{Def} Suppose $\underline{S}$ is a frame for $R$ and let $\underline{M} = (M,F)$ be a predisplay (resp. display) over $\underline{S}$.
	\begin{enumerate}[(i)]
		\item $\underline{M}$ is \textit{effective} if $d(M) \ge 0$.
		\item $\underline{M}$ is an \textit{$n$-predisplay} (resp. $n$-display) if it is effective and $a(M) = n$.
	\end{enumerate}
\end{Def}

One can check that our definition of an effective predisplay agrees with that of \cite{Lau2018}. As in the previous section, we collect here some notions regarding the exact tensor category structure of $\textbf{Disp}(\underline{S})$. Morphisms of displays over $\underline{S}$ are, in particular, morphisms of the underlying finite projective graded $S$-modules, so exactness is inherited from that category (or from \textbf{GrMod}$(S)$). The tensor product of displays is the tensor product in the category of predisplays:
\begin{align*}
(M,F) \otimes (M',F') := (M\otimes_S M', F\otimes F').
\end{align*}
Since $M \otimes_{S} M'$ is a finite projective graded $S$-module, this tensor product preserves the category of displays. The unit object is $(S,\sigma)$.

The dual of a display $\underline{M} = (M,F)$ is the display $\underline{M}^\vee = (M^\vee, F^\vee)$, where $M^\vee$ is the dual of $M$ in \textbf{PGrMod}$(S)$, and $F^\vee$ corresponds to the dual of the inverse of $F^\sharp: M^\sigma \xrightarrow{\sim} M^\tau$. It is clear that $\underline{M}$ is reflexive with respect to this notion, so the following analog of Lemma \ref{lem-pmod_rigid} is immediate.

\begin{lemma}
	The category \textup{\textbf{Disp}}$(\underline{S})$ is an exact rigid tensor category.
\end{lemma}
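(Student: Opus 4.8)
The plan is to reduce the rigidity of $\textbf{Disp}(\underline{S})$ to the rigidity of $\textbf{PGrMod}(S)$ established in Lemma \ref{lem-pmod_rigid}, since essentially all of the work has already been done there. First I would recall what needs to be checked: by the same footnote of \cite[Definition 1.7]{Deligne1982} cited in the proof of Lemma \ref{lem-pmod_rigid}, it suffices to exhibit a dual object $\underline{M}^\vee$ for every display $\underline{M}$ and to verify that the canonical morphism $\underline{M} \to \underline{M}^{\vee\vee}$ is an isomorphism of displays. The dual $\underline{M}^\vee = (M^\vee, F^\vee)$ is defined in the paragraph preceding the statement: $M^\vee$ is the dual in $\textbf{PGrMod}(S)$, which we already know is a finite projective graded $S$-module, and $F^\vee$ is characterized by requiring that its linearization $(F^\vee)^\sharp \colon (M^\vee)^\sigma \to (M^\vee)^\tau$ be the inverse transpose of $F^\sharp \colon M^\sigma \xrightarrow{\sim} M^\tau$.

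The key steps, in order, are: (1) Check that $\underline{M}^\vee$ is genuinely a display, i.e. that $(F^\vee)^\sharp$ is an $S_0$-module isomorphism; this is immediate because the transpose of an isomorphism is an isomorphism and so is its inverse, using that $(M^\sigma)^\vee \cong (M^\vee)^\sigma$ and $(M^\tau)^\vee \cong (M^\vee)^\tau$ canonically (base change commutes with dualization for finite projective modules, compatibly with the grading conventions $(M^\vee)_i = (M_{-i})^\vee$). (2) Verify that $\underline{M} \mapsto \underline{M}^\vee$ together with the evaluation and coevaluation morphisms satisfies the triangle identities in $\textbf{Disp}(\underline{S})$; but the underlying graded-module data of these morphisms is exactly the evaluation and coevaluation from $\textbf{PGrMod}(S)$, where the identities hold by Lemma \ref{lem-pmod_rigid}, so one only needs that these morphisms are compatible with the $F$-structures. (3) Conclude that $\underline{M} \to \underline{M}^{\vee\vee}$ is an isomorphism: the underlying map of graded modules is the reflexivity isomorphism $M \xrightarrow{\sim} M^{\vee\vee}$ of Lemma \ref{lem-pmod_rigid}, and one checks it intertwines $F$ with $F^{\vee\vee}$, which follows formally from $(F^\sharp)^{\vee\vee} = F^\sharp$ under the canonical identifications.

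The main obstacle — really the only point requiring care — is the bookkeeping in Step 2 and Step 3: one must check that the evaluation morphism $M \otimes_S M^\vee \to S$ and coevaluation $S \to M^\vee \otimes_S M$ are morphisms of displays, i.e. that they commute with $F \otimes F^\vee$ and with $\sigma$ on the unit $(S,\sigma)$. This amounts to unwinding the definition of $F^\vee$ via $(F^\vee)^\sharp = ((F^\sharp)^{-1})^{\vee}$ and chasing it through the canonical isomorphisms relating $(-)^\sigma$, $(-)^\tau$, $(-)^\vee$, and $\otimes_S$; the computation is a routine diagram chase of the sort that is standard for Dieudonn\'e-module-type categories, and it presents no real difficulty once the sign/grading conventions $(M^\vee)_i = (M_{-i})^\vee$ are respected. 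I would present this as: "After the remarks preceding the statement, it remains to show reflexivity, and since the underlying graded modules are reflexive by Lemma \ref{lem-pmod_rigid}, one checks directly that the reflexivity isomorphism $M \xrightarrow{\sim} M^{\vee\vee}$ is compatible with the $F$-structures, which is immediate from the definition of $F^\vee$."
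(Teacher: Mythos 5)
Your proposal is correct and follows essentially the same route as the paper: the dual display $(M^\vee, F^\vee)$ with $(F^\vee)^\sharp$ the inverse transpose of $F^\sharp$ is exactly the paper's construction, and the paper likewise invokes the footnote to \cite[Definition 1.7]{Deligne1982} so that only existence of duals and reflexivity need checking, the latter being declared ``clear'' where you spell out the (routine) compatibility of the reflexivity map with the $F$-structures. Your extra care with evaluation/coevaluation is just a more explicit version of what the paper leaves implicit.
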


Displays carry a good notion of bilinear form, which characterizes the tensor product of displays. 
\begin{Def}
	Let $\underline{M} = (M, F)$, $\underline{M}' = (M', F')$, and $\underline{M}'' = (M'', F'')$ be displays over $\underline{S}$. A \textit{bilinear form} $\beta: \underline{M} \times \underline{M}'  \to \underline{M}''$ is a bilinear form of the underlying graded $S$-modules $M \times M' \to M''$ such that
	\begin{align*}
		F''(\beta(x,y)) = \beta^\tau(F(x),F'(y)),
	\end{align*}
	where $\beta^\tau: M^\tau \times (M')^\tau \to (M'')^\tau$ is the induced bilinear map of $S_0$-modules.
\end{Def}

The tensor product of $\underline{M}$ and $\underline{M}'$ is characterized by the following universal property: it admits a bilinear form $\beta_0: \underline{M} \times \underline{M}' \to \underline{M} \otimes \underline{M}'$, and any other bilinear form $\underline{M} \times \underline{M}' \to \underline{M}''$ factors uniquely through $\beta_0$.

Let us review some other useful constructions for displays. 

\begin{Def}
	Let $\underline{M} = (M,F)$ be a predisplay over a frame $\underline{S} = (S,\sigma,\tau)$, and suppose $\underline{S} \to \underline{S}'$ is a morphism to another frame $\underline{S}'= (S',\sigma', \tau')$. Then the \textit{base change} of $\underline{M}$ to $\underline{S}'$ is the predisplay $\underline{M}_{\underline{S}'} = (M \otimes_S S', F \otimes \sigma')$. 
\end{Def}

Base change defines a functor \textbf{Predisp}$(\underline{S}) \to $ \textbf{Predisp}$(\underline{S}')$ which preserves the full subcategories of displays. 

The definition of type for a finite projective $S$-module extends in a natural way to displays. 

\begin{Def}
	Let $I$ be a collection of integers as in Definition \ref{def-type}. We say a display $\underline{M} = (M,F)$ is \textit{of type $I$} if the finite projective graded $S$-module $M$ is of type $I$.
\end{Def}

\begin{Def}\label{def-stddatum}
	 A \textit{standard datum} for a display is a pair $(L, \Phi)$ consisting of a finite projective graded $S_0$-module $L = \bigoplus_i L_i$ and a $\sigma_0$-linear automorphism $\Phi: L \to L$. 
\end{Def}

From a standard datum $(L, \Phi)$, we define a display $(M, F)$ by taking $M := L\otimes_{S_0} S$ and $F(x \otimes s) = \sigma(s) \Phi(x)$. On the other hand, if $\underline{M} = (M,F)$ is a display, then any normal decomposition $L$ of $M$ determines a standard datum by viewing $L$ as a submodule of $M = L\otimes_{S_0} S$ via $x \mapsto x \otimes 1$ and taking $\Phi = F \res_L$. It is clear that the display resulting from this $(L,\Phi)$ is indeed $(M,F)$. Hence if every finite projective graded $S$-module has a normal decomposition then every display over $\underline{S}$ is standard, i.e. is defined from a standard datum. Note also that if $(L, \Phi)$ is a standard datum for a display $\underline{M} = (M, F)$, then $M^\tau \cong L$ and $M^\sigma \cong L^{\sigma_0}$, cf. \cite[Remark 3.2.5]{Lau2018}.

Let us remark that both the tensor product and base change can be defined in a natural way using standard data. Indeed, if $\underline{M} = (M, F)$ and $\underline{M}' = (M', F')$ are displays over $\underline{S}$ with standard data $(L, \Phi)$ and $(L', \Phi')$ respectively, then $(L \otimes_{S_0} L', \Phi \otimes \Phi')$ is a standard datum for $\underline{M} \otimes \underline{M}'$. Similarly $(L \otimes_{S_0} S_0', \Phi \otimes \sigma_0')$ is a standard datum for $\underline{M}_{\underline{S}'}$. The characterizations of the tensor product and the base change above prove that the resulting object is independent of the choice of standard data.

Now we return our focus to the Witt frame, which is the case of interest in the remainder of the paper. Note that if $R$ and $R'$ are two $p$-adic rings, then a ring homomorphism $R \to R'$ induces a morphism of frames $\underline{W}(R) \to \underline{W}(R')$. Let \textbf{Disp}$^W$ be the category fibered over \textbf{Nilp}$_{\zz_p}$ whose fiber over $R$ is \textbf{Disp}$(\underline{W}(R))$. As in the case of finite projective graded $W(R)^\oplus$-modules, this fibered category satisfies effective descent for morphisms and for objects.

\begin{lemma}\label{lem-displaydescent}
	The fibered category \textup{\textbf{Disp}}$^W$ is an fpqc stack over \textup{\textbf{Nilp}}$_{\zz_p}$. 
\end{lemma}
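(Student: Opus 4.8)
The plan is to deduce the stack property of $\textbf{Disp}^W$ from the corresponding property of $\textbf{PGrMod}^W$ (Lemma~\ref{lem-pmod_descent}) together with the fpqc-sheaf property of the Witt frame $\underline W$ (recorded in Definition~\ref{def-wittframe}, via \cite[4.1]{Lau2018}). Recall that a display over $\underline W(R)$ is the same as a finite projective graded $W(R)^\oplus$-module $M$ equipped with an $S_0$-module isomorphism $F^\sharp\colon M^\sigma\xrightarrow{\sim}M^\tau$, and that the formation of $M^\sigma$ and $M^\tau$ is compatible with base change along morphisms of frames. First I would treat descent for morphisms: given $R\to R'$ faithfully flat with $R''=R'\otimes_R R'$, and displays $\underline M,\underline M'$ over $\underline W(R)$, a morphism $\underline M_{R'}\to\underline M'_{R'}$ that agrees after the two pullbacks to $R''$ is in particular a morphism of the underlying graded modules, so by Lemma~\ref{lem-pmod_descent} it descends uniquely to a graded $W(R)^\oplus$-module map $\varphi\colon M\to M'$; compatibility of $\varphi$ with $F$, $F'$ can be checked after the faithfully flat base change $W(R)^\oplus\to W(R')^\oplus$ (using Lemma~\ref{lem-exactW} or directly that $M^\tau\hookrightarrow (M^\tau)_{W(R')^\oplus}$ is injective), where it holds by hypothesis. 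This gives full faithfulness of the pullback functor on the relevant Hom-sets, i.e. effective descent for morphisms.

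Next I would treat effective descent for objects. Start with descent data: a display $\underline M'=(M',F')$ over $\underline W(R')$ together with an isomorphism of displays over $\underline W(R'')$ between the two pullbacks, satisfying the cocycle condition over $\underline W(R'\otimes_R R'\otimes_R R')$. Forgetting $F'$, this is a descent datum for finite projective graded $W(R')^\oplus$-modules, so by Lemma~\ref{lem-pmod_descent} there is a finite projective graded $W(R)^\oplus$-module $M$ with $M_{W(R')^\oplus}\cong M'$ compatibly with the descent isomorphism. It remains to descend $F'$. Since forming $(-)^\sigma$ and $(-)^\tau$ commutes with base change, $M^\sigma$ and $M^\tau$ are finite projective $W(R)$-modules whose base changes to $W(R')$ are $(M')^\sigma$ and $(M')^\tau$; the isomorphism $(F')^\sharp\colon (M')^\sigma\to (M')^\tau$ is, by the compatibility of the descent datum with $F'$, equipped with its own descent datum as a morphism of finite projective $W(R)$-modules. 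One then invokes descent for modules (equivalently, the morphism-descent statement just proved, applied to the unit display, or \cite[Lemma~4.3.2]{Lau2018} at the level of $W(R)^\oplus$-modules concentrated appropriately) to descend $(F')^\sharp$ to an $S_0$-module map $F^\sharp\colon M^\sigma\to M^\tau$; it is an isomorphism because being an isomorphism of finite projective modules is fpqc-local, and it is $\sigma$-linearly the linearization of a map $F\colon M\to M^\tau$. Thus $(M,F)$ is a display over $\underline W(R)$ whose pullback to $\underline W(R')$ is isomorphic to $\underline M'$ respecting the descent data, and uniqueness follows from the morphism-descent part.

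The bookkeeping obstacle — and the only real subtlety — is keeping the $\sigma$- versus $\tau$-twisted base changes straight: one must check that for a morphism of frames $\underline W(R)\to\underline W(R')$ arising from $R\to R'$, the canonical maps $(M^\sigma)_{W(R')^\oplus}\to (M_{W(R')^\oplus})^{\sigma'}$ and $(M^\tau)_{W(R')^\oplus}\to (M_{W(R')^\oplus})^{\tau'}$ are isomorphisms, so that ``$F$ base-changes to $F\otimes\sigma'$'' is literally compatible with the module-level descent. This is where one uses that $\underline W$ is a sheaf of frames (so $\sigma,\tau$ themselves are compatible with the transition maps) together with the standard-datum description $M^\tau\cong L$, $M^\sigma\cong L^{\sigma_0}$ for a normal decomposition $L$, which exists since $W(R)^\oplus$ has $p$-adic degree-zero part. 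Everything else is a formal transport of Lemma~\ref{lem-pmod_descent} and Lemma~\ref{lem-exactW} through the equivalence ``display $=$ graded module $+$ isomorphism $M^\sigma\simeq M^\tau$''. I would therefore organize the write-up as: (1) recall this equivalence and its base-change compatibility; (2) prove morphism descent; (3) prove object descent; (4) note the cocycle/uniqueness conditions are inherited. In fact, since the statement is quoted verbatim from \cite[Lemma~4.3.2 / §4.3]{Lau2018}, it would also be legitimate simply to cite \emph{loc.\ cit.}, but the above is the self-contained argument.
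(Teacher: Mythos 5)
Your argument is correct, but it is genuinely different from what the paper does: the paper's entire proof is the citation ``This is \cite[Lemma 4.4.2]{Lau2018}'' (note the display statement is Lau's Lemma 4.4.2, not 4.3.2 --- 4.3.2 is the graded-module statement already quoted as Lemma \ref{lem-pmod_descent}). What you supply instead is the self-contained reduction: descend the underlying finite projective graded $W(R)^\oplus$-module via Lemma \ref{lem-pmod_descent}, then descend the linearized Frobenius $F^\sharp\colon M^\sigma\to M^\tau$ using that $\sigma$- and $\tau$-twisted base change commutes with the frame morphisms $\underline W(R)\to\underline W(R')$. This is essentially the argument behind Lau's lemma, and it buys the reader a proof inside the paper's own toolkit; note in particular that your morphism-descent step is already recorded in the paper as Lemma \ref{lem-morphism_local}, proved exactly as you suggest via Witt-vector descent \cite[Corollary 34]{Zink2002}. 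One small point to tighten: since $R\to R'$ faithfully flat does not make $W(R)\to W(R')$ flat, ``being an isomorphism of finite projective $W(R)$-modules is fpqc-local'' should not be invoked as ordinary faithfully flat descent; the clean fix is to observe that $\bigl(F'^\sharp\bigr)^{-1}$ carries a compatible descent datum, so it descends as well, and uniqueness of descended morphisms forces the two composites to be identities (alternatively, surjectivity follows from Nakayama as in Lemma \ref{lem-sequence} and injectivity from the split-exactness of the resulting sequence). With that adjustment your write-up is a valid substitute for the citation.
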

\begin{proof}
	This is \cite[Lemma 4.4.2]{Lau2018}.
\end{proof}

Suppose $\underline{M} = (M, F)$ and $\underline{M}' = (M', F')$ are displays over $W(R)^\oplus$. Let $\psi: M \to M'$ be a homomorphism of graded $W(R)^\oplus$-modules, and write $\psi_{R'}$ for the base change of $\psi$ to $W(R')^\oplus$. We have the following lemma, which says that the property of ``being a morphism of displays'' can be checked fpqc locally.

\begin{lemma}\label{lem-morphism_local}
	Let $R$ be a $p$-nilpotent $\zz_p$-algebra, and let $R \to R'$ be a faithfully flat extension of $\zz_p$-algebras. With the set-up as above, $\psi_{R'}$ is a morphism of displays over $\underline{W}(R')$ if and only if $\psi$ is a morphism of displays over $\underline{W}(R)$.
\end{lemma}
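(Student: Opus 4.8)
\emph{Proposed proof.} The plan is to reduce the statement to the elementary fact that a homomorphism between finite projective $W(R)$-modules vanishes if and only if it vanishes after the base change $W(R)\to W(R')$.

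First I would encode the condition "$\psi$ is a morphism of displays". Since $\psi$ is given as a homomorphism of graded $W(R)^\oplus$-modules, the only remaining requirement is the commutativity of the square relating $F$ and $F'$, i.e. $\psi^\tau\circ F = F'\circ\psi$ as maps $M\to (M')^\tau$. Set $\delta := F'\circ\psi - \psi^\tau\circ F\colon M\to (M')^\tau$. Both summands are $\sigma$-linear ($\psi$ is $W(R)^\oplus$-linear, $\psi^\tau$ is $W(R)$-linear, and $F,F'$ are $\sigma$-linear), hence so is $\delta$, and $\psi$ is a morphism of displays over $\underline{W}(R)$ precisely when $\delta = 0$, equivalently when its linearization $\delta^\sharp\colon M^\sigma \to (M')^\tau$ vanishes. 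Here $M^\sigma$ and $(M')^\tau$ are finite projective $W(R)$-modules (for instance via a normal decomposition / standard datum, cf. \cite[Remark 3.2.5]{Lau2018}).

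Next I would track how this condition behaves under base change. Because $\underline{W}(R)\to\underline{W}(R')$ is a morphism of frames, the underlying graded ring map $\iota\colon W(R)^\oplus\to W(R')^\oplus$ satisfies $\sigma'\circ\iota = j\circ\sigma$ and $\tau'\circ\iota = j\circ\tau$, where $j\colon W(R)\to W(R')$ is the degree-zero part of $\iota$. It follows that $(M_{W(R')^\oplus})^{\sigma'}\cong M^\sigma\otimes_{W(R),j}W(R')$ and similarly for $\tau$, and that the Frobenius $F\otimes\sigma'$ of $\underline{M}_{\underline{W}(R')}$ is, under these identifications, the base change of $F$. Consequently $\delta$ base-changes to $\delta_{R'} := F'_{R'}\circ\psi_{R'} - (\psi_{R'})^{\tau'}\circ F_{R'}$, with $\delta_{R'}^\sharp = \delta^\sharp\otimes_{W(R),j}W(R')$. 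Thus $\psi_{R'}$ is a morphism of displays over $\underline{W}(R')$ if and only if $\delta^\sharp\otimes_{W(R)}W(R') = 0$, and the lemma becomes the assertion that $\delta^\sharp = 0 \iff \delta^\sharp\otimes_{W(R)}W(R') = 0$. The forward implication is immediate (base change sends display morphisms to display morphisms). For the converse, faithful flatness of $R\to R'$ forces $R\to R'$ to be injective, hence $j\colon W(R)\to W(R')$ is injective (it is given componentwise by $R\to R'$); since $(M')^\tau$ is finite projective, hence flat, over $W(R)$, the map $(M')^\tau\hookrightarrow (M')^\tau\otimes_{W(R)}W(R')$ is injective, and evaluating $\delta^\sharp$ on a finite generating set of $M^\sigma$ then yields $\delta^\sharp = 0$.

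The only genuinely non-formal input here is the injectivity of $W(R)\to W(R')$ together with flatness of $(M')^\tau$ over $W(R)$; the main point requiring care is the bookkeeping of how base change along the frame morphism $\underline{W}(R)\to\underline{W}(R')$ interacts with the $\sigma$-linear map $F$ and its linearization, which is routine but should be spelled out. Alternatively, the lemma can be obtained formally from fpqc descent for morphisms in $\textbf{Disp}^{W}$ and $\textbf{PGrMod}^{W}$ (Lemmas \ref{lem-displaydescent} and \ref{lem-pmod_descent}): $\psi$ is the unique graded-module homomorphism $M\to M'$ restricting to $\psi_{R'}$; the two pullbacks of $\psi_{R'}$ to $W(R'\otimes_R R')^\oplus$ agree and are morphisms of displays, so descent for $\textbf{Disp}^{W}$ produces a morphism of displays $\underline{M}\to\underline{M}'$ restricting to $\psi_{R'}$, which must coincide with $\psi$ by the uniqueness in the module-level descent.
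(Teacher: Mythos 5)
Your proof is correct and follows essentially the same route as the paper: reduce to the commutativity of the linearized square $(F')^\sharp\circ\psi^\sigma=\psi^\tau\circ F^\sharp$ and check it after base change along $W(R)\to W(R')$. The only difference is that the paper gets the needed injectivity of $(M')^\tau\to (M')^\tau\otimes_{W(R)}W(R')$ by citing Witt vector descent for finite projective modules (Zink, Corollary 34), whereas you prove it directly from componentwise injectivity of $W(R)\to W(R')$ and flatness; your alternative argument via descent for $\textbf{Disp}^W$ matches the paper's citation even more closely.
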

\begin{proof}
	Obviously $\psi_{R'}$ is a morphism of displays if $\psi$ is. For the converse, we want the following diagram to commute:
	\begin{center}
		\begin{tikzcd}
			M^\sigma
				\arrow[r, "\psi^\sigma"] \arrow[d, "F^\sharp"]
			& (M')^\sigma 
				\arrow[d, "(F')^\sharp"]
			\\ M^\tau
				\arrow[r, "\psi^\tau"]
			& (M')^\tau
		\end{tikzcd}
	\end{center}
	The diagram commutes after base change to $W(R')$, so the result follows from Witt vector descent for finitely generated projective modules, \cite[Corollary 34]{Zink2002}.
\end{proof}

%

\subsection{1-displays}\label{sub-1displays}
Fix a $p$-adic ring $R$. Following the notation in Definition \ref{def-wittframe}, let us denote the Witt vector Frobenius on $W(R)$ by $f$. 
\begin{Def}\label{def-zink}
	A \textit{Zink display} over $R$ is a quadruple $\underline{P} = (P_0, P_1, F_0, F_1)$ consisting of a finitely generated projective $W(R)$-module $P_0$, $P_1 \subseteq P_0$ is a submodule and $F_i: P_i \to P_0$ is an $f$-linear map, for $i = 0$ and $1$, such that the following conditions are satisfied:
	\begin{enumerate}[(i)]
		\item  $I_RP_0 \subseteq P_1 \subseteq P_0$, and the filtration 
		\begin{align*}
			0 \subseteq P_1 / I_R P_0 \subseteq P_0 / I_R P_0
		\end{align*}
		has finitely generated projective $R$-modules as graded pieces.
		\item $F_1: P_1 \to P_0$ is an $f$-linear epimorphism.
		\item For $x \in P_0$ and $\xi \in W(R)$ we have 
		\begin{align*}
			F_1(v(\xi)\cdot x ) = \xi\cdot F_0(x).
		\end{align*}	
	\end{enumerate}
\end{Def}

We remark that Zink displays are frequently referred to only as ``displays'' in the literature, and in \cite{Zink2002} they are called ``not-necessarily-nilpotent'' (or 3n-) displays. The filtration in (i) of Definition \ref{def-zink} is called the Hodge filtration of $\underline{P}$. 

\begin{lemma}\label{lem-zink}
	The category of Zink displays over $R$ is equivalent to the category of 1-displays over $\underline{W}(R)$. 
\end{lemma}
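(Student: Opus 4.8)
The plan is to construct functors in both directions between Zink displays over $R$ and $1$-displays over $\underline{W}(R)$, and show they are mutually quasi-inverse. Recall that a $1$-display is an effective display $(M,F)$ with $a(M) = 1$, so the module $M$ is concentrated in graded degrees $0$ and $1$ (up to the negative-degree copies coming from $S_{\le 0} = W(R)[t]$); the essential data lives in $M_0$ and $M_1$. The key observation is that if $L = L_0 \oplus L_1$ is a normal decomposition of such an $M$, then $M_0 = L_0 \oplus (I_R \otimes_{W(R)} L_1)$ and $M_1 = L_1 \oplus \cdots$, and $M^\tau \cong L_0 \oplus L_1$ by Lemma \ref{lem-theta} and the discussion of standard data.

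First I would go from a $1$-display $\underline{M} = (M,F)$ to a Zink display. Set $P_0 := M_0$ and $P_1 := $ the image of $M_1$ under $t\colon M_1 \to M_0$ (equivalently $\theta_1$-style, the submodule $\iota_0(M_1) \subseteq M_0$ in the canvas language of Remark \ref{rmk-canvases}), so that $I_R P_0 \subseteq P_1 \subseteq P_0$ holds because $M_1 \supseteq W(R)^\oplus_1 \cdot M_0 = I_R \cdot M_0$. The quotients of the filtration $0 \subseteq P_1/I_R P_0 \subseteq P_0/I_R P_0$ are identified with $\overline{L}_1$ and $\overline{L}_0$, which are finite projective over $R$ by definition of a finite projective graded module, giving (i). The $f$-linear maps are extracted from $F$: using $M^\sigma \cong L_0^{\sigma_0} \oplus L_1^{\sigma_0}$ and $M^\tau \cong L_0 \oplus L_1$, the $\sigma$-linear bijection $F$ restricts to $F_0\colon P_0 \to P_0$ and $F_1\colon P_1 \to P_0$; the relation $\sigma_1(v(\xi)\cdot x) = \xi\, \sigma_0(x)$ built into the Witt frame (Definition \ref{def-wittframe}) translates directly into condition (iii), and bijectivity of $F^\sharp$ forces $F_1$ to be an $f$-linear epimorphism, giving (ii).

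Conversely, from a Zink display $\underline{P} = (P_0, P_1, F_0, F_1)$ I would build the standard datum: choose a normal decomposition $P_0 = L_0 \oplus L_1$ refining $I_R P_0 \subseteq P_1 \subseteq P_0$ (so $P_1 = I_R L_0 \oplus L_1$), set $\Phi\colon L_0 \oplus L_1 \to L_0 \oplus L_1$ to be $F_0|_{L_0} \oplus F_1|_{L_1}$, which is a $\sigma_0$-linear automorphism by condition (ii) plus the projectivity of the Hodge filtration pieces, and then produce the associated display $(M,F)$ with $M = L \otimes_{S_0} W(R)^\oplus$ as in Definition \ref{def-stddatum}. One checks this is independent of the choice of normal decomposition, using the characterization of displays via standard data and the fact that any two normal decompositions differ by an automorphism. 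The two constructions are readily seen to be inverse on objects, and functoriality on morphisms is straightforward: a morphism of Zink displays is a $W(R)$-linear map $P_0 \to P_0'$ preserving $P_1$ and commuting with $F_0, F_1$, which is exactly the data of a graded $W(R)^\oplus$-linear map $M \to M'$ commuting with $F$.

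The main obstacle, and the point deserving the most care, is matching up the $f$-linear structure maps with the $\sigma$-linear bijection $F$ correctly — in particular verifying that the pair $(F_0, F_1)$ subject to (iii) is genuinely equivalent to a single $\sigma$-linear bijection $M \to M^\tau$, rather than merely determining one. This amounts to unwinding the identification $M^\sigma \cong L^{\sigma_0}$ together with how $\sigma$ acts on $W(R)^\oplus$ (namely $\sigma_1(v(\xi)) = \xi$ and $\sigma$ being $f$ on $W(R)$), and checking that the linearization $F^\sharp$ is an isomorphism precisely when $F_1$ is surjective and $F_0$ is ``$F_1$ twisted by $v$'' in the appropriate sense. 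This is essentially the content of \cite[Remark 3.2.5]{Lau2018} specialized to the Witt frame, and with the standard-datum formalism already in place the verification is bookkeeping rather than a genuine difficulty.
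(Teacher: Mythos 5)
Your construction is essentially the paper's proof: pass from a $1$-display to $(P_0,P_1,F_0,F_1)$ by taking $P_0\cong M_0\cong M^\tau$ (the paper works with $M^\tau$ via $\theta_0$ of Lemma \ref{lem-theta}) and $P_1$ the image of $M_1$, verify (i)--(iii) of Definition \ref{def-zink} using a normal decomposition, and go back via Zink's splitting $P_0=L_0\oplus L_1$, $P_1=I_RL_0\oplus L_1$ with the standard datum $\Phi=F_0|_{L_0}\oplus F_1|_{L_1}$ -- the same steps as the paper, which merely packages the comparison as fully faithful plus essentially surjective instead of an explicit inverse functor. Only correct the small slip in your opening identification: with $L=L_0\oplus L_1$ one has $M_0=L_0\oplus\bigl(L_1\otimes_{W(R)}W(R)\cdot t\bigr)$ and $M_1=\bigl(L_0\otimes_{W(R)}I_R\bigr)\oplus L_1$, not $M_0=L_0\oplus\bigl(I_R\otimes_{W(R)}L_1\bigr)$; your later statements (the filtration quotients $\overline{L}_0,\overline{L}_1$ and $I_RP_0\subseteq P_1$) already presuppose the correct version.
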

\begin{proof}
	Let $\underline{M} = (M,F)$ be a 1-display over $\underline{W}(R)$. Since $d(M) \ge 0$,  we know $\theta_0: M_0 \to M^\tau$ is bijective, cf. Lemma \ref{lem-theta}. We claim additionally that $\theta_1: M_1 \to M^\tau$ is injective. Indeed, by \cite[Lemma 3.1.4]{Lau2018} and Lemma \ref{lem-type}, we can choose a normal decomposition $L$ for $M$ such that $L_i = 0$ if $i \notin {0,1}$. Then 
	\begin{align}\label{eq-m1}
		M_1 = (L_0 \otimes_{W(R)} I_R) \oplus (L_1 \otimes_{W(R)} W(R)),
	\end{align}
	and $\theta_1: M_1 \to M^\tau = L$ is given by the natural inclusion. 

	
	Now let $P_0 = M^\tau$ and let $P_1$ be the image of $M_1$ under $\theta_1$. Then $P_0$ is a finitely generated projective $W(R)$-module, and $P_1$ is a submodule. The restrictions of $F$ to $M_0$ and $M_1$ are $f$-linear homomorphisms of $W(R)$-modules $M_i \to M^\tau$. Then $F_i := F \res_{M_i} \circ \theta_i^{-1}: P_i \to P_0$ is also $f$-linear for $i = 0,1.$ We claim $(P_0, P_1, F_0, F_1)$ is a Zink display. 
	
	The first part of condition (i) in Definition \ref{def-zink} is immediate. To verify the remaining conditions, choose a normal decomposition $L = L_0 \oplus L_1$ for $M$ as above, and let $\Phi = F \res_L$, so $(L, \Phi)$ is a standard datum for $\underline{M}$. The $W(R)$-module $M_1$ can be written as in (\ref{eq-m1}), and we see
	\begin{align*}
		M_0 = (L_0 \otimes_{W(R)} W(R)) \oplus (L_1 \otimes_{W(R)} W(R)\cdot t).
	\end{align*}
	The second part of condition (i) follows because $P_0 / P_1 \cong L_0 \otimes_{W(R)} R$, and $P_1 / I_R P_0 \cong L_1 \otimes_{W(R)} R$. Condition (ii) is equivalent to the condition that $( F\res_{M_1})^\sharp: (M_1)^f \to M^\tau$ is surjective. Since $\Phi^\sharp: L^f \to L$ is a $W(R)$-module isomorphism and $L$ is naturally identified with $M^\tau$, it is enough to show for any $x\otimes \xi \in L^f$, there is some $y \in (M_1)^f$ with $F^\sharp(y) = \Phi^\sharp(x\otimes\xi)$. First suppose $x\otimes \xi \in(L_0)^f$. Then $x \otimes v(\xi) \otimes 1 \in (M_1)^f$, and
	\begin{align*}
		F^\sharp(x \otimes v(\xi) \otimes 1) = F(x \otimes v(\xi)) = \sigma(v(\xi)) F(x) = \xi F(x) = \Phi^\sharp(x \otimes \xi).
	\end{align*}
	Now if $x \otimes \xi \in (L_1)^f$, then $x \otimes 1 \otimes \xi \in (M_1)^f$, and 
	\begin{align*}
		F^\sharp(x \otimes 1 \otimes \xi) = \xi F(x) = \Phi^\sharp(x \otimes \xi).
	\end{align*}
	This completes the proof of condition (ii). For condition (iii), let $x = x_0 + x_1 \in P_0 = M^\tau = L$ with $x_0 \in L_0$ and $x_1 \in L_1$. If $\xi \in W(R)$, then 
	\begin{align*}
		\theta_1^{-1}(v(\xi)\cdot x) = x_0 \otimes v(\xi) + x_1 \otimes v(\xi)
	\end{align*}
	with the first $v(\xi)$ viewed as an element of $(W(R)^\oplus)_1 = I_R$ and the second as an element in $(W(R)^\oplus)_0 = W(R)$. Then
	\begin{align*}
		F_1(v(\xi)\cdot x) = \xi(\Phi(x_0) + p\Phi(x_1)).
	\end{align*} 
	But $\theta_0^{-1}(x) = x_0 \otimes 1 + x_1 \otimes t$, so this is the same as $\xi\cdot F_0(x)$.
	
	If $\psi: (M,F) \to (M',F')$ is a morphism of 1-displays, then the $W(R)$-module homomorphism $\psi^\tau$ defines a morphism of the corresponding Zink displays. Hence the association $(M, F) \mapsto (P_0, P_1, F_0, F_1)$ determines a functor from the category of $1$-displays over $\underline{W}(R)$ to the category of Zink displays over $R$, which we claim is an equivalence of categories. Choosing a normal decomposition $L$ for $M$ as above, we see
	\begin{align*}
		M = (L_0 \otimes_{W(R)} W(R)^\oplus) \oplus (L_1 \otimes_{W(R)} W(R)^\oplus(-1)).
	\end{align*}
	It follows that any morphism $\underline{M} \to \underline{M'}$ of 1-displays is uniquely determined by its restriction to $L_0$ and $L_1$, and therefore by its restriction to $M_0$ and $M_1$, so the functor is faithful. 
	
	Now let $\underline{P}$ and $\underline{P'}$ be the Zink displays associated to $1$-displays $(M,F)$ and $(M',F')$, and suppose $\varphi: \underline{P} \to \underline{P'}$ is a morphism of Zink displays. In particular, $\varphi$ is a $W(R)$-module homomorphism $P_0= M^\tau \to (M')^\tau=P_0'$ which sends $P_1=\theta_1(M_1)$ to $P_1'=\theta_1'(M_1')$, and which satisfies
	\begin{align}\label{eq-compat}
		F_0'\circ \varphi = \varphi \circ F_0 ,\ \text{ and } \ F_1'\circ \varphi = \varphi \circ F_1.
	\end{align}
	For $i=0,1$, let $\psi_i$ be the composition
	\begin{align*}
		L_i \hookrightarrow P_i \xrightarrow{\psi} P_i'\xrightarrow{(\theta_i')^{-1}} M_i'.
	\end{align*}
	Then $\psi_0 + \psi_1$ defines a $W(R)$-module homomorphism $L \to M'$ which sends $L_i$ to $M_i'$ for every $i$. Therefore it induces a graded $W(R)^\oplus$-module homomorphism $\psi: M \to M'$. By construction $\psi^\tau = \varphi$, and it follows from the identities (\ref{eq-compat}) that $F'\circ \psi = \psi^\tau \circ F$. Hence the functor is full.
	
	Now suppose $(P_0, P_1, F_0,F_1)$ is a Zink display. By \cite[Lemma 2]{Zink2002}, condition (i) in the definition of Zink displays implies the existence of finitely generated projective $W(R)$-modules $L_0$ and $L_1$ such that $P_0 = L_0 \oplus L_1$ and $P_1 = I_R L_0 \oplus L_1$. If we define $\Phi = F_0 \res_{L_0} \oplus F_1\res_{L_1}$, then $(P_0, \Phi)$ constitutes a standard datum for a 1-display whose resulting Zink display is isomorphic to $(P_0,P_1,F_0,F_1)$. 
\end{proof}


If $(P_0,P_1, F_0,F_1)$ is a Zink display, then by \cite[Lemma 10]{Zink2002} there exists a unique linear map $V^\sharp: F_0 \to F_0^{f}$ characterized by
\begin{align*}
V^\sharp(\xi \cdot F_0(x)) = p\xi \otimes x, \ \text{ and } \ V^\sharp(\xi \cdot F_1(y)) = \xi \otimes y 
\end{align*}
for all $\xi \in W(R)$, $x \in P_0, y \in P_1$. Denote by $V_i^\sharp$ the induced map $P_0^{f^i} \to P_0^{f^{i+1}}$.
\begin{Def}\label{def-nilpotence}
	A Zink display $(P_0, P_1, F_0,F_1)$ over $R$ is \textit{nilpotent} if there exists an $N$ such that the composition
	\begin{align*}
	V^\sharp_N \circ V^\sharp_{N-1} \circ \cdots \circ V^\sharp: P_0 \to P_0^{f^{N+1}}
	\end{align*}
	is zero modulo $I_R +pW(R)$.
\end{Def}
\begin{rmk}\label{rem-nilp}
	If $R = k$ is a perfect field of characteristic $p$, then displays over $k$ are equivalent to Dieudonn\'e modules over $k$, and the nilpotence condition on a display corresponds to topological nilpotence of the Verschiebung operator on the corresponding Dieudonn\'e module, cf. \cite[Proposition 15]{Zink2002}.
\end{rmk} 

For a general $p$-adic ring $R$, Zink defines a functor $BT_R$ from the category of Zink displays over $R$ to the category of formal groups over $R$, and he shows that the restriction of $BT_R$ to the full subcategory of nilpotent displays has essential image contained in the category of $p$-divisible formal groups. The following is the main theorem connecting displays and formal $p$-divisible groups. For many $R$ it was proved by Zink in \cite{Zink2002}, and for all $R$ which are $p$-adic this was proved by Lau in \cite{Lau2008}.

\begin{thm}[Zink, Lau]\label{thm-BT}
	The functor $BT_R$ induces an equivalence of categories between nilpotent Zink displays over $R$ and formal $p$-divisible groups over $R$. Further, $BT_R$ has the following properties: if $\underline{P} = (P_0, P_1, F_0, F_1)$ is a nilpotent Zink display, then
	\begin{enumerate}[\textup{(}i\textup{)}]
		\item \textup{Lie}$\left(BT_R(\underline{P})\right) = P_0 / P_1$
		\item The height of $BT_R(\underline{P})$ is equal to the rank of $P_0$ over $W(R)$.
	\end{enumerate}
\end{thm}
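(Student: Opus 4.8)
\textbf{Proof proposal for Theorem \ref{thm-BT}.}

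The plan is to reduce the statement to the cases already treated in the literature and then deduce the supplementary properties (i) and (ii) from the explicit construction of $BT_R$. First I would recall that Zink in \cite{Zink2002} constructs the functor $BT_R$ for all $p$-adic rings $R$ and proves full faithfulness on the category of nilpotent Zink displays, as well as essential surjectivity onto formal $p$-divisible groups when $R$ is, say, an excellent local ring or a ring with nilpotent nilradical; Lau in \cite{Lau2008} removes these hypotheses and establishes essential surjectivity for every $p$-adic ring $R$. So the equivalence of categories part is simply a citation of these two results combined. I would state this explicitly, noting that full faithfulness is the content of \cite[Theorem 81 and Corollary 89]{Zink2002} (the precise numbering can be adjusted to the version cited) and that the remaining essential surjectivity is \cite[Theorem 1.1]{Lau2008} or the analogous main theorem there.

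For property (i), the identification $\mathrm{Lie}(BT_R(\underline{P})) = P_0/P_1$, I would point to the construction of $BT_R$ itself: Zink builds $BT_R(\underline{P})$ as a functor on nilpotent $R$-algebras whose tangent space at the origin is computed directly from the display datum, and the filtration $P_1/I_RP_0 \subseteq P_0/I_RP_0$ is by design the Hodge filtration, with $P_0/P_1$ playing the role of the Lie algebra. This is recorded in \cite[\S 2]{Zink2002}; the key point is that the Hodge filtration of a Zink display, as defined right after Definition \ref{def-zink}, matches the Hodge filtration of the associated $p$-divisible group under $BT_R$. For property (ii), the height of $BT_R(\underline{P})$ equals $\mathrm{rk}_{W(R)}(P_0)$: over a perfect field this is classical Dieudonné theory (cf. Remark \ref{rem-nilp}), and in general it follows because height is locally constant and can be checked on geometric points, where $P_0$ specializes to the Dieudonné module of the fiber and its $W$-rank is the height of that fiber. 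Alternatively one can cite the compatibility of $BT_R$ with the universal extension / crystal formalism, from which both (i) and (ii) are immediate.

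The main obstacle, to the extent there is one, is purely bibliographic: matching the hypotheses and the exact theorem numbers in \cite{Zink2002} and \cite{Lau2008} so that together they cover all $p$-adic rings $R$, since Zink's original statements carry technical restrictions on $R$ that Lau later lifted. Mathematically there is nothing new to prove here — the theorem is a packaging of known results in a form convenient for the later comparison with Rapoport--Zink spaces — so the proof should consist of precise citations plus a sentence each for (i) and (ii) explaining that they are visible from the construction. I would keep the proof short and refer the reader to \cite[\S 2]{Zink2002} and to \cite{Lau2008} for the details.
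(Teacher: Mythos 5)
Your proposal is correct and coincides with the paper's treatment: the theorem is stated there as a citation of Zink \cite{Zink2002} (construction of $BT_R$, the equivalence for many $R$, and properties (i) and (ii) as part of the construction) together with Lau \cite{Lau2008} (removal of the hypotheses on $R$, giving the equivalence for all $p$-adic rings), and no further argument is given or needed. The only caveat is the one you already flag yourself, namely getting the precise theorem numbers in the two references right.
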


Finally let us mention some aspects the of the connection between the theory of displays and the theory of crystals associated to $p$-divisible groups. Suppose $X$ is a formal $p$-divisible group over a $p$-nilpotent $\zz_p$-algebra $R$. Associated to $X$ is its covariant Dieudonn\'{e} crystal $\mathbb{D}(X)$. Evaluating $\mathbb{D}(X)$ on the trivial PD-thickening $\id_R: R\to R$, we obtain a finite projective $R$-module $\mathbb{D}(X)_R$ equipped with a functorial exact sequence 
\begin{align*}
0 \to \text{Lie}(X^\vee)^\ast \to \mathbb{D}(X)_R \to \text{Lie}(X) \to 0
\end{align*}
which is compatible with base change. Here $\text{Lie}(X)$ is the Lie algebra of $X$, which is a finite projective $R$-module, $X^\vee$ is the Serre dual of $X$, and $(-)^\ast$ denotes linear dual. The filtration
\begin{align*}
	\mathbb{D}(X)_R \supset \text{Lie}(X^\vee)^\ast \supset 0
\end{align*}
is the Hodge filtration of $X$. If $X = BT_R(\underline{P})$ for a nilpotent Zink display $\underline{P} = (P_0, P_1, F_0, F_1)$, then it follows from \cite[Theorem 94]{Zink2002} that there is a canonical isomorphism of finite projective $R$-modules
\begin{align*}
	P_0 \otimes_{W(R)} R \cong \mathbb{D}(X)_R
\end{align*}
which sends the Hodge filtration of $\underline{P}$ to the Hodge filtration of $X$.

\section{$G$-displays}

\subsection{$G$-displays of type $\mu$ and $(G,\mu)$-displays}\label{sub-Gdisplays}

Let $G$ be a flat affine group scheme of finite type over $\zz_p$, let $k_0$ be a finite extension of $\mathbb{F}_p$, and let $\mu: \mathbb{G}_{m,W(k_0)} \to G_{W(k_0)}$ be a cocharacter defined over $W(k_0)$. If $R$ is a $W(k_0)$-algebra, then $W(R)$ is endowed with the structure of a $W(k_0)$-algebra via composition
\begin{align*}
W(k_0) \xrightarrow{\Delta} W(W(k_0)) \to W(R),
\end{align*}
where the first map is the Cartier homomorphism (cf. \cite[Ch VII, Prop 4.2]{Lazard2006}) and the second is induced by functoriality from the $W(k_0)$-algebra structure homomorphism. Then $W(R)^\oplus$ is a graded $W(k_0)$-algebra, and $\sigma: W(R)^\oplus \to W(R)$ extends the Frobenius on $W(k_0)$. A frame whose graded ring and Frobenius satisfy these properties is called a $W(k_0)$-frame, cf. \cite[Definition 5.0.1]{Lau2018}.

Using the cocharacter $\mu$ we define a (right) action of $\mathbb{G}_{m,W(k_0)}$ on $G_{W(k_0)}$ as follows: if $\lambda \in \mathbb{G}_{m,W(k_0)}(R)$ and $g \in G_{W(k_0)}(R)$ for some $W(k_0)$-algebra $R$, define
\begin{align*}
	g \cdot \lambda := \mu(\lambda)^{-1}g \mu(\lambda).
\end{align*}
Write $G_{W(k_0)} = \text{Spec } A$ for a $W(k_0)$-algebra $A$. Then the action defined above also defines an action on $A$. Since $G_{W(k_0)}(R) = \textup{Hom}_{W(k_0)}(A, R)$ for any $W(k_0)$-algebra $R$, there is a canonical bijection between elements of $A$ and natural transformations $G_{W(k_0)} \to \mathbb{A}_{W(k_0)}^1$ given by sending $f \in A$ to evaluation on $f$. Hence we can make the action on $A$ explicit: if $f \in A$ and $\lambda \in \mathbb{G}_{m,W(k_0)}(R)$, define $\lambda \cdot f$ to be the function $G(R) \to R$ given by  
\begin{align*}
(\lambda \cdot f)(g) := f(\mu(\lambda)^{-1}g\mu(\lambda))
\end{align*}
for $g \in G_{W(k_0)}(R)$. 
Giving the collection of these actions as $R$ varies corresponds to a $\zz$-grading on $A$. In particular, $A_n$ is the set of $f \in A$ with $(\lambda \cdot f)(g) = \lambda^n f(g)$ for all $\lambda \in \mathbb{G}_{m,W(k_0)}(R)$, $g \in G(R)$. 

Following \cite[\textsection 5]{Lau2018}, for any $\zz$-graded $W(k_0)$-algebra $S$, let $G(S)_\mu$ be the set of $\mathbb{G}_m$-equivariant morphisms $\text{Spec }S \to G$ over $W(k_0)$. Equivalently, we have
\begin{align*}
	G(S)_\mu = \text{Hom}_{W(k_0)}^0(A,S),
\end{align*}
i.e. $G(S)_\mu$ is the subset of $G_{W(k_0)}(S)$ consisting of $W(k_0)$-algebra homomorphisms which preserve the respective gradings. The Hopf algebra structure for $A$ preserves the grading induced by $\mu$, so $G(S)_\mu$ forms a subgroup of $G_{W(k_0)}(S)$. 

Suppose now $\underline{S} = (S,\sigma,\tau)$ is a $W(k_0)$-frame. The $\zz_p$-algebra homomorphisms $\sigma, \tau: S \to S_0$ induce group homomorphisms 
\begin{align*}
	\sigma, \tau: G(S)_\mu \to G(S_0).
\end{align*}
Indeed, if $g \in G(S)_\mu$, then $\sigma(g)$ (resp. $\tau(g)$) is defined by post-composing $g \in \text{Hom}_{W(k_0)}(A, S)$ with $\sigma: S \to S_0$ (resp. $\tau: S \to S_0$). Using these homomorphisms we define a group action of $G(S)_\mu$ on $G(S_0)$ as follows:
\begin{equation}\label{eq-action}
	G(S_0) \times G(S)_\mu \to G(S_0), \ (x,g) \mapsto \tau(g)^{-1}x \sigma(g).
\end{equation}

Let us restrict our focus to the Witt frame, $\underline{W}(R)$ (cf. \ref{def-wittframe}). We define two group-valued functors on $W(k_0)$-algebras as follows: if $R$ is a $W(k_0)$-algebra, let
\begin{align*}
	L^+G(R) := G(W(R)), \text{ and } L^+_\mu G(R) := G(W(R)^\oplus)_\mu.
\end{align*}
By \cite[Lemma 5.4.1]{Lau2018} these are representable functors. We will refer to the $W(k_0)$-group scheme $L^+G$ as the \textit{positive Witt loop group scheme}, and to $L^+_\mu G$ as the \textit{display group} for the pair $(G,\mu)$. 

\begin{Def}[Lau] \label{def-gdisp} The stack of \textit{$G$-displays of type $\mu$} is the fpqc quotient stack 
\begin{align*}
	\Gdisp = [L^+G / L^+_\mu G].
\end{align*}
over \textbf{Nilp}$_{W(k_0)}$, where $L^+_\mu G$ acts on $L^+G$ via the action (\ref{eq-action}). 
\end{Def}

Explicitly, for a $p$-nilpotent $W(k_0)$-algebra $R$, $\Gdisp(R)$ is the groupoid of pairs $(Q,\alpha)$, where $Q$ is an fpqc-locally trivial $L^+_\mu G$-torsor over Spec $R$ and $\alpha: Q \to L^+G$ is an $L^+_\mu G$-equivariant map. 
If $(G,\mu)$ and $(G',\mu')$ are two pairs as above, and $\varphi: G \to G'$ is a $\zz_p$-group scheme homomorphism such that $\varphi \circ \mu = \mu'$, then  $\varphi$ induces a morphism of stacks
\begin{align*}
	\Gdisp \to G'\text{-}\textbf{Disp}^W_{\mu'}.
\end{align*}
Indeed, $\mathbb{G}_m$-equivariance of $\varphi$ furnishes us with a group homomorphism $G(S)_\mu \to G'(S)_{\varphi\circ\mu}$, and since $\varphi$ is defined over $\zz_p$ it commutes with $\sigma$ and $\tau$. On the level of objects, the pair $(Q,\alpha)$ is sent to $(Q^\varphi, \alpha')$, where $Q^\varphi$ is the pushforward of $Q$ along $\varphi: L^+_\mu G \to L^+_{\mu'} G'$ and $\alpha'$ is the induced $L^+_{\mu'} G'$-equivariant morphism $Q^\varphi \to L^+G'$.

\begin{rmk}\label{rmk-gln}
Let $I = (i_1, i_2, \dots, i_n) \in \zz^n$ with $i_1 \le i_2 \le \cdots \le i_n$, and define a cocharacter 
\begin{align*}
	\mu_I : \mathbb{G}_m \to \text{GL}_n, \ \lambda \mapsto \text{diag}(\lambda^{i_1}, \lambda^{i_2}, \dots, \lambda^{i_n}).
\end{align*}
Then $\text{GL}_n$-\textbf{Disp}$^W_{\mu_I}$ is the stack of displays of type $I = (i_1, \dots, i_n)$, cf. \cite[Example 5.3.5]{Lau2018}.

As a particular example, consider the case where $I = (0^{(r)}, 1^{(n-r)})$ for some $r$. Then $\mu= \mu_{r,n}$ is the minuscule cocharacter $\lambda \mapsto \text{diag}(1^{(r)}, \lambda^{(n-r)})$, and $L^+_\mu \text{GL}_n(R)$ consists of block matrices
\begin{align*}
	\begin{pmatrix} A & B \\ C & D \end{pmatrix}
\end{align*}
where 
\begin{itemize}
	\item $A$ is an $r \times r$-matrix whose entries are in $W(R)^\oplus_0 = W(R)$,
	\item $B$ is an $r\times (n-r)$-matrix whose entries are in $W(R)^\oplus_1 = I_R$,
	\item $C$ is an $(n-r) \times r$-matrix whose entries are in $W(R)^\oplus_{-1} = W(R)\cdot t$,
	\item $D$ is an $(n-r) \times (n-r)$-matrix whose entries are in $W(R)^\oplus_0 = W(R)$.
\end{itemize}
In this case, $\textup{GL}_n$-\textbf{Disp}$^W_{\mu_{r,n}}$ is isomorphic to the stack of Zink displays $(P_0, P_1, F_0, F_1)$ with $\text{rk}_{W(R)} P_0 = n$ and $\text{rk}_{R}(P_0 / P_1) = d$, cf. Lemma \ref{lem-zink}.
\end{rmk}

B\"{u}ltel and Pappas define \cite{BP2017} an alternative category of $(G,\mu)$-displays over $R$ in the case where $G$ is reductive over $\zz_p$ and $\mu$ is a minuscule cocharacter defined over $W(k_0)$. Let us briefly explain. Let $P_\mu$ be the parabolic sub-group scheme of $G$ defined by $\mu$ (see \cite[Appendix 1]{BP2017}). Then B\"ultel and Pappas define a closed sub-group scheme $H^\mu$ of $L^+G$ whose points in a $W(k_0)$-algebra $R$ are those elements of $L^+G(R)$ which map to $P_\mu(R)$ under the canonical map $L^+G(R) \to G(R)$. By \cite[Proposition 3.1.2]{BP2017}, there is a group scheme homomorphism
\begin{align*}
	\Phi_{G,\mu}: H^\mu \to L^+G
\end{align*}
such that $\Phi_{G,\mu}(h) = F(\mu(p)h\mu(p)^{-1}) \in G(W(R)[1/p])$, where $F$ is induced from the Witt vector Frobenius. Then a $(G,\mu)$-display over a $W(k_0)$-scheme $S$ is a triple $(Q,P,u)$, where $Q$ is a torsor for $H^\mu$ over $S$, $P$ is the pushforward of $Q$ to $L^+G$, and $u:Q \to P$ is a morphism such that $u(q\cdot h) = u(q)\Phi_{G,\mu}(h)$ for all $h \in H^\mu$, $q \in Q$. In \cite[3.2.7]{BP2017} it is shown that the stack of $(G,\mu)$-displays over $\textbf{Nilp}_{W(k_0)}$ is isomorphic to the fpqc quotient stack $[L^+G/_{\Phi_{G,\mu}} H^\mu]$, where $H^\mu$ acts on $L^+G$ via 
\begin{align*}
	g\cdot h = h^{-1} g \Phi_{G,\mu}(h)
\end{align*}
for $R$ a $p$-nilpotent $\zz_p$-algebra, $h \in H^\mu(R)$, $g \in L^+G(R)$. In \cite[Remark 6.3.4]{Lau2018} Lau proves the following lemma by showing that $\tau$ induces an isomorphism $L^+_\mu G \xrightarrow{\sim} H^\mu$, which is compatible with the actions of $L^+_\mu G$ resp. $H^\mu$ on $L^+G$.
\begin{lemma}\label{lem-bp}
	The stack of $(G,\mu)$-displays as in \textup{\cite{BP2017}} is isomorphic to $\Gdisp$. 
\end{lemma}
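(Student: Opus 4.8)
The plan is to show that the two quotient stack presentations of these categories agree by exhibiting a compatible isomorphism between the acting groups and a $G$-equivariant identification of the spaces being acted upon. Concretely, by Lau's Definition~\ref{def-gdisp}, the stack $\Gdisp$ is the fpqc quotient $[L^+G / L^+_\mu G]$ for the action \eqref{eq-action}, while by \cite[3.2.7]{BP2017} the stack of $(G,\mu)$-displays is the fpqc quotient $[L^+G /_{\Phi_{G,\mu}} H^\mu]$ for the action $g \cdot h = h^{-1} g\, \Phi_{G,\mu}(h)$. So it suffices to produce an isomorphism of $W(k_0)$-group schemes $\tau_\ast\colon L^+_\mu G \xrightarrow{\sim} H^\mu$ together with the identity map $\id\colon L^+G \to L^+G$ on spaces, such that these intertwine the two actions; a routine check (e.g. via the universal property of fpqc quotient stacks, or by unwinding torsor-plus-trivialization data) then yields the claimed isomorphism of stacks.

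First I would construct the map $\tau_\ast$. Since $\mu$ is minuscule, it is defined over $W(k_0)$, and the grading on $A = \mathcal{O}(G_{W(k_0)})$ induced by conjugation via $\mu$ is concentrated in degrees $-1, 0, 1$. For a $p$-nilpotent $W(k_0)$-algebra $R$, an element of $L^+_\mu G(R) = G(W(R)^\oplus)_\mu = \Hom^0_{W(k_0)}(A, W(R)^\oplus)$ is a graded homomorphism, and post-composing with $\tau\colon W(R)^\oplus \to W(R)$ gives $\tau(h) \in G(W(R))= L^+G(R)$. I would check that $\tau(h)$ in fact lands in $H^\mu(R)$, i.e.\ reduces into $P_\mu(R)$ modulo $I_R$: this follows because a graded homomorphism $A \to W(R)^\oplus$, when one quotients by $t W(R)^\oplus_1 = I_R$ in degree $0$ and kills negative degrees, factors through the quotient of $A$ by the ideal generated by strictly positive-degree elements, which (for minuscule $\mu$) cuts out exactly $P_\mu$. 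The fact that $\tau$ induces an \emph{isomorphism} $L^+_\mu G \xrightarrow{\sim} H^\mu$ is precisely the content cited from \cite[Remark 6.3.4]{Lau2018}, so I would invoke it; the inverse sends $h' \in H^\mu(R)$, which maps to $P_\mu(R)$, back across using $\mu$ to spread the $P_\mu$-part into the graded structure, and $\tau$-linearity recovers the values in $W(R)^\oplus$.

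Next I would verify the compatibility of the actions. On the one side, $x \in L^+G(R)$ and $g \in L^+_\mu G(R)$ act by $x \cdot g = \tau(g)^{-1} x\, \sigma(g)$. Under $\tau_\ast$, we have $\tau_\ast(g) = \tau(g)$; so I must check $\sigma(g) = \Phi_{G,\mu}(\tau_\ast(g))$ as elements of $L^+G(R)$ (equivalently of $G(W(R)[1/p])$). Unwinding the definition $\Phi_{G,\mu}(h) = F(\mu(p)\, h\, \mu(p)^{-1})$ with $h = \tau(g)$ and using that $\sigma = f \circ (\text{conjugate-by-}t\text{ then apply }\tau)$ on the relevant graded pieces — more precisely, that $\sigma$ restricted to $A_n$ equals $p^{-n}$ times $f\circ\tau$ in the sense made precise by the frame axioms ($\sigma_{-1}(t) = p$, $\tau_{-1}(t)=1$, $\sigma_0 = f$) — I would match $\sigma(g)$ with $F$ applied to the conjugate of $\tau(g)$ by $\mu(p)$. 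This is exactly the computation that makes the frame Frobenius $\sigma$ on the graded ring encode the $\mu$-twisted Frobenius $\Phi_{G,\mu}$, and it is the heart of Lau's remark. With $x \cdot g = \tau_\ast(g)^{-1}\, x\, \Phi_{G,\mu}(\tau_\ast(g))$ established, the two group actions are identified via $(\id, \tau_\ast)$.

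The main obstacle is the action-compatibility step: getting $\sigma(g) = \Phi_{G,\mu}(\tau(g))$ right requires care about how $\sigma$ behaves on the different graded pieces of $A$ and about the role of $\mu(p)$ as the "universal" degree-shifting element, and about the fact that a priori $\Phi_{G,\mu}$ only lands in $G(W(R)[1/p])$ so one must know the image is actually integral (which is where minusculeness and \cite[Proposition 3.1.2]{BP2017} enter). Since the excerpt explicitly attributes both the isomorphism $\tau_\ast\colon L^+_\mu G \xrightarrow{\sim} H^\mu$ and its compatibility with the two actions to \cite[Remark 6.3.4]{Lau2018}, the cleanest route is simply to cite that remark for both facts and then observe that an isomorphism of groups together with an equivariant isomorphism of the acted-upon schemes induces an isomorphism of the associated fpqc quotient stacks. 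I would close with that formal observation, noting that $[L^+G/L^+_\mu G] \cong [L^+G/_{\Phi_{G,\mu}}H^\mu]$ follows from functoriality of the quotient stack construction.
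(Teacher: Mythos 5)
Your proposal follows essentially the same route as the paper: identify both stacks with their quotient presentations $[L^+G/L^+_\mu G]$ and $[L^+G/_{\Phi_{G,\mu}}H^\mu]$, and invoke Lau's \cite[Remark 6.3.4]{Lau2018} for the fact that $\tau$ gives an isomorphism $L^+_\mu G \xrightarrow{\sim} H^\mu$ compatible with the two actions, whence the quotient stacks agree. The paper's proof is exactly this citation (it offers no further argument), so your additional sketch of why $\tau(h)$ lands in $H^\mu$ and why $\sigma(g)=\Phi_{G,\mu}(\tau(g))$ is extra but harmless detail.
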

\qed
\subsection{Graded fiber functors}
Let $G$ be a flat affine group scheme of finite type over $\zz_p$. Denote by \textbf{Rep}$_{\zz_p}(G)$ the category of representations of $G$ on finite free $\zz_p$-modules. Let $R$ be a $W(k_0)$-algebra.



\begin{Def}
	A \textit{graded fiber functor} over $W(R)^\oplus$ is an exact tensor functor
	\begin{align*}
		\mathscr{F}: \text{\textbf{Rep}}_{\zz_p}(G) \to \text{\textbf{PGrMod}}^W(R).
	\end{align*}
\end{Def}

Denote by \textbf{GFF}$^W(R)$ the category of graded fiber functors over $W(R)^\oplus$. Morphisms in this category are morphisms of tensor functors. If $R \to R'$ is a ring homomorphism and $\mathscr{F}$ is a graded fiber functor over $W(R)^\oplus$, then we define the base change of $\mathscr{F}$ to $W(R')^\oplus$, written $\mathscr{F}_{R'}$, as the composition \textbf{Rep}$_{\zz_p}(G) \to $ \textbf{PGrMod}$^W(R) \to $ \textbf{PGrMod}$^W(R')$. As $R$ varies in \textbf{Nilp}$_{W(k_0)}$ we obtain a fibered category \textbf{GFF}$^W$ whose fiber over $R$ is \textbf{GFF}$^W(R)$. 

\begin{lemma}\label{lem-functor_stack}
	The fibered category \textup{\textbf{GFF}}$^W$ is an fpqc stack in groupoids over \textup{\textbf{Nilp}}$_{W(k_0)}$.
\end{lemma}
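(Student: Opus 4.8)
The statement asserts that $\textbf{GFF}^W$ is an fpqc stack in groupoids over $\textbf{Nilp}_{W(k_0)}$. Since objects of $\textbf{GFF}^W(R)$ are exact tensor functors from the small category $\textbf{Rep}_{\zz_p}(G)$ into $\textbf{PGrMod}^W(R)$, and morphisms are tensor natural transformations, the plan is to deduce stackiness of the functor category from the fact, already recorded as Lemma \ref{lem-pmod_descent}, that $\textbf{PGrMod}^W$ is an fpqc stack. The general principle is that if $\mathscr{C}$ is an fixed essentially small (tensor, exact) category and $\mathscr{X}$ is a stack, then the fibered category $R \mapsto \mathrm{Fun}^{\otimes,\mathrm{ex}}(\mathscr{C}, \mathscr{X}(R))$ of (exact tensor) functors and their morphisms is again a stack; the proof is a diagram-chase reducing descent for functors and morphisms of functors to descent of objects and morphisms in $\mathscr{X}$, applied ``representationwise.''

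Concretely I would proceed in two steps. \emph{Descent for morphisms.} Given $\mathscr{F}, \mathscr{G} \in \textbf{GFF}^W(R)$ and an fpqc cover $R \to R'$, a morphism $\mathscr{F}_{R'} \to \mathscr{G}_{R'}$ satisfying the cocycle condition over $R'' = R' \otimes_R R'$ consists of a compatible family of morphisms $\mathscr{F}(V,\rho)_{R'} \to \mathscr{G}(V,\rho)_{R'}$ in $\textbf{PGrMod}^W(R')$, one for each object $(V,\rho)$, each satisfying the descent condition. By Lemma \ref{lem-pmod_descent} each descends uniquely to a morphism $\mathscr{F}(V,\rho) \to \mathscr{G}(V,\rho)$ over $R$; one then checks, again using uniqueness of descent and applying it to morphisms of $\textbf{Rep}_{\zz_p}(G)$ (naturality) and to tensor products (the constraint $\mathscr{F}(V)\otimes\mathscr{F}(W) \cong \mathscr{F}(V\otimes W)$ and the unit), that the resulting family is a tensor natural transformation over $R$. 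Faithfulness and fullness of the restriction functor follow the same way. \emph{Descent for objects.} Given an fpqc cover $R \to R'$, a graded fiber functor $\mathscr{F}'$ over $W(R')^\oplus$, and a descent datum (an isomorphism $p_1^\ast \mathscr{F}' \cong p_2^\ast \mathscr{F}'$ over $R''$ satisfying the cocycle identity), apply Lemma \ref{lem-pmod_descent} objectwise: for each $(V,\rho)$ the object $\mathscr{F}'(V,\rho)$ with its descent datum glues to a unique $\mathscr{F}(V,\rho) \in \textbf{PGrMod}^W(R)$, and for each morphism of $\textbf{Rep}_{\zz_p}(G)$ the corresponding morphism descends; functoriality is forced by uniqueness. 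The tensor structure constraints and exactness descend because these are (by Lemma \ref{lem-pmod_descent} for isomorphisms of objects, Lemma \ref{lem-morphism_local}-style arguments, and Lemma \ref{lem-sequence} for exactness) conditions that can be checked fpqc-locally. This produces $\mathscr{F} \in \textbf{GFF}^W(R)$ with $\mathscr{F}_{R'} \cong \mathscr{F}'$ compatibly with the descent datum.

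The main obstacle I anticipate is purely bookkeeping: verifying that the descended objectwise data $\{\mathscr{F}(V,\rho)\}$ assemble into a genuine \emph{exact tensor} functor, i.e. that all coherence data (associativity and unit constraints for $\otimes$, compatibility with duals from Lemma \ref{lem-pmod_rigid}) descend and that exactness of $\mathscr{F}$ holds over $R$. Each of these is a diagram in $\textbf{PGrMod}^W(R)$ whose commutativity (resp. exactness) is known after base change to $R'$; one invokes Lemma \ref{lem-pmod_descent} (uniqueness of descent forces commutativity of the descended diagram) and Lemma \ref{lem-sequence} (exactness is fpqc-local). There is no substantive difficulty beyond organizing these verifications. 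One should also note that since $\textbf{PGrMod}^W(R)$ is rigid (Lemma \ref{lem-pmod_rigid}), every tensor natural transformation between graded fiber functors is automatically an isomorphism, so $\textbf{GFF}^W$ is indeed fibered in \emph{groupoids}, as claimed.
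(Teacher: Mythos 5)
Your proposal is correct and follows essentially the same route as the paper: objectwise (representationwise) fpqc descent via Lemma \ref{lem-pmod_descent} for both morphisms and objects, exactness of the descended functor via Lemma \ref{lem-sequence}, and the groupoid property from rigidity of the source and target tensor categories (the paper cites Deligne's result that tensor morphisms between rigid tensor functors are isomorphisms). No substantive difference.
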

\begin{proof}
The proof is essentially the same as that of \cite[Proposition 7.2]{PWZ2015}. Let us summarize the argument.

Both \textbf{Rep}$_{\zz_p}(G)$ and \textup{\textbf{PGrMod}}$^W(R)$ are rigid tensor categories (cf. Lemma \ref{lem-pmod_rigid}), so by \cite[Proposition 1.13]{Deligne1982}, if $\mathscr{F}_1$ and $\mathscr{F}_2$ are graded fiber functors over $W(R)^\oplus$, then every morphism of tensor functors $\mathscr{F}_1 \to \mathscr{F}_2$ is an isomorphism. Hence \textbf{GFF}$^W$ is fibered in groupoids. 

It remains to show \textbf{GFF}$^W$ satisfies effective descent for morphisms and for objects. Let $R$ be a $p$-nilpotent $W(k_0)$-algebra, let $R'$ be a faithfully flat extension, and let $R'' = R' \otimes_R R'$. Suppose $\lambda': (\mathscr{F}_1)_{R'} \to (\mathscr{F}_2)_{R'}$ is a morphism over $R'$ such that the two pullbacks to $R''$ agree. Then for each $(V,\rho) \in \text{Ob}(\text{\textbf{Rep}}_{\zz_p}(G))$, the same holds for ${\lambda'}_\rho$. By Lemma \ref{lem-pmod_descent}, morphisms of finite projective graded $W(R)^\oplus$-modules descend, so we obtain unique morphisms $\lambda_\rho$ for every $(V,\rho)$. Applying descent again, one checks that these morphisms piece together to form a natural transformation $\lambda: \mathscr{F}_1 \to \mathscr{F}_2$ which is compatible with the tensor product.

Finally we prove \textbf{GFF}$^W$ satisfies effective descent for objects. Let $\mathscr{F}'$ be a graded fiber functor over $W(R')^\oplus$ equipped with a descent datum, i.e. equipped with an isomorphism
\begin{align*}
p_1^*\mathscr{F}' \xrightarrow{\sim} p_2^*\mathscr{F}'
\end{align*}
of tensor functors over $W(R'')^\oplus$ satisfying the cocycle condition, where $p_1^*$ and $p_2^*$ denote base change along the maps induced by $r \mapsto r \otimes 1$ and $r \mapsto 1 \otimes r$ from $R' \to R'\otimes R'$, respectively. The given descent datum induces a descent datum on $\mathscr{F}'(V,\rho)$ for each $(V,\rho)$ in \textbf{Rep}$_{\zz_p}(G)$, so, by Lemma \ref{lem-pmod_descent}, for every $(V,\rho)$ we obtain a finite projective $W(R)^\oplus$-module $\mathscr{F}(V,\rho)$ over $R$ whose base change to $R'$ is $\mathscr{F}'(V,\rho)$. As above, one checks that the assignment $(V,\rho) \to \mathscr{F}(V,\rho)$ is functorial, and that the resulting functor is indeed a tensor functor. By Lemma \ref{lem-sequence} exactness is an fpqc local property for finite projective $W(R)^\oplus$-modules, so $\mathscr{F}$ is exact. 
\end{proof}

If $\mathscr{F}_1$ and $\mathscr{F}_2$ are two graded fiber functors over $W(R)^\oplus$, denote by \underline{Isom}$^\otimes(\mathscr{F}_1,\mathscr{F}_2)$ the functor which assigns to an $R$-algebra $R'$ the set Isom$^\otimes((\mathscr{F}_1)_{R'},(\mathscr{F}_2)_{R'})$ of isomorphisms of tensor functors $(\mathscr{F}_1)_{R'} \to (\mathscr{F}_2)_{R'}$. By the Lemma \ref{lem-functor_stack} this is an fpqc sheaf over \textbf{Nilp}$_{R}$. We write \underline{Aut}$^\otimes(\mathscr{F}):=\text{\underline{Isom}}^\otimes(\mathscr{F},\mathscr{F})$ and Aut$^\otimes(\mathscr{F}_R):=\text{Isom}^\otimes(\mathscr{F}_R,\mathscr{F}_R)$. There is a natural action of \underline{Aut}$^\otimes(\mathscr{F}_1)$ on \underline{Isom}$^\otimes(\mathscr{F}_1,\mathscr{F}_2)$ by pre-composition.

If $R$ is a $W(k_0)$-algebra, let us define a tensor functor 
\begin{align*}
\mathscr{C}_{\mu,R}:\text{\textbf{Rep}}_{\zz_p}(G) \to \textup{\textbf{PGrMod}}^W(R)
\end{align*}
attached to any cocharacter $\mu$ of $G$ defined over $W(k_0)$. Given a representation $(V,\rho)$ in $\textbf{Rep}_{\zz_p}(G)$, $\mu$ induces a canonical decomposition
\begin{align}\label{eq_decomp}
V_{W(k_0)} = \bigoplus_{i \in \zz} V_{W(k_0)}^i,
\end{align}
where $V_{W(k_0)} := V\otimes_{\zz_p} W(k_0)$, and 
\begin{align*}
	V_{W(k_0)}^i = \{ v \in V_{W(k_0)} \mid (\rho\circ\mu)(z)\cdot v = z^i v \text{ for all } z \in \mathbb{G}_m(W(k_0))\}.
\end{align*}
By base change along $W(k_0) \to W(R)^\oplus$ we obtain a finite projective graded $W(R)^\oplus$ module
\begin{align*}
	V \otimes_{\zz_p} W(R)^\oplus = V_{W(k_0)} \otimes_{W(k_0)} W(R)^\oplus.
\end{align*}
Any morphism $\varphi: (V,\rho) \to (U, \pi)$ in $\textbf{Rep}_{\zz_p}(G)$ preserves the grading induced by $\mu$, so we have defined a functor 
\begin{align*}
\mathscr{C}_{\mu, R}: \text{\textbf{Rep}}_{\zz_p}(G) \to \textup{\textbf{PGrMod}}^W(R), \ V\mapsto V_{W(k_0)} \otimes_{W(k_0)}W(R)^{\oplus}.
\end{align*}
The resulting functor obviously preserves the tensor product, and it is exact because the underlying modules of objects in \textbf{Rep}$_{\zz_p}(G)$ are free, so all short exact sequences remain exact after tensoring over $W(R)^\oplus$. Then $\mathscr{C}_{\mu,R}$ is a graded fiber functor over $W(R)^\oplus$.

\begin{rmk}
	For any $W(k_0)$-algebra $R$, the $W(k_0)$-algebra structure homomorphism for $W(R)$ factors through $\Delta: W(k_0) \to W(W(k_0))$ by definition. Then we see $\mathscr{C}_{\mu,R}$ is the base change of $\mathscr{C}_{\mu, W(k_0)}$ along \textbf{PGrMod}$^W(W(k_0)) \to$ \textbf{PGrMod}$^W(R)$. We will denote $\mathscr{C}_{\mu,W(k_0)}$ simply by $\mathscr{C}_\mu$.
\end{rmk}

\begin{Def}
	A graded fiber functor $\mathscr{F}$ over $W(R)^\oplus$ is \textit{of type $\mu$} if for some faithfully flat extension $R \to R'$ there is an isomorphism $\mathscr{F}_{R'} \cong \mathscr{C}_{\mu,R'}$.
\end{Def}

\begin{rmk}\label{rmk-typeistype}
	Let $(V,\rho)$ be a representation of $G$ on a finite free $\zz_p$-algebra of rank $n$, and suppose the weights of $\rho\circ\mu$ on $V$ are $\{w_1, \dots, w_r\}$ with $w_1 \le w_2 \le \dots \le w_r$. Let $r_i$ be the rank of $V_{W(k_0)}^{w_i}$. Then define $I:= I_\mu(V,\rho) = (i_1, \cdots, i_n)$ as follows: First let $i_1 = \cdots = i_{r_1} = w_1$. Then for $j\ge1$ and any $r$ satisfying 
	\begin{align*}
		\sum_{k=1}^j r_k < r \le \sum_{k=1}^{j+1} r_k,
	\end{align*}
	let $i_r = w_r$. We see that $\mathscr{C}_{\mu,R}(V,\rho)$ is of type $I$. If $\mathscr{F}$ is any graded fiber functor over $W(R)^\oplus$ of type $\mu$, then it follows from Lemma \ref{lem-localtype} that $\mathscr{F}(V,\rho)$ is of type $I$ as well.
\end{rmk}

Denote by \textbf{GFF}$_\mu(W(R)^\oplus)$ the category of graded fiber functors of type $\mu$ over $W(R)^\oplus$. Base change preserves graded fiber functors of type $\mu$, so we obtain a fibered category \textbf{GFF}$_\mu^W$ whose fiber over $R$ is \textbf{GFF}$_\mu(W(R)^\oplus)$. Since the property of ``being type $\mu$'' is an fpqc-local property, $\textbf{GFF}^W_\mu$ forms a substack of $\textup{GFF}^W$.

For any $\zz_p$-algebra $R$ let \textbf{PMod}$(R)$ be the category of finite projective $R$-modules. Associated to this category we have the canonical fiber functor 
\begin{align*}
	\omega_R: \text{\textbf{Rep}}_{\zz_p}(G) \to \textbf{PMod}(R), \ (V,\rho) \mapsto V\otimes_{\zz_p}R.
\end{align*}
Define \underline{Aut}$^\otimes(\omega)$ to be the fpqc sheaf in groups over \textbf{Nilp}$_{W(k_0)}$ which associates to an $W(k_0)$-algebra $R$ the group of automorphisms of $\omega_{R}$. By Tannakian duality, the assignment $g \mapsto \{\rho(g)\}_{(V,\rho)}$ defines an isomorphism of fpqc sheaves in groups
\begin{align*}
	G \xrightarrow{\sim} \underline{\text{Aut}}^\otimes(\omega),
\end{align*}
cf. \cite[Theorem 44]{Cornut2014} for the statement in this generality.

	
\begin{lemma}\label{lem-hom}
	Let $R$ be a $W(k_0)$-algebra. For all $g \in L^+_\mu G(R)$, the collection $\{\rho(g)\}_{(V,\rho)}$ comprises an element of \underline{\textup{Aut}}$^\otimes(\mathscr{C}_{\mu,R})$.
\end{lemma}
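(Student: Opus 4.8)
The plan is to fix $g \in L^+_\mu G(R) = G(W(R)^\oplus)_\mu$ and check three things about the family $\{\rho(g)\}_{(V,\rho)}$: that each $\rho(g)$ is a morphism in $\textbf{PGrMod}^W(R)$ (i.e. grading-preserving), that the family is natural in $(V,\rho)$, and that it is compatible with the tensor structure. Together these say that $\{\rho(g)\}$ is a tensor endomorphism of $\mathscr{C}_{\mu,R}$; invertibility is then automatic, so $\{\rho(g)\} \in \text{Aut}^\otimes(\mathscr{C}_{\mu,R})$.

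The only nontrivial point is grading-preservation, which I would handle by reducing to $\mathrm{GL}_n$. Given $(V,\rho)$, pick (after base change to $W(k_0)$) a $W(k_0)$-basis $e_1, \dots, e_n$ of $V_{W(k_0)}$ adapted to the weight decomposition $V_{W(k_0)} = \bigoplus_i V^i_{W(k_0)}$, with $e_k$ of weight $i_k$. Under the resulting trivialization $\rho$ becomes a homomorphism $G_{W(k_0)} \to \mathrm{GL}_{n,W(k_0)}$ with $\rho \circ \mu = \mu_I$ for $I = (i_1, \dots, i_n)$, in the sense of Remark \ref{rmk-gln}. Since $\rho$ is a group homomorphism and $\rho \circ \mu = \mu_I$, it intertwines $\mu$-conjugation on $G$ with $\mu_I$-conjugation on $\mathrm{GL}_n$; dualizing the induced gradings on coordinate rings, $\rho$ carries $G(W(R)^\oplus)_\mu$ into $\mathrm{GL}_n(W(R)^\oplus)_{\mu_I}$. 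Now a direct computation (cf. the block description in Remark \ref{rmk-gln}) shows that an element of $\mathrm{GL}_n(W(R)^\oplus)_{\mu_I}$ is a matrix $(a_{kl})$ with $a_{kl} \in W(R)^\oplus_{i_l - i_k}$, and this last condition is exactly the requirement that the associated $W(R)^\oplus$-linear automorphism of $\bigoplus_k W(R)^\oplus \cdot e_k$, with $e_k$ placed in degree $i_k$, be grading-preserving. Since $\bigoplus_k W(R)^\oplus\cdot e_k$ with this grading is precisely $\mathscr{C}_{\mu,R}(V,\rho)$, it follows that $\rho(g)$ is a grading-preserving automorphism of $\mathscr{C}_{\mu,R}(V,\rho)$.

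Naturality and tensor-compatibility are then formal. If $\varphi\colon (V,\rho)\to (U,\pi)$ is a morphism in $\textbf{Rep}_{\zz_p}(G)$, then $\varphi$ respects the $\mu$-weight decompositions, so $\varphi \otimes \mathrm{id}$ is a morphism $\mathscr{C}_{\mu,R}(V,\rho) \to \mathscr{C}_{\mu,R}(U,\pi)$, and the identity $\pi(g)\circ(\varphi\otimes\mathrm{id}) = (\varphi\otimes\mathrm{id})\circ\rho(g)$ is just the $G$-equivariance of $\varphi$ evaluated at the $W(R)^\oplus$-point $g$. For tensor-compatibility, $(\rho\otimes\pi)(g) = \rho(g)\otimes\pi(g)$ under the canonical isomorphism $\mathscr{C}_{\mu,R}(V\otimes U,\rho\otimes\pi) \cong \mathscr{C}_{\mu,R}(V,\rho)\otimes_{W(R)^\oplus}\mathscr{C}_{\mu,R}(U,\pi)$ of graded modules, since the tensor representation carries the tensor coaction; and $g$ acts trivially on the trivial representation, which is the unit object. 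Finally $g^{-1}\in L^+_\mu G(R)$ and $\rho(g^{-1}) = \rho(g)^{-1}$ for every $\rho$, so $\{\rho(g^{-1})\}$ is a two-sided inverse of $\{\rho(g)\}$, placing it in $\text{Aut}^\otimes(\mathscr{C}_{\mu,R})$ — alternatively one may invoke \cite[Proposition 1.13]{Deligne1982}, as in the proof of Lemma \ref{lem-functor_stack}. The main obstacle is the identification in the second paragraph between the intrinsic notion of a grading-preserving automorphism of $\mathscr{C}_{\mu,R}(V,\rho)$ and the group-theoretic condition of lying in $\mathrm{GL}_n(W(R)^\oplus)_{\mu_I}$; this is pure bookkeeping with grading conventions, and is cleanest to carry out through the explicit matrix description of Remark \ref{rmk-gln}.
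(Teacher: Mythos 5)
Your proposal is correct and follows essentially the same route as the paper: the paper also reduces to $\mathrm{GL}$ by noting $\rho(g)\in\mathrm{GL}(V_{W(k_0)}\otimes_{W(k_0)}W(R)^\oplus)_{\rho\circ\mu}$, chooses a weight-adapted basis so that such an element is a matrix $(h_{ij})$ with $h_{ij}\in W(R)^\oplus_{n_j-n_i}$, and verifies by the same degree count $n_i+(n_j-n_i)+(\ell-n_j)=\ell$ that the grading is preserved. The naturality, tensor-compatibility, and invertibility points you spell out are treated as formal in the paper (they come from the Tannakian description of $\underline{\mathrm{Aut}}^\otimes(\omega)$ recalled just before the lemma), so nothing is missing.
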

\begin{proof}
	For every $(V,\rho)$, we have
	\begin{align*}
		\rho(g) \in \text{GL}({V_{W(k_0)} \otimes_{W(k_0)}} W(R)^\oplus)_{\rho\circ\mu},
	\end{align*} 
	so it is enough to show that any $h \in \text{GL}(V_{W(k_0)} \otimes_{W(k_0)} W(R)^\oplus)_{\rho\circ\mu}$ preserves the grading on $V_{W(k_0)} \otimes_{W(k_0)} W(R)^\oplus$ for all $(V,\rho)$. 
	
	Suppose rk$_{\zz_p}(V) = r$, and choose an ordered basis $\{v_1, \dots, v_r\}$ for $V_{W(k_0)}$ over $W(k_0)$ such that each $v_i \in V_{W(k_0)}^{n_i}$ and $n_1 \le n_2 \le \dots \le n_r$. Relative to this basis we have 
	\begin{align*}
		\text{GL}(V_{W(k_0)} \otimes_{W(k_0)} W(R)^\oplus) \cong \text{GL}_r(W(R)^\oplus),
	\end{align*}
	and $(\rho\circ\mu)(z) = \text{diag}(z^{n_1}, \dots, z^{n_r})$. Let $A$ be the coordinate ring of $\text{GL}_{r,W(k_0)}$, so 
	\begin{align*}
		A = W(k_0)[X_{ij}, Y]_{i,j=1}^r/(\det(X_{ij})Y-1).
	\end{align*}
	If $\lambda \in \mathbb{G}_m(W(k_0))$, the action of $\lambda$ on $A$ is given by 
	\begin{align*}
		X_{ij} \mapsto \lambda^{n_j-n_i}X_{ij}.
	\end{align*}
	Then any $h \in \text{GL}_r(W(R)^\oplus)_{\rho\circ\mu}$ is represented by a matrix $(h_{ij})_{ij}$ with $h_{ij} \in W(R)^\oplus_{n_j-n_i}$. Let $v \in (V_{W(k_0)} \otimes_{W(k_0)} W(R)^\oplus)_\ell$.  We need to show $h(v) \in (V_{W(k_0)} \otimes_{W(k_0)} W(R)^\oplus)_\ell$. Write
	\begin{align*}
		v = \sum_{j=1}^r v_j \otimes \xi_j,
	\end{align*}
	where $\xi_j \in W(R)^\oplus_{\ell-n_j}$ for all $j$. Then
	\begin{align*}
		h(v) = \sum_{j=1}^r\sum_{i=1}^r v_i \otimes h_{ij}\xi_j,
	\end{align*}
	and $v_i \otimes h_{ij}\xi_j$ is of degree $n_i + (n_j - n_i) + (\ell-n_j) = \ell$ as desired.
\end{proof}

It follows from the lemma that the assignment $g \mapsto \{\rho(g)\}_{(V,\rho)}$ induces a homomorphism of fpqc sheaves of groups on \textup{\textbf{Nilp}}$_{W(k_0)}$
\begin{align}\label{eq-hom}
\Psi: L^+_\mu G \to \underline{\textup{Aut}}^\otimes(\mathscr{C}_{\mu}).
\end{align}

\begin{thm}\label{thm-isom}
	The homomorphism \textup{(}\ref{eq-hom}\textup{)} is an isomorphism. 
\end{thm}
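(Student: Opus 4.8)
The plan is to reduce the statement to ordinary Tannakian reconstruction for $G$ by making the grading torus explicit. First I would note that postcomposing $\mathscr{C}_{\mu,R}$ with the functor $\textbf{PGrMod}(W(R)^\oplus)\to\textbf{PMod}(W(R)^\oplus)$ forgetting gradings recovers the canonical fiber functor $\omega_{W(R)^\oplus}$; consequently there is a faithful homomorphism $\textup{Aut}^\otimes(\mathscr{C}_{\mu,R})\hookrightarrow\textup{Aut}^\otimes(\omega_{W(R)^\oplus})$, and in fact its image is exactly the subgroup of those tensor automorphisms of $\omega_{W(R)^\oplus}$ which preserve the grading on every $\mathscr{C}_{\mu,R}(V,\rho)$ (grading-preservation being a property of a module automorphism, not extra data). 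Using the Tannakian isomorphism $G\xrightarrow{\sim}\underline{\textup{Aut}}^\otimes(\omega)$ recalled above, applied over the $\zz_p$-algebra $W(R)^\oplus$, the group $\textup{Aut}^\otimes(\omega_{W(R)^\oplus})$ is identified with $G(W(R)^\oplus)$, and under this identification the map on $R$-points becomes the inclusion of $L^+_\mu G(R)=G(W(R)^\oplus)_\mu$ into $\{g\in G(W(R)^\oplus): \text{each }\rho(g)\text{ is grading-preserving}\}$; the fact that this is well defined is precisely Lemma~\ref{lem-hom}. Hence the map is injective on $R$-points, and it remains to prove the reverse inclusion: if $g\in G(W(R)^\oplus)$ is such that $\rho(g)$ preserves the grading on $\mathscr{C}_{\mu,R}(V,\rho)$ for every $(V,\rho)$, then $g\in G(W(R)^\oplus)_\mu$.

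For this I would encode membership in $G(W(R)^\oplus)_\mu$ as a $\mathbb{G}_m$-equivariance identity. Write $A$ for the coordinate ring of $G_{W(k_0)}$ with its $\mu$-conjugation grading, let $B=W(R)^\oplus[\lambda,\lambda^{-1}]$, and let $\delta\colon W(R)^\oplus\to B$ be the coaction of the grading, $\delta(w)=\lambda^n w$ for $w$ homogeneous of degree $n$. A $W(k_0)$-algebra homomorphism $g\colon A\to W(R)^\oplus$ lies in $G(W(R)^\oplus)_\mu$ if and only if $\delta\circ g=(g\otimes 1)\circ\gamma_A$, where $\gamma_A$ is the comorphism of the conjugation action $g\cdot\lambda=\mu(\lambda)^{-1}g\mu(\lambda)$ that defines the grading on $A$; equivalently, writing $g'\in G(B)$ for the $B$-point $\delta\circ g$ and $\bar g\in G(B)$ for the image of $g$, the condition reads $g'=\mu(\lambda)^{-1}\,\bar g\,\mu(\lambda)$ in $G(B)$. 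Applying Tannakian reconstruction over $B$, this equality of $B$-points is equivalent to the equality $\rho(g')=\rho(\mu(\lambda))^{-1}\rho(\bar g)\rho(\mu(\lambda))$ of $B$-linear automorphisms of $V\otimes_{\zz_p}B$ for every $(V,\rho)$. Finally a direct computation—using that $\rho(g')(v\otimes 1)=(1\otimes\delta)(\rho(g)(v\otimes 1))$ and that $\rho(\mu(\lambda))$ acts on the summand $V_{W(k_0)}^i\otimes B$ by $\lambda^i$—shows that on a homogeneous weight vector $v\in V_{W(k_0)}^{i_0}$ the two sides agree exactly when $\rho(g)(v\otimes 1)$ is homogeneous of degree $i_0$ in $\mathscr{C}_{\mu,R}(V,\rho)$; since the vectors $v\otimes 1$ with $v$ a weight vector generate $V\otimes_{\zz_p}B$ over $B$ (here one uses that $V_{W(k_0)}$ splits into free weight spaces because $W(k_0)$ is a principal ideal domain), the asserted equality holds for all $(V,\rho)$ precisely when every $\rho(g)$ preserves the grading. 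That is our hypothesis, so $g\in G(W(R)^\oplus)_\mu$ and the map is surjective on $R$-points. As $\Psi$ is a morphism of fpqc sheaves that is bijective on $R$-points for every $R\in\textbf{Nilp}_{W(k_0)}$, it is an isomorphism.

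I expect the only real difficulty to be bookkeeping: one must fix compatible conventions for the sign of the grading on $A$, the conjugation action defining it, the weight decomposition of $V_{W(k_0)}$, and the coaction $\delta$ on $W(R)^\oplus$, and verify carefully that $\gamma_A$ is indeed the comorphism of the action used in \S\ref{sub-Gdisplays} to grade $A$. Two further points to check are that the Tannakian reconstruction $G(B)\xrightarrow{\sim}\textup{Aut}^\otimes(\omega_B)$ from \cite{Cornut2014} is valid for the rings $B=W(R)^\oplus$ and $B=W(R)^\oplus[\lambda,\lambda^{-1}]$, which need not be $p$-nilpotent, and that the image of $\textup{Aut}^\otimes(\mathscr{C}_{\mu,R})$ in $\textup{Aut}^\otimes(\omega_{W(R)^\oplus})$ is as described, i.e. a system of grading-preserving module automorphisms is automatically tensor-compatible, natural, and unital as a transformation of $\mathscr{C}_{\mu,R}$ once it is so on underlying modules.
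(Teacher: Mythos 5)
Your proof is correct, and it reaches the conclusion by a genuinely different route than the paper at the decisive step. Both arguments reduce to Cornut's integral Tannakian duality over the (non-$p$-nilpotent) ring $W(R)^\oplus$ — the paper relies on this just as you do, so your worry on that score is no obstacle — but the way the graded subgroup is recovered differs. The paper re-runs Cornut's explicit inverse: it extends $\mathscr{C}_{\mu,R}$ to the ind-category so as to include the regular representation $(A,\rho_{\mathrm{reg}})$, forms $(\varepsilon\otimes\id)\circ\lambda_{\rho_{\mathrm{reg}}}$, and checks directly that this homomorphism $A\to W(R)^\oplus$ is graded, the key computation being that the counit $\varepsilon$ kills every homogeneous component of $A_{W(k_0)}$ of nonzero degree; it then quotes Cornut for the fact that this is a two-sided inverse of $\Psi$. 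You instead avoid the regular representation and the counit altogether: you identify $\textup{Aut}^\otimes(\mathscr{C}_{\mu,R})$ inside $\textup{Aut}^\otimes(\omega_{W(R)^\oplus})\cong G(W(R)^\oplus)$ as the grading-preserving automorphisms, and then prove the purely group-theoretic statement that a point $g\in G(W(R)^\oplus)$ all of whose representations preserve the grading already lies in $G(W(R)^\oplus)_\mu$, by rewriting gradedness of $g\colon A\to W(R)^\oplus$ as the identity $\delta\circ g=\mu(\lambda)^{-1}\,\bar g\,\mu(\lambda)$ of points of $G$ over $W(R)^\oplus[\lambda,\lambda^{-1}]$ and comparing both sides on weight vectors, using only finite free representations and faithfulness of reconstruction (your verification here is consistent with the conventions of Lemma \ref{lem-hom}, as the $\mathrm{GL}_r$ case confirms). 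What your approach buys is that the only input beyond Lemma \ref{lem-hom} is the statement of reconstruction as a black box, with the converse inclusion handled by an elementary weight computation; what the paper's approach buys is an explicit formula for the inverse $\Phi_R$ (useful if one wants to manipulate it later) at the cost of passing through $\textbf{Rep}'_{\zz_p}(G)$ and the regular representation. The housekeeping points you flag — sign conventions for the $\mu$-conjugation grading on $A$, the identification of the image of $\textup{Aut}^\otimes(\mathscr{C}_{\mu,R})$ as the grading-preserving tensor automorphisms, and the validity of \cite{Cornut2014} over arbitrary $\zz_p$-algebras — are all routine and check out.
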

\begin{proof}
	For every $W(k_0)$-algebra $R$, $\Psi_R$ is the restriction of the map $G(W(R)^\oplus) \to \text{Aut}^\otimes(\omega_{W(R)^\oplus})$ given by $g \mapsto \{\rho(g)\}_{(V,\rho)}$. An inverse to this map is constructed in \cite[Theorem 44]{Cornut2014} (cf. also \cite[Theorem 9.2]{Milne2017}). We need only verify the restriction of this inverse to Aut$^\otimes(\mathscr{C}_{\mu,R})$ respects the grading. Let us review the construction of this map. 
	
	Denote by \textbf{Rep}$_{\zz_p}'(G)$ the category whose objects are the representations $(V,\rho)$ of $G$ on $\zz_p$-modules such that 
	\begin{align*}
		(V,\rho) = \varinjlim \ (W,\pi),
	\end{align*}
	where $(W,\pi)$ runs through the partially ordered set of all $G$-sub-representations of $(V,\rho)$ belonging to $\textbf{Rep}_{\zz_p}(G)$. The functor $\mathscr{C}_{\mu,R}$ extends to a functor  
	\begin{align*}
		\mathscr{C}_{\mu,R}': \textbf{Rep}'_{\zz_p}(G) \to \textbf{GrMod}(W(R)^\oplus), \ V \mapsto V_{W(k_0)} \otimes_{W(k_0)} W(R)^\oplus.
	\end{align*}
	If we denote by $\textup{\underline{Aut}}^\otimes(\mathscr{C}_\mu')$ the group-valued functor on $W(k_0)$-algebras given by $R \mapsto \textup{Aut}^\otimes(\mathscr{C}_{\mu,R}')$, then there is a canonical isomorphism
	\begin{align*}
		\textup{\underline{Aut}}^\otimes(\mathscr{C}_\mu) \xrightarrow{\sim} \textup{\underline{Aut}}^\otimes(\mathscr{C}_\mu').
	\end{align*}
	We will abuse notation and also denote the composition $L^+_\mu G \to \underline{\text{Aut}}^\otimes(\mathscr{C}_\mu')$ by $\Psi$. It is enough to define an inverse to this composition. 

	Write $G = $ Spec $A$. Recall the regular representation $\rho_{\text{reg}}$ is the representation of $G$ on $A$, viewed as a $\zz_p$-module, whose comodule morphism is the comultiplication for $A$. Explicitly, if $R$ is a $\zz_p$-algebra, $g \in G(R)$, and $a \in A\otimes_{\zz_p}R$, then $\rho_{\text{reg}}(g)\cdot a$ is defined by
	\begin{align*}
		(\rho_{\text{reg}}(g) \cdot a)(h) = a(hg).
	\end{align*}
	Since $G$ is a flat affine group scheme over a Noetherian ring, \cite[Cor. to Proposition 2]{Serre1968} implies that $\rho_{\text{reg}}$ is an object of \textbf{Rep}$_{\zz_p}'(G)$. One checks that the morphisms  
	\begin{align}\label{eq-G_equiv}
		(\zz_p, \mathbbm{1}) \to (A, \rho_{\text{reg}}) \ \text{ and } \ (A,\rho_{\text{reg}}) \otimes (A,\rho_{\text{reg}}) \to (A, \rho_{\text{reg}}),
	\end{align}
	given by the unit and multiplication respectively, are $G$-equivariant.
	
	Now let $R$ be a $W(k_0)$-algebra and let $\lambda \in \underline{\text{Aut}}^\otimes(\mathscr{C}'_{\mu, R})$, so for every $(V,\rho)$ in \textbf{Rep}$_{\zz_p}'(G)$, 
	\begin{align*}
		\lambda_\rho: V_{W(k_0)} \otimes_{W(k_0)} W(R)^\oplus \to V_{W(k_0)} \otimes_{W(k_0)} W(R)^\oplus
	\end{align*}
	is a graded $W(R)^\oplus$-module automorphism. In particular, $\lambda$ determines a graded $W(R)^\oplus$-module automorphism $\lambda_{\rho_{\text{reg}}}$ of $A \otimes_{\zz_p} W(R)^\oplus$. Moreover, since $\lambda$ is a morphism of tensor functors, functoriality applied to (\ref{eq-G_equiv}) implies $\lambda_{\rho_{\text{reg}}}$ is a graded $W(R)^\oplus$-algebra homomorphism. 
	
	Define $\Phi_R: \textup{\underline{Aut}}^\otimes(\mathscr{C}_{\mu,R}) \to G(W(R))^\oplus$ by assigning to $\lambda \in \textup{\underline{Aut}}^\otimes(\mathscr{C}_{\mu,R})$ the composition
	\begin{align*}
		A_{W(k_0)} \otimes_{W(k_0)} W(R)^\oplus \xrightarrow{\lambda_{\rho_{\text{reg}}}} A_{W(k_0)} \otimes_{W(k_0)} W(R)^\oplus \xrightarrow{\varepsilon \otimes \id_{W(R)^\oplus}} W(R)^\oplus,
	\end{align*}
	where the $\varepsilon$ is the counit for $A_{W(k_0)}$. We claim this composition is a graded $W(R)^\oplus$-module homomorphism, so $\Phi_R(\lambda) \in G(W(R)^\oplus)_\mu = L^+_\mu G(R)$. By assumption $\lambda_{\rho_\text{reg}}$ respects the grading, so it remains only to show $\varepsilon \otimes \id_{W(R)^\oplus}$ is a graded $W(R)^\oplus$-homomorphism. Because the zero element of $W(R)^\oplus$ is homogeneous of degree $n$ for all $n$, it is enough to show $\varepsilon(a) = 0$ if $a \in (A_{W(k_0)})_n$ for $n \ne 0$. Let $a \in (A_{W(k_0)})_n$ and $\lambda \in \mathbb{G}_m(W(k_0))$. Then
	\begin{align*}
		(\lambda \cdot a)(e) = a(\mu(\lambda)^{-1}e\mu(\lambda)) = a(e) = \varepsilon(a),
	\end{align*}
	where $e$ is the identity element of $G(W(k_0))$. But since $a \in (A_{W(k_0)})_n$, 
	\begin{align*}
		(\lambda \cdot a)(e) = \lambda^n a(e) = \lambda^n \varepsilon(a).
	\end{align*}
	Hence $(\lambda^n-1)\varepsilon(a) = 0$ in $W(k_0)$ for all $\lambda \in W(k_0)^\times$, so $\varepsilon(a) = 0$ if $n \ne 0$.
		
	The construction of $\Phi_R$ is functorial in $R$, so as $R$ varies we obtain a natural transformation
	\begin{align*}
		\Phi: \textup{\underline{Aut}}^\otimes(\mathscr{C}_{\mu}') \to L^+_\mu G,
	\end{align*}
	and the verifications in \cite[Theorem 44]{Cornut2014} show $\Phi$ and $\Psi$ compose to the identity in both directions.
\end{proof}

It follows from the theorem that if $\mathscr{F}$ is a graded fiber functor of type $\mu$ over $W(R)^\oplus$, then \underline{Isom}$^\otimes(\mathscr{C}_{\mu,R}, \mathscr{F})$ is an $L^+_\mu G$-torsor, with $L^+_\mu G$ acting by pre-composition. This defines a functor
\begin{align} \label{eq-functor}
	\text{\textbf{GFF}}_\mu^W(R) \to \text{\textbf{Tors}}_{L^+_\mu G} (R), \ \mathscr{F} \mapsto \text{\underline{Isom}}^\otimes(\mathscr{C}_{\mu,R}, \mathscr{F}),
\end{align}
where \textbf{Tors}$_{L^+_\mu G}$ is the fpqc stack of $L^+_\mu G$-torsors over \textbf{Nilp}$_{W(k_0)}$.

\begin{cor}\label{cor-torsors}
	The functor \textup{(}\ref{eq-functor}\textup{)} induces an isomorphism of stacks
	\begin{align*}
		\textup{\textbf{GFF}}_\mu^W \xrightarrow{\sim} \textup{\textbf{Tors}}_{L^+_\mu G}.
	\end{align*}	
\end{cor}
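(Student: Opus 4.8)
\textbf{Proof proposal for Corollary \ref{cor-torsors}.}

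The plan is to construct a quasi-inverse to the functor (\ref{eq-functor}) and then to verify, stack by stack, that the two compositions are naturally isomorphic to the identity. Given an $L^+_\mu G$-torsor $Q$ over a $p$-nilpotent $W(k_0)$-algebra $R$, I would define a graded fiber functor $\mathscr{F}_Q := Q \wedge^{L^+_\mu G} \mathscr{C}_{\mu,R}$ by the usual associated-bundle (contracted-product) construction: for each $(V,\rho)$, set $\mathscr{F}_Q(V,\rho)$ to be the quotient of $Q \times \mathscr{C}_{\mu,R}(V,\rho)$ by the diagonal action of $L^+_\mu G$, where $L^+_\mu G$ acts on $\mathscr{C}_{\mu,R}(V,\rho) = V_{W(k_0)}\otimes_{W(k_0)} W(R)^\oplus$ through the representation $\rho$ (this action is grading-preserving by Lemma \ref{lem-hom}, so the quotient is again a graded $W(R)^\oplus$-module). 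Since $Q$ is fpqc-locally trivial and $\mathscr{C}_{\mu,R}$ takes values in finite projective graded modules, $\mathscr{F}_Q(V,\rho)$ is fpqc-locally isomorphic to $\mathscr{C}_{\mu,R}(V,\rho)$; by Lemma \ref{lem-pmod_descent} it is a finite projective graded $W(R)^\oplus$-module, and exactness descends by Lemma \ref{lem-sequence}, so $\mathscr{F}_Q$ is a graded fiber functor of type $\mu$. Functoriality in $(V,\rho)$ and compatibility with the tensor product are checked fpqc-locally, where they reduce to the corresponding statements for $\mathscr{C}_{\mu,R}$; and functoriality in $Q$ is clear. This defines $\textbf{Tors}_{L^+_\mu G} \to \textbf{GFF}^W_\mu$.

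Next I would exhibit the natural isomorphisms witnessing that these functors are mutually quasi-inverse. In one direction, for a graded fiber functor $\mathscr{F}$ of type $\mu$ with torsor $Q_\mathscr{F} := \underline{\text{Isom}}^\otimes(\mathscr{C}_{\mu,R},\mathscr{F})$, the evaluation map $Q_\mathscr{F} \times \mathscr{C}_{\mu,R}(V,\rho) \to \mathscr{F}(V,\rho)$, $(\varphi, x)\mapsto \varphi_\rho(x)$, is $L^+_\mu G$-invariant (for the diagonal action) and descends to a map $\mathscr{F}_{Q_\mathscr{F}}(V,\rho)\to \mathscr{F}(V,\rho)$; this is an isomorphism because it is so fpqc-locally, where a trivialization of $Q_\mathscr{F}$ identifies both sides with $\mathscr{C}_{\mu,R}(V,\rho)$. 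These maps are natural in $(V,\rho)$ and compatible with tensor products, hence assemble to an isomorphism of tensor functors $\mathscr{F}_{Q_\mathscr{F}} \xrightarrow{\sim} \mathscr{F}$. In the other direction, for an $L^+_\mu G$-torsor $Q$ one has a canonical map $Q \to \underline{\text{Isom}}^\otimes(\mathscr{C}_{\mu,R}, \mathscr{F}_Q)$ sending $q$ to the collection of maps $x \mapsto [q,x]$; this is $L^+_\mu G$-equivariant, and it is an isomorphism because both sides are $L^+_\mu G$-torsors and the map is fpqc-locally an isomorphism (after trivializing $Q$, using Theorem \ref{thm-isom} to identify $\underline{\text{Aut}}^\otimes(\mathscr{C}_{\mu,R})$ with $L^+_\mu G$). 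Since both $\textbf{GFF}^W_\mu$ and $\textbf{Tors}_{L^+_\mu G}$ are fpqc stacks (Lemma \ref{lem-functor_stack} and its substack $\textbf{GFF}^W_\mu$, respectively the standard stackiness of the torsor stack), an equivalence of fibers compatible with base change upgrades to an isomorphism of stacks.

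The only point requiring genuine care — and the step I expect to be the main obstacle — is checking that the associated-bundle construction $\mathscr{F}_Q$ is actually an \emph{exact tensor} functor rather than merely a functor producing finite projective graded modules: one must confirm that the contracted product commutes with the relevant finite limits and colimits (tensor products, duals, kernels and cokernels of $G$-equivariant maps) so that the tensor and exactness structures match those of $\mathscr{C}_{\mu,R}$. All of these assertions are fpqc-local on $R$, and fpqc-locally $Q$ is trivial, so each reduces to the corresponding already-established property of $\mathscr{C}_{\mu,R}$; the work is in verifying that the descent data glue coherently, which is routine given Lemmas \ref{lem-pmod_descent} and \ref{lem-sequence} but must be spelled out. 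The only subtlety beyond this is the use of Theorem \ref{thm-isom}, which is what makes $Q \mapsto \underline{\text{Isom}}^\otimes(\mathscr{C}_{\mu,R}, \mathscr{F}_Q)$ land in (and exhaust) $L^+_\mu G$-torsors rather than torsors under the a priori larger sheaf $\underline{\text{Aut}}^\otimes(\mathscr{C}_\mu)$.
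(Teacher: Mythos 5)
Your proposal is correct, but it takes a different route from the paper. The paper's proof is two lines: it observes that $\textbf{GFF}^W_\mu$ is the (neutral) sub-gerbe of $\textbf{GFF}^W$ generated by $\mathscr{C}_\mu$, invokes Giraud's general theorem identifying a neutral gerbe with the stack of torsors under the automorphism sheaf of a chosen object, and then applies Theorem \ref{thm-isom} to replace $\underline{\text{Aut}}^\otimes(\mathscr{C}_\mu)$ by $L^+_\mu G$. What you do instead is prove the relevant special case of Giraud's theorem by hand: you build the quasi-inverse $Q \mapsto Q \wedge^{L^+_\mu G} \mathscr{C}_{\mu,R}$ and verify the unit and counit are isomorphisms by fpqc-local reduction to the trivial torsor, using Lemma \ref{lem-hom}/Theorem \ref{thm-isom} so that the twisted action is by graded tensor automorphisms and so that $\underline{\text{Isom}}^\otimes(\mathscr{C}_{\mu,R},\mathscr{F}_Q)$ is genuinely an $L^+_\mu G$-torsor. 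The paper's citation buys brevity and hides all coherence checks in Giraud; your construction buys self-containedness and an explicit inverse functor (the ``twist'' of $\mathscr{C}_{\mu,R}$ by a torsor), at the cost of the verifications you flag, namely that the contracted product is an exact tensor functor. One simplification you could make there: rather than checking tensor-compatibility and exactness of the sheaf quotient directly, choose a trivializing cover $R \to R'$, regard the cocycle of $Q$ as a descent datum on the whole fiber functor $\mathscr{C}_{\mu,R'}$ (legitimate by Theorem \ref{thm-isom}), and descend it in one stroke using Lemma \ref{lem-functor_stack}, which already packages descent for objects of $\textbf{GFF}^W$ including exactness and the tensor structure; the type-$\mu$ condition is fpqc-local by definition. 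With that adjustment your argument is complete and fully compatible with the rest of the paper.
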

\begin{proof}
	The substack $\textbf{GFF}^W_\mu$ of $\textbf{GFF}^W$ is the maximal sub-gerbe generated by $\mathscr{C}_{\mu}$, so $\textbf{GFF}^W_\mu \cong \textbf{Tors}_{\text{\underline{Aut}}^\otimes(\mathscr{C}_{\mu})}$ by \cite[Th\'eor\`eme 2.5.1]{Giraud1971}. Hence the result follows from Theorem \ref{thm-isom}.
\end{proof}

\subsection{Tannakian $(G,\mu)$-displays}
Let $G$ and $\mu$ be as in the previous section. In this section we give a Tannakian definition of $G$-displays of type $\mu$ and show that the resulting stack coincides with those defined by Lau and B\"ultel-Pappas. Let $R$ be a $p$-adic $W(k_0)$-algebra.

\begin{Def}
	A \textit{Tannakian $G$-display} over $R$ is an exact tensor functor
	\begin{align*}
		\mathscr{D}: \textup{\textbf{Rep}}_{\zz_p}(G) \to \textup{\textbf{Disp}}^W(R).
	\end{align*}
\end{Def}

As in the previous section such functors form a fibered category over \textbf{Nilp}$_{W(k_0)}$, which we will denote by $G$-\textbf{Disp}$^{W,\otimes}$.

\begin{lemma}\label{lem-displaystack}
	The fibered category $G$\textup{-\textbf{Disp}}$^{W,\otimes}$ is an fpqc stack in groupoids.
\end{lemma}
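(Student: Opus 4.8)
The plan is to mimic the proof of Lemma \ref{lem-functor_stack} verbatim, replacing the target stack $\textbf{PGrMod}^W$ by $\textbf{Disp}^W$ and invoking the display-theoretic analogs of the descent results already established. First I would observe that $\textbf{Disp}(\underline{W}(R))$ is an exact rigid tensor category (Lemma \ref{lem-pmod_rigid}'s display analog, stated just before this section), and $\textbf{Rep}_{\zz_p}(G)$ is a rigid tensor category, so by \cite[Proposition 1.13]{Deligne1982} every morphism of tensor functors $\mathscr{D}_1 \to \mathscr{D}_2$ is an isomorphism; hence $G\text{-}\textbf{Disp}^{W,\otimes}$ is fibered in groupoids. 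It then remains to check effective descent for morphisms and for objects.

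For descent of morphisms, let $R$ be a $p$-nilpotent $W(k_0)$-algebra, $R \to R'$ faithfully flat, $R'' = R' \otimes_R R'$, and suppose $\lambda'\colon (\mathscr{D}_1)_{R'} \to (\mathscr{D}_2)_{R'}$ is a tensor morphism whose two pullbacks to $R''$ agree. For each $(V,\rho)$ the components $\lambda'_\rho$ are morphisms of displays over $\underline{W}(R')$ whose pullbacks to $R''$ agree, so by Lemma \ref{lem-displaydescent} they descend uniquely to morphisms $\lambda_\rho\colon \mathscr{D}_1(V,\rho) \to \mathscr{D}_2(V,\rho)$ of displays over $\underline{W}(R)$; applying descent again (and using that a morphism of the underlying graded $W(R)^\oplus$-modules which becomes a display morphism after faithfully flat base change is itself a display morphism, Lemma \ref{lem-morphism_local}) one checks these piece together into a natural transformation $\lambda\colon \mathscr{D}_1 \to \mathscr{D}_2$ compatible with the tensor product. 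For descent of objects, let $\mathscr{D}'$ be a Tannakian $G$-display over $R'$ equipped with a descent datum relative to $R''$ satisfying the cocycle condition. For each $(V,\rho)$ this induces a descent datum on the display $\mathscr{D}'(V,\rho)$ over $\underline{W}(R')$, which by Lemma \ref{lem-displaydescent} descends to a display $\mathscr{D}(V,\rho)$ over $\underline{W}(R)$ with base change $\mathscr{D}'(V,\rho)$; functoriality in $(V,\rho)$ and compatibility with tensor products follow by reapplying descent, so $\mathscr{D}$ is a tensor functor. Finally, $\mathscr{D}$ is exact because a short exact sequence of displays over $\underline{W}(R)$ is exact precisely when the underlying sequence of graded $W(R)^\oplus$-modules is, and exactness of the latter is fpqc-local by Lemma \ref{lem-sequence}; since $\mathscr{D}_{R'} = \mathscr{D}'$ is exact, so is $\mathscr{D}$.

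The only real point requiring care — and the step I would expect to be the main obstacle if one is not careful — is confirming that the descended functor really lands in \emph{displays} (not merely predisplays), i.e. that $F(\rho)\colon M(\rho) \to M(\rho)^\tau$ remains a $\sigma$-linear bijection after descent; but this is exactly covered by Lemma \ref{lem-displaydescent}, which gives effective descent for objects of $\textbf{Disp}^W$, so the bijectivity of $F(\rho)^\sharp$ descends along with everything else. Everything else is a routine transcription of the argument for Lemma \ref{lem-functor_stack}, so I would simply write: ``The proof is identical to that of Lemma \ref{lem-functor_stack}, using Lemma \ref{lem-displaydescent} in place of Lemma \ref{lem-pmod_descent}, Lemma \ref{lem-morphism_local}, and Lemma \ref{lem-sequence}.''
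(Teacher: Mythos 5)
Your proposal is correct and follows exactly the route the paper takes: its proof of Lemma \ref{lem-displaystack} is literally ``the proof is essentially the same as that of Lemma \ref{lem-functor_stack}, after replacing \textbf{PGrMod}$^W$ by \textbf{Disp}$^W$ everywhere,'' which is what you spell out, with Lemma \ref{lem-displaydescent} standing in for Lemma \ref{lem-pmod_descent}. Your extra attention to the point that the descended $F(\rho)$ stays a $\sigma$-linear bijection is exactly what Lemma \ref{lem-displaydescent} handles, so nothing is missing.
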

\begin{proof}
	The proof is essentially the same as that of Lemma \ref{lem-functor_stack}, after replacing \textbf{PGrMod}$^W$ by \textbf{Disp}$^W$ everywhere.
\end{proof}
Denote by $\upsilon_R$ the natural forgetful functor which sends a display $\underline{M} = (M, F)$ to its underlying finite projective graded $W(R)^\oplus$-module $M$. If $\mathscr{D}$ is a Tannakian $G$-display over $R$, then by composing with the forgetful functor $\upsilon_R$ we obtain a graded fiber functor $\upsilon_R \circ \mathscr{D}:$ \textbf{Rep}$_{\zz_p}(G) \to $ \textbf{PGrMod}$^W(R)$.

\begin{Def}
	A \textit{Tannakian $(G,\mu)$-display} over $R$ is a Tannakian $G$-display $\mathscr{D}$ over $R$ such that $\upsilon_R \circ \mathscr{D}$ is a graded fiber functor of type $\mu$.
\end{Def}

Denote by $G$-\textbf{Disp}$^{W,\otimes}_\mu$ the fibered category over \textbf{Nilp}$_{W(k_0)}$ whose fiber over $R$ is the category of Tannakian $(G,\mu)$-displays over $R$. Evidently  $G$\textup{-\textbf{Disp}}$^{W,\otimes}_\mu$ is a substack of $G\text{-}\textbf{Disp}^{W,\otimes}$.

\begin{cons}\label{cons_GDisp}
	Suppose $\mathscr{D}$ is a Tannakian $(G,\mu)$-display over $R$. We will associate to $\mathscr{D}$ a $G$-display of type $\mu$. By Corollary \ref{cor-torsors},
	\begin{align*}
		Q_{\mathscr{D}} := \text{\underline{Isom}}^\otimes(\mathscr{C}_{\mu,R}, \upsilon_R \circ \mathscr{D})
	\end{align*}
	is an $L^+_\mu G$-torsor over $R$. Let $R'$ be an $R$-algebra and suppose $\lambda: \mathscr{C}_{\mu,R'} \xrightarrow{\sim} \upsilon_{R'}\circ \mathscr{D}_{R'}$ is an isomorphism of tensor functors. If $(V,\rho)$ is in \textbf{Rep}$_{\zz_p}(G)$, write $\mathscr{D}_{R'}(V,\rho) = (M(\rho)',F(\rho)')$. Define $\alpha_{\mathscr{D}}(\lambda)_\rho$ as the composition
	\begin{align*}
		V \otimes_{\zz_p} W(R') \xrightarrow{(\lambda_\rho)^\sigma} (M(\rho)')^\sigma \xrightarrow{(F(\rho)')^\sharp} (M(\rho)')^\tau \xleftarrow{(\lambda_\rho)^\tau} V \otimes_{\zz_p} W(R').
	\end{align*}
	On the left we are implicitly identifying 
	\begin{align*}
		(V \otimes_{\zz_p} W(R')^\oplus)^\sigma \cong V\otimes_{\zz_p} W(R')
	\end{align*}
	using the isomorphism induced by the natural isomorphism of rings $W(R')^\oplus \otimes_{W(R')^\oplus, \sigma} W(R') \xrightarrow{\sim} W(R')$. We have a similar identification on the right when we replace $\sigma$ by $\tau$.

	Because $\lambda: \mathscr{C}_{\mu,R'} \to \upsilon_{R'} \circ \mathscr{D}_{R'}$ is a tensor morphism, it follows that $\{\alpha_{\mathscr{D}}(\lambda)_\rho\}_{(V,\rho)}$ is an element of $\textup{Aut}^\otimes(\omega_{W(R')})$, which is isomorphic to $G(W(R')) = L^+G(R')$ by Tannakian duality. Hence there is some $\alpha_{\mathscr{D}}(\lambda) \in L^+G(R)$ such that $\rho(\alpha_{\mathscr{D}}(\lambda)) = \alpha_{\mathscr{D}}(\lambda)_\rho$ for every $(V,\rho)$. Altogether we have a morphism of fpqc sheaves
	\begin{align*}
		\alpha_{\mathscr{D}}: Q_{\mathscr{D}} \to L^+G.
	\end{align*}
	It remains to show $\alpha_{\mathscr{D}}$ is $L^+_\mu G$-equivariant. For this let $h \in L^+_\mu G(R')$. Then $(\lambda \cdot h)_\rho$ is the composition
	\begin{align*}
		V\otimes_{\zz_p} W(R')^\oplus \xrightarrow{\rho(h)} V\otimes_{\zz_p} W(R')^\oplus \xrightarrow{\lambda_\rho} M(\rho)'.
	\end{align*}
	Hence we see $\alpha_{\mathscr{D}}(\lambda \cdot h)_\rho$ is given by 
	\begin{align*}
		(\rho(h)^\tau)^{-1} \circ ((\lambda_\rho)^\tau)^{-1} \circ F(\rho)^\sharp \circ (\lambda_\rho)^\sigma \circ (\rho(h))^\sigma 
	&= \rho(\tau(h^{-1})\cdot \alpha_{\mathscr{D}}(\lambda)\cdot \sigma(h)).
	\end{align*}
	By Tannakian duality again we obtain $\alpha_{\mathscr{D}}(\lambda \cdot h) = \tau(h^{-1})\cdot \alpha_{\mathscr{D}}(\lambda) \cdot \sigma(h)$, so $\alpha_{\mathscr{D}}$ is $L^+_\mu G$-equivariant. 
\end{cons}
The pair $\left(Q_{\mathscr{D}}, \alpha_{\mathscr{D}}\right)$ comprises a $G$-display of type $\mu$ in the sense of Definition \ref{def-gdisp}. Suppose now $\mathscr{D}_1$ and $\mathscr{D}_2$ are Tannakian $(G,\mu)$-displays over $R$, and write $\mathscr{D}_1(V,\rho) = (M_1(\rho), F_1(\rho))$ and $\mathscr{D}_2(V,\rho) = (M_2(\rho), F_2(\rho))$. Given a morphism of Tannakian $(G,\mu)$-displays $\psi: \mathscr{D}_1 \to \mathscr{D}_2$, we get a morphism 
\begin{align*}
	Q_{\mathscr{D}_1} = \underline{\text{Isom}}^\otimes(\mathscr{C}_{\mu,R}, \upsilon_R \circ \mathscr{D}_1) \to \underline{\text{Isom}}^\otimes(\mathscr{C}_{\mu,R}, \upsilon_R \circ \mathscr{D}_2) = Q_{\mathscr{D}_2}
\end{align*}
by post-composition with $\psi$. This is obviously a morphism of torsors, and if $(V,\rho)$ is a representation of $G$, $\lambda \in Q_{\mathscr{D}_1}(R')$, then $\alpha_{\mathscr{D}_2}(\upsilon_R(\psi) \circ \lambda)_\rho$ is given by
\begin{align*}
	(\lambda_\rho^\tau)^{-1} \circ ((\psi_{R'})_\rho^\tau)^{-1} \circ (F_2(\rho)')^\sharp \circ (\psi_{R'})_\rho^\sigma \circ \lambda_\rho^\sigma,
\end{align*}
where $(\mathscr{D}_i)_{R'}(V,\rho) = (M_i(\rho)', F_i(\rho)')$. But because $(\psi_{R'})\rho$ is a morphism of displays $M_1(\rho)_{\underline{W}(R')} \to M_2(\rho)_{\underline{W}(R')}$, this becomes
\begin{align*}
	(\lambda_\rho^\tau)^{-1} \circ (F_1(\rho)')^\sharp \circ \lambda_\rho^\sigma = \alpha_{\mathscr{D}_1}(\lambda)_\rho.
\end{align*}
We conclude that the morphism $Q_{\mathscr{D}_1} \to Q_{\mathscr{D}_2}$ is a morphism of $G$-displays of type $\mu$, so the construction $\mathscr{D} \mapsto (Q_\mathscr{D} ,\alpha_{\mathscr{D}})$ is functorial. Denote the resulting functor by $T_R$. 

This construction is evidently compatible with base change, so we obtain a morphism of stacks
\begin{align} \label{eq-morphism}
	T: G\textup{-\textbf{Disp}}^{W,\otimes}_\mu \to G\textup{-\textbf{Disp}}^W_\mu, \ \mathscr{D} \mapsto \left(Q_{\mathscr{D}}, \alpha_{\mathscr{D}}\right).
\end{align}

\begin{thm}\label{thm-equiv}
	The morphism \textup{(}\ref{eq-morphism}\textup{)} is an isomorphism of fpqc stacks over \textup{\textbf{Nilp}}$_{W(k_0)}$.
\end{thm}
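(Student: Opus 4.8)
The plan is to reduce to showing that $T_R$ is an equivalence of categories for each $p$-nilpotent $W(k_0)$-algebra $R$. This suffices: $T$ is a morphism of categories fibered in groupoids (both $G\textup{-\textbf{Disp}}^{W,\otimes}_\mu$ and $G\textup{-\textbf{Disp}}^W_\mu$ are stacks in groupoids, the former by Lemma~\ref{lem-displaystack} and the latter because it is a quotient stack), and a morphism of fibered categories that is fiberwise an equivalence is an equivalence. So it remains to check essential surjectivity and full faithfulness of $T_R$.

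\emph{Essential surjectivity.} Given a $G$-display of type $\mu$, i.e.\ an $L^+_\mu G$-torsor $Q$ over $R$ together with an $L^+_\mu G$-equivariant morphism $\alpha\colon Q\to L^+G$, Corollary~\ref{cor-torsors} produces a graded fiber functor $\mathscr{F}$ of type $\mu$ over $W(R)^\oplus$ with a canonical identification $\underline{\textup{Isom}}^\otimes(\mathscr{C}_{\mu,R},\mathscr{F})\cong Q$; this $\mathscr{F}$ will be the underlying module functor $\upsilon_R\circ\mathscr{D}$ of the sought Tannakian $(G,\mu)$-display. To equip each $\mathscr{F}(V,\rho)$ with a display structure I would work fpqc-locally: over a faithfully flat $R\to R'$ choose a section $\lambda$ of $Q$, so that $\lambda$ is an isomorphism $\mathscr{C}_{\mu,R'}\xrightarrow{\sim}\mathscr{F}_{R'}$, and set
\[
F(\rho)^\sharp \;:=\; (\lambda_\rho)^\tau\circ\rho(\alpha(\lambda))\circ\big((\lambda_\rho)^\sigma\big)^{-1}\colon\;\mathscr{F}(V,\rho)^\sigma\xrightarrow{\;\sim\;}\mathscr{F}(V,\rho)^\tau,
\]
using the identifications $\mathscr{C}_{\mu,R'}(V,\rho)^\sigma\cong V\otimes_{\zz_p}W(R')\cong\mathscr{C}_{\mu,R'}(V,\rho)^\tau$ of Construction~\ref{cons_GDisp}. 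Since $\sigma,\tau\colon L^+_\mu G\to L^+G$ are obtained from $\rho(h)$ by base change along $\sigma,\tau$, the equivariance relation $\alpha(\lambda\cdot h)=\tau(h)^{-1}\alpha(\lambda)\sigma(h)$ shows $F(\rho)^\sharp$ is independent of $\lambda$ — the same computation as at the end of Construction~\ref{cons_GDisp}, read backwards — so it glues to a well-defined $W(R)$-isomorphism over $R$ by fpqc descent (Lemma~\ref{lem-pmod_descent}). Since Tannakian duality identifies $\{\rho(\alpha(\lambda))\}_{(V,\rho)}$ with a tensor automorphism of $\omega_{W(R')}$, the pairs $(\mathscr{F}(V,\rho),F(\rho))$ assemble into an exact tensor functor $\mathscr{D}\colon\textbf{Rep}_{\zz_p}(G)\to\textbf{Disp}^W(R)$ — exactness is inherited from $\mathscr{F}$, as a sequence of displays is exact precisely when the underlying sequence of graded modules is — which is a Tannakian $(G,\mu)$-display with $T_R(\mathscr{D})\cong(Q,\alpha)$ by unwinding the definitions.

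\emph{Full faithfulness.} Both fiber categories are groupoids. For Tannakian $(G,\mu)$-displays $\mathscr{D}_1,\mathscr{D}_2$ over $R$, injectivity of $T_R$ on morphisms is clear: a morphism $\psi$ is recovered from the induced morphism of torsors $Q_{\mathscr{D}_1}\to Q_{\mathscr{D}_2}$ by evaluating on a local section and applying faithfully flat descent for module maps. For surjectivity, let $f\colon Q_{\mathscr{D}_1}\to Q_{\mathscr{D}_2}$ be an $L^+_\mu G$-equivariant morphism with $\alpha_{\mathscr{D}_2}\circ f=\alpha_{\mathscr{D}_1}$. Over a faithfully flat $R\to R'$ trivializing $Q_{\mathscr{D}_1}$, pick a section $\lambda_1$ and put $\lambda_2=f(\lambda_1)$; define $\psi'_\rho:=(\lambda_2)_\rho\circ(\lambda_1)_\rho^{-1}$, an isomorphism $\upsilon_{R'}\circ(\mathscr{D}_1)_{R'}\xrightarrow{\sim}\upsilon_{R'}\circ(\mathscr{D}_2)_{R'}$ of graded fiber functors. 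Equivariance of $f$ makes $\psi'$ independent of the choice of $\lambda_1$, hence it descends to an isomorphism $\upsilon_R\circ\mathscr{D}_1\xrightarrow{\sim}\upsilon_R\circ\mathscr{D}_2$; the relation $\alpha_{\mathscr{D}_2}(\lambda_2)=\alpha_{\mathscr{D}_1}(\lambda_1)$ unwinds to say exactly that $\psi'_\rho$ intertwines $F_1(\rho)^\sharp$ and $F_2(\rho)^\sharp$ for each $\rho$, and by Lemma~\ref{lem-morphism_local} the property of being a morphism of displays may be checked fpqc-locally, so $\psi'$ is a morphism $\mathscr{D}_1\to\mathscr{D}_2$ with $T_R(\psi')=f$.

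The step I expect to be the main obstacle is not a single deep input but the bookkeeping in essential surjectivity: one must simultaneously verify that the locally defined $F(\rho)^\sharp$ are independent of the chosen section, natural in $(V,\rho)$, and compatible with tensor products, so that $\mathscr{D}$ is genuinely an exact tensor functor and not merely a family of displays. All three assertions reduce, via Theorem~\ref{thm-isom}, Corollary~\ref{cor-torsors}, and Tannakian duality for $\omega$, to the fact that $\{\rho(\alpha(\lambda))\}_{(V,\rho)}$ is a tensor automorphism of $\omega_{W(R')}$; the care required is in transporting the identifications $\mathscr{C}_{\mu,R'}(V,\rho)^\sigma\cong V\otimes_{\zz_p}W(R')$ and $\mathscr{C}_{\mu,R'}(V,\rho)^\tau\cong V\otimes_{\zz_p}W(R')$ consistently through the construction.
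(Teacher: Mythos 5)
Your proposal is correct and takes essentially the same route as the paper: full faithfulness via Corollary \ref{cor-torsors} together with the fpqc-local check of display-compatibility (Lemma \ref{lem-morphism_local}), and essential surjectivity by trivializing the torsor over a faithfully flat $R\to R'$ and setting $F(\rho)^\sharp=\lambda_\rho^\tau\circ\rho(\alpha(\lambda))\circ(\lambda_\rho^\sigma)^{-1}$, exactly as in the paper's construction of $\mathscr{D}'_{\underline{Q}}$. The only deviation is that the paper stops at local essential surjectivity and concludes by citing the stack-theoretic fact that a fully faithful morphism of stacks is an equivalence once it is locally essentially surjective, whereas you descend the Frobenius to $R$ directly via independence of the chosen section; that works, but note the descent you invoke there is for morphisms of finite projective $W(R)$-modules (Witt-vector descent, as used in Lemma \ref{lem-morphism_local}), not Lemma \ref{lem-pmod_descent}, since $F(\rho)^\sharp$ is a map of $W(R)$-modules rather than of graded $W(R)^\oplus$-modules.
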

\begin{proof}
	Fix a $p$-nilpotent $W(k_0)$-algebra $R$. It is immediate from Corollary \ref{cor-torsors} that $T_R$ is faithful. Let us prove it is full. Let $\mathscr{D}_1$ and $\mathscr{D}_2$ be Tannakian $(G,\mu)$-displays over $R$, and write
	\begin{align*}
		\mathscr{D}_1(V,\rho) = (M_1(\rho), F_1(\rho)) \ \text{ and } \ \mathscr{D}_2(V,\rho) = (M_2(\rho), F_2(\rho))
	\end{align*}
	for every representation $(V,\rho)$ of $G$. Suppose
	\begin{align*}
		\eta: (Q_{\mathscr{D}_1}, \alpha_{\mathscr{D}_1}) \to (Q_{\mathscr{D}_2}, \alpha_{\mathscr{D}_2}) 
	\end{align*}
	is a morphism of $G$-displays of type $\mu$. By Corollary \ref{cor-torsors} there exists some $\psi: \upsilon_R \circ \mathscr{D}_1 \to \upsilon_R \circ \mathscr{D}_2$ which induces $\eta$. For every representation $(V,\rho)$ in \textbf{Rep}$_{\zz_p}(G)$, we obtain a morphism of graded $W(R)^\oplus$-modules 
	\begin{align*}
		\psi_\rho: M_1(\rho) \to M_2(\rho).
	\end{align*}
	The collection of these morphisms is functorial and compatible with tensor product, so it remains only to show $\psi_\rho$ is compatible with $F_1(\rho)$ and $F_2(\rho)$. By Lemma \ref{lem-morphism_local} it is enough to check this condition after a faithfully flat extension of rings $R \to R'$. Choose such an extension with the property that $Q_{\mathscr{D}_1}(R')$ is nonempty, and suppose $\lambda: \mathscr{C}_{\mu,R'} \to \upsilon_{R'} \circ (\mathscr{D}_1)_{R'}$ is an isomorphism of graded fiber functors. 
	
	Let $(V,\rho)$ be a representation of $G$. For brevity, let us write $M_i(\rho)' := M_i(\rho)_{W(R')^\oplus}$ and $F_i(\rho)'$ for the base change of $F_i(\rho)$ to $W(R')^\oplus$. Consider the following diagram:
	\begin{center}
		\begin{tikzcd}
			(M_1(\rho)')^\sigma 
				\arrow[r, "(\lambda_\rho^\sigma)^{-1}"] \arrow[d, "(F_1(\rho)')^\sharp"']
			& V \otimes_{\zz_p} W(R') 
				\arrow[r, "="] \arrow[d, "\rho(\alpha_{\mathscr{D}_1}(\lambda))"']
			& V \otimes_{\zz_p} W(R') 
				\arrow[r, "(\psi_{R'} \circ \lambda)^\sigma_\rho"] \arrow[d, "\rho(\alpha_{\mathscr{D}_2}(\psi_{R'} \circ \lambda))"']
			& (M_2(\rho)')^\sigma
				\arrow[d, "(F_2(\rho)')^\sharp"']
			\\ (M_1(\rho)')^\tau
				\arrow[r, "(\lambda_\rho^\tau)^{-1}"]
			& V \otimes_{\zz_p} W(R') 
				\arrow[r, "="]
			& V \otimes_{\zz_p} W(R') 
				\arrow[r, "(\psi_{R'} \circ \lambda)_\rho^\tau"]
			& (M_2(\rho)')^\tau
		\end{tikzcd}
	\end{center}
	The left- and right-most squares commute by definition of $\alpha_{\mathscr{D}_i}$. Because $\eta$ is a morphism of $G$-displays of type $\mu$, we have
	\begin{align*}\
		\alpha_{\mathscr{D}_1}(\lambda) = \alpha_{\mathscr{D}_2}(\eta (\lambda)) = \alpha_{\mathscr{D}_2}(\psi_{R'} \circ \lambda).
	\end{align*}
	Therefore the middle square and hence the whole diagram commutes. But composition across the top is $(\psi_{R'})_\rho^\sigma$ and across the bottom is $(\psi_{R'})_\rho^\tau$, so commutativity of this diagram means that $(\psi_{R'})_\rho$ is a morphism of displays for every $(V,\rho)$, i.e. that $\psi$ is a morphism of Tannakian $(G,\mu)$-displays which induces $\eta$. We conclude $T_R$ is full.
	
	It remains to show $T_R$ is essentially surjective. Let $\underline{Q} = (Q,\alpha)$ be a $G$-display of type $\mu$ over $R$. By Corollary \ref{cor-torsors}, there is some graded fiber functor $\mathscr{F}$ of type $\mu$ such that $Q \cong \underline{\text{Isom}}^\otimes(\mathscr{C}_{\mu,R}, \mathscr{F})$. Write $\mathscr{F}(V,\rho) = M(\rho)$. By \cite[\href{https://stacks.math.columbia.edu/tag/046N}{Lemma 046N}]{stacks-project} it is enough to show the base change $\underline{Q}_{R'}$ is in the essential image of $T_{R'}$ for some faithfully flat extension $R \to R'$.
	
	Suppose $R \to R'$ is a faithfully flat extension such that $\text{Isom}^\otimes(\mathscr{C}_{\mu,R'},\mathscr{F}_{R'})$ is nonempty. Let $\lambda: \mathscr{C}_{\mu,R'} \to \mathscr{F}_{R'}$ be an isomorphism of graded fiber functors of type $\mu$. Then $\alpha(\lambda) \in L^+G(R')$, so $\rho(\alpha(\lambda))$ is an automorphism of $V \otimes_{\zz_p} W(R')$ for every $(V,\rho)$. Define $F(\rho)'$ to be the $\sigma$-linear homomorphism $M(\rho)' \to (M(\rho)')^\tau$ such that
	\begin{align*}
		(F(\rho)')^\sharp := \lambda_\rho^\tau \circ \rho(\alpha(\lambda)) \circ (\lambda_\rho^\sigma)^{-1}.
	\end{align*}
	Then $\underline{M(\rho)'} = (M(\rho)', F(\rho)')$ is a display over $W(R')^\oplus$. We claim the association $\mathscr{D}_{\underline{Q}}': (V,\rho) \mapsto \underline{M(\rho)'}$ is a Tannakian $(G,\mu)$-display over $R'$. 
	
	First let us show $\mathscr{D}_{\underline{Q}}'$ is functorial. Suppose $\varphi: (V,\rho) \to (U,\pi)$ is a morphism in \textbf{Rep}$_{\zz_p}(G)$. Then $\mathscr{F}(\varphi)$ is a homomorphism of graded $W(R')^\oplus$-modules $M(\rho) \to M(\pi)$, and we need to show that $\mathscr{F}(\varphi)$ is compatible with $F(\rho)'$ and $F(\pi)'$. Consider the following diagram:
	\begin{center}
		\begin{tikzcd}
			(M(\rho)')^\sigma 
				\arrow[r, "(\lambda_\rho^\sigma)^{-1}"] \arrow[d, "(F(\rho)')^\sharp"]
			& V \otimes_{\zz_p} W(R') 
				\arrow[r, "\varphi_{W(R')}"] \arrow[d, "\rho(\alpha(\lambda))"]
			& U \otimes_{\zz_p} W(R')
				\arrow[r, "\lambda_\pi^\sigma"] \arrow[d, "\pi(\alpha(\lambda))"]
			& (M(\pi)')^\sigma
				\arrow[d, "(F(\pi)')^\sharp"]
			\\ (M(\rho)')^\tau
				\arrow[r, "(\lambda_\rho^\tau)^{-1}"] 
			& V \otimes_{\zz_p} W(R')
				\arrow[r, "\varphi_{W(R')}"]
			& U \otimes_{\zz_p} W(R) 
				\arrow[r, "\lambda_\pi^\tau"]
			& (M(\pi)')^\tau
		\end{tikzcd}
	\end{center}
	Again, the outside squares commute by definition of $F(\rho)'$ and $F(\pi)'$. The middle square commutes because $\varphi$ is a morphism in \textbf{Rep}$_{\zz_p}(G)$. Since $\lambda$ is a natural transformation, composition across the top is $\mathscr{F}(\varphi)^\sigma$, and composition across the bottom is $\mathscr{F}(\varphi)^\tau$. Hence $\mathscr{D}_{\underline{Q}}'$ is a functor. A completely analogous argument proves that it is compatible with the tensor product, so $\mathscr{D}_{\underline{Q}}'$ is a Tannakian $(G,\mu)$-display over $R'$.

Now consider $\underline{Q}_{\mathscr{D}'_{\underline{Q}}}$, the $G$-display of type $\mu$ associated to $\mathscr{D}
_{\underline{Q}}$. By definition of $Q_{\mathscr{D}'_{\underline{Q}}}$ and construction of $\mathscr{D}'_{\underline{Q}}$, we have
	\begin{align*}
		Q_{\mathscr{D}'_{\underline{Q}}} = \underline{\text{Isom}}^\otimes(\mathscr{C}_{\mu,R'},\upsilon_{R'} \circ \mathscr{D}'_{\underline{Q}}) = \underline{\text{Isom}}^\otimes(\mathscr{C}_{\mu,R'}, \mathscr{F}_{R'}) \cong Q_{R'}.
	\end{align*}
	By Tannakian duality, under this identification we have $\alpha_{R'} = \alpha_{\mathscr{D}'_{\underline{Q}}}$. Hence $\underline{Q}_{R'} \cong T_{R'}(\mathscr{D}'_{\underline{Q}})$, and $T$ is essentially surjective.
\end{proof}

Combining the theorem with Lemma \ref{lem-bp} we obtain the following corollary:
\begin{cor}
	If $G$ is a reductive group scheme over $\zz_p$ and $\mu$ is a minuscule cocharacter defined over $W(k_0)$ then the stack of $(G,\mu)$-displays \textup(as in \textup{\cite{BP2017}}\textup) is isomorphic to the stack of Tannakian $(G,\mu)$-displays.
\end{cor}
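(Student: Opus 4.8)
The plan is to obtain this statement with essentially no additional work, by concatenating the two main structural results already proved. The target is an isomorphism between the stack of $(G,\mu)$-displays in the sense of \cite{BP2017} and the stack of Tannakian $(G,\mu)$-displays $G\text{-}\textbf{Disp}^{W,\otimes}_\mu$, and I would produce it as a composite of two isomorphisms of fpqc stacks over $\textbf{Nilp}_{W(k_0)}$.

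First I would invoke Theorem \ref{thm-equiv}. That theorem is proved for an arbitrary flat affine group scheme $G$ of finite type over $\zz_p$ together with a cocharacter $\mu$ defined over $W(k_0)$; no reductivity or minusculeness hypothesis is needed. In particular it applies to our reductive $G$ and minuscule $\mu$, yielding an isomorphism of fpqc stacks
\begin{align*}
	T : G\text{-}\textbf{Disp}^{W,\otimes}_\mu \xrightarrow{\ \sim\ } \Gdisp
\end{align*}
between Tannakian $(G,\mu)$-displays and $G$-displays of type $\mu$ in the sense of Lau (Definition \ref{def-gdisp}). Second, since $G$ is now assumed reductive and $\mu$ minuscule, the Bültel--Pappas construction recalled after Remark \ref{rmk-gln} is available, and Lemma \ref{lem-bp} --- which rests on Lau's observation that $\tau$ identifies $L^+_\mu G$ with $H^\mu$ compatibly with the respective actions on $L^+G$ --- provides an isomorphism of stacks between $\Gdisp$ and the stack of $(G,\mu)$-displays of \cite{BP2017}. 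Composing $T$ with the inverse of this latter isomorphism gives the desired isomorphism, and since each factor is an isomorphism of stacks over $\textbf{Nilp}_{W(k_0)}$, so is the composite.

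There is essentially no obstacle. The one point worth flagging is the compatibility of the hypotheses in the two inputs: Lemma \ref{lem-bp} is stated only for $G$ reductive and $\mu$ minuscule, because that is the range in which the Bültel--Pappas definition is formulated, whereas Theorem \ref{thm-equiv} requires merely that $G$ be flat affine of finite type over $\zz_p$; as the former hypotheses are a special case of the latter, both results are simultaneously applicable in the situation of the corollary, and the composite morphism is well-defined.
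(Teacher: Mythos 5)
Your proposal is correct and follows the paper's own argument: the corollary is obtained precisely by composing the equivalence of Theorem \ref{thm-equiv} with the isomorphism of Lemma \ref{lem-bp}, exactly as you do. Your remark about the compatibility of hypotheses (Lemma \ref{lem-bp} requiring reductive $G$ and minuscule $\mu$, while Theorem \ref{thm-equiv} holds more generally) is a fine clarification but not a departure from the paper's route.
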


In \cite{BP2017}, a $(G,\mu)$-display is called banal if the underlying torsor is trivial. We close this section by defining the analogous notion for Tannakian $(G,\mu)$-displays, and by giving a local description of the stack of Tannakian $(G,\mu)$-displays which is formally very similar to that of \cite[3.2.7]{BP2017}. Fix a $p$-nilpotent $W(k_0)$-algebra $R$.

\begin{Def}
	A Tannakian $(G,\mu)$-display $\mathscr{D}$ over $R$ is \textit{banal} if there is an isomorphism $\upsilon_R \circ \mathscr{D} \cong \mathscr{C}_{\mu,R}$.
\end{Def}
Banal Tannakian $(G,\mu)$-displays over $R$ form a full subcategory of Tannakian $(G,\mu)$-displays over $R$, and by definition any Tannakian $(G,\mu)$-display is fpqc-locally banal. 
\begin{cons}\label{cons-DU}
	To any $U \in L^+G(R)$ we can associate a banal $(G,\mu)$-display $\mathscr{D}_U$ as follows. Let $(V,\rho)$ be a representation of $G$ on a finite free $\zz_p$-module. To $(V,\rho)$ we associate the following standard datum:
	\begin{itemize}
		\item $L = V\otimes_{\zz_p} W(R)$ viewed as a graded $W(R)$-module with $L_i = V_{W(k_0)}^i \otimes_{W(k_0)} W(R)$, where $V_{W(k_0)}^i$ is the decomposition (\ref{eq_decomp}) of $V\otimes_{\zz_p} W(k_0)$ induced by $\mu$;
		\item $\Phi_U: L \to L$ is the composition $\rho(U) \circ (\id_V \otimes f)$. 
	\end{itemize}
	Write $F_U: L \otimes_{W(R)} W(R)^\oplus \to L \otimes_{W(R)} W(R)^\oplus$ for the resulting $\sigma$-linear map, defined explicitly as 
	\begin{align*}
	F_U(x \otimes \xi \otimes s) = \sigma(s)\rho(U)(x \otimes f(\xi))
	\end{align*}
	for $x \in V,$ $\xi \in W(R)$ and $s \in W(R)^\oplus$. Then $\mathscr{D}_U$ is the Tannakian $(G,\mu)$-display
	\begin{align*}
		\textbf{Rep}_{\zz_p}(G) \to \textbf{Disp}^W(R), \ (V,\rho) \mapsto (L \otimes_{W(R)} W(R)^\oplus, F_U).
	\end{align*}
	By construction it is clear that $\upsilon_R \circ \mathscr{D}_U = \mathscr{C}_{\mu,R}$, so $\mathscr{D}_U$ is indeed banal.
\end{cons}

In fact, the following lemma shows any banal Tannakian $(G,\mu)$-display is isomorphic to $\mathscr{D}_U$ for some $U \in L^+G(R)$.
\begin{lemma}\label{lemma-local}
	Let $\mathscr{D}$ be a banal Tannakian $(G,\mu)$-display over $R$, and let and let $\lambda: \mathscr{C}_{\mu, R} \xrightarrow{\sim} \upsilon_R \circ \mathscr{D}$ be an isomorphism of graded fiber functors. Define $U = \alpha_{\mathscr{D}}(\lambda)$ as in Construction \ref{cons_GDisp}. Then $\mathscr{D} \cong \mathscr{D}_U$.
\end{lemma}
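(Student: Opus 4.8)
The plan is to show that the given graded-module isomorphism $\lambda$ automatically respects the Frobenius-linear structures, hence promotes to an isomorphism of Tannakian $(G,\mu)$-displays $\mathscr{D}_U \xrightarrow{\sim} \mathscr{D}$. First I would invoke the last line of Construction \ref{cons-DU}, which says $\upsilon_R \circ \mathscr{D}_U = \mathscr{C}_{\mu,R}$. Thus $\lambda$ is literally an isomorphism of graded fiber functors $\upsilon_R \circ \mathscr{D}_U \xrightarrow{\sim} \upsilon_R \circ \mathscr{D}$; in particular the family $\{\lambda_\rho\}_{(V,\rho)}$ is natural in $(V,\rho)$ and compatible with tensor products, so the only remaining point is that for each representation $(V,\rho)$ the map $\lambda_\rho$ is a morphism of displays, i.e.\ (after linearizing) that $F(\rho)^\sharp \circ (\lambda_\rho)^\sigma = (\lambda_\rho)^\tau \circ F_U^\sharp$, where $\mathscr{D}(V,\rho) = (M(\rho), F(\rho))$ and $F_U$ is the Frobenius map of $\mathscr{D}_U(V,\rho)$.

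Next I would unwind both sides under the canonical identifications $(V\otimes_{\zz_p}W(R)^\oplus)^\sigma \cong V\otimes_{\zz_p}W(R) \cong (V\otimes_{\zz_p}W(R)^\oplus)^\tau$ used in Construction \ref{cons_GDisp}. On the one hand, the very definition $U = \alpha_{\mathscr{D}}(\lambda)$ together with Tannakian duality gives $\rho(U) = ((\lambda_\rho)^\tau)^{-1} \circ F(\rho)^\sharp \circ (\lambda_\rho)^\sigma$, i.e.\ $F(\rho)^\sharp \circ (\lambda_\rho)^\sigma = (\lambda_\rho)^\tau \circ \rho(U)$. On the other hand, the standard datum of $\mathscr{D}_U(V,\rho)$ is $(L, \Phi_U)$ with $L = V\otimes_{\zz_p}W(R)$ (graded via $\mu$) and $\Phi_U = \rho(U)\circ(\id_V\otimes f)$, and $M^\tau\cong L$, $M^\sigma\cong L^{\sigma_0}$ (cf.\ the remark after Definition \ref{def-stddatum}); using $F_U(x\otimes\xi\otimes s) = \sigma(s)\rho(U)(x\otimes f(\xi))$ one checks that $F_U^\sharp$ is identified with $\Phi_U^\sharp$, which under $L^{\sigma_0}\cong V\otimes_{\zz_p}W(R)$, $x\otimes\xi\otimes\eta\mapsto x\otimes f(\xi)\eta$, is exactly $\rho(U)$. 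Combining the two computations yields $F(\rho)^\sharp \circ (\lambda_\rho)^\sigma = (\lambda_\rho)^\tau \circ \rho(U) = (\lambda_\rho)^\tau \circ F_U^\sharp$, which is what we wanted.

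Thus each $\lambda_\rho$ is an isomorphism of displays, and since the family is already natural and tensor-compatible, $\lambda \colon \mathscr{D}_U \xrightarrow{\sim} \mathscr{D}$ is an isomorphism of Tannakian $(G,\mu)$-displays. I expect the only subtle point to be the bookkeeping of identifications: one must check that the isomorphism $(V\otimes_{\zz_p}W(R)^\oplus)^\sigma \cong V\otimes_{\zz_p}W(R)$ appearing implicitly in the definition of $\alpha_{\mathscr{D}}$ is the same one that turns $F_U^\sharp$ into $\rho(U)$, which ultimately reduces to the compatibility of $\sigma$ on $W(R)^\oplus$ with the Witt-vector Frobenius $f$ on $W(R) = W(R)^\oplus_0$. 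Once this is pinned down the argument is a formal diagram chase with no genuine obstacle; in fact the whole lemma is essentially a reformulation of Construction \ref{cons_GDisp} read backwards.
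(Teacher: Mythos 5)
Your argument is correct and is essentially the paper's own proof: since $\upsilon_R\circ\mathscr{D}_U=\mathscr{C}_{\mu,R}$, the only thing to check is $\lambda_\rho^\tau\circ\rho(U)=F(\rho)^\sharp\circ\lambda_\rho^\sigma$, which is immediate from the definition of $U=\alpha_{\mathscr{D}}(\lambda)$ in Construction \ref{cons_GDisp}. The paper simply leaves implicit the identification of $F_U^\sharp$ with $\rho(U)$ under the canonical isomorphisms, which you spell out (correctly) as the one mild bookkeeping point.
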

\begin{proof}
	It is clear that $\lambda$ provides an isomorphism between the underlying graded fiber functors. Let $(V,\rho)$ be a representation, and write $\mathscr{D}(V,\rho) = (M(\rho),F(\rho))$. In order to see that $\lambda$ is compatible with the display structures, we need to check $\lambda_\rho^\tau \circ \rho(U) = F(\rho)^\sharp \circ \lambda_\rho^\sigma$. But $U = \alpha_{\mathscr{D}}(\lambda)$, so this follows from Construction \ref{cons_GDisp}.
\end{proof}

We can now give an explicit description of the category of banal Tannakian $(G,\mu)$-displays. This corresponds to the description of banal $(G,\mu)$-displays given in \cite[3.2.7]{BP2017} when $G$ is reductive and $\mu$ is minuscule. Define a category $[L^+G/L^+_\mu G]^\text{pre}(R)$ as follows:
\begin{itemize}
	\item The objects in $[L^+G/L^+_\mu G]^\text{pre}(R)$ are elements $U \in L^+G(R)$;
	\item given $U$, $U'$ in $L^+G(R)$, the set of morphisms $U$ to $U'$ is given by
	\begin{align*}
	\text{Hom}(U,U') = \{h \in L^+G_\mu(R) \mid \tau(h)^{-1}U'\sigma(h) = U\}.
	\end{align*}
\end{itemize}

\begin{prop}\label{prop-local}
	The category of banal Tannakian $(G,\mu)$-displays over $R$ is equivalent to the category $[L^+G/L^+_\mu G]^\text{pre}(R)$. 
\end{prop}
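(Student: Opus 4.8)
\smallskip
\noindent\textbf{Proof proposal.} The plan is to construct an explicit functor
\[
	\Xi\colon [L^+G/L^+_\mu G]^{\text{pre}}(R) \longrightarrow \{\text{banal Tannakian }(G,\mu)\text{-displays over }R\}
\]
and to check it is fully faithful and essentially surjective. On objects, $\Xi$ sends $U \in L^+G(R)$ to the banal display $\mathscr{D}_U$ of Construction \ref{cons-DU}, so that $\upsilon_R\circ\mathscr{D}_U = \mathscr{C}_{\mu,R}$ for every $U$. On morphisms, given $h \in L^+_\mu G(R)$ with $\tau(h)^{-1}U'\sigma(h)=U$, I would set $\Xi(h) = \{\rho(h)\}_{(V,\rho)}$; by Theorem \ref{thm-isom} this is a well-defined element of $\mathrm{Aut}^\otimes(\mathscr{C}_{\mu,R})$, hence in particular a natural transformation $\upsilon_R\circ\mathscr{D}_U \to \upsilon_R\circ\mathscr{D}_{U'}$ of the underlying graded fiber functors. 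The collection $\{\rho(h)\}$ is clearly compatible with tensor products, and $\rho(\cdot)$ being a homomorphism makes $\Xi$ compatible with composition and identities once it is shown to be well-defined.

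The one computation to carry out is that $\{\rho(h)\}$ is a \emph{morphism of displays} $\mathscr{D}_U \to \mathscr{D}_{U'}$ precisely when $\tau(h)^{-1}U'\sigma(h)=U$. For a representation $(V,\rho)$, both $\mathscr{D}_U(V,\rho)$ and $\mathscr{D}_{U'}(V,\rho)$ have underlying module $M = V_{W(k_0)}\otimes_{W(k_0)}W(R)^\oplus$, and by Construction \ref{cons-DU} the linearizations $(F_U(\rho))^\sharp$ and $(F_{U'}(\rho))^\sharp\colon M^\sigma\to M^\tau$ are given by $\rho(U)$ and $\rho(U')$ respectively, followed by the one fixed canonical isomorphism $M^\sigma\xrightarrow{\sim}M^\tau$ coming from $W(R)^\oplus\otimes_{\sigma}W(R)\cong W(R)\cong W(R)^\oplus\otimes_{\tau}W(R)$ and functoriality in $\rho$ (the same identifications used in Construction \ref{cons_GDisp}). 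Under these identifications $\rho(h)^\sigma = \rho(\sigma(h))$ and $\rho(h)^\tau = \rho(\tau(h))$, so the compatibility square for $\{\rho(h)\}$ commutes if and only if $\rho(\tau(h))\circ\rho(U) = \rho(U')\circ\rho(\sigma(h))$, i.e. $\rho(\tau(h)\,U) = \rho(U'\,\sigma(h))$ for all $(V,\rho)$; by Tannakian duality this holds exactly when $\tau(h)\,U = U'\,\sigma(h)$, which is the defining relation of $\mathrm{Hom}(U,U')$. This shows simultaneously that $\Xi$ is well defined on morphisms.

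Faithfulness of $\Xi$ is immediate from Theorem \ref{thm-isom}: if $\{\rho(h)\} = \{\rho(h')\}$ then $h=h'$. For fullness, a morphism of displays $\psi\colon \mathscr{D}_U\to\mathscr{D}_{U'}$ is in particular a morphism of underlying graded fiber functors $\mathscr{C}_{\mu,R}\to\mathscr{C}_{\mu,R}$; by the rigidity argument recalled in the proof of Lemma \ref{lem-functor_stack} (via \cite[Proposition 1.13]{Deligne1982}) it is an automorphism, hence $\psi = \{\rho(h)\}$ for a unique $h\in L^+_\mu G(R)$ by Theorem \ref{thm-isom}, and the computation above forces $\tau(h)^{-1}U'\sigma(h)=U$, so $h\in\mathrm{Hom}(U,U')$ and $\psi = \Xi(h)$. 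Finally, essential surjectivity is exactly Lemma \ref{lemma-local}: given a banal Tannakian $(G,\mu)$-display $\mathscr{D}$, pick an isomorphism $\lambda\colon \mathscr{C}_{\mu,R}\xrightarrow{\sim}\upsilon_R\circ\mathscr{D}$ (which exists since $\mathscr{D}$ is banal), put $U = \alpha_{\mathscr{D}}(\lambda)$, and conclude $\mathscr{D}\cong\mathscr{D}_U = \Xi(U)$. The only mildly delicate point is the bookkeeping in the displayed compatibility square — tracking the canonical identifications $M^\sigma\cong V\otimes_{\zz_p}W(R)\cong M^\tau$ together with $\rho(h)^\sigma = \rho(\sigma(h))$, $\rho(h)^\tau = \rho(\tau(h))$ — but this is precisely the bookkeeping already done in Construction \ref{cons_GDisp}, so no genuinely new difficulty arises.
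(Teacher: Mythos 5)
Your proposal is correct and follows essentially the same route as the paper: the functor $U \mapsto \mathscr{D}_U$, $h \mapsto \Psi(h)=\{\rho(h)\}$, with full faithfulness deduced from Theorem \ref{thm-isom} and essential surjectivity from Lemma \ref{lemma-local}. The compatibility computation you spell out (that the square commutes iff $\rho(\tau(h)U)=\rho(U'\sigma(h))$ for all $\rho$, hence iff $\tau(h)^{-1}U'\sigma(h)=U$ by Tannakian duality) is exactly the step the paper asserts with the phrase ``exactly corresponds,'' so you have simply made its proof explicit.
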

\begin{proof}
	We claim the assignment $U \mapsto \mathscr{D}_U$ determines a functor from $[L^+G/L^+_\mu G]^\text{pre}(R)$ to the category of banal Tannakian $(G,\mu)$-displays over $R$. Let $U$, $U'$ in $L^+G(R)$ and $h \in \text{Hom}(U,U')$. Applying the homomorphism $\Psi$ (cf. (\ref{eq-hom})) to $h$ we obtain a morphism $\Psi(h)$ of the underlying graded fiber functors of $\mathscr{D}_U$ and $\mathscr{D}_{U'}$. These are both equal to $\mathscr{C}_{\mu,R}$, so $\Psi(h) \in \text{Aut}^\otimes(\mathscr{C}_{\mu,R}) = L^+_\mu G(R)$. The condition $\tau(h)^{-1}U'\sigma(h) = U$ exactly corresponds to the condition that $\Psi(h)$ determines a morphism of Tannakian $(G,\mu)$-displays $\mathscr{D}_U \to \mathscr{D}_{U'}$, so the above functor is well-defined. That the functor is fully faithful is an immediate consequence of Theorem \ref{thm-isom}, and that it is essentially surjective follows from Lemma \ref{lemma-local}.
\end{proof}
\subsection{G-quasi-isogenies}
Suppose $R$ is a $p$-adic $\zz_p$-algebra. Then $W(R)$ is endowed with a natural structure of a $\zz_p$-algebra via 
\begin{align*}
\zz_p \xrightarrow{\Delta} W(\zz_p) \to W(R).
\end{align*}
The Frobenius and Verschiebung for $W(R)$ extend in a natural way to $W(R)[1/p]$.

\begin{Def}\label{def-isodisp}
	An \textit{isodisplay} over $R$ is a pair $\underline{N} = (N, \varphi)$ where $N$ is a finitely generated projective $W(R)[1/p]$-module and $\varphi: N \to N$ is an $f$-linear isomorphism. 
\end{Def}

The category of isodisplays over $R$ has a natural structure of an exact tensor category, with tensor product defined by $(N_1, \varphi_1) \otimes (N_2, \varphi_2) := (N_1 \otimes N_2, \varphi_1 \otimes \varphi_2)$, and with exactness inherited from the analogous category defined by omitting the finitely generated projective condition for the $W(R)[1/p]$-modules $N$.

Let $\underline{M} = (M,F)$ be a display over $\underline{W}(R)$, and suppose the depth of $M$ is $d$ (cf.\,Definition \ref{def-depth}). Then we can associate to $\underline{M}$ an isodisplay as follows. Because $d(M) = d$, we have an isomorphism of $W(R)$-modules $M_d \xrightarrow{\theta_d} M^\tau$, cf. Lemma \ref{lem-theta}. Define $\varphi$ as follows: first consider the composition 
\begin{align*}
\varphi': M^\tau \xrightarrow{\theta_d^{-1}} M_d \xrightarrow{F_d} M^\tau,
\end{align*}
where $F_d$ is the restriction of $F$ to $M_d$. This is an $f$-linear endomorphism of $M^\tau$. We claim it induces an $f$-linear automorphism of $M^\tau[1/p]$. Indeed, we can choose a standard datum $(L, \Phi)$, with $L = \bigoplus_{i=d}^a L_i$, so $M = L\otimes_{W(R)} W(R)^\oplus$ and $F(x\otimes s) = \sigma(s) \Phi(x)$ for $x \in L$, $s \in W(R)^\oplus$. Then 
\begin{align*}
	M_d = \bigoplus_{n=0}^{a-d} \left(L_{d+n} \otimes_{W(R)} W(R)^\oplus_{-n}\right),
\end{align*}
and $F_d$ becomes the composition
\begin{align*}
	\bigoplus_{n=0}^{a-d} L_{d+n} \otimes_{W(R)} W(R)^\oplus_{-n} \xrightarrow{\bigoplus \id \otimes p^n} \bigoplus_{n=0}^{a-d} L_{d+n}\otimes_{W(R)} W(R)^\oplus_{-n} \xrightarrow{\bigoplus \id\otimes\tau_{-n}} \bigoplus_{n=0}^{a-d} L_{d+n} \xrightarrow{\Phi} \bigoplus_{n=0}^{a-d} L_{d+n}
\end{align*}
The last map is an $f$-linear bijection by assumption, and the second map is a bijection by definition of $\tau$. The first becomes a bijection after we invert $p$, so this proves the claim. 

Now define $\varphi := p^d \varphi'[1/p]$. By this procedure we obtain an isodisplay $\underline{M}[1/p]=(M^\tau[1/p],\varphi)$. This construction is evidently functorial, so if we denote the category of isodisplays over $R$ by \textbf{Isodisp}$(R)$, we obtain an exact tensor functor
\begin{align}\label{eq-isodisp}
\text{\textbf{Disp}}^{W}(R) \to \text{\textbf{Isodisp}}(R).
\end{align}
This procedure generalizes the one given for assigning an isodisplay to a $1$-display in \cite[Example 63]{Zink2002}. 
\begin{rmk} 
	If $\underline{S}$ is a frame over $R$, we can define an analogous category of  isodisplays over $\underline{S}$. If every finite projective graded $S$-module $M$ admits a normal decomposition, then Lemma \ref{lem-theta} gives an isomorphism of $S_0$-modules $M_d \xrightarrow{\sim} M^\tau$, where $d = d(M)$, and we can mimic the construction above to define a functor analogous to (\ref{eq-isodisp}). This holds in particular when $S_0$ is $p$-adic by \cite[Lemma 3.1.4]{Lau2018}. However, without the guaranteed existence of normal decompositions, it is unclear to the author whether $\theta_d$ is an isomorphism in general.
\end{rmk}
\begin{Def}
	Let $\underline{M} = (M, F)$ and $\underline{M}' = (M',F')$ be displays over $\underline{W}(R)$.
	\begin{enumerate}[(i)]
		\item A \textit{quasi-isogeny} $\gamma: \underline{M} \dashrightarrow \underline{M'}$ is an isomorphism of  isodisplays $\underline{M}[1/p] \xrightarrow{\sim} \underline{M'}[1/p]$.
		\item  A quasi-isogeny is an \textit{isogeny} if it is induced by a morphism of displays. 
	\end{enumerate}
	We say $\underline{M}$ is \textit{isogenous} to $\underline{M'}$ if there exists an isogeny $\underline{M} \to \underline{M'}$.
\end{Def}

Now suppose $G$ is a flat affine group scheme over $\zz_p$, and that $\mu$ is a cocharacter for $G$ defined over $W(k_0)$. Let $R$ be a $W(k_0)$-algebra. 

\begin{Def}
	A \textit{$G$-isodisplay} over $R$ is an exact tensor functor \textbf{Rep}$_{\zz_p}(G) \to \text{\textbf{Isodisp}}(R)$.
\end{Def}

If $\mathscr{D}$ is a Tannakian $(G,\mu)$-display, then we obtain a $G$-isodisplay by composition with the natural functor (\ref{eq-isodisp}). Denote the resulting $G$-isodisplay by $\mathscr{D}[1/p]$.

\begin{Def}
	Let $\mathscr{D}_1$ and $\mathscr{D}_2$ be Tannakian $(G,\mu)$-displays. A \textit{$G$-quasi-isogeny} $\mathscr{D}_1 \dashrightarrow \mathscr{D}_2$ is an isomorphism of $G$-isodisplays $\mathscr{D}_1[1/p] \xrightarrow{\sim} \mathscr{D}_2[1/p]$.
\end{Def}

Suppose $\mathscr{D} \cong \mathscr{D}_U$ is a banal Tannakian $(G,\mu)$-display over $R$, given by $U \in L^+G(R)$ as in the previous section. Then we can explicitly compute the resulting $G$-isodisplay. In this case, if $(V,\rho)$ is a representation of $G$, we have $\mathscr{D}[1/p](V,\rho) = (N(\rho), \varphi(\rho))$, where 
\begin{align*}
	N(\rho) = V \otimes_{\zz_p} W(R)[1/p].
\end{align*}

\begin{lemma}\label{lem-froblocal}
	For every $(V,\rho)$ in \textup{\textbf{Rep}}$_{\zz_p}(G)$, the Frobenius on $\mathscr{D}[1/p](V,\rho)$ is given by 
	\begin{align*}
		\varphi = \rho\left(U\mu^\sigma(p)\right)\circ(\textup{id}_V \otimes f).
	\end{align*}
\end{lemma}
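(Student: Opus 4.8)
The plan is to reduce to the banal model and then evaluate the isodisplay functor (\ref{eq-isodisp}) by hand on a single representation. Since the lemma already posits $\mathscr{D}\cong\mathscr{D}_U$ and the passage $\mathscr{D}\mapsto\mathscr{D}[1/p]$ is functorial, I would work directly with $\mathscr{D}_U$. Fix a representation $(V,\rho)$. By Construction \ref{cons-DU} the display $\mathscr{D}_U(V,\rho)$ is attached to the standard datum $(L,\Phi_U)$, where $L=V\otimes_{\zz_p}W(R)$ is graded by $L_i=V_{W(k_0)}^i\otimes_{W(k_0)}W(R)$ (the decomposition (\ref{eq_decomp}) of $V\otimes_{\zz_p}W(k_0)$ induced by $\mu$), where $\Phi_U=\rho(U)\circ(\id_V\otimes f)$, and the underlying module is $M=L\otimes_{W(R)}W(R)^\oplus$, so $M^\tau\cong L$ and $N(\rho)=M^\tau[1/p]=V\otimes_{\zz_p}W(R)[1/p]$. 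The first step is to identify the depth $d:=d(M)$ occurring in the construction of the functor (\ref{eq-isodisp}): by Lemma \ref{lem-type} the piece $L_i\otimes_{W(R)}R=V_{W(k_0)}^i\otimes_{W(k_0)}R$ is nonzero exactly when $i$ is a weight of $\rho\circ\mu$ on $V$, so $d$ is the least such weight.

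Next I would compute $\varphi$ from the explicit description of $\theta_d$ and $F_d$ in terms of a standard datum recorded just before (\ref{eq-isodisp}). Under $M^\tau\cong L$ one has $M_d=\bigoplus_{n\ge 0}L_{d+n}\otimes_{W(R)}W(R)^\oplus_{-n}$, and $\theta_d^{-1}(\ell)=\ell\otimes t^n$ for $\ell\in L_{d+n}$, where $t\in W(R)^\oplus_{-1}$ is the element with $\tau(t)=1$ (cf.\,the proof of Lemma \ref{lem-theta}). Since $\sigma(t)=p$ (Definition \ref{def-wittframe}), applying $F_U(x\otimes s)=\sigma(s)\Phi_U(x)$ gives $\varphi'(\ell)=\sigma(t^n)\Phi_U(\ell)=p^n\Phi_U(\ell)$ for $\ell\in L_{d+n}$; equivalently $\varphi'=\Phi_U\circ\delta$, where $\delta$ is multiplication by $p^{i-d}$ on $L_i$. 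Because $\rho(\mu(p))$ acts on the weight-$i$ summand $L_i$ by the scalar $p^i$, over $W(R)[1/p]$ we have $\delta=p^{-d}\rho(\mu(p))$; combined with $f(p)=p$ (so that $\Phi_U$ commutes with the scalar $p^{-d}$) this yields $\varphi=p^d\varphi'[1/p]=\Phi_U\circ\rho(\mu(p))=\rho(U)\circ(\id_V\otimes f)\circ\rho(\mu(p))$. Observe that the factor $p^d$ built into (\ref{eq-isodisp}) is precisely what cancels the $p^{-d}$ produced by $\theta_d$, so the resulting expression is independent of $d$.

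Finally I would commute $\id_V\otimes f$ past $\rho(\mu(p))$. Since $\rho$ is a morphism of $\zz_p$-schemes, for any ring endomorphism $\phi$ of $W(R)[1/p]$ and any $g\in G(W(R)[1/p])$ one has $(\id_V\otimes\phi)\circ\rho(g)=\rho(G(\phi)(g))\circ(\id_V\otimes\phi)$, both sides being $\phi$-semilinear and agreeing on $V\otimes 1$. Applying this with $\phi=f$ and $g=\mu(p)$, and using that $\underline{W}(R)$ is a $W(k_0)$-frame, so that $f$ restricts on $W(k_0)$ to the Frobenius while fixing $p$, one gets $G(f)(\mu(p))=\mu^\sigma(p)$ for the $\sigma$-conjugate cocharacter $\mu^\sigma$; hence $\varphi=\rho(U)\circ\rho(\mu^\sigma(p))\circ(\id_V\otimes f)=\rho(U\mu^\sigma(p))\circ(\id_V\otimes f)$, which is the claim. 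I do not expect a genuine obstacle here: the argument is careful bookkeeping with the definition of (\ref{eq-isodisp}). The two points that demand attention are (i) tracking the two powers of $p$---the $\sigma(t^n)=p^n$ supplied by $\theta_d$ versus the overall factor $p^d$ in (\ref{eq-isodisp})---and (ii) the semilinearity of the Witt vector Frobenius over $W(k_0)$, which is exactly what forces the twist $\mu^\sigma$, rather than $\mu$ itself, to appear.
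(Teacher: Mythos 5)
Your proposal is correct and follows essentially the same route as the paper: work with the standard datum of the banal display $\mathscr{D}_U$, compute $\theta_d^{-1}$ and $F_d$ weight by weight so that the factor $p^{d}$ in the definition of $\underline{M}[1/p]$ combines with the $p^{\,i-d}$ coming from $\sigma(t^{\,i-d})$ to produce $\rho(\mu(p))$, and then commute $\id_V\otimes f$ past $\rho(\mu(p))$ to get $\rho(\mu^\sigma(p))$. The only (welcome) addition is that you justify the semilinear commuting identity $(\id_V\otimes f)\circ\rho(\mu(p))=\rho(\mu^\sigma(p))\circ(\id_V\otimes f)$, which the paper invokes without proof.
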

\begin{proof}
	Fix $(V,\rho)$ in \textbf{Rep}$_{\zz_p}(G)$, and let $\mathscr{D}(V,\rho) = (M(\rho),F(\rho))$, where $M(\rho) = V\otimes_{\zz_p} W(R)^\oplus$. Because $\mathscr{D}$ is banal and defined from $U \in L^+G(R)$, $F$ is defined from the $f$-linear automorphism 
	\begin{align*}
		\Phi_U = \rho(U) \circ (\id_V \otimes f)
	\end{align*}
	of $V\otimes_{\zz_p} W(R)$. 
	Suppose the weights of $\rho \circ \mu$ are $\{w_1, \dots, w_r\}$ with $w_1 \le w_2 \le \dots \le w_r$. Then $d(M(\rho)) = w_1$. 
	
	Let $L_i = V_{W(k_0)}^{i} \otimes_{W(k_0)} W(R)$. Then we have 
	\begin{align*}
		M(\rho)^\tau \cong V \otimes_{\zz_p} W(R) = \bigoplus_{i = 1}^r L_{w_i}.
	\end{align*}
	The $f$-linear automorphism $\varphi$ of $N(\rho) = \left(M(\rho)^\tau\right)[1/p]$ is constructed in two steps. First we consider the composition 
	\begin{align*}
		M(\rho)^\tau \xrightarrow{\theta_{w_1}^{-1}} M(\rho)_{w_1} \xrightarrow{F_{w_1}} M(\rho)^\tau.
	\end{align*}
	In our case, if $x \in L_{w_i}$, then
	\begin{align*}
		\theta_{w_1}^{-1}(x) = x \otimes t^{w_i-w_1} \in L_{w_i} \otimes_{W(R)} W(R)^\oplus_{w_1-w_i},
	\end{align*}
	where $t \in W(R)^\oplus_1$ is the indeterminate from Definition \ref{def-wittframe}. Applying $F_{w_1}$, we obtain
	\begin{align*}
		F_{w_1}(x \otimes t^{w_i-w_1}) = p^{w_i-w_1} \rho(U)(\id_V \otimes f)(x). 
	\end{align*}
	Multiplying by $p^{d(M)}$, we have 
	\begin{align*}
		\varphi(x) = p^{w_i}\rho(U)(\id_V\otimes f)(x).
	\end{align*}
	for $x \in V^{w_i}_{W(k_0)}\otimes_{W(k_0)} W(R)[1/p]$.	This is evidently the same as $\rho(U)(\id_V \otimes f) \rho(\mu(p))(x)$, so the result follows from the identity
	\begin{align*}
		(\id_V \otimes f) \circ \rho(\mu(p)) = \rho(\mu^\sigma(p)) \circ (\id_V \otimes f).
	\end{align*}

\end{proof}

\section{RZ Spaces}

\subsection{Local Shimura data and the RZ functor}\label{sub-localdata}
We recall the formalism for local Shimura data developed in \cite{RV2014}, and we give a purely group theoretic definition of an RZ functor, following \cite{BP2017}. Our definition relies on the framework developed in the previous section, and as such, allows for a more general development than that in \cite{BP2017}. In particular, we do not need to assume $G$ is reductive or $\mu$ minuscule in order to formulate the definitions in this section.

Let $k$ be an algebraic closure of $\mathbb{F}_p$, and let $W(k)$ be the ring of Witt vectors over $k$. Write $K = W(k)[1/p]$, and let $\bar{K}$ be an algebraic closure of $K$. In this section we write $\sigma$ for the automorphism of $K$ coming from a lift of the absolute Frobenius $x \mapsto x^p$ of $k$. Let $G$ be a smooth affine group scheme over $\zz_p$ whose generic fiber $G_{\qq_p}$ is reductive. 
Consider pairs $(\{\mu\},[b])$ such that
\begin{itemize}
	\item $\{\mu\}$ is a $G(\bar{K})$-conjugacy class of cocharacters ${\mathbb{G}_m}_{\bar{K}} \to G_{\bar{K}}$;
	\item $[b]$ is a $\sigma$-conjugacy class elements $b \in G(K)$.
\end{itemize}
Let $E \subseteq \bar{K}$ be the field of definition of the conjugacy class $\{\mu\}$. Denote by $\mathcal{O}_E$ its valuation ring and $k_E$ its residue field. We make the following assumption:
\begin{ass}\label{ass_intmu}
	The field $E \subseteq \bar{K}$ is contained in $K$, and there exists a cocharacter $\mu: {\mathbb{G}_m}_{E} \to G_E$	in the conjugacy class $\{\mu\}$ which is defined over $E$ and which extends to an integral cocharacter 
	\begin{align*}
		\mu: {\mathbb{G}_m}_{\mathcal{O}_E} \to G_{\mathcal{O}_E}.
	\end{align*}
\end{ass} 
When the assumption is satisfied we may identify $\mathcal{O}_E\cong W(k_E)$ and $E \cong W(k_E)[1/p]$.

\begin{Def}\label{def-Shimdatum}
	A \textit{local integral Shimura datum} is a triple $(G,\{\mu\},[b])$ as above such that 
	\begin{enumerate}[(i)]
		\item $\{\mu\}$ is minuscule and satisfies Assumption \ref{ass_intmu}, and
		\item for any integral representative $\mu$ of $\{\mu\}$ as in Assumption \ref{ass_intmu}, the $\sigma$-conjugacy class $[b]$ has a representative
		\begin{align*}
			b \in G(W(k))\mu^\sigma(p)G(W(k)).
		\end{align*}
	\end{enumerate}
\end{Def} 

\begin{Def}\label{def-framingpair}
	Let $(G,\{\mu\},[b])$ be a local integral Shimura datum. A \textit{framing pair} for $(G,\{\mu\},[b])$ is a pair $(\mu, b)$ where 
	\begin{itemize}
		\item $\mu:\mathbb{G}_{m,W(k_E)} \to G_{W(k_E)}$ is a representative of the conjugacy class $\{\mu\}$ as in Assumption \ref{ass_intmu},
		\item $b$ is a representative of the $\sigma$-conjugacy class $[b]$ such that, for some $u \in L^+G(k)$,
		\begin{align}\label{eq-b}
			b = u\mu^\sigma(p).
		\end{align} 
	\end{itemize}
\end{Def}
It follows from Definition \ref{def-Shimdatum} that a framing pair always exists for a local integral Shimura datum $(G, \{\mu\}, [b])$. If $(\mu,b)$ is a framing pair, then the element $u\in L^+G(k)$ such that $b = u \mu^\sigma(p)$ is uniquely determined. 
\begin{Def}\label{def-framingobject}
	Let $(\mu,b)$ be a framing pair for $(G,\{\mu\},[b])$, and let $u \in L^+G(k)$ be the unique element such that $b = u \mu^\sigma(p)$. The \textit{framing object} associated to $(\mu,b)$ is the banal Tannakian $(G,\mu)$-display $\mathscr{D}_u$ associated to $u$ by Construction \ref{cons-DU}. 
\end{Def}

\begin{Def}\label{def-RZfunctor}
	Fix a framing pair $(\mu,b)$ for $(G,\{\mu\},[b])$, and let $\mathscr{D}_0$ be the associated framing object. The \textit{RZ-functor} associated to the triple $(G,\mu,b)$ is the functor on \textbf{Nilp}$_{W(k)}$ which assigns to a $p$-nilpotent $W(k)$-algebra $R$ the set of isomorphism classes of pairs $(\mathscr{D},\iota)$, where
	\begin{itemize}
		\item $\mathscr{D}$ is a Tannakian $(G,\mu)$-display over $R$,
		\item $\iota: \mathscr{D}_{R/pR} \dashrightarrow (\mathscr{D}_0)_{R/pR}$ is a $G$-quasi-isogeny. 
	\end{itemize}
	Two pairs $(\mathscr{D}_1, \iota_1)$ and $(\mathscr{D}_2,\iota_2)$ are isomorphic if there is an isomorphism $\mathscr{D}_1 \xrightarrow{\sim} \mathscr{D}_2$ lifting $\iota_2^{-1} \circ \iota_1$. Denote the RZ functor associated to $(G,\mu,b)$ by $\textup{RZ}_{G,\mu,b}$. 
\end{Def}
Associated to $\textup{RZ}_{G,\mu,b}$ we have a category $\textbf{RZ}_{G,\mu,b}$ fibered over \textbf{Nilp}$_{W(k)}$, such that if $R$ is a $p$-nilpotent $W(k)$-algebra, then $\textbf{RZ}_{G,\mu,b}(R)$ is the groupoid of pairs $(\mathscr{D},\iota)$ over $R$ as in Definition \ref{def-RZfunctor}. It follows from Lemma \ref{lem-displaystack} that $\textbf{RZ}_{G,\mu,b}$ is an fpqc stack in groupoids.

\subsection{Realization as a quotient stack}
In this section we will reinterpret $\textbf{RZ}_{G,\mu,b}$ as a quotient stack. From this we obtain an equivalence between our RZ functor and the one defined in \cite{BP2017}, in the case where both are defined.

First we recall the definition of the Witt loop scheme as in \cite[Section 2.2]{BP2017}. Let $R$ be a ring and let $X$ be an affine scheme of finite type over $W(R)$. Then the functor on $R$-algebras
\begin{align*}
	R' \mapsto X(W(R')[1/p])
\end{align*}
is representable by an ind-scheme over $R$ by \cite[Proposition 32]{Kreidl2014}. If $X$ is a $W(k_0)$-scheme, we can apply this to the base change of $X$ along the Cartier homomorphism $W(k_0) \xrightarrow{\Delta} W(W(k_0))$ to obtain an ind-scheme over $W(k_0)$. We will denote this ind-scheme by $LX$.

For such a scheme $X$, denote by $^\sigma X$ the base change of $X$ via the automorphism $\sigma$ of $W(k_0)$:
\begin{align*}
	^\sigma X = X \times_{\text{Spec }W(k_0),\sigma} \text{Spec }W(k_0).
\end{align*}
There is a natural isomorphism $^\sigma(LX) \xrightarrow{\sim} L( ^\sigma X)$. If $R$ is a $W(k_0)$-algebra, then the Witt vector frobenius $f$ on $W(R)$ induces a map on $R$ points $f: LX(R) \to {^\sigma(}LX)(R)$ as follows: if $x \in LX(R)$, then $f(x)$ is the composition
\begin{align*}
	\text{Spec }W(R)[1/p] \xrightarrow{f} \text{Spec }W(R)[1/p] \xrightarrow{x} X.
\end{align*}
This map is functorial in $R$ and hence defines a morphism of ind-schemes $f: LX \to {^\sigma L}X$ over $W(k_0)$. If $X$ is defined over $\zz_p$, there is a natural isomorphism $^\sigma LX \xrightarrow{\sim} LX$, so in this case $f$ defines an endomorphism $f: LX \to LX$. If $G$ is a group scheme over $W(k_0)$, then $LG$ is a group ind-scheme over $W(k_0)$, and in this case $f$ is a group ind-scheme homomorphism. 

Let $(G,\{\mu\},[b])$ be a local integral Shimura datum, and choose a framing pair $(\mu,b)$ for $(G,\{\mu\},[b])$, so $b = u \mu^\sigma(p)$ for some $u \in L^+G(k)$. To $b$ and $\mu$ we associate two morphisms:
\begin{align*}
	c_b: LG \to LG, \ g \mapsto g^{-1}\cdot b\cdot f(g), \\
	m_\mu: L^+G \to LG, \ U \mapsto U\cdot\mu^\sigma(p).
\end{align*}
Using these morphisms we form the fiber product $L^+G \times_{m_\mu, c_b} LG$, defined by the following Cartesian diagram:
\begin{center}
	\begin{tikzcd}
		L^+G \times_{m_\mu, c_b} LG 
			\arrow[r] \arrow[d]
		& LG
			\arrow[d, "c_b"]
		\\ L^+G 	
			\arrow[r, "m_\mu"]
		& LG
	\end{tikzcd}
\end{center}
\begin{lemma}\label{lem-RZaction}
	Suppose $h \in L^+_\mu G(R)$, and $(U,g) \in (L^+G \times_{m_\mu, c_b} LG)(R)$. Then 
	\begin{align}\label{eq-RZaction}
		(U,g)\cdot h := (\tau(h)^{-1}\cdot U \cdot \sigma(h), g \cdot \tau(h))
	\end{align}
	is an element of $(L^+G \times_{m_\mu, c_b} LG)(R)$.
\end{lemma}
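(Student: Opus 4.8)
The plan is to check directly that the defining equation of the fibre product survives the action. Recall that an $R$-point of $L^+G \times_{m_\mu, c_b} LG$ is a pair $(U,g)$ with $U \in L^+G(R)$ and $g \in LG(R)$ such that $m_\mu(U) = c_b(g)$, i.e.
\begin{align*}
	U\cdot\mu^\sigma(p) = g^{-1}\cdot b\cdot f(g) \quad\text{in } G(W(R)[1/p]).
\end{align*}
For $h \in L^+_\mu G(R)$ set $(U',g') = (\tau(h)^{-1} U\sigma(h),\, g\tau(h))$. Since $\sigma$ and $\tau$ carry $L^+_\mu G$ into $L^+G$, the element $U'$ lies in $L^+G(R) = G(W(R))$, and $g' \in LG(R)$ because $LG$ is a group ind-scheme; so the content of the lemma is the identity $m_\mu(U') = c_b(g')$.

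The key ingredient is the relation
\begin{align*}
	\sigma(h)\cdot\mu^\sigma(p) = \mu^\sigma(p)\cdot f(\tau(h)), \qquad h \in L^+_\mu G(R).
\end{align*}
I would obtain it from the compatibility, established in \cite[Remark 6.3.4]{Lau2018} and used in the discussion preceding Lemma \ref{lem-bp}, between the action $g \mapsto \tau(h)^{-1} g\,\sigma(h)$ of $L^+_\mu G$ on $L^+G$ and the action $g \mapsto \tau(h)^{-1} g\,\Phi_{G,\mu}(\tau(h))$ of $H^\mu$ on $L^+G$ under the isomorphism $\tau\colon L^+_\mu G \xrightarrow{\sim} H^\mu$: comparing the two actions forces $\sigma(h) = \Phi_{G,\mu}(\tau(h))$, and by \cite[Proposition 3.1.2]{BP2017} together with the $\sigma$-semilinearity of $F$ over $W(k_0)$ (with $F(p) = p$) one has $\Phi_{G,\mu}(\tau(h)) = F(\mu(p)\tau(h)\mu(p)^{-1}) = \mu^\sigma(p)\cdot f(\tau(h))\cdot\mu^\sigma(p)^{-1}$, which is the stated relation.

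Granting this, the verification is a one-line manipulation. Using that $f$ is a group homomorphism and the fibre-product relation for $(U,g)$,
\begin{align*}
	c_b(g') = (g\tau(h))^{-1}\cdot b\cdot f(g\tau(h)) = \tau(h)^{-1}\bigl(g^{-1}b f(g)\bigr)f(\tau(h)) = \tau(h)^{-1}\bigl(U\mu^\sigma(p)\bigr)f(\tau(h)),
\end{align*}
and substituting $\mu^\sigma(p) f(\tau(h)) = \sigma(h)\mu^\sigma(p)$ rewrites this as $\tau(h)^{-1}U\sigma(h)\cdot\mu^\sigma(p) = U'\cdot\mu^\sigma(p) = m_\mu(U')$, as desired.

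The step that will need the most care is the proof of the key relation, specifically keeping track of the Frobenius twist by $\sigma$: one must check that $f$ sends $\mu(p)$ to $\mu^\sigma(p)$ (and not to another twist) and that $F$ fixes $p$. An alternative route, avoiding \cite{BP2017} altogether, is to establish $\sigma(h)\mu^\sigma(p) = \mu^\sigma(p) f(\tau(h))$ by hand: write an element of the coordinate ring of $G_{W(k_0)}$ as a sum of its $\mu$-homogeneous components, observe that conjugation by $\mu^\sigma(p)$ scales the degree-$n$ component by a fixed power of $p$, and compare this with the explicit formulas for $\sigma$ and $\tau$ on the graded pieces of $W(R)^\oplus$ from Definition \ref{def-wittframe}, degree by degree.
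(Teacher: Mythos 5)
Your proof is correct and follows the same reduction as the paper: after the one-line manipulation with $c_b$ and $m_\mu$, everything rests on the identity $\sigma(h)=\mu^\sigma(p)\,f(\tau(h))\,\mu^\sigma(p)^{-1}$ in $LG(R)$, which is exactly the identity the paper isolates and attributes to (the last line of the proof of) \cite[Lemma 5.2.1]{Lau2018}. The only difference is how you source that identity. Your first route, via $\sigma(h)=\Phi_{G,\mu}(\tau(h))$ and \cite[Proposition 3.1.2]{BP2017}, is fine but carries a generality caveat: $H^\mu$ and $\Phi_{G,\mu}$ are defined by B\"ultel--Pappas only for $G$ reductive over $\zz_p$ with $\mu$ minuscule, whereas the lemma here is stated for a framing pair attached to a local integral Shimura datum, where $G$ is merely smooth affine with reductive generic fiber; so that route does not literally cover the stated generality. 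Your second route -- the degree-by-degree check that $f(\tau(x))=p^{\deg x}\,\sigma(x)$ on homogeneous pieces of $W(R)^\oplus$, matched against the scaling of $\mu$-homogeneous components of the coordinate ring under conjugation by $\mu^\sigma(p)$ -- is essentially the computation behind Lau's lemma, works for arbitrary flat affine $G$ and arbitrary $\mu$, and is the argument to prefer; the Frobenius-twist bookkeeping you flag ($f(p)=p$, $F(\mu(p))=\mu^\sigma(p)$) is indeed the only delicate point and comes out as you expect.
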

\begin{proof}
	We need to show
	\begin{align*}
		c_b(g\tau(h)) = m_\mu(\tau(h)^{-1}U\sigma(h)),
	\end{align*}
	which reduces to showing
	\begin{align*}
		\sigma(h) = \mu^\sigma(p) f(\tau(h)) \mu^\sigma(p)^{-1}
	\end{align*}
	when viewed as an element of $LG(R)$. This identity can be obtained in a straightforward manner by reformulating the last line of the proof of \cite[Lemma 5.2.1]{Lau2018}. 
\end{proof}

It follows from the lemma (\ref{eq-RZaction}) determines an action of $L^+_\mu G$ on $L^+G\times_{m_\mu, c_b} LG$. Using this action we form the quotient stack
\begin{align*}
	[(L^+G\times_{m_\mu,c_b} LG)/L^+_\mu G]
\end{align*}
over \textbf{Nilp}$_{W(k)}$. Explicitly, if $R$ is a $W(k)$-algebra, an object in $[(L^+G\times_{m_\mu, c_b} LG) /L^+_\mu G]$ is a pair $(Q,\beta)$ consisting of an $L^+_\mu G$-torsor $Q$ and a morphism $\beta: Q \to L^+G \times_{m_\mu, c_b} LG$ which is equivariant with respect to the action (\ref{eq-RZaction}).

\begin{thm}\label{prop-rzquotient}
	Let $(\mu,b)$ be a framing pair for $(G,\{\mu\},[b])$. Then there is an isomorphism of stacks 
	\begin{align*}
		\textbf{\textup{RZ}}_{G,\mu,b} \xrightarrow{\sim} [(L^+G \times_{m_\mu, c_b} LG) / L^+_\mu G].
	\end{align*}
\end{thm}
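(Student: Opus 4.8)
The plan is to build the isomorphism by combining the equivalence of Theorem~\ref{thm-equiv} with an explicit description of both sides in terms of $L^+G$-valued data, exploiting the banal (fpqc-local) case via Proposition~\ref{prop-local} and Lemma~\ref{lem-froblocal}. First I would recall that by Theorem~\ref{thm-equiv} a pair $(\mathscr{D},\iota) \in \textbf{RZ}_{G,\mu,b}(R)$ is the same datum as a $G$-display of type $\mu$, i.e. a pair $(Q_{\mathscr{D}},\alpha_{\mathscr{D}})$ with $Q_{\mathscr{D}}$ an $L^+_\mu G$-torsor and $\alpha_{\mathscr{D}}\colon Q_{\mathscr{D}} \to L^+G$ equivariant for the action \eqref{eq-action}, together with the extra datum of the quasi-isogeny $\iota$ over $R/pR$. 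The torsor $Q_{\mathscr{D}}$ will be sent to the torsor $Q$ in the target quotient stack, $\alpha_{\mathscr{D}}$ will furnish the first component $U$ of $\beta$, and the quasi-isogeny $\iota$ will be encoded in the second component $g \in LG$ via the Cartesian condition $c_b(g) = m_\mu(U)$, i.e. $g^{-1}\cdot b\cdot f(g) = U\cdot\mu^\sigma(p)$. The compatibility of the two actions is exactly Lemma~\ref{lem-RZaction}.

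The key steps, in order, are: (1) Reduce to the banal case. Since both $\textbf{RZ}_{G,\mu,b}$ and the quotient stack are fpqc stacks (the former by the remark after Definition~\ref{def-RZfunctor}, the latter by construction), and since every Tannakian $(G,\mu)$-display is fpqc-locally banal, it suffices to define the functor on banal objects and on morphisms between them, then check it glues. (2) Describe a banal object of $\textbf{RZ}_{G,\mu,b}(R)$ concretely. Choosing a trivialization $\lambda\colon \mathscr{C}_{\mu,R} \xrightarrow{\sim} \upsilon_R \circ \mathscr{D}$, Lemma~\ref{lemma-local} identifies $\mathscr{D}$ with $\mathscr{D}_U$ for $U = \alpha_{\mathscr{D}}(\lambda) \in L^+G(R)$; and the quasi-isogeny $\iota\colon (\mathscr{D}_U)_{R/pR} \dashrightarrow (\mathscr{D}_0)_{R/pR} = (\mathscr{D}_u)_{R/pR}$ is, by the Tannakian description of isodisplays together with Lemma~\ref{lem-froblocal}, encoded by an element $g \in LG(R/pR)$; using that $R$ is a $p$-nilpotent $W(k)$-algebra so that $W(R)[1/p] = W(R/pR)[1/p]$ (as $p$ is nilpotent modulo the topology, or after noting $\ker(W(R)\to W(R/pR))$ is killed by inverting $p$ — this is the point that needs a short justification), we lift $g$ canonically to an element of $LG(R)$. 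The condition that $\iota$ intertwines the two Frobenii, computed via Lemma~\ref{lem-froblocal} as $\varphi_{\mathscr{D}_U} = \rho(U\mu^\sigma(p))\circ f$ and $\varphi_{\mathscr{D}_0} = \rho(b)\circ f = \rho(u\mu^\sigma(p))\circ f$, translates into $g^{-1}\cdot b\cdot f(g) = U\cdot\mu^\sigma(p)$, i.e. $(U,g)\in (L^+G \times_{m_\mu, c_b} LG)(R)$. (3) Identify the morphisms. A morphism of banal objects is, on one side, an element $h\in L^+_\mu G(R)$ with $\tau(h)^{-1}U'\sigma(h) = U$ which is furthermore compatible with the quasi-isogenies; working out how $h$ changes $g$ (it acts on $\mathscr{D}_0$-side trivializations through $\tau(h)$, as in Construction~\ref{cons_GDisp}) gives precisely $g' = g\cdot\tau(h)$, matching the action \eqref{eq-RZaction}. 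This shows the functor on banal objects agrees with $[L^+G\times_{m_\mu,c_b}LG / L^+_\mu G]^{\mathrm{pre}}(R)$ in the evident sense. (4) Glue. By fpqc descent and the stackification argument (cf. the use of \cite[Lemma 046N]{stacks-project} in the proof of Theorem~\ref{thm-equiv}), the equivalence of banal categories upgrades to an isomorphism of the associated stacks, giving the claimed $\textbf{RZ}_{G,\mu,b} \xrightarrow{\sim} [(L^+G\times_{m_\mu,c_b}LG)/L^+_\mu G]$.

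The main obstacle I expect is step (2), specifically the careful bookkeeping of the quasi-isogeny datum: one must check that a $G$-quasi-isogeny $\iota\colon \mathscr{D}_{R/pR}\dashrightarrow(\mathscr{D}_0)_{R/pR}$ between banal Tannakian $(G,\mu)$-displays is the same as an element of $LG(R/pR)$ (via Tannakian duality for isodisplays, i.e. that $\underline{\mathrm{Aut}}^\otimes$ of the forgetful-to-isodisplay fiber functor over $\mathscr{C}_{\mu}$ is $LG$ — this uses that $R\mapsto G(W(R)[1/p])$ is the group ind-scheme $LG$ and a Tannakian reconstruction as in Theorem~\ref{thm-isom} but over $W(R)[1/p]$), that this element canonically lifts from $R/pR$ to $R$ because inverting $p$ kills the difference between $W(R)$ and $W(R/pR)$, and that the intertwining condition comes out exactly as $g^{-1} b\, f(g) = U\mu^\sigma(p)$ under the normalization $b = u\mu^\sigma(p)$ from the framing pair. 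All of this is ``formally similar'' to \cite[\S4.2.3]{BP2017} and to the proof of Theorem~\ref{thm-equiv}, so the remaining work is routine diagram-chasing and descent, but the identification of the quasi-isogeny with a point of $LG$ and its lift to $R$ is where the genuine content lies.
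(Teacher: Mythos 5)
Your proposal is correct and follows essentially the same route as the paper's proof: reduction to banal objects, identification of a banal $\mathscr{D}$ with some $\mathscr{D}_U$ via Lemma \ref{lemma-local}, encoding of the $G$-quasi-isogeny as an element $g$ of $LG$ using $W(R)[1/p]\cong W(R/pR)[1/p]$ and Lemma \ref{lem-froblocal} to obtain $g^{-1}b\,f(g)=U\mu^\sigma(p)$, and matching morphisms via Proposition \ref{prop-local} and the condition $\tau(h)=g_2^{-1}g_1$. The points you flag as needing care (the identification of the quasi-isogeny with a point of $LG$ over $R$) are handled in the paper exactly as you describe.
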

\begin{proof}
	Let $R$ be a $p$-nilpotent $W(k)$-algebra. Say $(\mathscr{D},\iota) \in RZ_{G,\mu,b}$ is banal if $\mathscr{D}$ is banal. Since any $\mathscr{D}$ is fpqc-locally banal, it is enough to show that banal $(\mathscr{D},\iota)$ in $RZ_{G,\mu,b}(R)$ are given by pairs $(U,g) \in (L^+G\times_{m_\mu,c_b} LG)(R)$, and that, if $(\mathscr{D}_1,\iota_1)$, $(\mathscr{D}_2,\iota_2)$ are banal, corresponding to $(U_1,g_1)$ and $(U_2,g_2)$ respectively, then morphisms $(\mathscr{D}_1, \iota_1) \to (\mathscr{D}_2,\iota_2)$ are given by $h \in L^+_\mu G(R)$ such that $(U_2,g_2)\cdot h = (U_1, g_1)$. 
	
	If $(\mathscr{D},\iota) \in RZ_{G,\mu,b}(R)$ is banal, then by Lemma \ref{lemma-local}, $\mathscr{D} \cong \mathscr{D}_U$ for some $U \in L^+G(R)$. To give the quasi-isogeny $\iota$ is to give, for every $(V,\rho)$ in $\textbf{Rep}_{\zz_p}(G)$, an automorphism of $W(R/pR)[1/p]$-modules
	\begin{align*}
		\iota_\rho: V \otimes_{\zz_p} W(R/pR)[1/p] \xrightarrow{\sim} V \otimes_{\zz_p} W(R/pR)[1/p]
	\end{align*}
	which commutes with the respective Frobenius morphisms.	Since $p$ is nilpotent in $R$, we have an identification $W(R)[1/p] \cong W(R/pR)[1/p]$ (cf. \cite[pg. 29]{BP2017}), so by Tannakian duality and Lemma \ref{lem-froblocal}, it is equivalent to give $g \in LG(R)$ such that $U \mu^\sigma(p) = g^{-1}u \mu^\sigma(p) f(g)$, i.e. such that $m_\mu(U) = c_b(g)$. 
	
	Now suppose $(\mathscr{D}_1,\iota_1)$ and $(\mathscr{D}_2,\iota_2)$ are banal, corresponding to $(U_1, g_1)$, $(U_2, g_2)$. Then by Proposition \ref{prop-local}, a morphism $\psi:(\mathscr{D}_1,\iota_1)\to (\mathscr{D}_2,\iota_2)$ is given by an $h \in L^+_\mu G(R)$ such that $\tau(h)^{-1}\cdot U_2\cdot \sigma(h) = U_1$. The condition that $\psi$ lifts $\iota_2^{-1}\circ \iota_1$ means that for every $(V,\rho)$ in $\textbf{Rep}_{\zz_p}(G)$, $(\psi_{R/pR})_\rho^\tau = (\iota_2)_\rho^{-1} \circ (\iota_1)_\rho$ as $W(R)[1/p]$-module automorphisms of $V \otimes_{\zz_p} W(R)[1/p]$. But this says $\rho(\tau(h)) = \rho(g_2)^{-1}\rho(g_1)$ for every $(V,\rho)$, which by Tannakian duality is equivalent to $\tau(h) = g_2^{-1}g_1$. Hence $(U_2,g_2)\cdot h = (U_1,g_1)$, which completes the proof.
\end{proof}

\section{Representability in some cases}
\subsection{The representability conjecture}

Suppose $(G,\{\mu\},[b])$ is a local integral Shimura datum, and let $(\mu,b)$ be a framing pair. In this section, suppose additionally that $G$ is reductive over $\zz_p$. Then the framing object $\mathscr{D}_0$ corresponds to a $(G,\mu)$-display $\mathcal{D}_0$ over $k$ in the sense of \cite{BP2017}. In \textit{loc. cit.}, a functor is associated to the data $(G,\mu, b)$ which classifies isomorphism classes of deformations of $\mathcal{D}_0$ up to $G$-quasi-isogeny. Let us call this functor $\text{RZ}^{\text{BP}}_{G,\mu, b}$, and denote by $\textbf{RZ}^{\text{BP}}_{G,\mu,b}$ the corresponding fpqc stack in groupoids.

\begin{prop} \label{cor-BPcompare}
	If $G$ is a reductive group scheme over $\zz_p$, then the stacks $\textbf{\textup{RZ}}^\textup{BP}_{G,\mu, b}$ and $\textbf{\textup{RZ}}_{G,\mu,b}$ are isomorphic.
\end{prop}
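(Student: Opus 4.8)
The plan is to reduce the claimed isomorphism to three already-established facts: Lau's Lemma~\ref{lem-bp} comparing the stack of $(G,\mu)$-displays of B\"ultel--Pappas with $\Gdisp$; the equivalence $T$ of Theorem~\ref{thm-equiv} between Tannakian $(G,\mu)$-displays and $G$-displays of type $\mu$; and the quotient-stack description $\textbf{RZ}_{G,\mu,b}\simeq[(L^+G\times_{m_\mu,c_b}LG)/L^+_\mu G]$ of Theorem~\ref{prop-rzquotient}. Since $G$ is reductive and $\mu$ minuscule, Lemma~\ref{lem-bp} gives an isomorphism $\tau\colon L^+_\mu G\xrightarrow{\sim}H^\mu$ identifying the $L^+_\mu G$-action~(\ref{eq-action}) on $L^+G$ with the $H^\mu$-action $g\cdot h=h^{-1}g\Phi_{G,\mu}(h)$. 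The first step is therefore to recall precisely how $\text{RZ}^{\text{BP}}_{G,\mu,b}$ is defined in \cite{BP2017}: it classifies pairs $(\mathcal{D},\iota)$ with $\mathcal{D}$ a $(G,\mu)$-display over $R$ and $\iota$ a $G$-quasi-isogeny to $\mathcal{D}_0$ over $R/pR$, and by \cite[\S4.2.3]{BP2017} it is isomorphic to a quotient stack $[(L^+G\times_{m_\mu,c_b}LG)/_{\Phi_{G,\mu}}H^\mu]$ for an $H^\mu$-action formally analogous to~(\ref{eq-RZaction}).

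Next I would assemble the comparison in stages. Via $T$ (Theorem~\ref{thm-equiv}) the stack $\textbf{RZ}_{G,\mu,b}$ of pairs $(\mathscr{D},\iota)$ with $\mathscr{D}$ a Tannakian $(G,\mu)$-display is equivalent to the corresponding stack of pairs $(Q_\mathscr{D},\alpha_\mathscr{D},\iota)$ with $(Q_\mathscr{D},\alpha_\mathscr{D})$ a $G$-display of type $\mu$; here one must check that $T$ carries $G$-quasi-isogenies to $G$-quasi-isogenies, which is immediate because $T$ is compatible with the isodisplay functor~(\ref{eq-isodisp}) applied representation-by-representation and with base change to $R/pR$. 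Then Lemma~\ref{lem-bp} (more precisely the isomorphism $\tau\colon L^+_\mu G\xrightarrow{\sim}H^\mu$ from \cite[Remark~6.3.4]{Lau2018}) identifies the stack of $G$-displays of type $\mu$ equipped with a quasi-isogeny to $\mathscr{D}_0$ with the stack of $(G,\mu)$-displays (in the sense of \cite{BP2017}) equipped with a $G$-quasi-isogeny to $\mathcal{D}_0$, i.e. with $\textbf{RZ}^{\text{BP}}_{G,\mu,b}$. Composing these equivalences yields the desired isomorphism. Alternatively, and perhaps more cleanly, I would pass through the quotient-stack presentations of both sides: Theorem~\ref{prop-rzquotient} gives $\textbf{RZ}_{G,\mu,b}\simeq[(L^+G\times_{m_\mu,c_b}LG)/L^+_\mu G]$, \cite[\S4.2.3]{BP2017} gives $\textbf{RZ}^{\text{BP}}_{G,\mu,b}\simeq[(L^+G\times_{m_\mu,c_b}LG)/_{\Phi_{G,\mu}}H^\mu]$, and the isomorphism $\tau\colon L^+_\mu G\xrightarrow{\sim}H^\mu$ is $\Phi_{G,\mu}$-equivariant in the sense required to identify the two quotient stacks; this reduces everything to a compatibility of actions that is essentially Lemma~\ref{lem-RZaction} together with the proof of Lemma~\ref{lem-bp}.

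The main obstacle, and the step I would write out with care, is verifying that the isomorphism $\tau\colon L^+_\mu G\xrightarrow{\sim}H^\mu$ intertwines the action~(\ref{eq-RZaction}) of $L^+_\mu G$ on $L^+G\times_{m_\mu,c_b}LG$ with the corresponding $H^\mu$-action used in \cite{BP2017}; this requires tracking how $\tau(h)$ and $\sigma(h)$ relate to $h$ and $\Phi_{G,\mu}(h)$, which in turn uses the key identity $\sigma(h)=\mu^\sigma(p)f(\tau(h))\mu^\sigma(p)^{-1}$ from the proof of Lemma~\ref{lem-RZaction} (equivalently, the last line of the proof of \cite[Lemma~5.2.1]{Lau2018}) together with $\Phi_{G,\mu}(\tau^{-1}(\tilde h))=F(\mu(p)\tilde h\mu(p)^{-1})$. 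One should also confirm that the two fiber products $L^+G\times_{m_\mu,c_b}LG$ appearing on the two sides really are the same scheme — i.e. that the maps $m_\mu$, $c_b$ used here agree with those implicit in \cite{BP2017} — which is a matter of unwinding definitions but deserves an explicit sentence. Everything else (compatibility with base change, the fact that faithful flatness lets us check the equivalence on banal objects) is routine given the stack machinery already in place, so the proof can be kept short: state the three inputs, note the $\tau$-equivariance, and conclude.
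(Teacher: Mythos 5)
Your proposal is correct and, in its second (quotient-stack) formulation, is exactly the paper's argument: the proof there is the one-line combination of Theorem \ref{prop-rzquotient}, \cite[\S 4.2.3]{BP2017}, and \cite[Remark 6.3.4]{Lau2018} (the latter being the content of Lemma \ref{lem-bp}), with the $\tau$-equivariance of the actions being the implicit compatibility you rightly flag as the point to check. Your first route via Theorem \ref{thm-equiv} is a mild variant that lands in the same place, so nothing essentially new is needed beyond what you wrote.
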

\begin{proof}
	Combine Theorem \ref{prop-rzquotient} with \cite[\textsection 4.2.3]{BP2017} and \cite[Remark 6.3.4]{Lau2018}.
\end{proof}

B\"ultel and Pappas make the following representability conjecture, see \cite[Conjecture 4.2.1]{BP2017}. 

\begin{conj}[B\"ultel-Pappas] \label{conj}
	Assume that $G$ is reductive and that $-1$ is not a slope of $\textup{Ad}^G(b)$. Then the functor $\textup{RZ}_{G,\mu,b}^\text{BP}$ is representable by a formal scheme which is formally smooth and formally locally of finite type over $W(k)$.
\end{conj}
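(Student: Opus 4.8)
The plan is to prove representability by a \emph{comparison with the linear case} $G=\mathrm{GL}_n$, for which representability is classical via $p$-divisible groups, using the quotient-stack description
\[
\mathbf{RZ}_{G,\mu,b}\;\simeq\;[(L^+G\times_{m_\mu,c_b}LG)/L^+_\mu G]
\]
of Theorem \ref{prop-rzquotient}. Concretely, I would fix a faithful representation $\rho\colon G\hookrightarrow\mathrm{GL}(\Lambda)$ over $\zz_p$ together with a finite family of tensors $(s_\alpha)\subset\Lambda^{\otimes}$ whose common stabilizer is $G$ — as in the Hodge-type setting of \cite{BP2017} — chosen so that $\rho\circ\mu$ has the diagonal shape $\mu_I$ of Remark \ref{rmk-gln}. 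Functoriality of $\mathscr{D}\mapsto(Q_{\mathscr{D}},\alpha_{\mathscr{D}})$ in the pair $(G,\mu)$ then yields a morphism of stacks $\mathbf{RZ}_{G,\mu,b}\to\mathbf{RZ}_{\mathrm{GL}(\Lambda),\rho\circ\mu,\rho(b)}$, and the target is a classical Rapoport--Zink space: evaluating a Tannakian $(\mathrm{GL}(\Lambda),\mu_I)$-display on $\Lambda$ gives a Zink display (Lemma \ref{lem-zink}), hence — once a nilpotence/formality hypothesis on $\rho(b)$ is in force — a formal $p$-divisible group via $BT_R$, and the $G$-quasi-isogeny $\iota$ becomes an honest quasi-isogeny of $p$-divisible groups. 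This identifies the target with the functor of \cite{RZ1996}, representable by a formal scheme which is formally smooth and locally formally of finite type, exactly as in \S\ref{sub-gln}.

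The crux is then to show that $\mathbf{RZ}_{G,\mu,b}\to\mathbf{RZ}_{\mathrm{GL}(\Lambda),\rho\circ\mu,\rho(b)}$ is relatively representable by a closed immersion. Given a point $(\underline{M},\iota)$ of the target over $R$ one must: (i) exhibit the locus $Z\subset\operatorname{Spec}R$ over which each tensor $s_\alpha$, regarded as a morphism $\mathbf{1}\to\underline{M}^{\otimes}$ of finite projective graded $W(R)^\oplus$-modules, is a morphism of \emph{displays}, and check that $Z$ is closed — this follows from Lemma \ref{lem-morphism_local} together with the observation that, after choosing a normal decomposition, compatibility of $F$ with a fixed tensor is the vanishing of finitely many Witt-vector coordinates; and (ii) show that over $Z$ the underlying graded fiber functor reduces its structure group to $G$, so that $(\underline{M},\iota)$ comes from a Tannakian $(G,\mu)$-display — here one uses the Tannakian reconstruction of $G$ from $(\rho,(s_\alpha))$ in tandem with the torsor dictionary of Corollary \ref{cor-torsors}. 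One must also check that $\iota$ is automatically a $G$-quasi-isogeny on $Z$, which is again Tannakian once the tensors are horizontal rationally.

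Granting the closed immersion, $\mathbf{RZ}_{G,\mu,b}$ is representable by a formal scheme, and it remains to establish formal smoothness and local finiteness of type. For smoothness I would argue by deformation theory directly on banal objects, using Construction \ref{cons-DU} and Proposition \ref{prop-local}: along a square-zero extension $R'\twoheadrightarrow R$ with nilpotent kernel, lifts of a banal Tannakian $(G,\mu)$-display together with its rigidification are controlled by a $\varphi$-module built from $\operatorname{Lie}(G)$ with its $\mathrm{Ad}^G(b)$-semilinear operator; the hypothesis that $-1$ is not a slope of $\mathrm{Ad}^G(b)$ forces $\mathrm{id}-\mathrm{Ad}^G(b)\circ f$ to be topologically nilpotent on the relevant filtered piece — this is precisely the group-theoretic incarnation of the nilpotence condition of Definition \ref{def-nilpotence} that makes $BT_R$ land in $p$-divisible groups — so the deformation problem is unobstructed with finite-dimensional tangent space. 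Local finiteness of type follows by writing $\mathbf{RZ}_{G,\mu,b}$ as the increasing union, over the integer bounding the denominators of $\iota$, of the sublocus where $p^N\iota$ and $p^N\iota^{-1}$ are isogenies; each such sublocus is stable under the $L^+_\mu G$-action and meets only finitely many Schubert-type strata of $L^+G\times_{m_\mu,c_b}LG$, hence is of finite type over $k$.

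The step I expect to be the genuine obstacle is the construction underlying the second paragraph: producing a faithful representation $\rho$ with $\rho\circ\mu$ minuscule (a Hodge-type embedding), and, given one, controlling the reduction of structure group from $\mathrm{GL}(\Lambda)$ to $G$ \emph{integrally} rather than merely rationally. A general reductive $G$ need not admit such a $\rho$, and even in the Hodge-type case the integral comparison forces one to restrict the test rings — to Noetherian, $p$-nilpotent $W(k)$-algebras — as in \cite{BP2017}. This is why the conjecture is only proved here in the special cases $G=\mathrm{GL}_n$ (\S\ref{sub-gln}) and unramified EL type; the unramified EL argument replaces the horizontality-of-tensors condition by the Kottwitz determinant condition, which the preceding lemma reinterprets as a condition on the graded module underlying the Zink display, and this substitute condition is manifestly closed and descends well, sidestepping the main difficulty.
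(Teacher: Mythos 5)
The statement you are addressing is Conjecture~\ref{conj}, and the paper does not prove it: it is quoted from \cite{BP2017} and remains open in the generality stated. What the paper actually establishes are special cases — representability for $G=\mathrm{GL}_n$ with $\mu=\mu_{d,n}$ and $b$ with no slope $0$ (Corollary~\ref{cor-gln}), and the unramified EL-type case via the comparison isomorphism $\textup{RZ}_{G,\mu,b}\xrightarrow{\sim}\textup{RZ}_{\textbf{D}}(X_0)$ of Theorem~\ref{thm-ELcompare} — together with the citation of \cite{BP2017} for the Hodge-type case restricted to Noetherian $p$-nilpotent $W(k)$-algebras. So there is no ``paper proof'' to match your proposal against, and your proposal does not itself close the conjecture.

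Concretely, the gaps in your sketch are the ones you half-acknowledge in your last paragraph, and they are fatal to the claim as stated. First, a general reductive $G$ over $\zz_p$ with minuscule $\{\mu\}$ need not admit a faithful $\rho$ with $\rho\circ\mu$ of the shape $\mu_{r,n}$, so the reduction to $\mathrm{GL}(\Lambda)$ is only available in the Hodge-type situation; the conjecture is not restricted to that case. Second, even granting a Hodge embedding, your step (i)--(ii) — that tensor horizontality cuts out a \emph{closed} formal subscheme of the classical RZ space and that over it the structure group reduces to $G$ integrally, over arbitrary (non-Noetherian) $p$-nilpotent test rings — is precisely the hard content of \cite{BP2017}, \cite{HP2017}, \cite{Kim2018}, and is currently known only after restricting to Noetherian rings; Lemma~\ref{lem-morphism_local} gives fpqc-locality of ``being a morphism of displays,'' not closedness of the horizontality locus. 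Third, the slope hypotheses do not line up: the passage through $BT_R$ requires nilpotence of the Zink display, i.e.\ that $\rho(b)$ (resp.\ $\eta(b)$) has no slope $0$, whereas the conjecture assumes only that $-1$ is not a slope of $\mathrm{Ad}^G(b)$; your deformation-theoretic assertion that this hypothesis makes $\mathrm{id}-\mathrm{Ad}^G(b)\circ f$ topologically nilpotent on the relevant filtered piece is stated, not proved, and the tangent-space/unobstructedness analysis is not carried out. What the paper does instead, in the one new case it settles, is sidestep tensors entirely: for unramified EL data the extra structure is an $\mathcal{O}_B$-action, the Kottwitz determinant condition is reinterpreted by Lemma~\ref{lem-det} as the graded module being fpqc-locally isomorphic to $\Lambda\otimes_{\zz_p}W(R)^\oplus$, and Theorem~\ref{thm-ELcompare} then gives an isomorphism of stacks with $\textbf{RZ}_{\textbf{D}}(\underline{M_0})$, so representability is inherited from \cite{RZ1996}. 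If you want a provable statement, restrict your write-up to that EL-type route (or to $\mathrm{GL}_n$); as a proof of Conjecture~\ref{conj} in general, the proposal has missing steps that are genuinely open.
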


See \cite[3.2.16]{BP2017} for details on the slope condition. For $G = \textup{GL}_n$, and for $b$ with no slopes equal to zero, Conjecture \ref{conj} follows essentially from the equivalence between Zink displays and formal $p$-divisible groups due to Zink and Lau and the representability of the classical Rapoport-Zink functor from \cite{RZ1996}. This is explained in the next section. In \cite{BP2017}, in the case where $G$ is reductive and the data $(G,\mu,b)$ is of Hodge type (see Definition \ref{def-hodge}), Conjecture \ref{conj} is proven for the restriction of the functor $\text{RZ}_{G,\mu,b}$ to Noetherian rings in $\textbf{Nilp}_{W(k)}$. Further, B\"ultel and Pappas show \cite[Remark 5.2.7]{BP2017} that the resulting formal scheme is isomorphic to the formal schemes constructed in \cite{HP2017} and \cite{Kim2018}. In particular, this shows that, when $R$ is Noetherian, $\text{RZ}_{G,\mu,b}(R)$ agrees with the points in the classical Rapoport-Zink space defined in \cite{RZ1996} in the unramified EL- and PEL-type cases. In Section \ref{sub-EL} we will take this one step further and prove that $\text{RZ}_{G,\mu,b}$ is naturally isomorphic to the classical Rapoport-Zink functor in the unramified EL-type case. This combined with the known representability of the classical functor will prove Conjecture \ref{conj} in that case.

\subsection{Representability for $\text{GL}_n$}\label{sub-gln}
In this section we summarize the proof of Conjecture \ref{conj} in the case $G = \text{GL}_n$ by comparing the moduli spaces of deformations of $p$-divisible groups up to quasi-isogeny from \cite{RZ1996} to an analogous moduli space defined using Zink displays (equivalently, 1-displays). The results of this section are well known and alluded to in \cite{BP2017}, but we found it prudent to work out some details for use in Section \ref{sub-EL}. Suppose $k$ is an algebraically closed field of characteristic $p$, and fix a $p$-divisible group $X_0$ over $k$. 
\begin{Def}\label{def-RZ}
	Define $\textup{RZ}(X_0)$ to be the set-valued functor on the category of $W(k)$-schemes in which $p$ is locally nilpotent which associates to any such $S$ the set of isomorphism classes of pairs $(X, \rho)$, where 
	\begin{itemize}
		\item $X$ is a $p$-divisible group over $S$, and
		\item $\rho: X \times_S \overline{S} \dashrightarrow X_0\times_{\text{Spec } k} \overline{S}$ is a quasi-isogeny.
	\end{itemize}
	Here $\overline{S}$ is the closed subscheme of $S$ defined by $p\mathcal{O}_S$, and we say two pairs $(X_1, \rho_1)$, $(X_2, \rho_2)$ are isomorphic if $\rho_2^{-1} \circ \rho_1$ lifts to an isomorphism $X_1 \to X_2$. 
\end{Def}
By \cite[Theorem 2.16]{RZ1996}, the functor $\textup{RZ}(X_0)$ is representable by a formal scheme which is formally smooth and locally formally of finite type over Spf $W(k)$. We can define an exactly analogous functor using displays.

\begin{Def}\label{def-RZM}
	Define $\textup{RZ}(\underline{M_0})$ to be the set-valued functor on \textbf{Nilp}$_{W(k)}$ which associates to a ring $R$ in \textbf{Nilp}$_{W(k)}$ the set of isomorphism classes of pairs $(\underline{M}, \gamma)$, where
	\begin{itemize}
		\item $\underline{M}$ is a 1-display over $\underline{W}(R)$, and
		\item $\gamma: \underline{M}_{\underline{W}(R/pR)} \dashrightarrow \underline{M_0}_{\underline{W}(R/pR)}$ is a quasi-isogeny.
	\end{itemize}
	We say two pairs $(\underline{M_1}, \gamma_1)$ and $(\underline{M_2},\gamma_2)$ are isomorphic if $\gamma_2^{-1} \circ \gamma_1$ lifts to an isomorphism $\underline{M_1} \to \underline{M_2}$.
\end{Def}

\begin{prop}\label{prop-gln}
	Let $\underline{M}_0$ be a nilpotent 1-display, and let $X_0 = BT_k(\underline{M_0})$. Then the functor $\textup{RZ}(\underline{M_0})$ and the restriction of $\textup{RZ}(X_0)$ to \textup{\textbf{Nilp}}$_{W(k)}$ are naturally isomorphic. In particular, $\textup{RZ}(\underline{M_0})$ is representable by a formal scheme which is formally smooth and locally formally of finite over $\textup{Spf }W(k)$.
\end{prop}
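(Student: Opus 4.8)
The plan is to build a natural isomorphism between the two functors at the level of $R$-points, $R\in\textbf{Nilp}_{W(k)}$, and then transport representability from the right-hand side. The main tool is the Zink--Lau equivalence $BT$ of Theorem \ref{thm-BT}, together with the identification of $1$-displays over $\underline{W}(R)$ with Zink displays over $R$ from Lemma \ref{lem-zink}. So on objects I would like to send a pair $(\underline{M},\gamma)\in\textup{RZ}(\underline{M_0})(R)$ to the pair $(BT_R(\underline{M}),BT_{R/pR}(\gamma))$, using that $BT$ is compatible with base change (so $BT_R(\underline{M})\times_R R/pR=BT_{R/pR}(\underline{M}_{\underline{W}(R/pR)})$ and $X_0\times_k R/pR=BT_{R/pR}((\underline{M_0})_{\underline{W}(R/pR)})$) and that the isocrystal functor identifies quasi-isogenies of displays with quasi-isogenies of the associated formal $p$-divisible groups (cf. the discussion at the end of \S\ref{sub-1displays}). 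Going back, a pair $(X,\rho)\in\textup{RZ}(X_0)(R)$ should be sent to $(\underline{M},\gamma)$ where $X=BT_R(\underline{M})$ and $\rho$ is the quasi-isogeny attached to $\gamma$.

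For any of this to make sense I first have to show that the objects occurring on each side are automatically nilpotent, resp.\ formal, so that $BT$ applies. This is the step I expect to do first, as a separate lemma. On the display side: if $\underline{M}$ over $\underline{W}(R)$ admits a quasi-isogeny to $(\underline{M_0})_{\underline{W}(R/pR)}$, then $\underline{M}$ is nilpotent. Here I would use that the nilpotence condition of Definition \ref{def-nilpotence} is a congruence modulo $I_R+pW(R)$, and $W(R)/(I_R+pW(R))=R/pR$, so nilpotence of $\underline{M}$ over $\underline{W}(R)$ depends only on $\underline{M}_{\underline{W}(R/pR)}$; thus one may assume $p=0$ in $R$, reduce to geometric fibres, and over a perfect field invoke Remark \ref{rem-nilp} (nilpotence $\iff$ topological nilpotence of $V$ $\iff$ vanishing of the slope-zero part of the isocrystal), which is manifestly a quasi-isogeny invariant; since $(\underline{M_0})_{\underline{W}(R/pR)}$ is nilpotent, so is $\underline{M}$. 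Dually, a $p$-divisible group over $R\in\textbf{Nilp}_{W(k)}$ is formal once its reduction modulo $p$ is formal, because the maximal étale quotient and its vanishing are insensitive to the nilpotent thickening $R\twoheadrightarrow R/pR$ (finite étale schemes are invariant under nilpotent thickenings); and formality modulo $p$ is preserved under quasi-isogeny by the same slope argument. Since $X_0=BT_k(\underline{M_0})$ is formal (as $\underline{M_0}$ is nilpotent), every $X$ appearing in $\textup{RZ}(X_0)(R)$ is formal.

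Granting that lemma, the two assignments above are well defined, and I would then check they are mutually inverse, functorial in $R$, and compatible with the equivalence relations defining isomorphism classes on the two sides (two pairs are isomorphic iff $\gamma_2^{-1}\gamma_1$, resp.\ $\rho_2^{-1}\rho_1$, lifts to an isomorphism). All of this is formal once one knows $BT$ is an equivalence on nilpotent displays / formal $p$-divisible groups, is compatible with base change, and extends to quasi-isogenies via the isocrystal. This yields a natural isomorphism $\textup{RZ}(\underline{M_0})\xrightarrow{\sim}\textup{RZ}(X_0)|_{\textbf{Nilp}_{W(k)}}$.

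For the last assertion, I would invoke \cite[Theorem 2.16]{RZ1996}: $\textup{RZ}(X_0)$ is representable by a formal scheme $\mathcal{M}$ which is formally smooth and locally formally of finite type over $\mathrm{Spf}\,W(k)$. Writing $\mathcal{M}=\varinjlim_n\mathcal{M}_n$ with $\mathcal{M}_n\hookrightarrow\mathcal{M}$ cut out by $p^n$, for $R$ with $p^N=0$ one has $\mathcal{M}(R)=\mathcal{M}_N(R)$, so the restriction of $\textup{RZ}(X_0)$ to $\textbf{Nilp}_{W(k)}$ is represented by $\mathcal{M}$; transporting along the isomorphism just constructed, $\textup{RZ}(\underline{M_0})$ is represented by $\mathcal{M}$ with the stated properties. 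The hard part here is really the preliminary lemma of the second paragraph: one must know that nilpotence of displays (equivalently, formality of $p$-divisible groups) over a $p$-nilpotent base is both detected on fibres and invariant under quasi-isogeny, and this is precisely the point where the hypothesis that $\underline{M_0}$ is nilpotent enters essentially. Everything else is bookkeeping with the Zink--Lau equivalence and its compatibilities.
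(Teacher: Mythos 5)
Your proposal is correct and takes essentially the same route as the paper: the paper's proof likewise deduces the statement as an immediate consequence of the Zink--Lau equivalence of Theorem \ref{thm-BT}, after checking that nilpotence of displays is preserved under (quasi-)isogeny and that $BT_R$ carries isogenies of nilpotent displays to isogenies of $p$-divisible groups, and then transports representability from \cite[Theorem 2.16]{RZ1996}. Your preliminary lemma (nilpotence/formality detected modulo $p$ and on fibres, and invariant under quasi-isogeny via slopes) is exactly the verification the paper leaves to the reader.
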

\begin{proof}
	After one checks that the nilpotence condition for displays is preserved by isogenies, and that the functor $BT_R$ of Theorem \ref{thm-BT} sends isogenies of nilpotent displays to isogenies of $p$-divisible groups, the theorem is an immediate corollary of Theorem \ref{thm-BT}.
\end{proof}

Let $G = \text{GL}_n$, let $1 \le d \le n$, and let $\mu_{d,n}$ be the cocharacter 
\begin{align*}
\mu_{d,n}: \mathbb{G}_m \to \text{GL}_n \ a \mapsto \text{diag}(1^{(d)}, a^{(n-d)}).
\end{align*}
Let $[b]$ be a $\sigma$-conjugacy class of elements in $\text{GL}_n(K)$, and choose a representative $b$ as in Section \ref{sub-localdata}. This determines a $(\text{GL}_n, \mu_{d,n})$-display $\mathscr{D}_0$. By Remark \ref{rmk-gln}, the stack of $(\text{GL}_n, \mu_{d,n})$-displays is isomorphic to the stack of displays of type $(0^{(d)}, 1^{(n-d)})$. Such a display has depth 0 and altitude 1, and is therefore a 1-display. Then the functor $\textup{RZ}_{\text{GL}_n,\mu_{d,n},b}$ is naturally isomorphic to the functor $\textup{RZ}(\underline{M_0})$, where $\underline{M_0}$ is the 1-display associated to $\mathscr{D}_0$. If the slopes of $b$ are all different from 0, then the resulting 1-display is nilpotent, so the functor is representable and isomorphic to $\textup{RZ}(BT_k(\underline{M_0}))$ by Proposition \ref{prop-gln}.	This proves the following corollary.
\begin{cor}\label{cor-gln}
	If the slopes of $b$ are all different from 0, the functor $\textup{RZ}_{\text{GL}_n,\mu_{d,n},b}$ is naturally isomorphic to $\textup{RZ}(BT_k(\underline{M_0}))$. In particular, it is representable by a formal scheme which is formally smooth and formally locally of finite type over \textup{Spf} $W(k)$.
\end{cor}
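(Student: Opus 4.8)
The plan is to reduce Corollary~\ref{cor-gln} to Proposition~\ref{prop-gln}, so that the only work is (a) to identify the functor $\textup{RZ}_{\text{GL}_n,\mu_{d,n},b}$ with the display-theoretic functor $\textup{RZ}(\underline{M_0})$ of Definition~\ref{def-RZM}, where $\underline{M_0}$ is the $1$-display attached to the framing object $\mathscr{D}_0$ (Definition~\ref{def-framingobject}), and (b) to verify that $\underline{M_0}$ is nilpotent when $b$ has no zero slope. For (a) I would first invoke Remark~\ref{rmk-gln} together with Theorem~\ref{thm-equiv}: the stack of Tannakian $(\text{GL}_n,\mu_{d,n})$-displays over $R$ is equivalent to the stack of displays over $\underline{W}(R)$ of type $(0^{(d)},1^{(n-d)})$. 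Any such display has depth $0$ and altitude $1$ (here $d<n$, since otherwise $\mu_{d,n}$ is trivial, $b\in G(W(k))$, and every slope of $b$ vanishes), hence is a $1$-display; under this chain of equivalences $\mathscr{D}$ corresponds to the $1$-display $\mathscr{D}(\zz_p^n,\textup{std})$, where $(\zz_p^n,\textup{std})$ is the standard representation of $\text{GL}_n$.

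To finish (a) I would match the quasi-isogeny data. A $G$-quasi-isogeny $\iota\colon\mathscr{D}_{R/pR}\dashrightarrow(\mathscr{D}_0)_{R/pR}$ is an isomorphism of the associated $G$-isodisplays; since $\text{GL}_n=\text{GL}(\zz_p^n)$ is the full automorphism group of the standard representation, evaluation on $(\zz_p^n,\textup{std})$ is an equivalence in this setting, so $\iota$ amounts to an isomorphism of isodisplays over $R/pR$ between those attached to the $1$-displays $\mathscr{D}(\zz_p^n,\textup{std})$ and $\underline{M_0}$, i.e.\ a quasi-isogeny of $1$-displays in the sense of Definition~\ref{def-RZM}. (This is Tannakian duality, used exactly as in Construction~\ref{cons_GDisp}.) One then checks that the two notions of isomorphism of pairs correspond and that the construction is compatible with base change along $R\to R/pR$ and with the identification $W(R)[1/p]\cong W(R/pR)[1/p]$; this yields a natural isomorphism $\textup{RZ}_{\text{GL}_n,\mu_{d,n},b}\xrightarrow{\sim}\textup{RZ}(\underline{M_0})$.

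For (b), recall $\underline{M_0}=\mathscr{D}_0(\zz_p^n,\textup{std})$ with $\mathscr{D}_0=\mathscr{D}_u$ and $b=u\mu^\sigma(p)$. By Lemma~\ref{lem-froblocal} the isodisplay $\underline{M_0}[1/p]$ is $(K^n,\,b\circ(\textup{id}\otimes f))$, the isocrystal of $b$, whose Newton slopes are the slopes of $b$; the condition $b\in G(W(k))\mu^\sigma(p)G(W(k))$ from Definition~\ref{def-Shimdatum} forces these into $[0,1]$, so assuming none is $0$ places them all in $(0,1]$, i.e.\ the isocrystal has no slope-$0$ summand. By Remark~\ref{rem-nilp}---over $k$, displays are Dieudonn\'e modules and nilpotence means topological nilpotence of the Verschiebung---this is precisely nilpotence of $\underline{M_0}$, equivalently (via Theorem~\ref{thm-BT}) that $X_0:=BT_k(\underline{M_0})$ is a formal $p$-divisible group. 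Proposition~\ref{prop-gln} then gives that $\textup{RZ}(\underline{M_0})$ is representable by a formal scheme which is formally smooth and locally formally of finite type over $\textup{Spf}\,W(k)$, and is naturally isomorphic to the restriction of $\textup{RZ}(X_0)$ to $\textbf{Nilp}_{W(k)}$; composing with the isomorphism from (a) yields the corollary.

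The substantial inputs---the Zink-Lau equivalence (Theorem~\ref{thm-BT}) and the representability of the classical Rapoport-Zink space from \cite{RZ1996}---enter only through Proposition~\ref{prop-gln}, so nothing hard is created at this stage. The two points that need genuine care are the Tannakian bookkeeping in (a), namely checking that evaluation on the standard representation is an equivalence onto the stack of $1$-displays and is compatible with quasi-isogenies and base change, and, more subtly, the translation in (b) between ``$b$ has no zero slope'' and ``$\underline{M_0}$ is nilpotent'': this depends on fixing the covariant Dieudonn\'e/isocrystal normalization so that the \'etale part of $X_0$ is exactly the slope-$0$ summand. I expect (b), though short, to be the more delicate of the two.
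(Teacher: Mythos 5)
Your proposal is correct and follows essentially the same route as the paper: identify $\textup{RZ}_{\text{GL}_n,\mu_{d,n},b}$ with $\textup{RZ}(\underline{M_0})$ via Remark \ref{rmk-gln} and Theorem \ref{thm-equiv} (evaluation on the standard representation, with Tannakian duality handling the quasi-isogeny data), observe that displays of type $(0^{(d)},1^{(n-d)})$ are $1$-displays, check that the nonzero-slope hypothesis makes $\underline{M_0}$ nilpotent, and conclude by Proposition \ref{prop-gln}. If anything, you give more detail than the paper's own brief paragraph, in particular on the slope-versus-nilpotence translation, which the paper simply asserts.
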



\subsection{The determinant condition}
In the next two sections our goal is to give a generalization of Corollary \ref{cor-gln} when the local Shimura datum $(G,\{\mu\}, [b])$ is of EL-type. We begin by recalling the definition of RZ data in the EL case, cf. \cite[1.38]{RZ1996}, \cite[4.1]{RV2014}. As in \ref{sub-localdata}, let $k$ be an algebraic closure of $\mathbb{F}_p$, and let $W(k)$ be the ring of Witt vectors over $k$. Write $K = W(k)[1/p]$, and let $\bar{K}$ be an algebraic closure of $K$.

\begin{Def}
	An \textit{unramified integral EL-datum} is a tuple
	\begin{align*}
		\textbf{D} = (B, \mathcal{O}_B, \Lambda)
	\end{align*}
	such that
	\begin{itemize}
		\item $B$ is a semisimple $\qq_p$-algebra whose simple factors are matrix algebras over unramified extensions of $\qq_p$;
		\item $\mathcal{O}_B$ is a maximal order in $B$;
		\item $\Lambda$ is a finite free $\zz_p$-module with an  $\mathcal{O}_B$-action.
	\end{itemize}
\end{Def}
To an unramified integral EL-datum $\textbf{D} = (B, \mathcal{O}_B, \Lambda)$ we associate a reductive algebraic group $G = G_{\textbf{D}}$ over $\zz_p$ whose points in a $\zz_p$-algebra $R$ are given by
\begin{align*}
	G(R) = \text{GL}_{\mathcal{O}_B}(\Lambda \otimes_{\zz_p} R).
\end{align*}


\begin{Def}\label{def-unramifiedRZEL}
	An \textit{unramified integral RZ-datum of EL-type} is a tuple
	\begin{align*}
		(\textbf{D}, \{\mu\}, [b])
	\end{align*} 
	such that $\textbf{D}$ is an EL-datum with associated group $G$ and $(\{\mu\}, [b])$ is a pair as in \textsection \ref{sub-localdata} such that $\{\mu\}$ satisfies Assumption \ref{ass_intmu} and $(G,\{\mu\},[b])$ is a local integral Shimura datum. We require the following condition:
	\begin{itemize}
		\item For any $\mu$ in $\{\mu\}$, the only weights occurring in the weight decomposition for $\Lambda \otimes_{\zz_p} W(k)$ are $0$ and $1$, i.e.
		\begin{align*}
			\Lambda \otimes_{\zz_p} W(k) = \Lambda^0 \oplus \Lambda^1.
		\end{align*}
	\end{itemize}
\end{Def}

For the remainder of this section let us fix an unramified integral RZ-datum of EL-type $(\textbf{D},\{\mu\}, [b])$. Before we can formulate the corresponding moduli problem of $p$-divisible groups, we must first recall the determinant condition following \cite[3.23]{RZ1996}. Let $\mathbb{V}$ be the scheme over $\zz_p$ whose points in a $\zz_p$-algebra $R$ are given by
\begin{align*}
	\mathbb{V}(R) = \mathcal{O}_B\otimes_{\zz_p} R.
\end{align*}
For any $W(k)$-algebra $R$ define
\begin{align*}
	\delta_{\textbf{D}}: \mathbb{V}(R) \to \mathbb{A}^1_{W(k)}(R), \ a \mapsto \det\left(a \mid \Lambda^0 \otimes_{W(k)} R\right).
\end{align*}
This determines a morphism of $W(k)$-schemes $\mathbb{V}_{W(k)} \to \mathbb{A}^1_{W(k)}$. Now, let $R$ be a $W(k)$-algebra and $L$ be a finite projective $R$-module endowed with an $\mathcal{O}_B$-action. Then we can define similarly, for any $R$-algebra $R'$,
\begin{align*}
	\delta_L: \mathbb{V}_R(R') \to \mathbb{A}^1_{R}(R'), \ a \mapsto \det\left(a \mid L\otimes_R R'\right),
\end{align*}
which determines a morphism of $R$-schemes $\mathbb{V}_R \to \mathbb{A}^1_R$. 
\begin{Def}
	We say that $L$ \textit{satisfies the determinant condition with respect to \textbf{D}} if the morphisms of $R$-schemes $\delta_{\textbf{D}} \otimes \id_{\text{Spec}(R)}$ and $\delta_L$ are equal.
\end{Def}
Let $X$ be a $p$-divisible group over a $p$-nilpotent $W(k)$-algebra $R$ which is equipped with an action of $\mathcal{O}_B$, i.e. a homomorphism $\mathcal{O}_B \to \text{End}(X).$ Then the Lie algebra $\text{Lie}(X)$ is endowed with the structure of an $\mathcal{O}_B \otimes_{\zz_p} R$-module, so one can ask whether Lie$(X)$ satisfies the determinant condition with respect to \textbf{D}.

Let $R$ be a $p$-nilpotent $W(k)$-algebra, and suppose $X$ is a formal $p$-divisible group with an action by $\mathcal{O}_B$. Let $\underline{M} = (M,F)$ be the nilpotent 1-display with $\mathcal{O}_B$-action corresponding to $X$, so $X = BT_R(\underline{M})$. We would like to reinterpret the determinant condition as a condition on the projective graded $W(R)^\oplus$-module $M$. 

Suppose the height of $X$ is equal to rk$_{\zz_p} \Lambda$, so by Theorem \ref{thm-BT}, 
\begin{align*}
\text{rk}_{W(R)} M^\tau = \text{rk}_{\zz_p} \Lambda.
\end{align*}
By the recollections from Section \ref{sub-1displays}, we have an identification 
\begin{align*}
M^\tau \otimes_{W(R)} R \cong \mathbb{D}(X)_R
\end{align*} 
which identifies the Hodge filtrations, i.e.
\begin{align*}
(M^\tau \otimes_{W(R)} R \supset E_1 \supset 0) \xrightarrow{\sim} (\mathbb{D}(X)_R \supset \text{Lie}(X^\vee)^\ast \supset 0)
\end{align*}
is an isomorphism of filtered $\mathcal{O}_B \otimes_{\zz_p} R$-modules. Here $E_1$ is the image of $M_1$ under the composition 
\begin{align*}
M_1 \xrightarrow{\theta_1} M^\tau \to M^\tau \otimes_{W(R)} R.
\end{align*} 
In particular, we have an identification 
\begin{align*}
\text{Lie}(X) \cong M^\tau / \theta_1(M_1).
\end{align*}
Viewing $\mathcal{O}_B$ as a graded $\zz_p$-algebra concentrated in degree zero, we can view $\mathcal{O}_B \otimes_{\zz_p} W(R)^\oplus$ as a graded ring. Then $\Lambda \otimes_{\zz_p} W(R)^\oplus$ inherits the structure of a graded $\mathcal{O}_B \otimes_{\zz_p} W(R)^\oplus$-module.

\begin{lemma}\label{lem-det}
	The following are equivalent:
	\begin{enumerate}[\textup{(}i\textup{)}]
		\item For some faithfully flat extension $R \to R'$ there is an isomorphism of graded $\mathcal{O}_B \otimes_{\zz_p} W(R')^\oplus$-modules 
		\begin{align*}
		M_{W(R')^\oplus} \xrightarrow{\sim} \Lambda \otimes_{\zz_p} W(R')^\oplus
		\end{align*}
		\item For some faithfully flat extension $R \to R'$ there is an isomorphism of filtered $\mathcal{O}_B \otimes_{\zz_p} R'$-modules
		\begin{align}\label{eq-filt}
		(M^\tau \otimes_{W(R)} R' \supset E_1 \otimes_R R' \supset 0) \xrightarrow{\sim} (\Lambda \otimes_{\zz_p} R' \supset \Lambda^1 \otimes_{W(k)} R' \supset 0)
		\end{align}
		\item \textup{Lie}$(X)$ satisfies the determinant condition with respect to \textbf{D}.
	\end{enumerate}
\end{lemma}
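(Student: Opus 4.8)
The plan is to prove the chain of equivalences (i) $\Leftrightarrow$ (ii) $\Leftrightarrow$ (iii), where the passage between the "graded module" picture and the "filtered module" picture is essentially formal, and the genuine content lies in the step connecting an isomorphism of $\mathcal{O}_B\otimes_{\zz_p}R'$-modules to the determinant condition.

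First, I would establish (i) $\Leftrightarrow$ (ii). Given an isomorphism $M_{W(R')^\oplus}\xrightarrow{\sim}\Lambda\otimes_{\zz_p}W(R')^\oplus$ of graded $\mathcal{O}_B\otimes_{\zz_p}W(R')^\oplus$-modules, I apply $-\otimes_{W(R')^\oplus,\tau}W(R')$ (equivalently use $M^\tau\cong\bigoplus_i M_i$ via the colimit description, and the recollection that $M^\tau\cong L$ for a normal decomposition $L$) to recover $M^\tau\otimes_{W(R)}R'$ together with its degree-$1$ part, which by Lemma \ref{lem-type} and Lemma \ref{lem-theta} is precisely $E_1\otimes_R R'$; since $(\Lambda\otimes_{\zz_p}W(R')^\oplus)^\tau\otimes_{W(R')}R'\cong\Lambda\otimes_{\zz_p}R'$ with degree-$1$ part $\Lambda^1\otimes_{W(k)}R'$, we get (ii). Conversely, given a filtered isomorphism as in (\ref{eq-filt}), I would lift it to a graded module isomorphism: choose normal decompositions on both sides (which exist since $W(R')$ is $p$-adic and $\Lambda\otimes_{\zz_p}W(R')^\oplus$ is already "split"), so $M_{W(R')^\oplus}\cong (L_0\oplus L_1)\otimes_{W(R')}W(R')^\oplus(-\bullet)$ with $L_0\cong M^\tau/E_1$-lift and $L_1\cong E_1$-lift as $\mathcal{O}_B\otimes W(R')$-modules; after a further faithfully flat (indeed Zariski, by lifting idempotents) extension the filtered isomorphism over $R'$ splits compatibly with the $\mathcal{O}_B$-action, and lifts along $W(R')\twoheadrightarrow R'$ because projective $\mathcal{O}_B\otimes W(R')$-modules are determined by their reductions (here one uses that $\mathcal{O}_B$ is a maximal order in a product of matrix algebras over unramified extensions, so $\mathcal{O}_B\otimes_{\zz_p}W(R')$ is again a product of matrix algebras over étale $W(R')$-algebras and Morita theory reduces module classification to the commutative base). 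This gives (i).

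Second, for (ii) $\Leftrightarrow$ (iii): by the recollections preceding the lemma, $\mathrm{Lie}(X)\cong M^\tau/\theta_1(M_1)$ and the Hodge filtration of $X$ is identified, as filtered $\mathcal{O}_B\otimes_{\zz_p}R$-module, with $(M^\tau\otimes_{W(R)}R\supset E_1\supset 0)$. The determinant condition is by definition the equality of the two morphisms $\mathbb{V}_R\to\mathbb{A}^1_R$, namely $\delta_{\mathrm{Lie}(X)}$ and $\delta_{\textbf D}\otimes\mathrm{id}$, where $\delta_{\textbf D}$ is built from $\Lambda^0=\Lambda\otimes_{\zz_p}W(k)/\Lambda^1$. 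Since $\mathrm{Lie}(X)\cong (M^\tau\otimes_{W(R)}R)/E_1$, condition (ii) immediately implies $\mathrm{Lie}(X)\cong(\Lambda\otimes_{\zz_p}R)/(\Lambda^1\otimes_{W(k)}R)\cong\Lambda^0\otimes_{W(k)}R$ as $\mathcal{O}_B\otimes_{\zz_p}R'$-modules after faithfully flat base change, hence the two determinant morphisms agree over $R'$, and agreement of morphisms of affine $R$-schemes is fpqc-local on $R$, giving (iii). For the converse (iii) $\Rightarrow$ (ii), I invoke the classification of $\mathcal{O}_B\otimes_{\zz_p}R$-modules that are fpqc-locally free: writing $B=\prod_\alpha M_{n_\alpha}(F_\alpha)$ with $F_\alpha/\qq_p$ unramified, Morita equivalence plus the fact that $W(k)$ contains all the relevant unramified extensions decomposes any such module into a sum of "eigenspaces" indexed by the embeddings $\mathcal{O}_{F_\alpha}\hookrightarrow W(k)$, and the function $\delta_L$ records exactly the multiranks of these eigenspaces; thus $\delta_{\mathrm{Lie}(X)}=\delta_{\textbf D}\otimes\mathrm{id}$ forces $\mathrm{Lie}(X)$ and $\Lambda^0\otimes_{W(k)}R$ to have the same multiranks, hence to be fpqc-locally isomorphic as $\mathcal{O}_B\otimes_{\zz_p}R$-modules (two finite projective modules over $\mathcal{O}_B\otimes_{\zz_p}R$ with locally constant ranks agreeing are locally isomorphic). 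Lifting this to a filtered isomorphism as in (ii) uses the same projectivity/lifting argument as in the previous paragraph, together with the fact that $E_1$, being the kernel of $M^\tau\otimes R\twoheadrightarrow\mathrm{Lie}(X)$, is also finite projective with the complementary multiranks, matching $\Lambda^1\otimes_{W(k)}R$.

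The main obstacle I anticipate is the module-classification input in (iii) $\Rightarrow$ (ii): turning the equality of determinant polynomials into an honest (fpqc-local) isomorphism of $\mathcal{O}_B\otimes_{\zz_p}R$-modules. This requires unwinding the Morita/eigenspace decomposition carefully and checking that the determinant morphism $\delta_L$ genuinely separates the isomorphism classes — i.e. that over $W(k)$ (which splits all the $F_\alpha$) a finite projective $\mathcal{O}_B\otimes_{\zz_p}R$-module is determined up to isomorphism by the collection of ranks of its isotypic/eigenspace components, and that these ranks are read off from $\det(a\mid L\otimes R')$. Once that structural statement is in hand the rest is bookkeeping with normal decompositions and the standard $p$-adic lifting of projective modules, all of which is routine given the results already available in the excerpt (Lemmas \ref{lem-theta}, \ref{lem-type}, \ref{lem-zink}, and the Dieudonné-crystal recollections following Theorem \ref{thm-BT}).
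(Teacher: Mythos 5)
Your proposal is correct and follows essentially the same route as the paper's proof: reduce to the unramified factor $\mathcal{O}_L$ by Morita equivalence, decompose into eigenspaces for $\mathcal{O}_L\otimes_{\zz_p}W(k)\cong\prod_j W(k)$, lift isomorphisms along $W(R')\to R'$ using projectivity and Nakayama, reformulate the determinant condition as equality of the eigenspace ranks (cf.\ \cite[3.23(b)]{RZ1996}), and use the complementary-rank/splitting argument together with the Dieudonn\'e-crystal identification of the Hodge filtration. The only cosmetic difference is the passage from a filtered to a graded isomorphism in (ii)$\Rightarrow$(i): the paper packages it via the fully faithful functor $M\mapsto(M^\tau,\theta_1(M_1))$ from the proof of Lemma \ref{lem-zink}, whereas you split the filtration $\mathcal{O}_B$-equivariantly using normal decompositions, which amounts to the same thing.
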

\begin{proof}
	We start by proving (i) and (ii) are equivalent. Suppose (i) holds, so for some faithfully flat $R \to R'$ there is a $\varphi: M_{W(R')^\oplus} \xrightarrow{\sim} \Lambda \otimes_{\zz_p} W(R')^\oplus$ which is compatible with the $\mathcal{O}_B$-action and the grading. Then in particular, $M_1'$, the first graded piece of $M_{W(R')^\oplus}$, is carried by $\varphi$ into the first graded piece of $\Lambda \otimes_{\zz_p} W(R')^\oplus$. That is,
	\begin{align*}
	\varphi(M_1') \cong (\Lambda^0 \otimes_{W(k)} I_{R'}) \oplus (\Lambda^1 \otimes_{W(k)} W(R')).
	\end{align*}
	By reducing modulo $I_{R'}(M_{W(R')^\oplus})^\tau$ we see that (ii) holds.
	
	Now let us prove (ii) implies (i). Let
	\begin{align*}
	\bar{\varphi}: M^\tau \otimes_{W(R)} R' \xrightarrow{\sim} \Lambda \otimes_{\zz_p} R'
	\end{align*}
	be an isomorphism preserving the filtration. First we lift $\bar{\varphi}$ to an $\mathcal{O}_B\otimes_{\zz_p} W(R)$-module isomorphism
	\begin{align*}
		\varphi: (M_{W(R')^\oplus})^\tau \xrightarrow{\sim} \Lambda \otimes_{\zz_p} W(R').
	\end{align*}
	\noindent By restricting to simple factors and applying Morita equivalence we may assume $\mathcal{O}_B = \mathcal{O}_L$ is the ring of integers in an unramified extension $L$ of degree $n$ over $\qq_p$. In that case we have an isomorphism
	\begin{align*}
	\mathcal{O}_L \otimes_{\zz_p} W(k) \cong \prod_{j \in \zz/n\zz} W(k),
	\end{align*}
	which gives decompositions 
	\begin{align*}
	(M_{W(R')^\oplus})^\tau = \bigoplus_{j \in \zz/n\zz} M'(j), \ \text{ and }\ \Lambda \otimes_{\zz_p} W(R') = \bigoplus_{j \in \zz/n\zz} \Lambda'(j),
	\end{align*}
	where $M'(j) = \{ m \in (M_{W(R')^\oplus})^\tau \mid a\cdot m = \sigma^j(a)m, \ a \in \mathcal{O}_L\}$, and similarly for $\Lambda'(j)$. Since $\bar{\varphi}$ is an isomorphism of $\mathcal{O}_L \otimes_{\zz_p} R'$-modules, it induces isomorphisms
	\begin{align*}
	M'(j) \otimes_{W(R')^\oplus} R' \xrightarrow{\sim} \Lambda'(j) \otimes_{W(k)} R'
	\end{align*}
	for every $j$. Each $M'(j)$ is projective as a $W(R')$-module, so these isomorphisms can be lifted to $W(R')$-module homomorphisms
	\begin{align*}
	M'(j) \to \Lambda'(j),
	\end{align*}
	which are surjective by Nakayama's lemma. Summing these together we obtain a map
	\begin{align*}
	(M_{W(R')^\oplus})^\tau \to \Lambda \otimes_{\zz_p} W(R'),
	\end{align*}
	which is an $\mathcal{O}_L \otimes_{\zz_p} W(R')$-module homomorphism since it preserves the decompositions above. At the same time, it is a surjective homomorphism between projective $W(R')$-modules of the same rank, so it must be an isomorphism.
	
	If $M'$ is a finite projective  graded $W(R')^\oplus$-module of non-negative depth and altitude one, then the assignment $M' \mapsto ((M')^\tau, \theta_1'(M_1'))$ determines a functor $F$ to the category $\textbf{Pairs}(R)$ of pairs $(P,Q)$, where $P$ is a finite projective $W(R)$-module and $Q \subset P$ is a submodule. By the proof of Lemma \ref{lem-zink} this functor is fully faithful. In the case at hand, because $\bar{\varphi}$ preserves the filtration (\ref{eq-filt}), it follows that $\varphi$ sends $\theta_1'(M_1')$ into $(\Lambda^0 \otimes_{W(k)} I_{R'}) \oplus (\Lambda^1 \otimes_{W(k)} W(R'))$, so $\varphi$ determines an isomorphism
	\begin{align*}
		((M_{W(R')^\oplus})^\tau, \theta_1'(M_1')) \xrightarrow{\sim} (\Lambda \otimes_{\zz_p} W(R'), (\Lambda^0 \otimes_{W(k)} I_{R'}) \oplus (\Lambda^1 \otimes_{W(k)} W(R'))) 
	\end{align*}
	in $\textbf{Pairs}(R)$. Since the left-hand side is $F(M_{W(R')^\oplus})$ and the right-hand side is $F(\Lambda \otimes_{\zz_p} W(R')^\oplus)$, full faithfulness of $F$ implies the existence of an isomorphism of graded $W(R')^\oplus$-modules 
	\begin{align*}
		\tilde{\varphi}: M_{W(R')^\oplus} \xrightarrow{\sim} \Lambda \otimes_{\zz_p} W(R')^\oplus
	\end{align*}
	lifting $\varphi$. Since the $\mathcal{O}_B$-action on $(M_{W(R')^\oplus})^\tau$ is induced by the given action on $M_{W(R')^\oplus}$, we can  once again apply full faithfulness of $F$ to see $\tilde{\varphi}$ is compatible with the respective $\mathcal{O}_B$-actions.
	
	Now let us prove the equivalence of (ii) and (iii). Suppose (ii) holds. The determinant condition can be checked fpqc-locally because morphisms of schemes can be glued locally for the fpqc topology. But by (ii) and the above identifications there is an isomorphism of $\mathcal{O}_B \otimes_{\zz_p} R'$-modules
	\begin{align*}
	\text{Lie}(X_{R'}) \xrightarrow{\sim} \Lambda^0 \otimes_{W(k)} R',
	\end{align*}
	where $R \to R'$ is a faithfully flat extension. This implies the determinant condition holds over $R'$, and therefore over $R$ as well.
	
	Conversely, suppose (iii) holds, i.e. Lie$(X)$ satisfies the determinant condition with respect to \textbf{D}. Again by restricting to simple factors and applying Morita equivalence we can assume $\mathcal{O}_B = \mathcal{O}_L$ is the ring of integers in an unramified extension $L$ of degree $n$ over $\qq_p$. As in the proof of (ii) implies (i), we have decompositions
	\begin{align*}
	\Lambda^0 = \bigoplus_{j \in \zz/n\zz} \Lambda^0(j), \ \text{ and } \ \text{Lie}(X) = \bigoplus_{j \in \zz/n\zz} L(j).
	\end{align*}
	In this case the determinant condition is equivalent to
	\begin{align*}
	\text{rk}_{W(k)} \Lambda^0(j) = \text{rk}_R L(j)
	\end{align*}
	for every $j$, cf. \cite[3.23(b)]{RZ1996}. Hence $L(j)$ and $\Lambda^0(j)\otimes_{W(k)} R$ are projective $R$-modules of the same rank, so after some localization we can find an isomorphism $\alpha_j: L(j) \xrightarrow{\sim} \Lambda^0(j) \otimes_{W(k)} R$ for every $j$. Similarly we can decompose
	\begin{align*}
	\Lambda^1 = \bigoplus_{j \in \zz/n\zz} \Lambda^1(j) \ \text{ and } \ \text{Lie}(X^\vee)^\ast = \bigoplus_{j \in \zz/n\zz} L'(j),
	\end{align*}
	and, perhaps after another localization, we can find isomorphisms $\beta_j: L'(j) \xrightarrow{\sim} \Lambda^1(j) \otimes_{W(k)} R$ for every $j$. Now, write
	\begin{align*}
	\mathbb{D}(X)_R = \bigoplus_{j \in \zz/n\zz} D(j) \ \text{ and } \ \Lambda = \bigoplus_{j \in \zz/n\zz} \Lambda(j).
	\end{align*}
	Since $L(j)$ and $\Lambda^0(j)$ are projective, the short exact sequences
	\begin{align*}
	0 \to L'(j) \to D(j) \to L(j) \to 0 
	\end{align*}
	and 
	\begin{align*}
	0 \to \Lambda^1(j)\otimes_{W(k)} R \to \Lambda(j)\otimes_{W(k)} R \to \Lambda^0(j) \otimes_{W(k)} R \to 0
	\end{align*}
	split over $R$. Hence we can piece together the isomorphisms $\alpha_j$ and $\beta_j$ to obtain $\phi_j: D(j) \xrightarrow{\sim} \Lambda(j) \otimes_{W(k)} R$, preserving the filtration by direct summands. Combining these for all $j$ gives an isomorphism 
	\begin{align*}
	\varphi: \mathbb{D}(X)_R \xrightarrow{\sim} \Lambda \otimes_{\zz_p} R.
	\end{align*}
	By construction, $\varphi$ is an isomorphism of $\mathcal{O}_L \otimes_{\zz_p} R$-modules which identifies the Hodge filtrations of the two sides. By the remarks before the statement of the lemma we have a natural identification
	\begin{align*}
	(M^\tau \otimes_{W(R)} R \supset E_1 \supset 0) \xrightarrow{\sim} (\mathbb{D}(X)_R \supset \text{Lie}(X^\vee)^\ast \supset 0).
	\end{align*}
	Combining this with $\varphi$ gives the desired isomorphism.
\end{proof}

\subsection{Representability in the EL-type case}\label{sub-EL}
In this section we prove that $\textup{RZ}_{G,\mu,b}$ is representable by a formal scheme when $(\mu,b)$ is a framing pair for a local integral Shimura datum $(G,\{\mu\},[b])$ associated to an unramified integral RZ-datum of EL-type. Throughout this section we continue to assume $k$ is an algebraic closure of $\mathbb{F}_p$, that $\textbf{D} = (B,\mathcal{O}_B,\Lambda)$ is an unramified integral EL-datum, and that $(\textbf{D},\{\mu\},[b])$ is an unramified integral RZ-datum of EL-type. Let $X_0$ be a $p$-divisible group over $k$ equipped with an action by $\mathcal{O}_B$. Let us begin by recalling the definition of the unramified EL-type Rapoport-Zink space associated to $X_0$ and $\textbf{D}$. 

\begin{Def}\label{def-RZEL}
	Define $\textup{RZ}_{\textbf{D}}(X_0)$ to be the set-valued functor on the category of $W(k)$-schemes in which $p$ is locally nilpotent which associates to any such $S$ the set of isomorphism classes of pairs $(X,\rho)$, where 
	\begin{itemize}
		\item $X$ is a $p$-divisible group over $S$ equipped with an action of $\mathcal{O}_B$, such that Lie$(X)$ satisfies the determinant condition with respect to \textbf{D}, and
		\item $\rho: X \times_S \overline{S} \dashrightarrow X_0\times_{\text{Spec}(k)} \overline{S}$ is a quasi-isogeny which commutes with the action of $\mathcal{O}_B$.
	\end{itemize}
	Here again $\overline{S}$ denotes the closed subscheme of $S$ defined by $p\mathcal{O}_S$, and two pairs $(X_1, \rho_1)$ and $(X_2, \rho_2)$ are isomorphic if $\rho_2^{-1} \circ \rho_1$ lifts to an isomorphism $X_1 \to X_2$ respecting the $\mathcal{O}_B$-actions.
\end{Def}

By \cite[Theorem 3.25]{RZ1996}, the inclusion $\textup{RZ}_{\textbf{D}}(X_0) \hookrightarrow \textup{RZ}(X_0)$ is a closed immersion, so the functor $\textup{RZ}_{\textbf{D}}(X_0)$ is representable by a formal scheme which is formally smooth and formally locally of finite type over Spf $W(k)$. If $\underline{M_0}$ is a 1-display over $k$ equipped with an $\mathcal{O}_B$-action, we obtain an analogous subfunctor of $\textup{RZ}(\underline{M_0})$ by replacing all $p$-divisible groups that arise in Definition \ref{def-RZEL} with displays.

\begin{Def}
	Let $\textup{RZ}_{\textbf{D}}(\underline{M_0})$ be the set-valued functor on \textbf{Nilp}$_{W(k)}$ which associates to a ring $R$ in \textbf{Nilp}$_{W(k)}$ the set of isomorphism classes of pairs $(\underline{M},\gamma)$, where
	\begin{itemize}
		\item $\underline{M} = (M,F)$ is a 1-display over $\underline{W}(R)$ equipped with an action by $\mathcal{O}_B$ such that the underlying graded $\mathcal{O}_B \otimes_{\zz_p} W(R)^\oplus$-module $M$ is fpqc-locally isomorphic to $\Lambda \otimes_{\zz_p} W(R)^\oplus$, and
		\item $\gamma: \underline{M}_{\underline{W}(R/pR)} \dashrightarrow \underline{M_0}_{\underline{W}(R/pR)}$ is a quasi-isogeny of displays which commutes with the $\mathcal{O}_B$-actions.
	\end{itemize}
	As in Definition \ref{def-RZM}, two pairs $(\underline{M_1}, \gamma_1)$ and $(\underline{M_2}, \gamma_2)$ are isomorphic if $\gamma_2^{-1} \circ \gamma_1$ lifts to an isomorphism $\underline{M_1} \to \underline{M_2}$.
\end{Def}

\begin{lemma}\label{lem-intermediate}
	Let $\underline{M}_0$ be a nilpotent 1-display endowed with an $\mathcal{O}_B$-action, and let $X_0 = BT_k(\underline{M_0})$. Then there is a natural isomorphism
	\begin{align*}
		\textup{RZ}_{\textbf{D}}(\underline{M_0}) \xrightarrow{\sim} \textup{RZ}_{\textbf{D}}(X_0)
	\end{align*} 
	of functors on $\textup{\textbf{Nilp}}_{W(k)}$.
\end{lemma}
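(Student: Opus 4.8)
The plan is to deduce this from Proposition \ref{prop-gln} by adding the $\mathcal{O}_B$-structures to both sides and then identifying the two remaining conditions via Lemma \ref{lem-det}. Proposition \ref{prop-gln} already provides a natural isomorphism $\textup{RZ}(\underline{M_0}) \xrightarrow{\sim} \textup{RZ}(X_0)$ of functors on $\textbf{Nilp}_{W(k)}$, induced on $R$-points by $(\underline{M},\gamma) \mapsto (BT_R(\underline{M}), BT_{R/pR}(\gamma))$, using that $BT$ is compatible with base change and carries isogenies of nilpotent displays to isogenies of $p$-divisible groups. Since $\textup{RZ}_{\textbf{D}}(\underline{M_0})$ and $\textup{RZ}_{\textbf{D}}(X_0)$ are by definition the subfunctors of $\textup{RZ}(\underline{M_0})$ and $\textup{RZ}(X_0)$ cut out by imposing an $\mathcal{O}_B$-action together with a compatibility/decoration condition, it suffices to show that this isomorphism restricts to a bijection between them for each $R$; naturality of the restriction is then automatic, being the restriction of a natural transformation.

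First I would transport the $\mathcal{O}_B$-structures. By Theorem \ref{thm-BT} together with Lemma \ref{lem-zink}, $BT_R$ is an equivalence between nilpotent $1$-displays over $\underline{W}(R)$ and formal $p$-divisible groups over $R$, so it induces ring isomorphisms $\text{End}(\underline{M}) \xrightarrow{\sim} \text{End}(BT_R(\underline{M}))$; hence endowing $\underline{M}$ with an $\mathcal{O}_B$-action is the same as endowing $X = BT_R(\underline{M})$ with an $\mathcal{O}_B$-action, functorially in $R$. The corresponding statement after inverting $p$ — the compatibility of $BT$ with passage to the quasi-isogeny categories, which is part of rational Dieudonn\'e theory and is already invoked in Proposition \ref{prop-gln} — shows that $\gamma\colon \underline{M}_{\underline{W}(R/pR)} \dashrightarrow (\underline{M_0})_{\underline{W}(R/pR)}$ commutes with the $\mathcal{O}_B$-actions if and only if its image $\rho\colon X\times_S\overline{S} \dashrightarrow X_0\times\overline{S}$ does, where one uses $X_0 = BT_k(\underline{M_0})$ and the base change of $BT$ along $k \to R/pR$.

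Next I would match the remaining conditions. On the display side one demands that the underlying graded $\mathcal{O}_B\otimes_{\zz_p}W(R)^\oplus$-module $M$ be fpqc-locally isomorphic to $\Lambda\otimes_{\zz_p}W(R)^\oplus$; on the $p$-divisible side one demands that $\text{Lie}(X)$ satisfy the determinant condition with respect to $\textbf{D}$. These are precisely conditions (i) and (iii) of Lemma \ref{lem-det}, hence equivalent, provided the rank hypothesis of that lemma holds, namely $\text{rk}_{W(R)}M^\tau = \text{rk}_{\zz_p}\Lambda$, equivalently $\text{ht}(X) = \text{rk}_{\zz_p}\Lambda$. On the display side this is immediate from the module condition; on the $p$-divisible side it follows from the quasi-isogeny $\rho$ to $X_0 = BT_k(\underline{M_0})$, since quasi-isogenies preserve height and, as $\underline{M_0}$ has underlying module $\Lambda\otimes_{\zz_p}W(k)^\oplus$ (the case relevant here, $\underline{M_0}$ being the display underlying the framing object evaluated on $(\Lambda,\eta)$), Theorem \ref{thm-BT}(ii) gives $\text{ht}(X_0) = \text{rk}_{W(k)}M_0^\tau = \text{rk}_{\zz_p}\Lambda$. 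Combining this with the previous paragraph, the isomorphism of Proposition \ref{prop-gln} restricts to the asserted natural isomorphism $\textup{RZ}_{\textbf{D}}(\underline{M_0}) \xrightarrow{\sim} \textup{RZ}_{\textbf{D}}(X_0)$.

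The main obstacle is not any single step but rather being careful that $BT$ genuinely upgrades to an equivalence of $\mathcal{O}_B$-linear quasi-isogeny categories compatibly with base change along $k\to R/pR$; this is where one leans hardest on Theorem \ref{thm-BT} and the rational theory, and it is the only place something could go wrong. The translation of the geometric determinant condition into a linear-algebra condition on $M$ looks like it ought to be the crux, but it has been isolated and settled already in Lemma \ref{lem-det}, so here it is merely cited.
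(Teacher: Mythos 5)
Your proposal is correct and follows essentially the same route as the paper, whose proof simply observes that the natural isomorphism of Proposition \ref{prop-gln} restricts to the subfunctors with $\mathcal{O}_B$-structure by Lemma \ref{lem-det}. The extra details you supply (transporting the $\mathcal{O}_B$-action and the quasi-isogeny through the equivalence $BT_R$, and verifying the height hypothesis needed to invoke Lemma \ref{lem-det}) are exactly what the paper leaves implicit.
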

\begin{proof}
	The natural isomorphism $\textup{RZ}(\underline{M_0}) \xrightarrow{\sim} \textup{RZ}(X_0)$ from Proposition \ref{prop-gln} restricts to a natural isomorphism $\textup{RZ}_{\textbf{D}}(\underline{M_0}) \xrightarrow{\sim} \textup{RZ}_{\textbf{D}}(X_0)$ by Lemma \ref{lem-det}. 
\end{proof}

Now, suppose $G$ is the group associated to $(\textbf{D},\{\mu\},[b])$, and denote by $\eta$ the natural closed embedding 
\begin{align*}
	\eta: G \hookrightarrow GL(\Lambda).
\end{align*} 
Let $(\mu,b)$ be a framing pair for $(G,\{\mu\},[b])$, and denote by $\mathscr{D}_0$ the associated framing object, cf. Definition \ref{def-framingobject}. Evaluating $\mathscr{D}_0$ on $(\Lambda,\eta)$, we obtain a display over $\underline{W}(k)$, which we will denote by $\underline{M_0}$. The condition in Definition \ref{def-unramifiedRZEL} implies $\mathscr{C}_{\mu,k}(\Lambda,\eta)$ is of type $(0^{(d)}, 1^{(n-d)})$, where $d = \text{rk}_{W(k)} \Lambda^0$ and $n = \text{rk}_{\zz_p} \Lambda$. Hence $\underline{M_0}$ is also of type $(0^{(d)},1^{(n-d)})$, and therefore $\underline{M_0}$ is a $1$-display. 

Now, notice that $\textup{RZ}_{\textbf{D}}(\underline{M_0})$ is the functor of isomorphism classes associated to a category fibered in groupoids $\textbf{RZ}_{\textbf{D}}(\underline{M_0})$. By Lemma \ref{lem-displaydescent} (or \cite[Theorem 37]{Zink2002}), $\textbf{RZ}_{\textbf{D}}(\underline{M_0})$ is an fpqc stack. Evaluation on the embedding $\eta: G \hookrightarrow GL(\Lambda)$ induces a natural morphism of stacks 
\begin{align*}
	\textbf{RZ}_{G,\mu,b} \to \textbf{RZ}_{\textup{D}}(\underline{M_0}).
\end{align*}
Let us explain. If $R$ is a $p$-nilpotent $W(k)$-algebra and $(\mathscr{D},\iota)$ is an object of $\textbf{RZ}_{G,\mu,b}(R)$, then by evaluating $\mathscr{D}$ on $(\Lambda, \eta)$ we obtain a display $\underline{M} = (M,F)$ over $\underline{W}(R)$. Since there is an isomorphism $\lambda:\upsilon_{R'}\mathscr{D}_{R'}\xrightarrow{\sim}  \mathscr{C}_{\mu,R'}$ after some faithfully flat $R \to R'$, we have an isomorphism of graded $W(R')^\oplus$-modules $\lambda_\eta: M_{W(R')^\oplus} \cong \Lambda \otimes_{\zz_p} W(R')^\oplus$. By Lemma \ref{lem-localtype}, then, $\underline{M}$ is a 1-display. Because the endomorphism on $\Lambda$ induced by any $a \in \mathcal{O}_B$ is $G$-equivariant, by functoriality we obtain an action of $\mathcal{O}_B$ on $\underline{M}$. Furthermore, by functoriality of $\lambda$, for each $a \in \mathcal{O}_B$, we have $\mathscr{C}_{\mu,R'}(a)\circ \lambda_\eta = \lambda_\eta \circ (\upsilon_{R'}\circ \mathscr{D}_{R'})(a)$, i.e. the isomorphism $M_{W(R')^\oplus} \cong \Lambda \otimes_{\zz_p} W(R')^\oplus$ is compatible with the $\mathcal{O}_B$-action. Now the $G$-quasi-isogeny $\iota$ induces a quasi-isogeny
\begin{align*}
\gamma := \iota_\eta: \underline{M}_{\underline{W}(R/pR)} \dashrightarrow \underline{M_0}_{\underline{W}(R/pR)}
\end{align*}
by evaluation on $(\Lambda, \eta)$. Again, because the action by $\mathcal{O}_B$ is $G$-equivariant, we see that $\gamma$ is compatible with the $\mathcal{O}_B$-actions. Hence $(\underline{M},\gamma)$ is an element of $\textbf{RZ}_{\textbf{D}}(\underline{M_0})(R)$. This construction is compatible with base change, so we have defined a morphism of stacks
\begin{align}\label{eq_RZmorphism}
\textbf{RZ}_{G,\mu,b} \to \textbf{RZ}_{\textbf{D}}(\underline{M_0}).
\end{align}

We will show that (\ref{eq_RZmorphism}) is an isomorphism, but first we prove a useful lemma. If $(\underline{M},\gamma)$ is any object in $\textbf{RZ}_{\textbf{D}}(\underline{M_0})(R)$, define
\begin{align}\label{eq-QM}
Q_{M} := \underline{\text{Isom}}^0(\Lambda\otimes_{\zz_p} W(R)^\oplus,M)
\end{align}
as the functor on $R$-algebras taking $R'$ to the group of graded $\mathcal{O}_B \otimes_{\zz_p} W(R')^\oplus$-isomorphisms between $M_{W(R')^\oplus}$ and $\Lambda \otimes_{\zz_p} W(R')^\oplus$. Then $Q_{M}$ is an $L_{\mu}^+G$-torsor over $R$ because $M$ is locally isomorphic to $\Lambda \otimes_{\zz_p} W(R)^\oplus$, and the group of graded $\mathcal{O}_B\otimes_{\zz_p} W(R')^\oplus$ automorphisms of $\Lambda \otimes_{\zz_p} W(R')^\oplus$ is isomorphic to $L^+_\mu G(R')$. By mimicking Construction \ref{cons_GDisp}, we obtain a morphism
\begin{align}\label{eq-aF}
\alpha_F: Q_M \to L^+G
\end{align}
such that the pair $(Q_{M}, \alpha_F)$ determines a $G$-display of type $\mu$ over $R$.

\begin{lemma}\label{lem-help}
	Let $R$ be a $p$-nilpotent $W(k)$-algebra, let $(\mathscr{D},\iota) \in RZ_{G,\mu,b}(R)$, and let $(\underline{M},\gamma)$ be the image of $(\mathscr{D},\iota)$ under (\ref{eq_RZmorphism}). If $(Q_{\mathscr{D}},\alpha_{\mathscr{D}})$ is the $G$-display of type $\mu$ associated to $\mathscr{D}$ by Construction \ref{cons_GDisp}, then evaluation on $(\Lambda, \eta)$ induces a natural isomorphism of $G$-displays of type $\mu$
		\begin{align*}
			(Q_\mathscr{D},\alpha_{\mathscr{D}}) \xrightarrow{\sim} (Q_M, \alpha_F).
		\end{align*}
\end{lemma}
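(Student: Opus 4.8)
The plan is to unwind both constructions through the faithful representation $(\Lambda,\eta)$ and match them term-by-term. First I would recall that evaluation on $(\Lambda,\eta)$ sends the graded fiber functor $\mathscr{C}_{\mu,R}$ to $V_{W(k)}\otimes_{W(k)}W(R)^\oplus = \Lambda\otimes_{\zz_p}W(R)^\oplus$ (by the remark identifying $\mathscr{C}_{\mu,R}$ as a base change of $\mathscr{C}_\mu$), and sends $\upsilon_R\circ\mathscr{D}$ to the underlying graded module $M$ of $\underline{M}=(M,F)=\mathscr{D}(\Lambda,\eta)$. Since $\eta$ is a closed embedding, the action of $\mathcal{O}_B$ on $\Lambda$ is realized through $G$-equivariant endomorphisms, so any tensor isomorphism $\lambda\colon\mathscr{C}_{\mu,R'}\xrightarrow{\sim}\upsilon_{R'}\circ\mathscr{D}_{R'}$ produces, via $\lambda_\eta$, a graded $\mathcal{O}_B\otimes_{\zz_p}W(R')^\oplus$-module isomorphism $\Lambda\otimes_{\zz_p}W(R')^\oplus\xrightarrow{\sim}M_{W(R')^\oplus}$; conversely every such isomorphism is of this form because $G=\underline{\mathrm{Aut}}^\otimes(\omega)$ by Tannakian duality and $L^+_\mu G$ is exactly the group of graded $\mathcal{O}_B\otimes_{\zz_p}W(R')^\oplus$-automorphisms of $\Lambda\otimes_{\zz_p}W(R')^\oplus$ (this last identification is used in the definition of $Q_M$ in (\ref{eq-QM})). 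This gives a natural map of $L^+_\mu G$-torsors $Q_{\mathscr{D}}=\underline{\mathrm{Isom}}^\otimes(\mathscr{C}_{\mu,R},\upsilon_R\circ\mathscr{D})\to Q_M$, $\lambda\mapsto\lambda_\eta$, which is $L^+_\mu G$-equivariant on the nose and hence an isomorphism of torsors.

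Next I would check compatibility with the maps to $L^+G$. Given $\lambda\in Q_{\mathscr{D}}(R')$, the element $\alpha_{\mathscr{D}}(\lambda)\in L^+G(R')$ is characterized in Construction \ref{cons_GDisp} by $\rho(\alpha_{\mathscr{D}}(\lambda)) = (\lambda_\rho^\tau)^{-1}\circ F(\rho)^\sharp\circ\lambda_\rho^\sigma$ for all $(V,\rho)$; in particular, taking $\rho=\eta$, we get $\eta(\alpha_{\mathscr{D}}(\lambda)) = (\lambda_\eta^\tau)^{-1}\circ F^\sharp\circ\lambda_\eta^\sigma$, where $F=F(\eta)$. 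On the other hand, $\alpha_F$ was defined in (\ref{eq-aF}) by exactly the same recipe applied to the display $\underline{M}=(M,F)$ and the torsor $Q_M$ of graded $\mathcal{O}_B\otimes_{\zz_p}W(R')^\oplus$-isomorphisms: for $\varphi\in Q_M(R')$ one has $\alpha_F(\varphi) = (\varphi^\tau)^{-1}\circ F^\sharp\circ\varphi^\sigma$ (after the standard identifications $(\Lambda\otimes W(R')^\oplus)^\sigma\cong\Lambda\otimes W(R')$ and likewise for $\tau$, which are compatible with evaluation). Since the torsor isomorphism sends $\lambda$ to $\varphi=\lambda_\eta$, and since $\eta$ is faithful so that an element of $L^+G(R')$ is determined by its image under $\eta$, we conclude $\alpha_F(\lambda_\eta) = \alpha_{\mathscr{D}}(\lambda)$ as elements of $L^+G(R')$. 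This is precisely the statement that the torsor isomorphism intertwines $\alpha_{\mathscr{D}}$ and $\alpha_F$, i.e. it is an isomorphism of $G$-displays of type $\mu$. Naturality in $R$ is immediate since all constructions are defined functorially.

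The main obstacle is purely bookkeeping: one must make sure that the various identifications $(\Lambda\otimes_{\zz_p}W(R')^\oplus)^\sigma\cong\Lambda\otimes_{\zz_p}W(R')$ induced by $W(R')^\oplus\otimes_{W(R')^\oplus,\sigma}W(R')\xrightarrow{\sim}W(R')$ (and the analogue for $\tau$) used implicitly in Construction \ref{cons_GDisp} to define $\alpha_{\mathscr{D}}$ agree with those used to define $\alpha_F$ in (\ref{eq-aF}), and that $\lambda_\eta^\sigma$ really is the base change of $\lambda_\eta$ along $\sigma$ in the sense compatible with the forgetful functor. Since $\mathscr{C}_{\mu,R}(\Lambda,\eta)=\Lambda\otimes_{\zz_p}W(R)^\oplus$ literally on the nose, these identifications are the standard ones and no subtlety arises; I would simply remark that evaluation on $(\Lambda,\eta)$ commutes with $(-)^\sigma$ and $(-)^\tau$ and with linearization of $\sigma$-linear maps, which reduces everything to comparing the two defining formulas, which are identical. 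Hence the lemma follows.
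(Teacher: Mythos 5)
Your proposal is correct and follows essentially the same route as the paper's proof: evaluation at $(\Lambda,\eta)$ gives the torsor isomorphism $\lambda\mapsto\lambda_\eta$, and the identity $\alpha_F(\lambda_\eta)=\alpha_{\mathscr{D}}(\lambda)$ is obtained by comparing the two defining formulas through $\eta$ and using that $\eta$ is faithful. The extra bookkeeping you flag (compatibility of the $\sigma$- and $\tau$-identifications) is indeed harmless, exactly as the paper implicitly treats it.
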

\begin{proof}
	By definition, $Q_{\mathscr{D}} = \underline{\text{Isom}}^\otimes(\mathscr{C}_{\mu,R}, \upsilon_R \circ \mathscr{D})$, so evaluation on $(\Lambda, \eta)$ defines an isomorphism of $L^+_\mu G$-torsors $\delta:Q_\mathscr{D} \to Q_M$. We need to show that $\delta$ is an isomorphism of $G$-displays of type $\mu$, i.e. that $\alpha_F \circ \delta = \alpha_\mathscr{D}$. Let $\lambda \in Q_{\mathscr{D}}(R')$ for some $R$-algebra $R'$. Then $\eta((\alpha_F \circ \delta)(\lambda)) = \eta(\alpha_F(\lambda_{\eta})) \in L^+\text{GL}(\Lambda)(R')$ is the composition $(\lambda_\eta^\tau)^{-1} \circ F^\sharp \circ \lambda_\eta^\sigma$. On the other hand, by definition of $\alpha_{\mathscr{D}}$, this is exactly $\alpha_{\mathscr{D}}(\lambda)_\eta$. But
	\begin{align*}
	\alpha_{\mathscr{D}}(\lambda)_\eta = \eta(\alpha_{\mathscr{D}}(\lambda)),
	\end{align*}
	so we have $\eta(\alpha_{\mathscr{D}}(\lambda)) = \eta(\alpha_F(\lambda_\eta))$. Hence $\alpha_{\mathscr{D}}(\lambda) =  \alpha_F(\delta(\lambda))$.
\end{proof}
The following is the main theorem of this section.

\begin{thm}\label{thm-ELcompare}
	The morphism (\ref{eq_RZmorphism}) is an isomorphism of fpqc stacks.
\end{thm}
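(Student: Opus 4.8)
The plan is to check that (\ref{eq_RZmorphism}) is fully faithful and essentially surjective after fpqc base change on the fibre over each $p$-nilpotent $W(k)$-algebra $R$; since both $\textbf{RZ}_{G,\mu,b}$ and $\textbf{RZ}_{\textbf{D}}(\underline{M_0})$ are fpqc stacks (Lemma \ref{lem-displaystack} together with Theorem \ref{prop-rzquotient} for the source, Lemma \ref{lem-displaydescent} for the target), this will suffice. As in the proof of Theorem \ref{prop-rzquotient}, I would first reduce to banal objects: call $(\mathscr{D},\iota)\in\textbf{RZ}_{G,\mu,b}(R)$ \emph{banal} when $\mathscr{D}$ is banal, and call $(\underline{M},\gamma)\in\textbf{RZ}_{\textbf{D}}(\underline{M_0})(R)$ \emph{banal} when the $L^+_\mu G$-torsor $Q_M$ of (\ref{eq-QM}) is trivial, i.e.\ when $M\cong\Lambda\otimes_{\zz_p}W(R)^\oplus$ as a graded $\mathcal{O}_B\otimes_{\zz_p}W(R)^\oplus$-module. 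Both notions are fpqc-local, every object is fpqc-locally banal, and by Lemma \ref{lem-help} the functor (\ref{eq_RZmorphism}) sends banal objects to banal objects. Since full faithfulness may be checked fpqc-locally, it is then enough to prove that (\ref{eq_RZmorphism}) restricts to an equivalence between the full subcategories of banal objects.

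To do this I would identify both banal categories with the explicit category $[(L^+G\times_{m_\mu,c_b}LG)/L^+_\mu G]^{\textup{pre}}(R)$ whose objects are pairs $(U,g)\in(L^+G\times_{m_\mu,c_b}LG)(R)$ and whose morphisms $(U,g)\to(U',g')$ are the $h\in L^+_\mu G(R)$ with $(U,g)\cdot h=(U',g')$ for the action (\ref{eq-RZaction}). On the source side this identification is exactly what is produced in the proof of Theorem \ref{prop-rzquotient}. For the target side I would first observe that evaluation on $(\Lambda,\eta)$ identifies banal Tannakian $(G,\mu)$-displays over $R$ with $1$-displays $\underline{M}$ carrying an $\mathcal{O}_B$-action whose module is fpqc-locally isomorphic to $\Lambda\otimes_{\zz_p}W(R)^\oplus$ as a graded $\mathcal{O}_B\otimes_{\zz_p}W(R)^\oplus$-module: faithfulness is clear since $\eta$ is faithful, a quasi-inverse sends $\underline{M}$ to the $G$-display of type $\mu$ $(Q_M,\alpha_F)$ of (\ref{eq-QM}) and (\ref{eq-aF}) and hence (Theorem \ref{thm-equiv}) to a Tannakian $(G,\mu)$-display, and that one recovers $\underline{M}$ with its $\mathcal{O}_B$-action is checked as in Lemma \ref{lemma-local} and Proposition \ref{prop-local}, using Lemma \ref{lem-zink} and Lemma \ref{lem-localtype} to identify the relevant modules as exactly those of type $(0^{(d)},1^{(n-d)})$ with $d=\textup{rk}_{W(k)}\Lambda^0$. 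Via Proposition \ref{prop-local} this category is then $[L^+G/L^+_\mu G]^{\textup{pre}}(R)$, with $\underline{M}$ corresponding to $U=\alpha_F(\lambda)\in L^+G(R)$ for a choice of trivialisation $\lambda$ and $F$ having the form $F_U$ of Construction \ref{cons-DU}. Second, an $\mathcal{O}_B$-equivariant quasi-isogeny $\gamma\colon\underline{M}_{\underline{W}(R/pR)}\dashrightarrow(\underline{M_0})_{\underline{W}(R/pR)}$ is, via $W(R/pR)[1/p]\cong W(R)[1/p]$, an element $g\in\text{GL}_{\mathcal{O}_B}(\Lambda\otimes_{\zz_p}W(R)[1/p])=LG(R)$; computing the Frobenius of the underlying isodisplay exactly as in Lemma \ref{lem-froblocal}, compatibility of $\gamma$ with Frobenius becomes the relation $m_\mu(U)=c_b(g)$, while the requirement that a morphism lift $\gamma_2^{-1}\circ\gamma_1$ becomes $(U_1,g_1)=(U_2,g_2)\cdot h$. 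This gives the desired identification of the banal category of $\textbf{RZ}_{\textbf{D}}(\underline{M_0})(R)$ with $[(L^+G\times_{m_\mu,c_b}LG)/L^+_\mu G]^{\textup{pre}}(R)$.

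It remains to check that (\ref{eq_RZmorphism}) is compatible with these two identifications, which is immediate from Lemma \ref{lem-help}: if a banal $(\mathscr{D},\iota)$ corresponds to $(U,g)$ — so $\mathscr{D}\cong\mathscr{D}_U$ and $\iota$ is the quasi-isogeny attached to $g$ — then its image has underlying display $\mathscr{D}_U(\Lambda,\eta)$, whose attached element of $L^+G(R)$ is $\alpha_F$ of the canonical trivialisation, and Lemma \ref{lem-help} says this equals $\alpha_{\mathscr{D}}$ of the canonical trivialisation, namely $U$, while the evaluated quasi-isogeny $\iota_\eta$ is the one attached to $g$. Hence (\ref{eq_RZmorphism}) induces the identity on $[(L^+G\times_{m_\mu,c_b}LG)/L^+_\mu G]^{\textup{pre}}(R)$, so it is an equivalence on banal objects and therefore, being fully faithful and essentially surjective after fpqc base change, an isomorphism of stacks. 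I expect the main obstacle to be the second step — establishing the $\text{GL}_{\mathcal{O}_B}(\Lambda)$-analogues of Lemma \ref{lemma-local} and Proposition \ref{prop-local}, that a $1$-display with $\mathcal{O}_B$-action whose module is fpqc-locally $\Lambda\otimes_{\zz_p}W(R)^\oplus$ is the same datum, with the same morphisms, as a $G$-display of type $\mu$ — which I would reduce, by passing to simple factors and Morita equivalence as in the proof of Lemma \ref{lem-det}, to the case $\mathcal{O}_B=\mathcal{O}_L$ with $L/\qq_p$ unramified, where it follows from the grading analysis in the proof of Lemma \ref{lem-zink} combined with Witt-vector descent (Lemma \ref{lem-morphism_local}).
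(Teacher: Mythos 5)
Your proposal is correct, but it is organized differently from the paper's proof. The paper argues directly: faithfulness follows from a local banal trivialization together with faithfulness of $\eta$; fullness is proved by transporting a morphism $\varphi\colon \underline{M_1}\to\underline{M_2}$ through the torsor comparison of Lemma \ref{lem-help} to a morphism of $G$-displays of type $\mu$ and then invoking Theorem \ref{thm-equiv} to descend it to a morphism of Tannakian $(G,\mu)$-displays lifting $\iota_2^{-1}\circ\iota_1$; essential surjectivity is the same local trivialization argument producing $(U,g)$ and the object $(\mathscr{D}_U,\iota_g)$ that you also use. You instead reduce everything to banal objects on both sides and identify each banal fiber with the explicit groupoid $[(L^+G\times_{m_\mu,c_b}LG)/L^+_\mu G]^{\textup{pre}}(R)$ --- on the source via the proof of Theorem \ref{prop-rzquotient}, on the target via a hands-on dictionary (graded $\mathcal{O}_B\otimes_{\zz_p}W(R)^\oplus$-automorphisms of $\Lambda\otimes_{\zz_p}W(R)^\oplus$ equal $L^+_\mu G(R)$; the Frobenius of a trivialized $1$-display with $\mathcal{O}_B$-action gives $U$; the $\mathcal{O}_B$-equivariant quasi-isogeny gives $g$ with $m_\mu(U)=c_b(g)$ by the computation of Lemma \ref{lem-froblocal}) --- and then check via Lemma \ref{lem-help} that (\ref{eq_RZmorphism}) is the identity on these presentations. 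This is a legitimate alternative: it buys, as a byproduct, a quotient-stack presentation of $\textbf{RZ}_{\textbf{D}}(\underline{M_0})$ itself and makes the comparison purely formal, at the cost of having to carry out the target-side banal classification of objects \emph{and} morphisms explicitly (the ``main obstacle'' you flag), which the paper sidesteps for morphisms by routing fullness through Theorem \ref{thm-equiv}; note that the identification of graded $\mathcal{O}_B$-automorphisms with $L^+_\mu G$ is also what the paper uses, without further comment, when it declares $Q_M$ an $L^+_\mu G$-torsor, so you are not assuming more than the paper does. One cosmetic point: your stated morphism convention for the pre-quotient groupoid ($h\colon (U,g)\to(U',g')$ iff $(U,g)\cdot h=(U',g')$) is the opposite of the relation you use later ($(U_1,g_1)=(U_2,g_2)\cdot h$ for a morphism from the object indexed by $1$ to the one indexed by $2$, as in the paper); fix one convention and use it on both sides, since the comparison being ``the identity'' depends on matching them.
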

\begin{proof}
To see that it is faithful, let $R$ be a $p$-nilpotent $W(k)$-algebra, and suppose $\psi_1$ and $\psi_2$ are two morphisms $(\mathscr{D},\iota) \to (\mathscr{D}_2, \iota_2)$ in $\textbf{RZ}_{G,\mu,b}$ which agree after applying (\ref{eq_RZmorphism}). By descent, it is enough to check that $\psi_1$ and $\psi_2$ are equal fpqc-locally. But after some faithfully flat $R \to R'$, $\psi_1$ and $\psi_2$ correspond to elements $h_1, h_2 \in \text{Aut}^\otimes(\mathscr{C}_{\mu,R'}) = L^+_\mu G(R')$, and to say that $\psi_1$ and $\psi_2$ agree after applying (\ref{eq_RZmorphism}) means $\eta(h_1) = \eta(h_2)$. But $\eta$ is a faithful representation, so this implies $h_1 = h_2$, hence $\psi_1 = \psi_2$. 

To show the morphism is full, suppose $(\mathscr{D}_1, \iota_1)$, $(\mathscr{D}_2,\iota_2)$ are objects in $\textbf{RZ}_{G,\mu,b}(R)$ corresponding to objects $(\underline{M_1}, \gamma_1)$, $(\underline{M_2},\gamma_2)$ in $\textbf{RZ}_{\textbf{D}}(\underline{M_0})(R)$, and suppose $\varphi: (\underline{M_1},\gamma_1) \xrightarrow{\sim} (\underline{M_2},\gamma_2)$ is a morphism in $\textbf{RZ}_{\textbf{D}}(\underline{M_0})(R)$. By Lemma \ref{lem-help}, we have isomorphisms of $G$-displays of type $\mu$
\begin{align*}
	\delta_1: (Q_{\mathscr{D}_1},\alpha_{\mathscr{D}_1}) \xrightarrow{\sim} (Q_{M_1},\alpha_{F_1}) \ \text{ and } \ \delta_2: (Q_{\mathscr{D}_2},\alpha_{\mathscr{D_2}}) \xrightarrow{\sim} (Q_{M_2},\alpha_{F_2}),
\end{align*}
where the $Q_{M_i}$ and $\alpha_{F_i}$ are defined as in (\ref{eq-QM}) and (\ref{eq-aF}), respectively. The map of underlying $W(R)^\oplus$-modules $\varphi: M_1 \to M_2$ induces a morphism $\psi: Q_{M_1}\to Q_{M_2}$. As in the proof of Lemma \ref{lem-help}, one checks that $\alpha_{F_2}\circ\psi=\alpha_{F_1}$, so the composition 
\begin{align*}
	(Q_{\mathscr{D}_1},\alpha_{\mathscr{D}_1}) \xrightarrow{\delta_1} (Q_{M_1},\alpha_{F_1}) \xrightarrow{\psi} (Q_{M_2},\alpha_{F_2}) \xrightarrow{(\delta_2)^{-1}} (Q_{\mathscr{D}_2}, \alpha_{\mathscr{D}_2})
\end{align*}
is a morphism of $G$-displays of type $\mu$, which we call $\zeta$. By Theorem \ref{thm-equiv}, $\zeta$ is induced by a unique morphism of Tannakian $(G,\mu)$-displays $\xi: \mathscr{D}_1 \to \mathscr{D}_2$. We claim $\xi$ induces $\varphi$ via (\ref{eq_RZmorphism}), i.e. that $\xi_{\eta} = \varphi$. Once we know this, it will follow that $\xi$ lifts $(\iota_2)^{-1}\circ \iota_1$. Indeed, if $\xi$ induces $\varphi$, then $\xi_\eta$ lifts $(\iota_2)_{\eta}^{-1} \circ (\iota_1)_\eta$, and then we can argue as in the proof of faithfulness.

By descent it is enough to check $\xi_{\eta}$ and $\varphi$ agree fpqc-locally. Suppose $R \to R'$ is a faithfully flat extension, and let $\lambda \in \text{\underline{Isom}}^\otimes(\mathscr{C}_{\mu,R'}, \upsilon_{R'} \circ (\mathscr{D}_1)_{R'})$. Then because $\xi$ induces $\zeta$, we have $\zeta(\lambda) = \xi_{R'} \circ \lambda$. On the other hand, by definition of $\zeta$, $\zeta(\lambda)$ is the unique morphism $\mathscr{C}_{\mu,R'} \xrightarrow{\sim} \upsilon_{R'} \circ (\mathscr{D}_2)_{R'}$ such that $\zeta(\lambda)_{\eta} = \psi(\delta(\lambda)) = \varphi_{R'} \circ\lambda_{\eta}$. Hence
\begin{align*}
	\varphi_{R'} \circ \lambda_{\eta} = \zeta(\lambda)_\eta = (\xi_{R'})_\eta \circ \lambda_\eta.
\end{align*}
But $\lambda_\eta$ is an isomorphism, so $\varphi_{R'} = (\xi_{R'})_\eta$, i.e. $\varphi = \xi_\eta$. This shows (\ref{eq_RZmorphism}) is full.

Last we prove (\ref{eq_RZmorphism}) is essentially surjective. As usual, by \cite[\href{https://stacks.math.columbia.edu/tag/046N}{Lemma 046N}]{stacks-project}, it is enough to show any $(\underline{M},\gamma)$ over $R$ is in the essential image of (\ref{eq_RZmorphism}) fpqc-locally. But after some faithfully flat $R \to R'$ we have an isomorphism $\lambda: \Lambda \otimes_{\zz_p} W(R')^\oplus \xrightarrow{\sim} M_{W(R')^\oplus}$. Then the composition $(\lambda^\tau)^{-1} \circ F_{R'}^\sharp \circ \lambda^\sigma$ is obtained by applying $\eta$ to some $U \in L^+G(R')$. Similarly, we obtain $g \in LG(R')$ such that $\eta(g)$ is given by the composition
\begin{align*}
	\Lambda \otimes_{\zz_p} W(R')[1/p] \xrightarrow{\lambda^\tau} (M_{W(R'/pR')^\oplus})^\tau[1/p] \xrightarrow{\gamma} ((M_0)_{W(R'/pR')^\oplus})^\tau[1/p] \xrightarrow{\sim} \Lambda \otimes_{\zz_p} W(R')[1/p],
\end{align*}
after identifying 
\begin{align*}
	\Lambda \otimes_{\zz_p} W(R')[1/p] \cong \Lambda \otimes_{\zz_p} W(R'/pR')[1/p]
\end{align*}
as in the proof of Theorem \ref{prop-rzquotient}. Let $\mathscr{D}_U$ be the banal Tannakian $(G,\mu)$-display obtained from $U$. Then $g$ induces a $G$-quasi-isogeny $\iota_g: (\mathscr{D}_U)_{R/pR} \dashrightarrow (\mathscr{D}_0)_{R/pR}$, and $(\mathscr{D}_U, \iota_g)$ is an object in $\textbf{RZ}_{G,\mu,b}(R')$ whose image under (\ref{eq_RZmorphism}) is isomorphic to $(\underline{M}, \gamma)$.

\end{proof}

Suppose now that all slopes of $\eta(b)$ are nonzero. Then the $1$-display $\underline{M_0}$ obtained by evaluating $\mathscr{D}_0$ on $(\Lambda, \eta)$ is nilpotent. Hence $X_0 = BT_k(\underline{M_0})$ is a formal $p$-divisible group, and both $\underline{M_0}$ and $X_0$ inherit $\mathcal{O}_B$-actions from the action of $\mathcal{O}_B$ on $\Lambda$. Combining Lemma \ref{lem-intermediate} and Theorem \ref{thm-ELcompare}, we obtain the following corollary.
\begin{cor}
	Suppose all slopes of $\eta(b)$ are different from $0$. Then there is a natural isomorphism of functors
	\begin{align*}
		\textup{RZ}_{G,\mu,b} \xrightarrow{\sim} \textup{RZ}_{\textbf{D}}(X_0).
	\end{align*} 
	In particular, $\textup{RZ}_{G,\mu,b}$ is representable by a formal scheme which is formally smooth and formally locally of finite type over \textup{Spf} $W(k)$. 
\end{cor}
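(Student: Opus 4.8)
The plan is to deduce the corollary by chaining together the comparison results already established, with essentially no new input: the statement is a formal consequence of Theorem~\ref{thm-ELcompare}, Lemma~\ref{lem-intermediate}, and the known representability of the classical EL-type Rapoport--Zink space.

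First I would verify that the hypothesis on the slopes of $\eta(b)$ makes the $1$-display $\underline{M_0} = \mathscr{D}_0(\Lambda,\eta)$ nilpotent. By Lemma~\ref{lem-froblocal}, applied to the framing object $\mathscr{D}_0 = \mathscr{D}_u$ with $b = u\mu^\sigma(p)$, the isodisplay $\underline{M_0}[1/p]$ attached to $(\Lambda,\eta)$ has Frobenius $\eta(b)\circ(\mathrm{id}_\Lambda\otimes f)$, so its Newton slopes are precisely the slopes of $\eta(b)$. Over the perfect field $k$, the dictionary between $1$-displays and Dieudonné modules recalled in Remark~\ref{rem-nilp}, together with Theorem~\ref{thm-BT}, identifies nilpotence of $\underline{M_0}$ with the condition that $BT_k(\underline{M_0})$ is formal, i.e.\ has no étale part, i.e.\ the isocrystal $(\Lambda\otimes K,\eta(b)f)$ has no slope $0$. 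Thus the hypothesis gives exactly what is needed. Consequently $X_0 := BT_k(\underline{M_0})$ is a formal $p$-divisible group, and since the $\mathcal{O}_B$-action on $\Lambda$ is $G$-equivariant, functoriality equips $\underline{M_0}$ and $X_0$ with compatible $\mathcal{O}_B$-actions, so $X_0$ is an object of the type occurring in Definition~\ref{def-RZEL}.

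Next I would chain the isomorphisms. Theorem~\ref{thm-ELcompare} provides an isomorphism of fpqc stacks $\textbf{RZ}_{G,\mu,b}\xrightarrow{\sim}\textbf{RZ}_{\textbf{D}}(\underline{M_0})$; passing to the associated functors of isomorphism classes yields a natural isomorphism $\textup{RZ}_{G,\mu,b}\xrightarrow{\sim}\textup{RZ}_{\textbf{D}}(\underline{M_0})$. Since $\underline{M_0}$ is now known to be nilpotent, Lemma~\ref{lem-intermediate} applies and gives $\textup{RZ}_{\textbf{D}}(\underline{M_0})\xrightarrow{\sim}\textup{RZ}_{\textbf{D}}(X_0)$. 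Composing the two produces the desired natural isomorphism $\textup{RZ}_{G,\mu,b}\xrightarrow{\sim}\textup{RZ}_{\textbf{D}}(X_0)$; that this is a morphism over $\textup{Spf } W(k)$ is automatic, as all functors in sight are defined on $\textbf{Nilp}_{W(k)}$.

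Finally, representability is transported along this isomorphism: by \cite[Theorem~2.16]{RZ1996} the functor $\textup{RZ}(X_0)$ is representable by a formal scheme which is formally smooth and locally formally of finite type over $\textup{Spf } W(k)$, and by \cite[Theorem~3.25]{RZ1996} the inclusion $\textup{RZ}_{\textbf{D}}(X_0)\hookrightarrow\textup{RZ}(X_0)$ is a closed immersion, so $\textup{RZ}_{\textbf{D}}(X_0)$ --- and hence $\textup{RZ}_{G,\mu,b}$ --- has the same property. There is no serious obstacle here; the only point that deserves a line of justification is the slope-versus-nilpotence translation in the first step, and that is classical.
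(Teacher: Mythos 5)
Your proposal is correct and follows the same route as the paper: the corollary is obtained by noting that the slope hypothesis makes $\underline{M_0}$ nilpotent, then combining Theorem \ref{thm-ELcompare} with Lemma \ref{lem-intermediate} and importing representability of $\textup{RZ}_{\textbf{D}}(X_0)$ from \cite{RZ1996}. Your extra paragraph justifying the slope-versus-nilpotence translation (via Lemma \ref{lem-froblocal} and Remark \ref{rem-nilp}) only makes explicit what the paper asserts without comment.
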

This proves Conjecture \ref{conj} in the unramified EL-type case.

\subsection{Remarks on the Hodge-type case}
In this section we specialize the study of our RZ functor to the Hodge-type case. In this case, our Tannakian approach further allows us to prove that the $\textbf{RZ}_{G,\mu,b}$ is a stack in setoids, i.e. that objects in $\textbf{RZ}_{G,\mu,b}(R)$ have no nontrivial automorphisms for any $R$. This is known when $G$ is reductive and $R$ is Noetherian by \cite[Proposition 3.6.1]{BP2017}. 

\begin{Def}\label{def-hodge}
	A local Shimura datum $(G,\{\mu\},[b])$ is \textit{of Hodge-type} if there exists a faithful representation $\Lambda$ of $G$ on a finite free $\zz_p$-module such that the corresponding closed embedding of group schemes $\eta: G \hookrightarrow \text{GL}(\Lambda)$ satisfies the following property: after a choice of basis $\Lambda_{\mathcal{O}_E} \xrightarrow{\sim} \mathcal{O}_E^n$, the composite
	\begin{align*}
		\eta \circ \mu : \mathbb{G}_{m, \mathcal{O}_E} \to \text{GL}_{n, \mathcal{O}_E}
	\end{align*}
	is the cocharacter $a \mapsto \text{diag}(1^{(r)}, a^{(n-r)})$ for some $1 \le r < n$. 
\end{Def}
\begin{Def}
	A \textit{local Hodge embedding datum} for a local Shimura datum $(G,\{\mu\},[b])$ of  Hodge-type consists of 
	\begin{itemize}
		\item a group scheme embedding $\eta: G \hookrightarrow \text{GL}(\Lambda)$ as above,
		\item a framing pair $(\mu,b)$ for $(G,\{\mu\},[b])$.
	\end{itemize}
\end{Def}

Let $(\Lambda, \eta, b)$ be a local Hodge embedding datum for a local Shimura datum $(G,\{\mu\},[b])$ of Hodge-type. Let $R$ be a $p$-nilpotent $W(k)$-algebra and $(\mathscr{D},\iota)$ be an object in $\textbf{RZ}_{G,\mu,b}(R)$. By evaluating $\mathscr{D}$ on $(\Lambda, \eta)$, we obtain a display $\underline{M}(\eta)$ over $\underline{W}(R)$. Because $\upsilon_R \circ \mathscr{D}$ is fpqc-locally of type $\mu$, it follows from Lemma \ref{lem-localtype} that $\underline{M}(\eta)$ is of type $(0^{(r)}, 1^{(n-r)})$. In particular, it is a 1-display over $\underline{W}(R)$. 

Let $\mathscr{D}_0$ be the framing object associated to $(\mu,b)$, and denote by $\underline{M_0}$ the evaluation of $\mathscr{D}_0$ on $(\Lambda, \eta)$. Then $\iota$ induces a quasi-isogeny of $1$-displays 
\begin{align*}
	\iota_{\eta}: \underline{M}(\eta)_{\underline{W}(R/pR)} \dashrightarrow \underline{M_0}_{\underline{W}(R/pR)}.
\end{align*}
Hence the pair $(\underline{M}(\eta), \iota_\eta)$ determines an object in $\textbf{RZ}(\underline{M_0})(R)$, where $\textbf{\textup{RZ}}(\underline{M_0})$ is the fpqc-stack whose functor of isomorphism classes is $\textup{RZ}(\underline{M_0})$. This construction defines a morphism of stacks
\begin{align*}
	\textbf{RZ}_{G,\mu,b} \to \textup{\textbf{RZ}}(\underline{M_0}).
\end{align*}
If $\eta(b)$ has no slopes equal to zero, then $\underline{M_0}$ is a nilpotent 1-display over $\underline{W}(k)$, so by Proposition \ref{prop-gln}, $\textup{RZ}(\underline{M_0})$ is represented by the classical RZ-space $\textup{RZ}(X_0)$, where $X_0 = BT_k(\underline{M_0})$.

\begin{prop}
	Assume that $(G,\{\mu\},[b])$ is a Hodge-type local Shimura datum with a local Hodge embedding datum such that $\eta(b)$ has no slopes equal to 0. Then $\textbf{\textup{RZ}}_{G,\mu,b}$ is a stack in setoids. 
\end{prop}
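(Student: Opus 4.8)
The plan is to reduce the statement, via the faithful representation $(\Lambda,\eta)$, to the classical rigidity of quasi-isogenies of $p$-divisible groups. Recall that an automorphism of an object $(\mathscr{D},\iota)$ of $\textbf{RZ}_{G,\mu,b}(R)$ is an isomorphism $\psi:\mathscr{D}\xrightarrow{\sim}\mathscr{D}$ of Tannakian $(G,\mu)$-displays which lifts $\iota^{-1}\circ\iota=\mathrm{id}$, i.e.\ such that the associated $G$-quasi-isogeny $\psi[1/p]_{R/pR}$ is the identity. I must show that every such $\psi$ equals $\mathrm{id}$. Once this is known the proposition follows, because a stack in groupoids in which every object has trivial automorphism group is a stack in setoids: for any two objects $x,y$ the set of morphisms $x\to y$ is either empty or a torsor under $\mathrm{Aut}(x)=\{1\}$, hence has at most one element.

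First I would show that the auxiliary stack $\textbf{RZ}(\underline{M_0})$ --- whose $R$-points are the pairs $(\underline{M},\gamma)$ of Definition~\ref{def-RZM} --- is a stack in setoids. Since $\eta(b)$ has no slope equal to $0$, the $1$-display $\underline{M_0}$ is nilpotent, and, using that nilpotence is a quasi-isogeny invariant and that it is a condition imposed modulo $I_R+pW(R)$ with $p$ nilpotent in $R$, every $\underline{M}$ occurring in $\textbf{RZ}(\underline{M_0})(R)$ is nilpotent over $R$; this is the bookkeeping already carried out in the proof of Proposition~\ref{prop-gln}. By Theorem~\ref{thm-BT} and Proposition~\ref{prop-gln} the functor $BT$ identifies $\textbf{RZ}(\underline{M_0})$, as a fibered category, with the stack of pairs $(X,\rho)$ where $X$ is a formal $p$-divisible group over $R$ and $\rho:X_{R/pR}\dashrightarrow (X_0)_{R/pR}$ is a quasi-isogeny, $X_0=BT_k(\underline{M_0})$; here one uses that $BT$ is compatible with the passage to the isogeny category. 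An automorphism of $(X,\rho)$ is an automorphism $\phi$ of $X$ whose reduction modulo $p$ is the identity quasi-isogeny of $X_{R/pR}$; since $\mathrm{End}$ of a $p$-divisible group is torsion free, this reduction is the honest identity, whereupon the rigidity of homomorphisms of $p$-divisible groups along the nilpotent closed immersion $\operatorname{Spec} R/pR\hookrightarrow\operatorname{Spec} R$ (cf.\ \cite[Proposition 2.9]{RZ1996}) forces $\phi=\mathrm{id}$. Thus objects of $\textbf{RZ}(\underline{M_0})(R)$ have no nontrivial automorphisms.

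Next, evaluation on $(\Lambda,\eta)$ defines the morphism of stacks $\textbf{RZ}_{G,\mu,b}\to\textbf{RZ}(\underline{M_0})$ constructed just before the statement. This morphism is faithful: by fpqc descent a morphism in $\textbf{RZ}_{G,\mu,b}$ is locally given by an element $h\in L^+_\mu G(R')=\mathrm{Aut}^\otimes(\mathscr{C}_{\mu,R'})$ (Theorem~\ref{thm-equiv} together with Theorem~\ref{thm-isom}), and the morphism sends it to $\eta(h)$; since $\eta$ is faithful, $\eta(h_1)=\eta(h_2)$ implies $h_1=h_2$. This is precisely the computation establishing faithfulness of (\ref{eq_RZmorphism}) in the proof of Theorem~\ref{thm-ELcompare}. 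Combining the two steps: an automorphism $\psi$ of $(\mathscr{D},\iota)$ is sent to an automorphism of the corresponding object of $\textbf{RZ}(\underline{M_0})(R)$, which is trivial by the previous paragraph; faithfulness then forces $\psi=\mathrm{id}$, and $\textbf{RZ}_{G,\mu,b}$ is a stack in setoids.

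I expect the main obstacle to be the second step above --- transporting the classical rigidity of quasi-isogenies through the equivalence $BT$. Two points deserve care: that $\underline{M}$ is nilpotent over all of $R$ (not merely over $R/pR$), so that $BT_R(\underline{M})$ is genuinely $p$-divisible, and that $BT$ matches quasi-isogenies of nilpotent displays with quasi-isogenies of the corresponding $p$-divisible groups, so that the condition $\psi[1/p]_{R/pR}=\mathrm{id}$ can be detected on the $p$-divisible group. Both follow from the results of Zink and Lau (\cite{Zink2002}, \cite{Lau2008}) already invoked in Proposition~\ref{prop-gln}; the remainder of the argument is formal.
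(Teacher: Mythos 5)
Your proof is correct and follows essentially the same route as the paper: evaluate on the faithful representation $(\Lambda,\eta)$, use that objects of $\textbf{RZ}(\underline{M_0})$ admit no nontrivial automorphisms (the paper deduces this from Proposition \ref{prop-gln}, while you spell it out via $BT$ and rigidity of quasi-isogenies of $p$-divisible groups), and transfer triviality back through the faithfulness of $\eta$. The only cosmetic difference is the final step: the paper quotes the general fact that an endomorphism of an exact tensor functor which is the identity on a faithful representation is the identity, whereas you rederive this via local banality, Theorem \ref{thm-isom} and fpqc descent --- both arguments rest on the same injectivity of $\eta$ on points.
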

\begin{proof}
	Let $R$ be in \textbf{Nilp}$_{W(k)}$ and let $(\mathscr{D},\iota)$ be an object in $\textbf{RZ}_{G,\mu,b}(R)$. Suppose $\psi$ is an automorphism of the pair $(\mathscr{D},\iota)$. Then $\psi$ is a natural transformation of tensor functors $\mathscr{D} \to \mathscr{D}$ which lifts the identity $\mathscr{D}_{R/pR} \to \mathscr{D}_{R/pR}$. Evaluating $\mathscr{D}$ on $(\Lambda, \eta)$, we obtain an automorphism $\psi_\eta$ of  $(\underline{M}(\eta), \iota_\eta)$ in $\textup{\textbf{RZ}}(\underline{M_0})$. But by Proposition \ref{prop-gln}, $\textup{RZ}(\underline{M_0})$ is representable, hence its objects have no nontrivial automorphisms. Then $\psi_\eta = \id_{\underline{M}(\eta)}$. This implies that $\psi = \id_{\mathscr{D}}$, since any endomorphism of an exact tensor functor from Rep$_{\zz_p}(G)$ to an exact rigid tensor category which is the identity on a faithful representation is itself the identity. 
\end{proof}


%
\bibliographystyle{amsalpha}
\bibliography{Refs}

\providecommand{\bysame}{\leavevmode\hbox to3em{\hrulefill}\thinspace}
\providecommand{\MR}{\relax\ifhmode\unskip\space\fi MR }
\providecommand{\MRhref}[2]{%
  \href{http://www.ams.org/mathscinet-getitem?mr=#1}{#2}
}
\providecommand{\href}[2]{#2}
\begin{thebibliography}{{Sta}17}

\bibitem[BP17]{BP2017}
O.~B{\"u}ltel and G.~Pappas, \emph{$(${G},$\mu$$)$-displays and
  {R}apoport-{Z}ink spaces}, J. Inst. Math. Jussieu (2017), 1--47.

\bibitem[Cor14]{Cornut2014}
C.~Cornut, \emph{Filtrations and buildings}, preprint arXiv:1411.5567 (2014).

\bibitem[DM82]{Deligne1982}
P.~Deligne and J.S. Milne, \emph{Tannakian categories}, Hodge cycles, motives,
  and {Shimura} varieties, Lecture {N}otes in {M}ath., vol. 900, {Springer},
  Cham, 1982, pp.~101--228.

\bibitem[FJ13]{FJ2013}
J.-M. Fontaine and U.~Jannsen, \emph{Frobenius gauges and a new theory of
  p-torsion sheaves in characteristic p. {I}}, preprint arXiv:1304.3740 (2013).

\bibitem[Gir71]{Giraud1971}
J.~Giraud, \emph{Cohomologie non ab{\'e}lienne}, Grundlehren {M}ath. {W}iss.,
  vol. 179, {Springer}, Cham, 1971.

\bibitem[HP17]{HP2017}
B.~Howard and G.~Pappas, \emph{Rapoport-{Z}ink spaces for spinor groups},
  Compos. Math. \textbf{153} (2017), no.~5, 1050--1118.

\bibitem[Kim18]{Kim2018}
W.~Kim, \emph{{R}apoport-{Z}ink spaces of {H}odge type}, Forum Math. {S}igma
  \textbf{6} (2018).

\bibitem[Kre14]{Kreidl2014}
M.~Kreidl, \emph{On $p$-adic lattices and {G}rassmannians}, Math. Z.
  \textbf{276} (2014), no.~3-4, 859--888.

\bibitem[Lau08]{Lau2008}
E.~Lau, \emph{Displays and formal p-divisible groups}, Invent. Math.
  \textbf{171} (2008), no.~3, 617--628.

\bibitem[Lau18]{Lau2018}
\bysame, \emph{Higher frames and $ {G} $-displays}, preprint arXiv:1809.09727
  (2018).

\bibitem[Laz75]{Lazard2006}
M.P. Lazard, \emph{Commutative formal groups}, Lecture {N}otes in {Math.}, vol.
  443, {Springer}, Cham, 1975.

\bibitem[LZ07]{LZ2007}
A.~Langer and T.~Zink, \emph{{De Rham-Witt} cohomology and displays}, Doc.
  Math. \textbf{12} (2007), 147--191.

\bibitem[Mil17]{Milne2017}
J.S. Milne, \emph{Algebraic groups: The theory of group schemes of finite type
  over a field}, {Cambridge Stud. Adv. Math}, vol. 170, Cambridge Univ. Press,
  Cambridge, 2017.

\bibitem[MW04]{MW2004}
B.~Moonen and T.~Wedhorn, \emph{Discrete invariants of varieties in positive
  characteristic}, Int. Math. Res. Not. IMRN \textbf{2004} (2004), no.~72,
  3855--3903.

\bibitem[Nor76]{Nori1976}
M.V. Nori, \emph{On the representations of the fundamental group}, Compos.
  Math. \textbf{33} (1976), no.~1, 29--41.

\bibitem[PWZ15]{PWZ2015}
R.~Pink, T.~Wedhorn, and P.~Ziegler, \emph{F-zips with additional structure},
  Pacific J. Math. \textbf{274} (2015), no.~1, 183--236.

\bibitem[RR96]{RR1996}
M.~Rapoport and M.~Richartz, \emph{On the classification and specialization of
  {F}-isocrystals with additional structure}, Compos. Math. \textbf{103}
  (1996), no.~2, 153--182.

\bibitem[RV14]{RV2014}
M.~Rapoport and E.~Viehmann, \emph{Towards a theory of local {Shimura}
  varieties}, M{\"u}nster J. Math. \textbf{7} (2014), no.~1, 273--326.

\bibitem[RZ96]{RZ1996}
M.~Rapoport and T.~Zink, \emph{Period spaces for $p$-divisible groups
  (am-141)}, {Ann. of Math. Stud.}, vol. 141, Princeton Univ. Press, Princeton,
  1996.

\bibitem[Ser68]{Serre1968}
J.-P. Serre, \emph{Groupes de {Grothendieck} des sch{\'e}mas en groupes
  r{\'e}ductifs d{\'e}ploy{\'e}s}, Publ. Math. Inst. Hautes \'Etudes Sci.
  \textbf{34} (1968), 37--52.

\bibitem[{Sta}17]{stacks-project}
The {Stacks Project Authors}, \emph{{Stacks Project}},
  \url{http://stacks.math.columbia.edu}, 2017.

\bibitem[Zin01]{Zink2001}
T.~Zink, \emph{A {D}ieudonn{\'e} theory for $p$-divisible groups}, Class Field
  Theory -- Its Centenary and Prospect (Tokyo, 1998), {Adv. Stud. Pure Math.},
  vol.~30, {Math. Soc. Japan}, Tokyo, 2001, pp.~139--160.

\bibitem[Zin02]{Zink2002}
\bysame, \emph{The display of a formal $p$-divisible group}, Cohomologies
  $p$-adiques et applications arithm\'etiques, I., Ast{\'e}risque, no. 278,
  2002, pp.~127--248.

\end{thebibliography}

\end{document}